\documentclass[12pt]{amsart}

\usepackage{amsmath, amsthm, amssymb}
\usepackage{graphicx}
\usepackage{mathbbol}
\usepackage{txfonts}
\usepackage[usenames]{color}
\usepackage{pgfplots}
\usepackage{comment}
\usepackage{esint}
\usepackage{cancel}
\usepackage{enumerate}
\usepackage{xcolor}
\usepackage[colorlinks,linkcolor=blue]{hyperref}

\allowdisplaybreaks[4]


\newtheorem{thm}{Theorem}[section]
\newcommand{\bt}{\begin{thm}}
\newcommand{\et}{\end{thm}}

\newtheorem{conj}[thm]{Conjecture}

\newtheorem{cor}[thm]{Corollary}   
\newcommand{\bc}{\begin{cor}}
\newcommand{\ec}{\end{cor}}

\newtheorem{lem}[thm]{Lemma}   
\newcommand{\bl}{\begin{lem}}
\newcommand{\el}{\end{lem}}

\newtheorem{prop}[thm]{Proposition}
\newcommand{\bp}{\begin{prop}}
\newcommand{\ep}{\end{prop}}

\newtheorem{defn}[thm]{Definition}
\newcommand{\bd}{\begin{defn}}    
\newcommand{\ed}{\end{defn}}

\newtheorem{rmrk}[thm]{Remark}   

\newcommand{\br}{\begin{rmrk}}
\newcommand{\er}{\end{rmrk}}

\newcommand{\mina}{\operatorname{MinA}}

\newcommand{\Scal}{\operatorname{Scalar}}

\newcommand{\be}{\begin{equation}}

 \newcommand{\ee}{\end{equation}}

\newcommand{\N}{\mathbb{N}}

\newcommand{\R}{\mathbb{R}}

\newcommand{\Z}{\mathbb{Z}}

\newcommand{\diam}{\operatorname{Diam}}

\newcommand{\mass}{{\mathbf M}}



\newcommand{\area}{\operatorname{Area}}
\newcommand{\vol}{{\rm vol}}

\newcommand{\Sph}{{\mathbb S}}         

\newcommand{\og}{\overline{\Gamma}}
\newcommand{\on}{\overline{\nabla}}

\newcommand{\I}{{\bf I}}


\begin{document}

\title[Compactness of warped circles over spheres with NNSC]{Compactness of sequences of warped product circles over spheres with nonnegative scalar curvature}


\author{Wenchuan Tian}
\address{Department of Mathematics, University of California, Santa Barbara, CA93106-3080}
\email{tian.wenchuan@gmail.com}

\author{Changliang Wang}
\address{School of Mathematical Sciences and Institute for Advanced Study, Tongji University, Shanghai 200092, China}
\email{wangchl@tongji.edu.cn}

\date{}

\keywords{}

\begin{abstract}
Gromov and Sormani conjectured that a sequence of three dimensional Riemannian manifolds with nonnegative scalar curvature and some additional uniform geometric bounds should have a subsequence which converges in some sense to a limit space with some generalized notion of nonnegative scalar curvature. In this paper, we study the pre-compactness of a sequence of three dimensional warped product manifolds with warped circles over standard $\Sph^2$ that have nonnegative scalar curvature, a uniform upper bound on the volume, and a positive uniform lower bound on the $\mina$, which is the minimum area of closed minimal surfaces in the manifold. We prove that such a sequence has a subsequence converging to a $W^{1, p}$ Riemannian metric for all $p<2$, and that the limit metric has nonnegative scalar curvature in the distributional sense as defined by Lee-LeFloch.
\end{abstract}

\maketitle

\tableofcontents

\section{ Introduction}

\noindent In \cite{Gromov-Plateau} and \cite{Gromov-Dirac}, Gromov conjectured that a sequence of Riemannian manifolds with nonnegative scalar curvature, $\Scal \ge 0$, should have a subsequence which converges in some weak sense to a limit space with some generalized notion of ``nonnegative scalar curvature".  In light of the examples constructed by Basilio, Dodziuk, and Sormani in \cite{BDS-sewing}, the $\mina$ condition in (\ref{defn-MinA}) below was added to prevent collapsing happening, and the conjecture was made more precise at an IAS Emerging Topics Workshop co-organized by Gromov and Sormani as follows \cite{Sormani-Scalar}:

\begin{conj}\label{Scalar-Compactness}
Let $\{M_j^3\}_{j=1}^\infty$ be a sequence of
 closed oriented three dimensional Riemannian manifolds without boundary satisfying
\be
 \Scal_j \geq 0, \ \ {\rm Vol}(M_j) \le V, \ \ \diam(M_j) \le D,
\ee
\be \label{defn-MinA}
\mina(M^3_j)=\inf\{\area(\Sigma)\, : \, \Sigma \textrm{ closed min surf in } M_j^3\,\} \ge A_0>0.
\ee
Then there exists a subsequence which is still denoted as $\{M_j\}_{j=1}^\infty$ that converges in the volume preserving intrinsic flat sense to a three dimensional rectifiable limit space $M_\infty$.
Furthermore, $M_\infty$ is a connected geodesic metric space, that has Euclidean tangent cones almost everywhere, and has nonnegative generalized scalar curvature.
\end{conj}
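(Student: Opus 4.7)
Since \thmref{Scalar-Compactness} is an open conjecture whose full resolution would be a major result, my proposal outlines an approach that combines three main ingredients: the intrinsic flat compactness theory of Sormani-Wenger, three-dimensional Schoen-Yau stable minimal surface techniques adapted to $\Scal \ge 0$, and the distributional scalar curvature framework of Lee-LeFloch.

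First I would establish volume-preserving intrinsic flat precompactness of $\{M_j\}$. The uniform upper bounds on $\vol(M_j)$ and $\diam(M_j)$ provide the mass and support bounds needed for the underlying integral current spaces, but what is missing is a uniform noncollapsing estimate. Here the $\mina$ hypothesis is crucial: it forbids small closed minimal surfaces in the $M_j$, and combined with Schoen-Yau slicing of $M_j$ by stable minimal surfaces, which in dimension three are strongly constrained by $\Scal \ge 0$, it should yield a uniform lower bound on the filling volumes of small balls and hence intrinsic flat precompactness by Wenger's compactness theorem.

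Next, to extract the rectifiable limit $M_\infty$ with Euclidean tangent cones almost everywhere, I would perform a blow-up analysis at generic points of $M_\infty$. Since $\mina$ scales like length squared and volume like length cubed, rescalings at scale $r \to 0$ produce a sequence with $\mina \to \infty$ and bounded volume ratios; applying the preceding step to these rescalings yields a tangent space with no closed stable minimal surfaces of finite area, and a rigidity argument modeled on the Cheeger-Colding theory for Ricci-limit spaces should identify such a tangent as flat $\R^3$. To verify nonnegative generalized scalar curvature, I would then upgrade the intrinsic flat convergence, on subsets of full measure, to $W^{1, p}$ convergence of Riemannian metrics for some $p < 2$, as this paper accomplishes in the warped product case. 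With this regularity one can invoke a lower semicontinuity property of Lee-LeFloch distributional scalar curvature: the inequalities $\langle \Scal_j, \varphi \rangle \ge 0$ for nonnegative test functions $\varphi$ survive passage to the limit, giving $\Scal_\infty \ge 0$ in the distributional sense.

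The central obstacle is the first step. A scalar curvature lower bound by itself is far weaker than a Ricci lower bound and does not directly yield Cheeger-Gromov-type compactness. The $\mina$ hypothesis is designed precisely to neutralize the sewing examples of Basilio-Dodziuk-Sormani, but translating $\mina \ge A_0$ into quantitative geometric estimates on the $M_j$ requires essentially new analytical input, likely through stable minimal surface sweepouts or Stern-type harmonic function methods. The warped product case treated in this paper serves as a first concrete verification of the conjecture in a symmetric setting where the $\mina$ condition and the distributional scalar curvature inequality can be read off from the warping function, and where the $W^{1, p}$ regularity is established by direct analysis; this suggests both the form of the limit space and the level of regularity one should expect in the general three-dimensional setting.
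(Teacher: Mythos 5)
The statement you are addressing is Conjecture~\ref{Scalar-Compactness}, which is an open conjecture: the paper does not prove it, and offers no proof for you to be compared against. What the paper (together with \cite{Park-Tian-Wang-18}) does is verify special symmetric cases, and even in the $\Sph^2\times_f\Sph^1$ case treated here the conclusion actually established is $L^q$ convergence of the metrics to a $W^{1,p}$ limit ($p<2$) with nonnegative distributional scalar curvature [Theorems~\ref{Intro-thm-Lq-convergence} and~\ref{Intro-thm-distr-scalar}], not the full volume-preserving intrinsic flat convergence with rectifiable limit and Euclidean tangent cones asserted in the conjecture. So your proposal cannot be judged ``correct'': it is a research outline whose load-bearing steps are precisely the open problems.

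Concretely, each of your three steps rests on results that do not exist. First, the passage from $\mina(M_j)\ge A_0$ to a uniform filling-volume or noncollapsing lower bound for small balls via Schoen--Yau slicing is exactly the missing analytical input; no such quantitative estimate is known, and this paper itself has to work hard to extract noncollapsing from $\mina$ even with full warped product symmetry (via Min-Max sweepouts, the weak maximum principle, and the spherical mean inequality in Proposition~\ref{prop: spherical mean inequality}). Second, the blow-up step invokes ``a rigidity argument modeled on Cheeger--Colding,'' but that theory requires Ricci lower bounds; there is no analogue for scalar curvature bounds, and intrinsic flat limits need not have well-defined tangent cones almost everywhere without further structure, so the Euclidean-tangent-cone claim is unsupported. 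Third, the lower semicontinuity of Lee--LeFloch distributional scalar curvature under $W^{1,p}$ convergence with $p<2$ is not a general theorem: the Lee--LeFloch definition requires $L^\infty_{loc}\cap W^{1,2}_{loc}$ regularity with bounded inverse, which the limit metric here fails (see Remark~\ref{rmrk-LL-divergence} and \cite{STW-ex}); in this paper the distribution is only well defined because divergent terms cancel for the specific warped product ansatz, and the limit inequality is obtained from the explicit warped product formula (\ref{eqn-scalar}) plus the weak convergence (\ref{eqn-weak-convergence}), not from any abstract semicontinuity principle. Moreover, as the paper notes via the Kazaras--Xu example \cite{Kazaras-Xu}, even the choice of ``generalized nonnegative scalar curvature'' in the conjecture is delicate, so the final step of your outline presupposes a framework that has not yet been fixed in the nonsymmetric setting.
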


In a joint work with Jiewon Park \cite{Park-Tian-Wang-18}, the authors confirmed Conjecture \ref{Scalar-Compactness} for sequences of rotationally symmetric Riemannian manifolds $(M^3_j, g_j)$. In our proof the $\mina$ condition provides a uniform lower bound for the warping functions in the closed region between any two minimal surfaces. As a result, we can prevent counter examples like the sequence of round spheres shrinking to a point, and we can also prevent the formation of thin tunnels between two non-collapsed regions. The regularity of the limit metric is high, and the convergence of the sequence of warping functions is strong. In particular, in \cite{Park-Tian-Wang-18} we proved that the limit warping function is Lipschitz and that the sequence of warping functions converges to the limit function in the $W^{1,2}$ norm in closed regions away from the two poles.

In this paper, we study the $\Sph^2\times_f \Sph^1$ warped product case of the Conjecture \ref{Scalar-Compactness}. We consider the following:
\begin{defn}\label{Defn-Main}
{\rm
Let $\{(\Sph^2\times \Sph^1, g_j)\}_{j=1}^\infty$ be a sequence of Riemannian manifold such that
\be\label{eqn-circle-over-sphere}
g_{j}=g_{\Sph^2} +f_j^2 g_{\Sph^1}=dr^2+\sin(r)^2 d\theta^2+f_j^2 d\varphi^2, \text{ for }j=1,2, 3, ...
\ee
where $g_{\Sph^2}$ and $g_{\Sph^1}$ are the standard metrics on $\Sph^2$ and $\Sph^1$ respectively, and the function $f_j: \Sph^2\to (0,\infty)$ is smooth for each $j$. Here $r$ and $\theta$ are the geodesic polar coordinate for $\Sph^2$. We also use the notation $\Sph^2 \times_{f_{j}} \Sph^1$ to denote  $(\Sph^2\times \Sph^1, g_{j})$.
}
\end{defn}

We consider the convergence of the warping function and prove the sharp regularity of the limit warping function in the following theorem:
\begin{thm}\label{Intro-thm: f_infty positive}
Let $\{\Sph^2 \times_{f_j} \Sph^1\}_{j=1}^\infty$ be a sequence of warped product Riemannian manifolds such that each $\Sph^2 \times_{f_j} \Sph^1$ has non-negative scalar curvature. If we assume that
\be\label{eqn-Intro-thm-main-condition}
{\rm Vol} (\Sph^2 \times_{f_j} \Sph^1) \leq V \text{ and }\mina(\Sph^2 \times_{f_{j}} \Sph^1) \geq A >0, \ \  \forall j \in \N,
\ee
then we have the following:
\begin{enumerate}[{\rm (i)}]
   \item After passing to a subsequence if needed, the sequence of warping functions $\{f_j\}_{j=1}^\infty$ converges to some limit function $f_\infty$ in $L^{q}(\Sph^2)$ for all $q \in [1, \infty)$.
   \item The limit function $f_\infty$ is in $W^{1, p}(\Sph^2)$, for all $p$ such that $1 \leq p <2$.

   \item The essential infimum of $f_{\infty}$ is strictly positive, i.e. $\inf\limits_{\Sph^2} f_{\infty} >0$.
   \item If we allow $+\infty$ as a limit, then the limit
             \be
            \overline{f_\infty}(x) := \lim_{r \rightarrow 0} \fint_{B_{r}(x)} f_{\infty}
            \ee
         exists for every $x \in \Sph^2$. Moreover, $\overline{f_{\infty}}$ is lower semi-continuous and strictly positive everywhere on $\Sph^2$, and $\overline{f_\infty} = f_\infty $ a.e. on $\Sph^2$.
         \end{enumerate}
\end{thm}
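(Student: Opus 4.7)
The plan is to derive uniform $W^{1,p}$ bounds on $\{f_j\}$ for every $p<2$, extract a subsequence converging strongly in every $L^q$ and weakly in $W^{1,p}$, and then identify the precise representative of the limit via a distributional supersolution property. The single nontrivial input is a uniform positive lower bound $f_j\geq c_0(A,V)>0$ coming from the $\mina$ hypothesis.

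A direct computation of the scalar curvature of the warped metric $g_{\Sph^2}+f^2 g_{\Sph^1}$ gives
\[
  \Scal(g_j)\;=\;2-2\,\frac{\Delta_{\Sph^2} f_j}{f_j},
\]
so $\Scal(g_j)\geq 0$ is equivalent to $\Delta_{\Sph^2} f_j \leq f_j$; equivalently, $f_j>0$ is a supersolution of the coercive operator $-\Delta+1$ on $\Sph^2$. Rewriting as $\Delta\log f_j \leq 1-|\nabla\log f_j|^2$ and integrating gives
\[
  \int_{\Sph^2}|\nabla\log f_j|^2\,dA_{\Sph^2}\;\leq\;4\pi,
\]
so $\nabla\log f_j$ is uniformly bounded in $L^2$. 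The volume bound translates to $\int_{\Sph^2} f_j \leq V/(2\pi)$, and Jensen applied to the concave function $\log$ yields the upper mean bound $\fint_{\Sph^2}\log f_j\leq \log(V/(8\pi^2))$.

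The main obstacle is the matching uniform \emph{lower} bound $f_j\geq c_0(A,V)>0$ from $\mina$. The heuristic is that a small value of $f_j$ somewhere creates a ``neck'' in $\Sph^2\times_{f_j}\Sph^1$ that can be exploited by a competitor torus $\gamma\times\Sph^1$, where $\gamma$ is a suitable closed curve in $\Sph^2$ enclosing the small region: the area $2\pi\int_\gamma f_j\,ds$ of such a torus becomes arbitrarily small as $f_j$ degenerates. Minimizing area in the corresponding isotopy class inside the warped manifold should then produce a closed stable minimal surface of vanishing area, contradicting $\mina\geq A$ once $f_j$ is small enough. Rigorizing this argument---in particular arranging that the minimizer is a bona fide closed minimal $2$-torus rather than a degeneration onto a slice $\Sph^2\times\{\varphi\}$ (area $4\pi$) or a higher-multiplicity cover---is the main technical work, and is where the $\mina$ hypothesis is genuinely used.

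Once $f_j\geq c_0$ is in hand, $\log f_j\geq \log c_0$ pointwise, so with the upper mean bound and the $L^2$ gradient bound, Poincar\'e shows $\{\log f_j\}$ is uniformly bounded in $W^{1,2}(\Sph^2)$. The Trudinger--Moser inequality in two dimensions then bounds $f_j=e^{\log f_j}$ uniformly in $L^q(\Sph^2)$ for every $q\in[1,\infty)$, and since $|\nabla f_j|=f_j|\nabla\log f_j|$, H\"older with exponents $2/p$ and $2/(2-p)$ yields
\[
  \int_{\Sph^2}|\nabla f_j|^p\;\leq\;\|f_j\|_{L^{2p/(2-p)}}^{\,p}(4\pi)^{p/2},
\]
a uniform $W^{1,p}$ bound for every $p<2$. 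Reflexivity together with Rellich--Kondrachov then extracts a subsequence with $f_j\rightharpoonup f_\infty$ in $W^{1,p}$ and $f_j\to f_\infty$ in $L^q$ for every $q<\infty$, proving (i) and (ii); passing to an a.e.\ convergent sub-subsequence gives $f_\infty\geq c_0$ a.e., which is (iii). For (iv), the inequality $\Delta f_j\leq f_j$ passes to the distributional limit, so $f_\infty\geq c_0$ is a nonnegative distributional supersolution of $-\Delta u+u=0$ on $\Sph^2$; by the classical regularity theory for such supersolutions, the precise representative $\overline{f_\infty}(x):=\lim_{r\to 0}\fint_{B_r(x)} f_\infty\in[c_0,+\infty]$ is well-defined at every $x\in\Sph^2$, coincides with $f_\infty$ almost everywhere, is lower semicontinuous, and satisfies $\overline{f_\infty}\geq c_0>0$ everywhere.
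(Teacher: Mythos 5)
The central gap is that your entire argument hinges on a uniform pointwise lower bound $f_j \geq c_0(A,V) > 0$, which you do not actually establish. You describe only a heuristic (``small $f_j$ creates a neck, find a competitor torus $\gamma\times\Sph^1$ of small area, minimize in an isotopy class'') and explicitly defer the rigorization. But this is not a loose end to be tidied up later: it is the entire difficulty of the theorem, and the heuristic itself is problematic. If $f_j$ is small only near an isolated point $p$, there is no a priori reason a closed curve $\gamma$ enclosing $p$ would have $\int_\gamma f_j\,ds$ small, and an area-minimizing sequence in the isotopy class of $\gamma\times\Sph^1$ can degenerate onto a slice $\{r\}\times\Sph^2$ (area $4\pi\sin^2 r$, not small) rather than settle on a small torus. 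The paper does not attempt this argument. Instead, it proves (iii) first---without any uniform lower bound on the $f_j$---by a three-step mechanism: a truncation device $\bar f^K_j$ to upgrade the limit to $W^{1,2}$ regularity so that the strong maximum principle (Gilbarg--Trudinger Thm.\ 8.19) applies and yields the dichotomy ``$\operatorname*{ess\,inf} f_\infty>0$ or $f_\infty\equiv 0$''; then Min-Max theory of Marques--Neves applied to the $1$-sweepout $r\mapsto\partial B_r(x)\times\Sph^1$ to get, \emph{for every center} $x\in\Sph^2$, a torus $\partial B_{r_x}(x)\times\Sph^1$ of area $\geq\mina$; and finally a Vitali covering argument combined with a spherical mean inequality (Proposition \ref{prop: spherical mean inequality}) to show that $\|f\|_{L^1}\geq A/(100\pi)$ whenever $\|f\|_{L^2}$ is small, which rules out $f_\infty\equiv 0$. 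The uniform lower bound $f_j\geq e_\infty/4$ for large $j$ is then a \emph{corollary} (Proposition \ref{Intro-prop: warping function uniform lower bound}), logically downstream of (iii), not an input to it; taking it as the starting point is circular unless you supply a genuinely independent proof.

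Two secondary remarks. First, the lower bound is in fact unnecessary for (i) and (ii): the paper's Lemmas \ref{lem-l2-gradient-ln}--\ref{lem-Sobolev-norm-bound} get the uniform $L^q$ and $W^{1,p}$ bounds directly from the Onofri/Moser--Trudinger inequality applied to $p\log f_j$, using only the $L^2$ bound on $\nabla\log f_j$ and the $L^1$ bound on $f_j$---no positivity of $\inf f_j$ required, and no Poincar\'e for $\log f_j$. Inserting an unproved lower bound to streamline a step that does not need it weakens the structure of the argument. Second, your route to (iv) via classical potential-theoretic regularity of distributional supersolutions of $-\Delta+1$ is plausible in spirit, but $f_\infty$ only lies in $W^{1,p}$ for $p<2$, which is outside the standard $W^{1,2}_{\mathrm{loc}}$ setting for weak supersolutions; the paper sidesteps this by proving a quantitative ball average monotonicity $r\mapsto\fint_{B_r(x)}(f_\infty-Cd(\cdot,x))$ (Propositions \ref{prop: ball average non-increasing} and \ref{prop: average limit exists}), which both gives the everywhere-existence of the Lebesgue limit and the lower semicontinuity without appealing to higher regularity.
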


The definition of essential infimum is given in Definition \ref{Defn-Ess-Inf}. In the proof of convergence properties in items (i) and (ii) in Theorem \ref{Intro-thm: f_infty positive}, we only need nonnegative scalar curvature condition and volume uniform upper bound condition. In the proof of part (iii) of Theorem \ref{Intro-thm: f_infty positive}, we make essential use of $\mina$ condition combined with the spherical mean inequality [Proposition \ref{prop: spherical mean inequality}], Min-Max minimal surface theory and a covering argument. This is an interesting new way of applying the $\mina$ condition to prevent collapsing. Then the part (iv) follows from (iii) and an interesting ball average monotonicity property [Proposition \ref{prop: ball average non-increasing}]. The ball average monotonicity is obtained from spherical mean inequality by using the trick as in the proof of Bishop-Gromov volume comparison theorem.

\begin{rmrk}\label{Intro-rmrk-best-regularity}
{\rm
The extreme example constructed by Sormani and authors in \cite{STW-ex} shows that the $W^{1, p}$ regularity for $1 \leq p <2$ is sharp for the limit warping function $f_\infty$.
}
\end{rmrk}

By applying Theorem \ref{Intro-thm: f_infty positive} and the spherical mean inequality [Proposition \ref{prop: spherical mean inequality}], we obtain:. 
\begin{prop}\label{Intro-prop: warping function uniform lower bound}
Let $\{\Sph^2 \times_{f_j} \Sph^1\}_{j=1}^\infty$ be a sequence of warped product manifolds such that each $\Sph^2 \times_{f_j} \Sph^1$ has non-negative scalar curvature, and the sequence satisfies conditions in $(\ref{eqn-Intro-thm-main-condition})$.
Then there exists $j_0 \in \N$ such that $f_j(x)\geq \frac{e_\infty}{4}>0$, for all $j \geq j_0$ and $x \in \Sph^2$, where $e_\infty = \inf_{\Sph^2}f_\infty >0$ obtained in Theorem \ref{Intro-thm: f_infty positive}.
\end{prop}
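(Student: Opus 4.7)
The strategy is to combine the spherical mean inequality (or equivalently the ball average monotonicity) applied to each smooth warping function $f_j$ with the $L^1$ convergence $f_j \to f_\infty$ and the essential infimum bound $e_\infty > 0$ supplied by Theorem \ref{Intro-thm: f_infty positive}.

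Fix once and for all a radius $r_0 \in (0, \pi)$. By the ball average monotonicity property [Proposition \ref{prop: ball average non-increasing}] applied to the smooth function $f_j$, the quantity $r \mapsto \fint_{B_{r}(x)} f_j \, dA_{\Sph^2}$ is non-increasing in $r$; letting $r \to 0$ and using continuity of $f_j$ yields, for every $j$ and every $x \in \Sph^2$,
\begin{equation*}
f_j(x) \;\geq\; \fint_{B_{r_0}(x)} f_j \, dA_{\Sph^2}.
\end{equation*}
On the other hand, since $e_\infty = \inf_{\Sph^2} f_\infty$ is the essential infimum by Theorem \ref{Intro-thm: f_infty positive}(iii), we have $f_\infty \geq e_\infty$ almost everywhere, whence
\begin{equation*}
\fint_{B_{r_0}(x)} f_\infty \, dA_{\Sph^2} \;\geq\; e_\infty \qquad \text{for every } x \in \Sph^2.
\end{equation*}
Since by Theorem \ref{Intro-thm: f_infty positive}(i) we have $f_j \to f_\infty$ in $L^1(\Sph^2)$, the elementary estimate
\begin{equation*}
\sup_{x \in \Sph^2} \left| \fint_{B_{r_0}(x)} (f_j - f_\infty) \, dA \right| \;\leq\; \frac{\|f_j - f_\infty\|_{L^1(\Sph^2)}}{|B_{r_0}|_{g_{\Sph^2}}}
\end{equation*}
lets us choose $j_0$ so large that the right-hand side is at most $\tfrac{3}{4} e_\infty$ for all $j \geq j_0$; chaining the three displayed bounds then gives $f_j(x) \geq e_\infty - \tfrac{3}{4} e_\infty = e_\infty/4$ uniformly in $x \in \Sph^2$ for every $j \geq j_0$.

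The proof above is short precisely because the heavy lifting has already been done: the spherical mean inequality converts the pointwise value $f_j(x)$ into an $L^1$-quantity (a ball average), at which point the $L^1$ convergence of $f_j$ to $f_\infty$ and the essential infimum bound finish the job. There is no real obstacle --- the only point to watch is that $r_0$ must be chosen uniformly in both $x$ and $j$, but this is automatic since ball average monotonicity holds at every point for every smooth $f_j$, and homogeneity of $\Sph^2$ makes the volume $|B_{r_0}|_{g_{\Sph^2}}$ independent of $x$.
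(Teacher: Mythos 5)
Your overall strategy --- bound $f_j(x)$ from below by a ball average of $f_j$, compare to the ball average of $f_\infty$, and invoke $L^1$ convergence together with $\inf f_\infty = e_\infty > 0$ --- is exactly the paper's strategy. However, your first displayed inequality is not correct as stated, and the error is substantive rather than cosmetic.

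Proposition \ref{prop: ball average non-increasing} does \emph{not} assert that $r \mapsto \fint_{B_r(x)} f_j$ is non-increasing. It asserts that $r \mapsto \fint_{B_r(x)} \left( f_j(y) - C\,d(y,x) \right) d\mathrm{vol}(y)$ is non-increasing, where $C$ is a constant chosen so that $\|f_j\|_{L^2(\Sph^2)} \le C\sqrt{2\pi}$ (uniformly in $j$ by Lemma \ref{Lem-Moser-Trudinger}). The correction term is essential: the hypothesis $\Delta f_j \le f_j$ with $f_j > 0$ does not make $f_j$ superharmonic, so the uncorrected spherical/ball mean of $f_j$ is genuinely allowed to \emph{increase} with $r$. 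Letting $r \to 0$ in the correct monotonicity gives
\begin{equation*}
f_j(x) \;\geq\; \fint_{B_{r_0}(x)} f_j \,d\mathrm{vol} \;-\; C \fint_{B_{r_0}(x)} d(y,x)\,d\mathrm{vol}(y),
\end{equation*}
and the second term is of size $\approx C r_0$, which does not disappear. (Also, the monotonicity only holds for $r \le \pi/2$, so $r_0$ must be chosen in $(0,\pi/2]$, not $(0,\pi)$.)

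As written, your proof allocates the entire margin $\tfrac{3}{4}e_\infty$ to the $L^1$-convergence error and leaves nothing for this correction term, so the final chaining fails. The fix is exactly what the paper does: first choose $r_0$ (their $r_1$) small enough, using the uniform $L^2$ bound, so that the correction is at most $e_\infty/2$; then choose $j_0$ large enough so that the $L^1$ error contributes at most $e_\infty/4$; conclude $f_j(x) \geq e_\infty - e_\infty/2 - e_\infty/4 = e_\infty/4$. The paper phrases this through the spherical mean inequality [Proposition \ref{prop: spherical mean inequality}] rather than ball average monotonicity, which is a minor difference in packaging; the real point you are missing is the presence and control of the $C\,d(y,x)$ correction.
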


As an application of Proposition \ref{Intro-prop: warping function uniform lower bound}, we have:
\begin{cor}\label{Intro-cor: systole lower bound}
Let $\{\Sph^2 \times_{f_j} \Sph^1\}_{j=1}^\infty$ be a sequence of warped product manifolds such that each $\Sph^2 \times_{f_j} \Sph^1$ has non-negative scalar curvature, and the sequence satisfies conditions in $(\ref{eqn-Intro-thm-main-condition})$.
Then the systoles of $\Sph^2 \times_{f_j} \Sph^1$, for all $j\in \N$, have a uniform positive lower bound given by $\min\left\{2\pi, \frac{e_\infty}{2}\pi\right\}$, where $e_\infty := \inf\limits_{\Sph^2} f_\infty > 0$ obtained in Theorem \ref{Intro-thm: f_infty positive}.
\end{cor}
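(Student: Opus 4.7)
The plan is to first establish the clean identity that the systole of any warped product $\Sph^2 \times_f \Sph^1$ with $f : \Sph^2 \to (0, \infty)$ smooth is precisely $2\pi \min_{\Sph^2} f$, and then to apply the uniform pointwise lower bound $f_j \geq e_\infty/4$ supplied by \propref{Intro-prop: warping function uniform lower bound} to finish.

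For the systole computation, I note that $\pi_1(\Sph^2 \times \Sph^1) = \Z$ is generated by the $\Sph^1$-fiber. Any non-contractible loop $\gamma$ lifts in the Riemannian universal cover $(\Sph^2 \times \R,\; g_{\Sph^2} + f^2\, d\tilde\varphi^2)$ to a path from $(p_0, 0)$ to $(p_0, 2\pi k)$ for some integer $k \neq 0$. Parametrizing the lift as $(p(t), \tilde\varphi(t))$ for $t \in [0,1]$, I would compute
\[
L(\gamma) = \int_0^1 \sqrt{|\dot p|_{g_{\Sph^2}}^2 + f^2(p)\,\dot{\tilde\varphi}^2}\,dt \ \geq\ \int_0^1 f(p)\,|\dot{\tilde\varphi}|\,dt \ \geq\ \Big(\min_{\Sph^2}f\Big)\!\int_0^1 |\dot{\tilde\varphi}|\,dt \ \geq\ 2\pi|k|\min_{\Sph^2} f,
\]
where the last step uses $\bigl|\int \dot{\tilde\varphi}\,dt\bigr| = 2\pi|k|$ together with $\bigl|\int \dot{\tilde\varphi}\,dt\bigr| \leq \int |\dot{\tilde\varphi}|\,dt$. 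Conversely, since $\Sph^2$ is compact and $f$ is smooth, $f$ attains its minimum at some $p_{\min}$, and the fiber $\{p_{\min}\} \times \Sph^1$ realizes the bound with length exactly $2\pi \min f$. Hence the systole equals $2\pi \min_{\Sph^2} f$.

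Applying \propref{Intro-prop: warping function uniform lower bound}, there exists $j_0 \in \N$ such that $f_j(x) \geq e_\infty/4$ for all $j \geq j_0$ and all $x \in \Sph^2$, so for $j \geq j_0$
\[
\text{systole}\bigl(\Sph^2 \times_{f_j} \Sph^1\bigr) = 2\pi \min_{\Sph^2} f_j \ \geq\ 2\pi \cdot \frac{e_\infty}{4} \ = \ \frac{\pi e_\infty}{2} \ \geq\ \min\!\Big\{2\pi,\; \tfrac{\pi e_\infty}{2}\Big\}.
\]
For the finitely many indices $j < j_0$, each smooth positive $f_j$ on compact $\Sph^2$ has an individually positive minimum, so the corresponding systoles are positive and may be absorbed into the uniform bound; equivalently, one may re-index the subsequence so that $j_0 = 1$.

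No serious obstacle appears, since the analytic heart of the argument sits in \propref{Intro-prop: warping function uniform lower bound}. The one point requiring care is the winding-number bookkeeping: I must verify that, whether or not $\tilde\varphi$ is monotone along the lift, the total variation $\int |\dot{\tilde\varphi}|\,dt$ is at least $2\pi|k| \geq 2\pi$, and that the equality case $\dot p \equiv 0$ singles out the $\Sph^1$-fiber over a minimum of $f_j$ as the systole-realizer. Once this is spelled out, the corollary follows directly.
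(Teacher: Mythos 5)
Your proof has a genuine gap tied to the paper's definition of systole. Definition~\ref{defn: systole} takes the systole to be the length of the shortest closed geodesic, \emph{including contractible ones}, and the paper remarks explicitly that this is intentional: contractible short geodesics (a dumbbell $\Sph^3$ is the example given) are geometrically relevant to the compactness problem even though the $\pi_1$-systole ignores them. Your covering-space argument lifts a non-contractible loop and estimates its length via winding around the $\Sph^1$ fiber; this controls only the $\pi_1$-systole and says nothing about contractible closed geodesics. But in $\Sph^2 \times_f \Sph^1$, every great circle $C \subset \Sph^2 \times \{\varphi_0\}$ is a contractible closed geodesic of length $2\pi$ (the base is totally geodesic in a warped product). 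Your stated identity ``systole $= 2\pi \min_{\Sph^2} f$'' is therefore false whenever $\min f > 1$: in that regime the shortest closed geodesic is a great circle of length $2\pi$, not a fiber of length $2\pi\min f$. The very form of the corollary's bound $\min\{2\pi, \tfrac{e_\infty}{2}\pi\}$ reflects this dichotomy---the $2\pi$ comes from great circles and the $\tfrac{e_\infty}{2}\pi$ from fibers---and your argument produces only the second term.

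The paper closes this gap differently: Lemma~\ref{lem: geodesic dichotomy} proves that any closed geodesic in $N \times_f \Sph^1$ either wraps the fiber (then has length $\geq 2\pi \min f$, as you show) or is entirely contained in a base slice $N \times \{\varphi_0\}$ (then has length $\geq \operatorname{sys}(N)$, which is $2\pi$ for $N = \Sph^2$). Combining the two cases via Lemma~\ref{lem: systole lower bound} gives exactly $\min\{\operatorname{sys}(N), 2\pi \min f\}$, whence the bound. You could salvage your approach by adding the base-slice case and invoking $\operatorname{sys}(\Sph^2) = 2\pi$, but as written it does not address the contractible geodesics that the paper's systole is defined to include. (Your handling of the indices $j < j_0$ is also loose---it yields only some positive bound, not the specific constant claimed---but the paper's own proof has a similar imprecision there, so the main issue is the missing contractible case.)
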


The systole of a Riemannian manifold is defined to be the length of the shortest closed geodesic in the manifold [Definition \ref{defn: systole}]. In order to estimate systole of warped product manifolds: $\Sph^2 \times_{f_j} \Sph^1$, in Lemma \ref{lem: geodesic dichotomy} we establish an interesting dichotomy property for closed geodesics in a general warped product manifold $N \times_f \Sph^1$ with $\Sph^1$ as a typical fiber, with metric tensor as $g = g_N + f^2 g_{\Sph^1}$, where $(N, g_N)$ is a $n$-dimensional complete Riemannian manifold without boundary and $f$ is a positive smooth function on $N$. The dichotomy property in Lemma \ref{lem: geodesic dichotomy} has its own interests independently, and shall be useful in other studies of closed geodesics in such warped product manifolds. 

The convergence of the warping functions in Theorem \ref{Intro-thm: f_infty positive} leads to the convergence of the Riemannian metrics, we prove the following:

\begin{thm} \label{Intro-thm-Lq-convergence}
Let $\{\Sph^2 \times_{f_j} \Sph^1\}_{j=1}^\infty$ be a sequence of warped product Riemannian manifolds such that each $\Sph^2 \times_{f_j} \Sph^1$ has non-negative scalar curvature. If we assume that
\be
{\rm Vol} (\Sph^2 \times_{f_j} \Sph^1) \leq V \text{ and }\mina(\Sph^2 \times_{f_{j}} \Sph^1) \geq A >0, \ \  \forall j \in \N,
\ee
Then there exists a subsequence $g_{j_k}$ and a (weak) warped product Riemannian metric $g_\infty \in W^{1, p}(\Sph^2 \times \Sph^1, g_0)$ for $p \in [1, 2)$ such that
\be
 g_{j_k} \rightarrow g_\infty \ \ \text{ in} \ \  L^{q}(\Sph^2\times \Sph^1, g_0), \ \ \forall q \in [1, \infty).
\ee
\end{thm}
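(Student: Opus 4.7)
The plan is to deduce this theorem as a direct consequence of Theorem~\ref{Intro-thm: f_infty positive}. First, I would extract a subsequence, still denoted $\{f_{j_k}\}$, such that $f_{j_k} \to f_\infty$ in $L^q(\Sph^2)$ for every $q \in [1,\infty)$ and $f_\infty \in W^{1,p}(\Sph^2)$ for every $p \in [1,2)$, and then define the candidate limit metric
\begin{equation*}
g_\infty := g_{\Sph^2} + f_\infty^2\, g_{\Sph^1},
\end{equation*}
viewed as a symmetric $2$-tensor on $\Sph^2 \times \Sph^1$ via the two projections. The natural background metric is $g_0 = g_{\Sph^2} + g_{\Sph^1}$, and since $f_\infty$ is pulled back from $\Sph^2$, the entire analysis reduces to scalar questions about $f_\infty$ on $\Sph^2$.

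The next step is to promote $L^q$ convergence of $\{f_{j_k}\}$ to $L^q$ convergence of the metric tensors. The key identity is
\begin{equation*}
g_{j_k} - g_\infty = (f_{j_k} - f_\infty)(f_{j_k} + f_\infty)\, g_{\Sph^1},
\end{equation*}
to which I would apply H\"older's inequality to obtain
\begin{equation*}
\|f_{j_k}^2 - f_\infty^2\|_{L^q(\Sph^2)} \leq \|f_{j_k} - f_\infty\|_{L^{2q}(\Sph^2)} \|f_{j_k} + f_\infty\|_{L^{2q}(\Sph^2)}.
\end{equation*}
The first factor tends to zero by Theorem~\ref{Intro-thm: f_infty positive}(i) applied at exponent $2q$, and the second factor is uniformly bounded in $k$ since every $L^{2q}$-convergent sequence is $L^{2q}$-bounded. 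Because $g_{\Sph^1}$ has constant norm with respect to $g_0$ on the compact manifold $\Sph^2\times\Sph^1$, this should give $\|g_{j_k} - g_\infty\|_{L^q(\Sph^2 \times \Sph^1, g_0)} \to 0$ for every $q \in [1,\infty)$.

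The last and most delicate step is to verify $g_\infty \in W^{1,p}(\Sph^2 \times \Sph^1, g_0)$, which reduces to showing $f_\infty^2 \in W^{1,p}(\Sph^2)$. \emph{The main obstacle} is that $f_\infty$ is not known to lie in $L^\infty$---the extreme example in \cite{STW-ex} referenced in Remark~\ref{Intro-rmrk-best-regularity} already suggests unboundedness---so the naive chain rule for squaring cannot be invoked directly. To circumvent this, I would first invoke the two-dimensional Sobolev embedding $W^{1,p}(\Sph^2) \hookrightarrow L^{2p/(2-p)}(\Sph^2)$ to conclude $f_\infty \in L^s(\Sph^2)$ for every $s < \infty$, and then justify the chain rule by a truncation argument (applying the Lipschitz chain rule to $\min(f_\infty, N)$ and letting $N \to \infty$) to obtain the weak gradient $\nabla(f_\infty^2) = 2 f_\infty \nabla f_\infty$. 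A final application of H\"older with exponents $1/p = 1/a + 1/b$, where $p < b < 2$ and $a = pb/(b-p) < \infty$, would yield
\begin{equation*}
\|\nabla(f_\infty^2)\|_{L^p(\Sph^2)} \leq 2 \|f_\infty\|_{L^a(\Sph^2)} \|\nabla f_\infty\|_{L^b(\Sph^2)} < \infty,
\end{equation*}
completing the verification and hence the proof.
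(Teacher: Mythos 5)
Your proposal matches the paper's proof (Theorem~\ref{thm-Lq-convergence} together with Proposition~\ref{prop-regularity-limit-metric}) step for step: extract an $L^q$-convergent subsequence of warping functions via Proposition~\ref{prop-W1p-limit}, define $g_\infty = g_{\Sph^2}+f_\infty^2\,g_{\Sph^1}$, establish $L^q$ convergence from the factorization $f_{j_k}^2-f_\infty^2=(f_{j_k}-f_\infty)(f_{j_k}+f_\infty)$ and H\"older, and obtain $W^{1,p}$ regularity of $g_\infty$ from $\overline{\nabla}g_\infty = 2f_\infty\nabla f_\infty\otimes g_{\Sph^1}$ together with the two-dimensional Sobolev embedding and a H\"older split with exponents $b<2$ and $a<\infty$. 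Your explicit truncation argument justifying the weak chain rule $\nabla(f_\infty^2)=2f_\infty\nabla f_\infty$ is a welcome detail that the paper leaves implicit.
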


Theorem \ref{Intro-thm-Lq-convergence} is proved in \S\ref{subsect-W1p-limit-metric}.
The definition of a (weak) warped product Riemannian metric is given in Definition \ref{defn-limit-metric}, and the spaces $ L^{q}(\Sph^2\times \Sph^1, g_0)$ and $ W^{1, p}(\Sph^2 \times \Sph^1, g_0)$ are defined in Definition \ref{defn-W1p}. The $\mina$ condition is used to prevent $g_{j_k}$ converging to a non-metric tensor in $W^{1, p}(\Sph^2 \times \Sph^1, g_0)$, with the help of the non-collapsing property of $f_\infty$ in the item (iii) in Theorem \ref{Intro-thm: f_infty positive}.

In the limit space we calculate the scalar curvature as a distribution using the definition by Lee and LeFloch \cite{Lee-LeFloch}, and we prove the following:
\begin{thm}\label{Intro-thm-distr-scalar}
The limit metric $g_\infty$ obtained in Theorem \ref{Intro-thm-Lq-convergence} has nonnegative distributional scalar curvature on $\Sph^2\times \Sph^1$  in the sense of Lee-LeFloch. \cite{Lee-LeFloch}. Moreover, the total scalar curvatures of $g_j$ converge to the distributional total scalar curvature of $g_\infty$.
\end{thm}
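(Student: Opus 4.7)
The plan is to use the explicit warped-product formula for scalar curvature, rewrite it via integration by parts as a distribution on $\Sph^2\times\Sph^1$, and pass to the distributional limit along the subsequence $\{g_{j_k}\}$ produced by Theorem~\ref{Intro-thm-Lq-convergence}. For the smooth metric $g_j = g_{\Sph^2} + f_j^2 g_{\Sph^1}$, the general warped-product formula, specialized to a one-dimensional fiber, gives
\[
\Scal_{g_j} = 2 - \frac{2\Delta_{\Sph^2} f_j}{f_j}, \qquad \Scal_{g_j}\, d\mu_{g_j} = \bigl(2 f_j - 2\Delta_{\Sph^2} f_j\bigr)\, d\mu_{\Sph^2} \wedge d\varphi.
\]
Crucially, no $|\nabla f|^2$ term appears — a special feature of $\dim \Sph^1 = 1$ — which is what will allow the formalism to survive the low regularity of $g_\infty$. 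For any $u \in C^\infty(\Sph^2\times\Sph^1)$, integration by parts on the $\Sph^2$ factor yields
\[
\int \Scal_{g_j}\, u\, d\mu_{g_j} = 2\int f_j\, u\, d\mu_{\Sph^2}\, d\varphi + 2\int \langle \nabla_{\Sph^2} f_j,\, \nabla_{\Sph^2} u\rangle\, d\mu_{\Sph^2}\, d\varphi.
\]
Since the right-hand side is meaningful for any $f \in W^{1,1}(\Sph^2)$, this motivates defining
\[
\langle \Scal_{g_\infty}\, d\mu_{g_\infty},\, u\rangle := 2\int f_\infty\, u\, d\mu_{\Sph^2}\, d\varphi + 2\int \langle \nabla_{\Sph^2} f_\infty,\, \nabla_{\Sph^2} u\rangle\, d\mu_{\Sph^2}\, d\varphi.
\]

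I would then verify that this distribution coincides with the Lee--LeFloch distributional scalar curvature by unwinding their integration-by-parts formula with smooth background metric $g_0 := g_{\Sph^2} + g_{\Sph^1}$ and using the warped-product ansatz to check that all Christoffel-difference terms collapse to the first-order expression above. The limit passage is then routine: Theorem~\ref{Intro-thm: f_infty positive}(i) provides $f_{j_k} \to f_\infty$ strongly in every $L^q$, which handles the $f_j u$ term; and the uniform $W^{1,p}$ bound on $\{f_j\}$ implicit in the proof of Theorem~\ref{Intro-thm: f_infty positive}(ii) produces, after passing to a further subsequence, weak convergence $\nabla_{\Sph^2} f_{j_k} \rightharpoonup \nabla_{\Sph^2} f_\infty$ in $L^p$ for every $p<2$, so pairing against the smooth (hence $L^{p'}$) vector field $\nabla_{\Sph^2} u$ gives convergence of the second integral. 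Together these yield $\int \Scal_{g_{j_k}}\, u\, d\mu_{g_{j_k}} \to \langle \Scal_{g_\infty}\, d\mu_{g_\infty},\, u\rangle$ for every $u \in C^\infty(\Sph^2\times\Sph^1)$.

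Taking $u \geq 0$, the left-hand side is nonnegative at each stage (since $\Scal_{g_{j_k}} \geq 0$ pointwise), so the limit is nonnegative, giving the distributional nonnegativity assertion. Taking $u \equiv 1$ (legitimate since $\Sph^2 \times \Sph^1$ is closed) and using $\int_{\Sph^2} \Delta_{\Sph^2} f_j = 0$, one obtains
\[
\int \Scal_{g_{j_k}}\, d\mu_{g_{j_k}} = 4\pi \int_{\Sph^2} f_{j_k}\, d\mu_{\Sph^2} \longrightarrow 4\pi \int_{\Sph^2} f_\infty\, d\mu_{\Sph^2} = \langle \Scal_{g_\infty}\, d\mu_{g_\infty},\, 1\rangle,
\]
proving the stated convergence of total scalar curvatures. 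The main obstacle I anticipate is the identification step with the Lee--LeFloch framework: since $g_\infty$ lies only in $W^{1,p}$ for $p<2$, below the $W^{1,2}_{\mathrm{loc}}$ baseline under which Lee--LeFloch's definition is customarily formulated, one must carefully check that when specialized to this warped-product setting their formula really does collapse to the first-order expression above, with no latent quadratic-in-$\nabla f_\infty$ remainder that would fail to be integrable. This structural collapse, enabled by $\dim \Sph^1 = 1$, is the technical crux of the argument; everything else is soft functional analysis.
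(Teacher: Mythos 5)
Your proposal follows essentially the same route as the paper: express $\Scal_{g_j}\,d\mu_{g_j}$ via the warped-product formula, integrate by parts on $\Sph^2$, recognize the resulting first-order functional as well-defined for $f_\infty\in W^{1,p}$ with $p<2$, and pass to the limit using the $L^q$ convergence and weak gradient convergence from Proposition \ref{prop-W1p-limit}; setting $u\geq 0$ gives nonnegativity and $u\equiv 1$ gives convergence of total scalar curvatures. The one place to sharpen your intuition is the Lee--LeFloch identification you defer: it is not that the quadratic term $|\nabla f_\infty|^2/f_\infty$ is absent from their formula when specialized to this setting, but rather that it appears with \emph{opposite signs} in the two constituent integrals $-\int V\cdot\overline\nabla(u\,d\mu_{g_\infty}/d\mu_0)\,d\mu_0$ and $\int F\,u\,(d\mu_{g_\infty}/d\mu_0)\,d\mu_0$ (Lemmas \ref{lem-V}, \ref{lem-F}, \ref{lem-extreme-Lee-LeFloch-divergence}), so each piece may individually diverge and the cancellation must be performed at the level of integrands before integrating, which is exactly the content of Remark \ref{rmrk-LL-divergence}. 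With that caveat in place, the rest of your argument is the paper's proof.
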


Theorem \ref{Intro-thm-distr-scalar} is proved in \S\ref{subsect-nonnegative-distr-scalar}. In general, it is still an interesting and difficult problem to formulate suitable notions of generalized (or weak) nonnegative scalar curvature in Conjecture \ref{Scalar-Compactness}. A natural candidate is the volume-limit notion of nonnegative scalar curvature. But recently Kazara and Xu constructed a sequence of warped product metrics on $\Sph^2 \times \Sph^1$ whose limit space does not have nonnegative scalar curvature in the sense of volume-limit in Theorem 1.3 in \cite{Kazaras-Xu}. There are other candidates, like Gromov's polyhedron comparison notion \cite{Gromov-Dirac, Li-polyhedral} and Burkhardt-Guim's Ricci flow notion \cite{Burkhardt-Guim-GAFA} of nonnegative scalar curvature for $C^0$-metrics.  However, as mentioned in Remark \ref{Intro-rmrk-best-regularity}, the $W^{1, p}$ regularity, for $1 \leq p < 2$, is the best regularity for our limit metrics, and in general our limit metrics are not continuous. Lee and Lefloch \cite{Lee-LeFloch} defined the scalar curvature distribution for $W^{1, 2}_{loc}$-metrics. Our limit metric $g_\infty$ obtained in Theorem \ref{Intro-thm-Lq-convergence} does not satisfy the regularity requirement in \cite{Lee-LeFloch}, but when we add up different terms in the integrand, the divergent terms cancel with each other and the scalar curvature is still well defined as a distribution. This is discussed in detail in Remark \ref{rmrk-LL-divergence}. Interestingly, we obtain the continuity of distributional total scalar curvature in Theorem \ref{Intro-thm-distr-scalar}. More importantly, the scalar curvature distribution of Lee-LeFloch enables us to see the concentration of scalar curvature on the singular set, see \S4.4 in \cite{STW-ex}.


In Appendix \ref{appendix}, we study pre-compactness of the sequence of warped product spheres over circle $(M^3_j, g_j)$, that is, $M^3_j$ are diffeomorphic to $\Sph^1 \times \Sph^2$ with warped product metric tensors
\be\label{eqn-sphere-over-circle}
g_j = g_{\Sph^1} + h_j^2 g_{\Sph^2}, \ \ \textrm{where} \ \ h_j: \Sph^1 \rightarrow (0, \infty).
\ee
The study of this case is similar to the rotationally symmetric case studied in \cite{Park-Tian-Wang-18}. The key is to obtain a uniform bound for the norm of gradient of $h_j$ from nonnegative scalar curvature condition [Lemma \ref{lem-scal}]. By combining this with uniform diameter upper bound and the $\mina$ condition, we prove that a subsequence of $\{h_j\}^\infty_{j=1}$ converges in $C^0$ and $W^{1, 2}$ sense to a bounded positive Lipschitz function $h_\infty: \Sph^1 \rightarrow (0, \infty)$ [Theorem \ref{thm-sphere-over-circle-convergence}]. Moreover, we prove that the limit $W^{1, 2}$ Riemannian metric $g_\infty = g_{\Sph^1} + h^2_\infty g_{\Sph^2}$ has nonnegative distributional scalar curvature in the sense of Lee-LeFloch [Theorem \ref{thm-scalar-appendix}].

The proof of Theorem \ref{thm-sphere-over-circle-convergence} is similar to that of Theorems 4.1 and 4.8 in \cite{Park-Tian-Wang-18}. We include it here to show the difference with the rotationally symmetric case and the difference with Theorem \ref{Intro-thm: f_infty positive} and Theorem \ref{Intro-thm-Lq-convergence}.

The proof of Theorem \ref{thm-scalar-appendix} shows that in this case the regularity requirement in Lee-LeFloch \cite{Lee-LeFloch} is essential for the definition of the scalar curvature as a distribution. This provides an interesting contrast with the proof of Theorem \ref{Intro-thm-distr-scalar}.

The article is organized as follows: in Section \ref{sec-consequences}, we derive several analysis properties of warping functions $f_j$ from the uniform geometric bounds of metric $g_j$ as in (\ref{eqn-circle-over-sphere}). In particular, we show that metrics $g_j$ in (\ref{eqn-circle-over-sphere}) have nonnegative scalar curvature if and only if the warping functions $f_j$ satisfy the differential inequality [Lemma \ref{lem: nonnegative scalar curvature condition}]:
\be\label{eqn-diff-inequality}
\Delta f_j \leq f_j, \ \ \textrm{on} \ \ \Sph^2,
\ee
where $\Delta$ is the Lapacian on the standard round sphere $\Sph^2$, taken to be the trace of the Hessian. Moreover, a positive number $V$ is a uniform upper bound of volumes of metrics $g_j$ in (\ref{eqn-circle-over-sphere}) if and only if $f_j$ satisfy [Lemma \ref{lem: volume upper bound condition}]
\be\label{eqn-integral-bound}
\int_{\Sph^2} f_j d\vol_{g_{\Sph^2}} \leq \frac{V}{2\pi}.
\ee

It is well-known that the spherical mean property of (sub, sup)-harmonic functions plays important roles in the study of these functions. Inspired by this, we prove a spherical mean inequality for functions $f_j$ satisfying the differential inequality (\ref{eqn-diff-inequality}) [Proposition \ref{prop: spherical mean inequality}]. It turns out that the spherical mean inequality is very important in the proof of non-collapsing property in Section \ref{sect-positivity-limit}, in particular, in the proof of Proposition \ref{prop-L1-bound-mina-from-upper}. Furthermore, by employing the trick in the proof of Bishop-Gromov volume comparison theorem, we prove a ball average monotonicity property for $f_j$ [Proposition \ref{prop: ball average non-increasing}], which helps us to obtain lower semi-continuity of the limit warping function $f_\infty$ in Proposition \ref{prop: average limit exists}.

In Section \ref{sect-W1p-limit}, we study the convergence of a sequence $\{f_j\}^\infty_{j=1}$ of positive functions on $\Sph^2$ satisfying (\ref{eqn-diff-inequality}) and (\ref{eqn-integral-bound}). We prove that there exists a subsequence of such sequence $\{f_j\}$ and a function $f_\infty \in W^{1, p}(\Sph^2) \, (1 \leq p <2)$ such that the subsequence converges to $f_\infty$ in $L^{q}(\Sph^2)$ for any $q \geq 1$ [Proposition \ref{prop-W1p-limit}]. The proof of this convergence result is very different from that in cases of warped product metrics as in \cite{Park-Tian-Wang-18} and in (\ref{eqn-sphere-over-circle}). Because warping functions $h_j$ in \cite{Park-Tian-Wang-18} and in (\ref{eqn-sphere-over-circle}) have one variable, whereas $f_j$ in (\ref{eqn-circle-over-sphere}) have two variables,  it is more difficult to obtain sub-convergence of $\{ f_j \}$, and we make use of the Moser-Trudinger inequality in (25) in \cite{Onofri-82}. The regularity of the limit function $f_\infty$ is weaker than $h_\infty$. The extreme example constructed by Sormani and authors in \cite{STW-ex} shows that the $W^{1, p}$ regularity for $1 \leq p <2$ is sharp for $f_\infty$.

In Section \ref{sect-positivity-limit}, we use the $\mina$ condition to show that the limit function $f_\infty$ has positive essential infimum [Theorem \ref{thm: f_infty positive}] and that the warping functions $f_j$ have a positive uniform lower bound [Proposition \ref{prop: f_j lower bound}]. This enables us to define weak warped product Riemnnian metric $g_\infty$ on $\Sph^2 \times \Sph^1$ in Definition \ref{defn-limit-metric}, and is crucial in the study of geometric convergence of warped product circles over sphere with metric tensor as in (\ref{eqn-circle-over-sphere}). Moreover, as a consequence of Proposition \ref{prop: f_j lower bound}, we obtain a positive uniform lower bound for the systole of the warped product manifolds $\Sph^2 \times_{f_j} \Sph^1$ [Proposition \ref{prop: systole uniform lower bound}].

The $\mina$ condition can be viewed as a noncollapsing condition. As shown in \cite{Park-Tian-Wang-18} and in Lemma \ref{lem-lower-bound} below, it is not difficult to see this in cases of metric tensors as in \cite{Park-Tian-Wang-18} and (\ref{eqn-sphere-over-circle}). In the case of metric tensors as in (\ref{eqn-circle-over-sphere}), however, the implication of the $\mina$ condition  is much more complicated. We need to use  the Min-Max minimal surface theory of Marques and Neves (see e.g. \cite{MN-LNM-2018}), the maximum principle for weak solutions (Theorem 8.19 in \cite{GT-PDE-book}), and the spherical mean inequality obtained in Proposition \ref{prop: spherical mean inequality}, in order to obtain noncollapsing from the $\mina$ condition.

In Section \ref{sect-distr-scal}, we prove that a subsequence of $\{g_j\}^\infty_{j=1}$, with $g_j$ as in (\ref{eqn-circle-over-sphere}) having nonnegative scalar curvatures and uniform upper bounded volumes and satisfying $\mina$ condition, converges to a weak metric tensor $g_\infty \in W^{1, p}(\Sph^2 \times \Sph^1, g_0) \, (1 \leq p <2)$ in the sense of $L^{q}(\Sph^2 \times \Sph^1, g_0)$ for all $q \geq 1$ [Theorem \ref{thm-Lq-convergence}]. Moreover, we prove that the limit metric $g_\infty$ has nonnegative distributional scalar curvature in the sense of Lee-LeFloch [Theorem \ref{thm-distr-scalar}].

Note that in the case of metric tensors as in \cite{Park-Tian-Wang-18} and (\ref{eqn-sphere-over-circle}), we need the diameter uniform upper bound condition in addition to nonnegative scalar curvature condition and the $\mina$ condition for getting convergence [Theorem 1.3 in \cite{Park-Tian-Wang-18} and Theorem \ref{thm-sphere-over-circle-convergence}], whereas in the case of metric tensors as in (\ref{eqn-circle-over-sphere}), we need the volume uniform upper bound condition instead of the diameter uniform upper bound condition [Theorem \ref{thm-Lq-convergence}].

\noindent
{\bf Acknowledgements:}

The authors would like to thank the Fields Institute for hosting the {\em Summer School on Geometric Analysis} in July 2017 where we met Professor Christina Sormani and she started to guide us working on the project concerning compactness of manifolds with nonnegative scalar curvatures. We are grateful to Professor Sormani for her constant encouragement and inspiring discussions. In particular, Professor Sormani suggested us the method of spherical means, and it turns out to be very useful in the study of warping functions in Theorem \ref{Intro-thm: f_infty positive}. We thank Brian Allen for discussions and interest in this work. Wenchuan Tian was partially supported by the AMS Simons Travel Grant. Changliang Wang was partially supported by the Fundamental Research Funds for the Central Universities and Shanghai Pilot Program for Basic Research.



\section{ Consequences of the geometric hypotheses on $\Sph^2 \times_f \Sph^1$}\label{sec-consequences}

In this section we prove several consequences of the uniform geometric bounds. In Subsection \ref{subsect-basic-consequences}, we derive the differential inequality satisfied by the warping function $f_j$ and prove that the uniform volume bounds on sequence of Riemannian manifolds implies the uniform $L^1 $ norm of the warping function.

In Subsection \ref{subsect-spherical-mean-inequality}, we prove the spherical mean inequality for the warping function $f$ [Proposition \ref{prop: ball average non-increasing}], which is our main analytic tool. In Subsection \ref{subsect-ball-average}, we prove a ball average monotonicity property for the warping function $f$ [Proposition \ref{prop: spherical mean inequality}].

The implication of the $\mina$ condition is more complicated we discuss that in Section \ref{sect-positivity-limit}.


\subsection{Basic consequences of the hypotheses}\label{subsect-basic-consequences}

\begin{lem}[Non-negative scalar curvature condition]\label{lem: nonnegative scalar curvature condition}
The scalar curvature of warped product manifolds $\Sph^2 \times_{f_j} \Sph^1$ are given by
\be\label{eqn-scalar}
\Scal_{j}=2-2\frac{\Delta f_j}{f_j},
\ee
where $\Delta$ is the Laplacian on $\Sph^2$ with respect to the standard metric $g_{\Sph^2}$, taken to be the trace of the Hessian (without the negative sign).

Thus $\Sph^2 \times_{f_j} \Sph^1$ have nonnegative scalar curvature if and only if
\be
\Delta f_{j} \leq f_{j}.
\ee
\end{lem}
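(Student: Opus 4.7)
The statement is a direct computation of the scalar curvature of a warped product metric, so my plan is to apply the standard warped product curvature formula and simplify.

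First, I would recall the general formula for the scalar curvature of a warped product $B^n \times_f F^m$ with metric $g = g_B + f^2 g_F$, where $f$ is a positive smooth function on the base $B$:
\[
\Scal_{B \times_f F} = \Scal_B + \frac{\Scal_F}{f^2} - 2m\,\frac{\Delta_B f}{f} - m(m-1)\,\frac{|\nabla_B f|^2}{f^2},
\]
with the convention that $\Delta_B$ is the trace of the Hessian (no minus sign). This is a standard identity; one can derive it by choosing a local orthonormal frame $\{e_1,\dots,e_n\}$ on $B$ and the $f$-rescaled frame $\{f^{-1}\tilde e_1,\dots,f^{-1}\tilde e_m\}$ on the fiber, computing the mixed sectional curvatures $K(e_i, \tilde e_\alpha) = -\frac{1}{f}\nabla^2 f(e_i, e_i)$ and the fiber sectional curvatures $K(\tilde e_\alpha, \tilde e_\beta) = \frac{K^F - |\nabla f|^2}{f^2}$, and then tracing to get the Ricci tensor and finally the scalar curvature.

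Next I would specialize to the setting at hand: the base is $(\Sph^2, g_{\Sph^2})$, so $n = 2$ and $\Scal_B = 2$; the fiber is $(\Sph^1, g_{\Sph^1})$, which is one-dimensional, so $m = 1$ and $\Scal_F = 0$. Plugging these in, the $|\nabla f|^2/f^2$ term has coefficient $m(m-1) = 0$ and the fiber scalar term vanishes, leaving
\[
\Scal_j = 2 - 2\,\frac{\Delta f_j}{f_j},
\]
which is exactly (\ref{eqn-scalar}). The equivalence $\Scal_j \ge 0 \iff \Delta f_j \le f_j$ then follows immediately because $f_j > 0$.

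There is no genuine obstacle here: the only thing to be careful about is sign and normalization conventions, since the paper fixes $\Delta$ to be the trace of the Hessian (rather than minus that), and the formula above is stated with the same convention, so no sign flips are needed. I would include a one-line reminder of the convention and the fiber being one-dimensional to emphasize why the gradient-squared term drops out, since that feature distinguishes the $\Sph^1$-fiber case (treated here) from the $\Sph^2$-fiber case treated in the appendix, where the analogous computation yields the extra $|\nabla h|^2$ term appearing in Lemma \ref{lem-scal}.
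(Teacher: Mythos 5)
Your proposal is correct and takes essentially the same route as the paper: the paper cites the Ricci curvature formula for warped products (Proposition~9.106 in \cite{Besse}) and traces to get the scalar curvature, while you write down the resulting general scalar curvature formula for $B^n\times_f F^m$ directly and specialize to $n=2$, $m=1$. Your concluding observation about why the $|\nabla f|^2/f^2$ term drops out for a one-dimensional fiber, in contrast to the $\Sph^1\times_h\Sph^2$ case in the appendix, is accurate and a nice sanity check on the normalization.
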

\begin{proof}
By using the Ricci curvature formula for warped product metrics as in Proposition 9.106 of \cite{Besse}, we can easily obtain the scalar curvature of $\Sph^2 \times_{f_j} \Sph^1$ as $\Scal_{j}=2-2\frac{\Delta f_j}{f_j}.$ Then the second claim directly follows, since $f_j >0$.
\end{proof}

\begin{lem}[Volume upper bound condition]\label{lem: volume upper bound condition}
The warped product manifolds $\Sph^2 \times_{f_j} \Sph^1$ have volume $ { \rm Vol} (\Sph^2 \times_{f_j} \Sph^1) \leq V$ if and only if
\be
\int_{\Sph^2} f_j d {\rm vol}_{\Sph^2} \leq \frac{V}{2\pi}.
\ee
\end{lem}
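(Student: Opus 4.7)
The plan is essentially a direct volume computation using the product structure and Fubini. The warped product metric $g_j = g_{\Sph^2} + f_j^2 g_{\Sph^1}$ on $\Sph^2 \times \Sph^1$ has an orthogonal splitting in which the $\Sph^2$-directions carry the standard metric and the $\Sph^1$-direction is rescaled by $f_j(x)$. Since the fiber is one-dimensional, the volume element contributes a single factor of $f_j$.

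First, I would write out the volume element explicitly. In local coordinates where $g_{\Sph^2}$ has volume form $d\vol_{\Sph^2}$ and $g_{\Sph^1}$ has volume form $d\varphi$, the matrix of $g_j$ is block diagonal with the $\Sph^2$-block equal to that of $g_{\Sph^2}$ and the $\Sph^1$-block equal to $f_j^2$. Therefore
\[
d\vol_{g_j} = f_j(x) \, d\vol_{\Sph^2}(x) \wedge d\varphi.
\]
This is consistent with the general warped product volume formula $d\vol_{g_N + f^2 g_F} = f^{\dim F} d\vol_{g_N} \wedge d\vol_{g_F}$ with $\dim F = 1$.

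Next, I would compute the total volume by Fubini, using that $f_j$ is independent of the $\Sph^1$-coordinate:
\[
\vol(\Sph^2 \times_{f_j} \Sph^1) = \int_{\Sph^2 \times \Sph^1} f_j(x) \, d\vol_{\Sph^2}(x) \, d\varphi = \left(\int_{\Sph^1} d\varphi\right) \int_{\Sph^2} f_j \, d\vol_{\Sph^2} = 2\pi \int_{\Sph^2} f_j \, d\vol_{\Sph^2}.
\]

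Finally, the equivalence is immediate: dividing by $2\pi$, the condition $\vol(\Sph^2 \times_{f_j} \Sph^1) \leq V$ becomes $\int_{\Sph^2} f_j \, d\vol_{\Sph^2} \leq V/(2\pi)$, and conversely. There is no real obstacle here; the only thing to be careful about is to use the correct warped product volume formula (with the exponent being the fiber dimension) and to treat $\Sph^1$ as having length $2\pi$ in the standard metric.
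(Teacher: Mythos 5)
Your proof is correct and follows essentially the same route as the paper's: identify the volume form $d\vol_{g_j} = f_j\, d\vol_{\Sph^2}\, d\vol_{\Sph^1}$, integrate out the $\Sph^1$ factor to pick up the constant $2\pi$, and read off the equivalence. The extra remarks about the block-diagonal structure and the general formula $d\vol = f^{\dim F} d\vol_N \wedge d\vol_F$ are fine but not needed beyond what the paper states.
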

\begin{proof}
The Riemannian volume measure of $g_j$ is given by
\be
d\vol_{g_j} = f_j d\vol_{g_{\Sph^2}} d\vol_{g_{\Sph^1}}.
\ee
Thus the volume of $\Sph^2 \times_{f_j} \Sph^1$ is given by
\be
{\rm Vol} ( \Sph^2 \times_{f_j} \Sph^1) = \int_{\Sph^2 \times \Sph^1} f_j d\vol_{g_{\Sph^2}} d\vol_{\Sph^1} = 2\pi \int_{\Sph^2} f_j d\vol_{g_{\Sph^2}}.
\ee
Then the claim directly follows.
\end{proof}


\subsection{Spherical mean inequality}\label{subsect-spherical-mean-inequality}
In this subsection, we prove a spherical mean inequality [Proposition \ref{prop: spherical mean inequality}] for the smooth functions $f$ on $\Sph^2$ satisfying the differential inequality $\Delta f \leq f$. By Lemma \ref{lem: nonnegative scalar curvature condition}, this is equivalent to studying the warping function of warped product manifolds $\Sph^2 \times_f \Sph^1$ with nonnegative scalar curvature. The spherical mean inequality plays an important role in the proof of Proposition \ref{prop-L1-bound-mina-from-upper}.

The derivation of the spherical mean value inequality is similar to that of the mean value property of harmonic functions. We start with the following lemma.
\begin{lem}\label{lem: spherical mean derivative}
Let $f$ be a smooth function on $\Sph^2$. Consider the spherical mean given by
\begin{equation}
\phi(r) : = \fint_{\partial B_{r}(p)} f ds,
\end{equation}
where $B_{r}(p)$ is the geodesic ball in the standard $\mathbb{S}^{2}$ with center $p$ and radius $r$. The derivative of $\phi(r)$ satisfies
\begin{equation}
\frac{d}{dr}\phi(r) = \frac{1}{2 \pi \sin r}\int_{B_{r}(p)}  \Delta f d{\rm vol}_{\Sph^2}.
\end{equation}
\end{lem}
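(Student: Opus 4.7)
The plan is to reduce the spherical mean to a one–variable integral using geodesic polar coordinates around $p$, differentiate under the integral sign, and then convert the result back to an integral over the geodesic circle in order to apply the divergence theorem. Throughout we may assume $0 < r < \pi$, so that the polar coordinates centered at $p$ are a bona fide chart on the punctured complement of the antipode and contain $B_r(p)$.

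\textbf{Step 1: Simplify the mean.} In geodesic polar coordinates $(r,\theta)$ on $\Sph^2$ centered at $p$, the standard round metric takes the form $dr^2+\sin^2(r)\,d\theta^2$, so the induced length element on $\partial B_r(p)$ is $ds=\sin r\,d\theta$ and the total length is $2\pi\sin r$. Therefore
\begin{equation*}
\phi(r)=\frac{1}{2\pi\sin r}\int_0^{2\pi} f(r,\theta)\,\sin r\,d\theta=\frac{1}{2\pi}\int_0^{2\pi} f(r,\theta)\,d\theta.
\end{equation*}

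\textbf{Step 2: Differentiate and re–express.} Since $f$ is smooth, I differentiate under the integral to get $\phi'(r)=\tfrac{1}{2\pi}\int_0^{2\pi}\partial_r f(r,\theta)\,d\theta$. Multiplying and dividing by $\sin r$ rewrites this as a boundary integral:
\begin{equation*}
\phi'(r)=\frac{1}{2\pi\sin r}\int_0^{2\pi}\partial_r f(r,\theta)\,\sin r\,d\theta=\frac{1}{2\pi\sin r}\int_{\partial B_r(p)}\frac{\partial f}{\partial \nu}\,ds,
\end{equation*}
where I have used that the outward unit normal $\nu$ to $\partial B_r(p)$ coincides with the radial vector field $\partial_r$.

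\textbf{Step 3: Apply the divergence theorem.} Since $f$ is smooth on $\Sph^2$ and $B_r(p)$ is a relatively compact domain with smooth boundary for $r\in(0,\pi)$, the divergence theorem applied to the gradient vector field $\nabla f$ on the round sphere yields
\begin{equation*}
\int_{\partial B_r(p)}\frac{\partial f}{\partial \nu}\,ds=\int_{B_r(p)}\operatorname{div}(\nabla f)\,d\vol_{\Sph^2}=\int_{B_r(p)}\Delta f\,d\vol_{\Sph^2}.
\end{equation*}
Combining this with Step 2 gives the claimed formula. The only minor point to watch is the validity of the polar chart and divergence theorem at the endpoints $r=0$ and $r=\pi$, but neither is needed: the statement only concerns $r\in(0,\pi)$ where both tools are perfectly standard, so there is no serious obstacle in the argument.
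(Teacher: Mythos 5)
Your proof is correct and follows essentially the same route as the paper's: write the spherical mean in geodesic polar coordinates as $\tfrac{1}{2\pi}\int_0^{2\pi} f(r,\theta)\,d\theta$, differentiate under the integral sign, recognize $\partial_r f$ as the normal derivative on the geodesic circle, and apply the divergence theorem to obtain the ball integral of $\Delta f$. The only (minor, harmless) addition is your explicit remark about the validity of the polar chart away from the endpoints, which the paper leaves implicit.
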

\begin{proof}
Using the geodesic polar coordinate $(r, \theta)$ on $\mathbb{S}^{2}$ centered at $p$, one can write $\phi(r)$ as
\begin{equation}
\phi(r) = \frac{\int^{2\pi}_{0} f(r, \theta) \sin r d\theta}{2\pi \sin r} = \frac{\int^{2\pi}_{0}f(r, \theta)d\theta}{2\pi}.
\end{equation}
Then taking derivative with respective to $r$ gives
\begin{eqnarray}
\phi^{\prime}(r)
& = & \frac{1}{2\pi} \int^{2\pi}_{0} \frac{\partial f}{\partial r} d\theta \\
& = & \frac{1}{2\pi} \int^{2\pi}_{0} \langle \nabla f, \partial_{r} \rangle \\
& = & \frac{1}{2\pi \sin r} \int^{2\pi}_{0} \langle \nabla f, \partial_{r} \rangle \sin r d\theta \\
& = & \frac{1}{2\pi \sin r} \int_{\partial B_{r}(p)} \langle \nabla f, \partial_{r} \rangle ds\\
& \overset{Stokes}{=} & \frac{1}{2\pi \sin r} \int_{B_{r}(p)} \Delta f d{\rm vol}_{\mathbb{S}^{2}}.
\end{eqnarray}
\end{proof}

Now we use Lemma \ref{lem: spherical mean derivative} to prove the spherical mean inequality.

\begin{prop}\label{prop: spherical mean inequality}
Let $f$ be a smooth function on $\Sph^2$ satisfying $\Delta f \leq f$. Then for any fixed $p\in \mathbb{S}^{2}$ and $0<r_{0} < r_{1} \leq \frac{\pi}{2}$, one has
\begin{equation}
\fint_{\partial B_{r_{1}}(p)} f ds - \fint_{\partial B_{r_{0}}(p)} f ds \leq \frac{\|f\|_{L^{2}(\mathbb{S}^{2})}}{\sqrt{2\pi}} (r_{1}-r_{0}),
\end{equation}
where $B_{r}(p)$ is the geodesic ball in the $\mathbb{S}^{2}$ with center $p$ and radius $r$.

Moreover, by taking limit as $r_{0} {\rightarrow} 0$, one has
\begin{equation}
\fint_{\partial B_{r}(p)} f ds - f(p) \leq \frac{\|f\|_{L^{2}(\mathbb{S}^{2})}}{\sqrt{2\pi}}r,
\end{equation}
for any $0< r\leq \frac{\pi}{2}$.
\end{prop}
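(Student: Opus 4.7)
The plan is to mimic the classical mean-value argument for subharmonic functions, using Lemma \ref{lem: spherical mean derivative} as a substitute for the usual integration-by-parts identity. Set $\phi(r) = \fint_{\partial B_r(p)} f \, ds$. By Lemma \ref{lem: spherical mean derivative} and the hypothesis $\Delta f \le f$,
\[
\phi'(r) \;=\; \frac{1}{2\pi \sin r} \int_{B_r(p)} \Delta f \, d\vol_{\Sph^2} \;\le\; \frac{1}{2\pi \sin r} \int_{B_r(p)} f \, d\vol_{\Sph^2}.
\]
Note that the right-hand side is not automatically positive, but the inequality $\int_{B_r(p)} f \le \int_{\Sph^2}|f|$ combined with Cauchy--Schwarz still yields a clean upper bound.

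Next I would apply Cauchy--Schwarz on $B_r(p)$ to obtain
\[
\int_{B_r(p)} f \, d\vol_{\Sph^2} \;\le\; \|f\|_{L^2(\Sph^2)} \sqrt{\vol(B_r(p))} \;=\; \|f\|_{L^2(\Sph^2)} \sqrt{2\pi(1-\cos r)},
\]
using the standard formula $\vol(B_r(p)) = 2\pi(1-\cos r)$ on $\Sph^2$. Substituting, and invoking the half-angle identities $1-\cos r = 2\sin^2(r/2)$ and $\sin r = 2\sin(r/2)\cos(r/2)$, one gets
\[
\phi'(r) \;\le\; \frac{\|f\|_{L^2(\Sph^2)}}{\sqrt{2\pi}} \cdot \frac{\sqrt{1-\cos r}}{\sin r \cdot \sqrt{2\pi}/\sqrt{2\pi}} \;=\; \frac{\|f\|_{L^2(\Sph^2)}}{2\sqrt{\pi}\,\cos(r/2)}.
\]
The restriction $r \le \pi/2$ gives $\cos(r/2) \ge 1/\sqrt{2}$, which turns the bound into $\phi'(r) \le \|f\|_{L^2(\Sph^2)}/\sqrt{2\pi}$. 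This is exactly the quantity needed; integrating from $r_0$ to $r_1$ yields the first inequality in the proposition.

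For the limiting statement as $r_0 \to 0$, I would just note that $\phi$ extends continuously to $r=0$ with $\lim_{r_0\to 0}\phi(r_0) = f(p)$, by uniform continuity of the smooth function $f$ on the shrinking geodesic spheres $\partial B_{r_0}(p)$. The main (minor) obstacle is checking that the restriction $r \le \pi/2$ is used sharply, since this is what makes $\cos(r/2)$ stay above $1/\sqrt{2}$ and gives the clean constant $\|f\|_{L^2}/\sqrt{2\pi}$; without it one would still get a finite bound but with a worse, $r$-dependent prefactor. No positivity assumption on $f$ is needed for the argument, which matches the statement of the proposition.
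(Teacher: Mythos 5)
Your proof is correct and follows essentially the same approach as the paper's: bound $\phi'(r)$ via Lemma \ref{lem: spherical mean derivative} and $\Delta f\le f$, apply Cauchy--Schwarz over $B_r(p)$, simplify the resulting trigonometric ratio using $r\le\pi/2$, and integrate. The only cosmetic differences are that you use the half-angle identity $\tfrac{\sqrt{1-\cos r}}{\sin r} = \tfrac{1}{\sqrt{2}\cos(r/2)}$ where the paper uses $\tfrac{\sqrt{1-\cos r}}{\sin r}=\tfrac{1}{\sqrt{1+\cos r}}$, and that you bound $\phi'$ pointwise before integrating rather than integrating first and then bounding the integrand; these are equivalent.
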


\begin{proof}
By Lemma \ref{lem: spherical mean derivative} and the assumption $\Delta f \leq f$, one has
\begin{equation}
\phi^{\prime}(r) \leq \frac{1}{2\pi \sin r} \int_{B_{r}(p)} f d {\rm vol}_{\mathbb{S}^{2}}.
\end{equation}
Integrating this differential inequality for $r$ from $r_{0}$ to $r_{1}$ gives
\begin{eqnarray}
\phi(r_{1}) - \phi(r_{0})
& \leq & \int^{r_{1}}_{r_{0}} \left( \frac{1}{2\pi \sin r} \int_{B_{r}(p)} f d {\rm vol}_{\mathbb{S}^{2}}\right) dr \\
& \leq & \int^{r_{1}}_{r_{0}} \left( \frac{1}{2\pi \sin r} \|f\|_{L^{2}(\mathbb{S}^{2})} \sqrt{{\rm Area}(B_{r}(p))}\right) dr \\
& = &  \frac{\|f\|_{L^{2}(\mathbb{S}^{2})}}{\sqrt{2\pi}} \int^{r_{1}}_{r_{0}} \frac{ \sqrt{1-\cos r}}{\sin r} dr \\
& = &  \frac{\|f\|_{L^{2}(\mathbb{S}^{2})}}{\sqrt{2\pi}} \int^{r_{1}}_{r_{0}} \frac{1}{\sqrt{1+\cos r}} dr \\
& \leq & \frac{\|f\|_{L^{2}(\mathbb{S}^{2})}}{\sqrt{2\pi}} \int^{r_{1}}_{r_{0}} 1 dr   \qquad \left( 0< r_{0}< r_{1} \leq \frac{\pi}{2}\right) \\
& = & \frac{\|f\|_{L^{2}(\mathbb{S}^{2})}}{\sqrt{2\pi}}(r_{1} - r_{0}).
\end{eqnarray}
\end{proof}


\subsection{Ball average monotonicity}\label{subsect-ball-average}

In this subsection, we further derive a ball average monotonicity [Proposition \ref{prop: ball average non-increasing}] for a smooth function on $\Sph^2$ satisfying $\Delta f \leq f$. The proof uses the spherical mean inequality [Proposition \ref{prop: spherical mean inequality}] and the trick as in the proof of Bishop-Gromov volume comparison theorem. This ball average monotonicity is used in Proposition \ref{prop: average limit exists} to prove that the ball average limit as $r\to 0$ exists everywhere for the limit function.

\begin{lem}
Let $f$ be a smooth function on $\Sph^2$ satisfying $\Delta f \leq f$ and $\|f\|_{L^{2}(\Sph^2)} \leq C \sqrt{2\pi}$, where $C$ is a  positive constant. For any fixed $x\in \Sph^2$, the spherical mean
\begin{equation}
\fint_{\partial B_{r}(x)} \left( f- C r \right) = \frac{\int_{\partial B_{r}(x)}(f - Cr)}{2\pi \sin r}
\end{equation}
is a non-increasing function in $r$ for $r\in (0,\frac{\pi}{2}]$
\end{lem}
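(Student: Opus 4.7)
The plan is to reduce the monotonicity statement to a simple ODE inequality for the pure spherical mean $\phi(r) := \fint_{\partial B_r(x)} f\, ds$. Since $Cr$ is constant on $\partial B_r(x)$, the quantity in the lemma equals $\psi(r) := \phi(r) - Cr$, so I need to show $\psi'(r) \leq 0$ on $(0, \pi/2]$, i.e.\ $\phi'(r) \leq C$ on that interval.

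First I would apply \lemref{lem: spherical mean derivative} (the preceding lemma in this subsection, which computes $\phi'$) to get
\[
\phi'(r) \;=\; \frac{1}{2\pi \sin r}\int_{B_r(x)} \Delta f \, d\vol_{\Sph^2}.
\]
Since $\Delta f \leq f$ by hypothesis, this already gives $\phi'(r) \leq \frac{1}{2\pi \sin r}\int_{B_r(x)} f \, d\vol_{\Sph^2}$. Next I would bound the right-hand side with Cauchy--Schwarz, exactly as in the proof of \propref{prop: spherical mean inequality}, using $\|f\|_{L^2(\Sph^2)} \leq C\sqrt{2\pi}$ and $\area(B_r(x)) = 2\pi(1-\cos r)$:
\[
\phi'(r) \;\leq\; \frac{\|f\|_{L^2}\sqrt{2\pi(1-\cos r)}}{2\pi \sin r} \;\leq\; \frac{C\sqrt{2\pi}\,\sqrt{2\pi(1-\cos r)}}{2\pi\sqrt{(1-\cos r)(1+\cos r)}} \;=\; \frac{C}{\sqrt{1+\cos r}}.
\]

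The last step is where the range $(0,\pi/2]$ enters: for $r \in (0,\pi/2]$ we have $\cos r \geq 0$, so $\sqrt{1+\cos r} \geq 1$ and therefore $\phi'(r) \leq C$. This gives $\psi'(r) = \phi'(r) - C \leq 0$, so $\psi$ is non-increasing on $(0,\pi/2]$, as required.

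There is no real obstacle here; the lemma is essentially a repackaging of the pointwise derivative estimate from \propref{prop: spherical mean inequality} together with the elementary observation that $\cos r \geq 0$ exactly on $[0,\pi/2]$. The only content is recognizing that the Bishop--Gromov-style trick (subtracting the linear comparison $Cr$) turns the integral inequality of \propref{prop: spherical mean inequality} into the sharper monotonicity statement, which is what will be needed in \propref{prop: average limit exists} to guarantee existence of the $r \to 0$ ball-average limit of the $W^{1,p}$ limit function $f_\infty$ at every point of $\Sph^2$.
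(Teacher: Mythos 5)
Your argument is correct and is essentially the paper's own proof: the paper simply cites the already-established Proposition~\ref{prop: spherical mean inequality} (which was obtained by exactly the derivative-plus-Cauchy--Schwarz computation you carry out) and rearranges the resulting difference inequality into the monotonicity statement, whereas you inline that computation to get the pointwise bound $\phi'(r)\le C$ directly. Same ingredients, same estimate, just unpacked one level.
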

\begin{proof}
The spherical mean inequality in Proposition \ref{prop: spherical mean inequality} says that for any $x\in \Sph^2$ and $0 < r_0 < r_1 \leq \frac{\pi}{2}$,
\begin{equation}
\fint_{\partial B_{1}(x)} f  - \fint_{\partial B_{r_{0}}(x)} f \leq \frac{\|f\|_{L^{2}(\Sph^2)}}{\sqrt{2 \pi}} (r_1 - r_0) \leq C (r_1 - r_0).
\end{equation}
By rearranging this inequality, we obtain that for any fixed $x\in \Sph^2$,
\begin{equation}\label{eqn: spherical mean non-increasing}
\fint_{\partial B_{r_1}(x)} (f - C r_1) \leq \fint_{\partial B_{r_0}(x)} (f - C r_0), \quad \forall 0 < r_0 \leq r_1 \leq \frac{\pi}{2}.
\end{equation}
This completes the proof.
\end{proof}

Combine this spherical mean monotonicity with the trick as in the proof of Bishop-Gromov volume comparison theorem, we obtain the following ball average monotonicity.

\begin{prop}\label{prop: ball average non-increasing}
Let $f$ be a smooth function on $\Sph^2$ satisfying $\Delta f \leq f$ and $\|f\|_{L^{2}(\Sph^2)} \leq C \sqrt{2\pi}$, then $\forall 0 < r < R \leq \frac{\pi}{2}$,
\begin{equation}
\fint_{B_{R}(x)} \left( f(y) - C d(y, x) \right) d{\rm vol}(y) \leq \fint_{B_{r}(x)} \left( f(y) - C d(y, x) \right) d{\rm vol}(y),
\end{equation}
where $d(y, x)$ is the distance between $y$ and $x$ in the standard $\Sph^2$.
\end{prop}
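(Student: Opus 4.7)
Set $g(y) := f(y) - C d(y,x)$, so that the claim is simply that the ball average $\psi(R) := \fint_{B_R(x)} g\, d\vol$ is non-increasing in $R$ on $(0,\pi/2]$. The strategy is standard Bishop--Gromov: rewrite $\psi(R)$ as a weighted average of the spherical means of $g$, invoke the spherical-mean monotonicity established in the previous lemma, and then differentiate.

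The first step is to pass to geodesic polar coordinates centered at $x$. Writing $A(r) := \vol(\partial B_r(x)) = 2\pi \sin r$ and $V(R) := \vol(B_R(x)) = \int_0^R A(r)\,dr$, the coarea formula gives
\begin{equation}
\psi(R) = \frac{1}{V(R)} \int_0^R \phi(r)\, A(r)\, dr,
\qquad
\phi(r) := \fint_{\partial B_r(x)} g\, ds = \fint_{\partial B_r(x)} f\, ds - C r,
\end{equation}
since $d(\cdot, x) \equiv r$ on $\partial B_r(x)$. By the preceding lemma, $\phi$ is non-increasing on $(0, \pi/2]$.

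Next I would verify the elementary calculus fact that a weighted average of a non-increasing function is itself non-increasing and dominates the function. Explicitly, since $\phi(r) \geq \phi(R)$ for $0 < r \leq R$,
\begin{equation}
\psi(R) = \frac{\int_0^R \phi(r) A(r)\, dr}{\int_0^R A(r)\, dr} \geq \phi(R),
\end{equation}
and a direct differentiation yields
\begin{equation}
\psi'(R) = \frac{A(R)}{V(R)}\bigl(\phi(R) - \psi(R)\bigr) \leq 0,
\end{equation}
which gives the desired monotonicity.

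\textbf{Potential obstacle.} The only subtlety I anticipate is handling the behavior as $r \to 0^+$ in the integral representation of $\psi(R)$ (the weight $A(r)$ vanishes like $r$, while $\phi(r) \to f(x)$ by smoothness, so the integrand is integrable and the identity $\psi(R) = V(R)^{-1}\int_0^R \phi A\, dr$ is justified by Fubini/coarea). Everything else is bookkeeping, and the restriction $R \leq \pi/2$ is inherited directly from the range in which the spherical-mean monotonicity was proved.
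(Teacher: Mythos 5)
Your proof is correct. You and the paper both reduce the claim to the spherical-mean monotonicity of $\phi(r) = \fint_{\partial B_r(x)} f\,ds - Cr$, but the mechanism is different. The paper never differentiates: it proves the two integral inequalities
\begin{equation}
\fint_{\partial B_r(x)}(f - Cr) \leq \fint_{B_r(x)} (f - Cd(\cdot,x))
\quad\text{and}\quad
\fint_{A_{r,R}(x)}(f - Cd(\cdot,x)) \leq \fint_{\partial B_r(x)}(f - Cr),
\end{equation}
by comparing the weighted average over $[0,r]$ (resp.\ $[r,R]$) of $\phi(s)A(s)$ against $\phi(r)A(s)$, then stitches the ball and annulus together via $B_R = B_r \cup A_{r,R}$. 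You instead write $\psi(R) = V(R)^{-1}\int_0^R \phi\, A\,dr$, observe $\psi(R) \geq \phi(R)$, and conclude from the derivative identity $\psi'(R) = \frac{A(R)}{V(R)}(\phi(R) - \psi(R)) \leq 0$. Your differential form is shorter and makes the Bishop--Gromov structure transparent in a single line; the paper's integral form avoids differentiating under the integral sign and so requires no smoothness of $\psi$ beyond what the integrals themselves provide, and it also produces the intermediate inequality $\fint_{B_r} (f-Cd) \geq \fint_{\partial B_r}(f-Cr)$ (your $\psi \geq \phi$, rederived directly) as a standalone byproduct which the paper reuses. Both are valid; yours is a legitimate streamlining given the smoothness of $f$.
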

\begin{proof}
{\bf Step 1}.
      \begin{eqnarray}
        &  &  \int_{B_{r}(x)} \left( f(y) - C d(y, x) \right) d{\rm vol}(y)  \\
       & = & \int^{r}_{0} \left( \int_{\partial B_{s}(x)} (f - C s) \right) ds \\
       & = & \int^{r}_{0} (2 \pi \sin s) \left( \fint_{\partial B_{s}(x)} (f - C s) \right) ds \\
       & \geq & \fint_{\partial B_r (x)} (f - C r) \cdot \int^{r}_{0} 2\pi \sin s ds \quad  (\text{by} \ \ (\ref{eqn: spherical mean non-increasing}) \ \  \text{and} \ \  s \leq r ) \\
       & = & {\rm Vol} (B_{r}(x)) \fint_{\partial B_r (x)} (f - C r).
      \end{eqnarray}
      So
      \begin{equation}\label{eqn: ball average}
      \fint_{B_{r}(x)} (f(y) - C d(y, x)) d{\rm vol}(y) \geq \fint_{\partial B_r(x)} (f(y) - C r)
      \end{equation}

{\bf Step 2}. Let $A_{r, R}(x) = B_R(x) \setminus B_r(x)$. Similar as in step 1, we have
      \begin{eqnarray}
      &   & \int_{A_{r, R}(x)} (f(y) - C d(y, x)) d{\rm vol}(y) \\
      & = & \int^{R}_{r} \left( \int_{\partial B_{s}(x)} (f - C s) d\sigma \right) ds  \\
      & = & \int^{R}_{r} (2\pi \sin s) \left( \fint_{\partial B_{s}(x)} (f - C s) d\sigma \right) ds  \\
      & \leq & \fint_{\partial B_{r}(x)} (f- C r) d\sigma \cdot \int^{R}_{r} (2\pi \sin s) ds  \quad (\text{by} \ \ (\ref{eqn: spherical mean non-increasing}) \ \  \text{and} \ \ s \geq r ) \\
      & = & \vol(A_{r, R}(x)) \fint_{\partial B_{r}(x)} (f- C r) d\sigma
      \end{eqnarray}
      So
       \begin{equation}\label{eqn: annulus average}
       \fint_{A_{r, R}(x)} (f(y) - C d(y, x)) d{\rm vol}(y) \leq \fint_{\partial B_{r}(x)} (f - C r) d\sigma.
       \end{equation}

 {\bf Step 3}. By combining (\ref{eqn:  ball average}) and (\ref{eqn: annulus average}), we obtain that for $0 < r < R \leq \frac{\pi}{2}$
      \begin{equation}
      \fint_{A_{r, R}(x)} (f(y) - C d(y, x)) d{\rm vol}(y) \leq   \fint_{B_{r}(x)} (f(y) - C d(y, x)) d{\rm vol}(y).
      \end{equation}

{\bf Step 4}.
      \begin{eqnarray}
       &  & \int_{B_{R}(x)} (f - C d(y, x)) d{\rm vol}(y)   \\
      & = & \int_{B_{r}(x)} (f - C d(y, x)) d {\rm vol}(y) + \int_{A_{r, R}(x)} (f - C d(y, x)) d{\rm vol}(y)  \\
      & \leq & \int_{B_{r}(x)} (f - C d(y, x)) d {\rm vol}(y) \\
      & &  + {\rm Vol}(A_{r, R}(x)) \cdot \fint_{B_{r}(x)} (f(y) - C d(y, x)) d{\rm vol}(y)   \\
      & = & \left( {\rm Vol} (B_{r}(x)) + \vol(A_{r, R}(x)) \right) \fint_{B_{r}(x)} (f(y) - C d(y, x)) d{\rm vol}(y) \\
      & = & {\rm Vol} (B_{R}(x)) \fint_{B_{r}(x)} (f(y) - C d(y, x)) d{\rm vol}(y).
      \end{eqnarray}
      This completes the proof.
\end{proof}



\section{$W^{1, p}$ limit of warping function for $1 \leq p<2$}\label{sect-W1p-limit}

In this section, we study the $L^q$ pre-compactness of a sequence of positive smooth functions $f_j $ satisfying the inequalities
\be\label{eqn-warping-function-condition}
\Delta f_j \leq f_j, \quad \int_{\Sph^2}f_j d {\rm vol}_{\Sph^2} \leq \frac{V}{2\pi}, \quad \forall j \in \N.
\ee
Here $V$ is a positive constant. By Lemmas \ref{lem: nonnegative scalar curvature condition} and \ref{lem: volume upper bound condition}, the inequlities in (\ref{eqn-warping-function-condition})  are equivalent to the requirements that the Riemannian manifolds $ \Sph^2 \times_{f_j} \Sph^1 $ have nonnegative scalar curvature and uniform volume upper bound.

In Subsection \ref{subsect-W1p-limit-function}, we prove that a sequence of positive smooth functions $f_j$ on $\Sph^2$ satisfying requirements in (\ref{eqn-warping-function-condition}) has a convergent subsequence in $L^q(\Sph^2)$ for any $1 \leq q <+\infty$, and that the limit function is in $W^{1, p}(\Sph^2)$ for any $1 \leq p <2$ [Proposition \ref{prop-W1p-limit}].

In Subsection \ref{subsect-lower-semi-continuous-representative}, we apply the ball average monotonicity property obtained in Proposition \ref{prop: ball average non-increasing} to prove that the limit function has a lower semi-continuous representative [Proposition \ref{prop: average limit exists}, Remark \ref{rmrk-lower-semi-continuous}].

\subsection{$W^{1, p}$ limit function for $p<2$}\label{subsect-W1p-limit-function}
We first derive the gradient estimate for the sequence of function $ \ln f_j $ in Lemma \ref{lem-l2-gradient-ln}, which is used to obtain $L^{p}$ estimate for $f_{i}$ by using Moser-Trudinger inequality in Lemma \ref{Lem-Moser-Trudinger}.
\begin{lem}\label{lem-l2-gradient-ln}
Let $\{f_{j }\}_{j=1}^\infty$ be a sequence of positive functions on $\Sph^2$ satisfying
\be
\Delta f_{j } \leq f_j , \quad \forall j \in \N.
\ee
We have
\be
\|\nabla \ln f_j \|_{L^2(\Sph^2)}^2\leq {\rm Vol} (\Sph^2), \quad \forall j \in \N.
\ee

\end{lem}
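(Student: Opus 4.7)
The plan is to pass to the logarithm $u_j := \ln f_j$, which is well-defined and smooth since $f_j$ is smooth and strictly positive, and exploit the pointwise identity
\[
\Delta u_j \;=\; \frac{\Delta f_j}{f_j} \;-\; \frac{|\nabla f_j|^2}{f_j^2} \;=\; \frac{\Delta f_j}{f_j} \;-\; |\nabla u_j|^2,
\]
which is just the chain rule for $\Delta(\ln f_j)$ on a Riemannian manifold. Rearranging, we obtain the key pointwise identity
\[
|\nabla u_j|^2 \;=\; \frac{\Delta f_j}{f_j} \;-\; \Delta u_j.
\]

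Next, I would invoke the hypothesis $\Delta f_j \le f_j$ together with $f_j > 0$ to conclude $\Delta f_j / f_j \le 1$ pointwise on $\Sph^2$. Integrating the identity above against $d\vol_{\Sph^2}$ and noting that $\int_{\Sph^2} \Delta u_j \, d\vol_{\Sph^2} = 0$ by the divergence theorem on the closed manifold $\Sph^2$, the $\Delta u_j$ term drops out and we are left with
\[
\int_{\Sph^2} |\nabla u_j|^2 \, d\vol_{\Sph^2} \;=\; \int_{\Sph^2} \frac{\Delta f_j}{f_j} \, d\vol_{\Sph^2} \;\le\; \int_{\Sph^2} 1 \, d\vol_{\Sph^2} \;=\; \vol(\Sph^2),
\]
which is exactly the desired bound.

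There is essentially no obstacle here: the argument is a one-line Bochner-type computation plus integration by parts, and it does not use the volume bound from \eqref{eqn-warping-function-condition}, only the differential inequality. The only points worth flagging are (a) the fact that $u_j$ is genuinely smooth (so Stokes applies without qualification), which follows from the smoothness and strict positivity of $f_j$ built into Definition \ref{Defn-Main}, and (b) that the estimate is uniform in $j$ because the right-hand side $\vol(\Sph^2) = 4\pi$ does not depend on $j$. This uniform $L^2$ bound on $\nabla \ln f_j$ is then exactly what feeds into the Moser–Trudinger inequality in the next step to yield uniform $L^p$ control on $f_j$.
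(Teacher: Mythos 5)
Your proof is correct and is essentially identical to the paper's: both pass to $\ln f_j$, use the pointwise identity $\Delta \ln f_j = \frac{\Delta f_j}{f_j} - |\nabla \ln f_j|^2$, bound $\frac{\Delta f_j}{f_j} \le 1$, and integrate over the closed manifold so the $\Delta \ln f_j$ term vanishes by Stokes. The only immaterial difference is that the paper applies the pointwise bound before integrating while you apply it after.
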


\begin{proof}
Note that
\be\label{eqn-Laplace-ln}
\Delta \ln f_j=\frac{\Delta f_j}{f_j}-\frac{|\nabla f_j |^2}{f_j^2}.
\ee
By equation (\ref{eqn-Laplace-ln}) and the assumption, we have
\be
|\nabla \ln f_j |^2=\frac{|\nabla f_j |^2}{f_j^2}
=\frac{\Delta f_j }{f_j }-\Delta \ln f_j \leq 1-\Delta \ln f_j .
\ee	
Integrating it over $\Sph^2$, and using Stokes' theorem, we get
\be
\|\nabla \ln f_j \|_{L^2(\Sph^2)}^2=\int_{\Sph^2}|\nabla \ln f_j |^2\leq {\rm Vol} (\Sph^2).
\ee

\end{proof}

\begin{lem} \label{Lem-Moser-Trudinger}
Let $\{f_{j}\}_{j=1}^\infty$ be a sequence of positive functions on $\Sph^2$ satisfying
\be
\Delta f_j \leq f_j, \quad \int_{\Sph^2}f_j d {\rm vol}_{\Sph^2} \leq \frac{V}{2\pi}, \quad \forall j \in \N.
\ee
Then we have
\be
\| f_j \|_{L^p(\Sph^2)}^p\leq 4\pi \exp\left(\frac{Vp}{8\pi^2}+\frac{p^2}{4}\right),
\ee
for all $j \in \N$ and $ p \in [1, +\infty)$.
	
\end{lem}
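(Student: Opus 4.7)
The plan is to apply the Moser--Trudinger/Onofri inequality on $\Sph^2$ (formula (25) in \cite{Onofri-82}) to the test function $u := p \ln f_j$, and then estimate the two resulting terms using, respectively, Lemma~\ref{lem-l2-gradient-ln} and the given $L^1$ bound on $f_j$.

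First, I recall Onofri's inequality: for every $u \in W^{1,2}(\Sph^2)$,
\begin{equation*}
  \log\left( \fint_{\Sph^2} e^{u} \, d{\rm vol}_{\Sph^2} \right)
  \;\leq\; \fint_{\Sph^2} u \, d{\rm vol}_{\Sph^2}
  \;+\; \frac{1}{16\pi} \int_{\Sph^2} |\nabla u|^2 \, d{\rm vol}_{\Sph^2}.
\end{equation*}
Setting $u = p \ln f_j$ and using ${\rm Vol}(\Sph^2) = 4\pi$, we obtain
\begin{equation*}
  \|f_j\|_{L^p(\Sph^2)}^p
  = \int_{\Sph^2} e^{p \ln f_j}
  \;\leq\; 4\pi \exp\!\left( p \fint_{\Sph^2} \ln f_j \;+\; \frac{p^2}{16\pi}\int_{\Sph^2} |\nabla \ln f_j|^2 \right).
\end{equation*}

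Next, I would handle the two terms in the exponent separately. For the gradient term, Lemma~\ref{lem-l2-gradient-ln} gives $\|\nabla \ln f_j\|_{L^2(\Sph^2)}^2 \leq {\rm Vol}(\Sph^2) = 4\pi$, so
\begin{equation*}
  \frac{p^2}{16\pi}\int_{\Sph^2} |\nabla \ln f_j|^2 \;\leq\; \frac{p^2}{4},
\end{equation*}
which accounts for the $p^2/4$ in the target bound. For the mean term, by Jensen's inequality (concavity of $\log$) and the hypothesis $\int_{\Sph^2} f_j \leq V/(2\pi)$,
\begin{equation*}
  \fint_{\Sph^2} \ln f_j \;\leq\; \log\!\left( \fint_{\Sph^2} f_j \right) \;\leq\; \log\!\frac{V}{8\pi^2}.
\end{equation*}

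Finally, using the elementary inequality $\log x \leq x$ valid for all $x > 0$, I convert the logarithm into a linear expression:
\begin{equation*}
  p \fint_{\Sph^2} \ln f_j \;\leq\; p \log\!\frac{V}{8\pi^2} \;\leq\; \frac{Vp}{8\pi^2},
\end{equation*}
which is exactly the other term in the target exponent. Combining everything yields the claimed inequality. The only non-routine input is Onofri's inequality; everything else is a direct chain of estimates, so I do not anticipate any significant obstacle.
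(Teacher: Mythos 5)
Your proof is correct and follows essentially the same route as the paper: apply the Moser--Trudinger/Onofri inequality to $\psi = p\ln f_j$, bound the gradient term by Lemma~\ref{lem-l2-gradient-ln}, and bound the mean term from the $L^1$ hypothesis. The only cosmetic difference is that you insert a Jensen step $\fint \ln f_j \le \log(\fint f_j)$ before invoking $\log x \le x$, whereas the paper applies $\ln f_j \le f_j$ pointwise and integrates; both paths are valid and land on the same exponent $\frac{Vp}{8\pi^2} + \frac{p^2}{4}$.
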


\begin{proof}
By the Moser-Trudinger inequality (inequality (25) in \cite{Onofri-82}), for any smooth function $\psi:\Sph^2\to \R$ we have
\be\label{eqn-Moser-Trudinger}
\int_{\Sph^2} e^\psi d{\rm vol}_{\Sph^2}
\leq
4\pi \exp\left(\frac{1}{4\pi}\int_{\Sph^2}\left(\psi+\frac{1}{4}|\nabla\psi|^2\right)d {\rm vol}_{\Sph^2}\right).
\ee
Here $\nabla$ is the Levi-Civita connection of the standard metric $g_{\Sph^2}$ and $d{\rm vol}_{\Sph^2}$ is the volume form on $\Sph^2$ with respect to the standard metric $g_{\Sph^2}$. Take $\psi=p\ln f_j$, then we have
\begin{eqnarray}
\| f_j \|_{L^p(\Sph^2)}^p
& = & \int_{\Sph^2} f_j ^p d{\rm vol}_{\Sph^2} \\
& = &
\int_{\Sph^2} e^{p\ln f_j }d{\rm vol}_{\Sph^2} \\
& \leq  &
4\pi \exp\left(\frac{1}{4\pi}\int_{\Sph^2} \left(p\ln f_j +\frac{p^2}{4}|\nabla \ln f_j |^2\right)d{\rm vol}_{\Sph^2}\right).
\end{eqnarray}
By the fact that $\ln x \leq x, \forall x>0$, we have
\be
\int_{\Sph^2}\ln f_j \leq \int_{\Sph^2} f_j \leq \frac{V}{2\pi}.
\ee
On the hand, by Lemma \ref{lem-l2-gradient-ln} we have
\be\int_{\Sph^2}|\nabla \ln f_j |^2 \leq \vol (\Sph^2)=4\pi.\ee
This completes the proof.
\end{proof}

Next, we show that such sequence of function is uniformly bounded in $W^{1,p}(\Sph^2)$ for $p\in [1,2)$.
\begin{lem}\label{lem-Sobolev-norm-bound}
Let $\{ f_j \}_{j=1}^\infty$ be a sequence of positive functions on $\Sph^2$ satisfying \be
\Delta f_j \leq f_j, \quad \int_{\Sph^2}f_j  d {\rm vol}_{\Sph^2} \leq \frac{V}{2\pi}, \quad \forall j \in \N.
\ee
Then the sequence is uniformly bounded in $W^{1, p}(\Sph^2)$ for $p \in [1, 2)$, i.e. for each $p \in [1, 2)$, there exists a constant $C(p)$ such that
\be
\|f_j \|_{W^{1, p}(\Sph^2)} \leq C(p), \quad \forall j \in \N.
\ee
\end{lem}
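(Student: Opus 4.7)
The plan is to combine the two previous lemmas via the identity $\nabla f_j = f_j \nabla \ln f_j$ and H\"older's inequality. Lemma \ref{Lem-Moser-Trudinger} already gives uniform $L^p$ bounds on $f_j$ itself for every $p \in [1,\infty)$, so what remains is to control $\|\nabla f_j\|_{L^p(\Sph^2)}$ for $p\in[1,2)$.

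First I would write, pointwise on $\Sph^2$,
\begin{equation}
|\nabla f_j|^p \;=\; f_j^{\,p}\,|\nabla \ln f_j|^p,
\end{equation}
and apply H\"older's inequality with conjugate exponents $2/p$ and $2/(2-p)$ (both finite and $>1$ because $p\in[1,2)$):
\begin{equation}
\int_{\Sph^2}|\nabla f_j|^p\,d{\rm vol}_{\Sph^2}
\;\leq\;
\left(\int_{\Sph^2}|\nabla \ln f_j|^{2}\,d{\rm vol}_{\Sph^2}\right)^{\!p/2}
\left(\int_{\Sph^2}f_j^{\,2p/(2-p)}\,d{\rm vol}_{\Sph^2}\right)^{\!(2-p)/2}.
\end{equation}
The first factor is uniformly bounded by $(4\pi)^{p/2}$ by Lemma \ref{lem-l2-gradient-ln}. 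For the second factor, since $2p/(2-p)$ is a finite exponent (depending only on $p$), Lemma \ref{Lem-Moser-Trudinger} applied with that exponent gives a uniform bound depending only on $V$ and $p$. Hence $\|\nabla f_j\|_{L^p(\Sph^2)} \leq C_1(p,V)$.

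Combining this with the $L^p$ bound $\|f_j\|_{L^p(\Sph^2)} \leq C_2(p,V)$ from Lemma \ref{Lem-Moser-Trudinger} yields
\begin{equation}
\|f_j\|_{W^{1,p}(\Sph^2)} \;=\; \|f_j\|_{L^p(\Sph^2)} + \|\nabla f_j\|_{L^p(\Sph^2)} \;\leq\; C(p),
\end{equation}
with $C(p)$ independent of $j$, proving the claim. There is no real obstacle here; the only point to check carefully is that the conjugate exponent $2p/(2-p)$ is finite precisely because $p<2$, which is exactly why the conclusion fails at $p=2$ and why the sharp regularity of the limit is only $W^{1,p}$ for $p<2$ (cf.\ Remark \ref{Intro-rmrk-best-regularity}).
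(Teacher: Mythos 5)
Your proof is correct and takes essentially the same approach as the paper: write $\nabla f_j = f_j\,\nabla\ln f_j$, apply H\"older with exponents $2/p$ and $2/(2-p)$, then invoke Lemma \ref{lem-l2-gradient-ln} for the gradient-of-log factor and Lemma \ref{Lem-Moser-Trudinger} for the $L^{2p/(2-p)}$ and $L^p$ norms of $f_j$. Your write-up is in fact slightly cleaner than the paper's, which inserts an unnecessary intermediate bound $\|f_j\|_{L^{2p/(2-p)}} \leq \|f_j\|_{L^{2p/(2-p)}} + {\rm Vol}(\Sph^2)$.
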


\begin{proof}
For any $1\leq p <2$,
 \begin{equation}
 |\nabla f_{j} |^{p} = |\nabla \ln  f_{j}  |^{p} \cdot  | f_{j}  |^{p}.
 \end{equation}
 The Cauchy-Schwarz inequality implies that
 \begin{eqnarray}
 & & \|\nabla f_{j} \|_{L^{p}(\mathbb{S}^{2})} \\
 & = & \left(\int_{\mathbb{S}^{2}} |\nabla \ln  f_{j}  |^{p} \cdot  | f_{j}  |^{p}\right)^{\frac{1}{p}} \\
 & \leq & \|\nabla \ln f_{j} \|_{L^{2}(\mathbb{S}^{2})} \cdot  \| f_{j}  \|_{L^{\frac{2p}{2-p}}(\mathbb{S}^{2})} \\
  & \leq & \|\nabla \ln f_{j} \|_{L^{2}(\mathbb{S}^{2})} \cdot \left(  \|f_{j} \|_{L^{\frac{2p}{2-p}}(\mathbb{S}^{2})} + {\rm Vol}(\Sph^2) \right) \\
  & \leq & \left(\vol(\mathbb{S}^{2})\right)^{\frac{1}{2}} \left(  (4\pi)^{\frac{2-p}{2p}} \exp\left(\frac{V}{8\pi^{2}} + \frac{p}{2(2-p)}\right) + {\rm Vol}(\Sph^2) \right).
  \end{eqnarray}
  Here in the last step, we used Lemma \ref{lem-l2-gradient-ln} and Lemma \ref{Lem-Moser-Trudinger}.
  Moreover, by Lemma~\ref{Lem-Moser-Trudinger} again, for each $p \in [1, 2)$, $\| f_j \|_{L^{p}(\Sph^2)}$ is uniformly bounded for all $j \in \N$.
  Hence for each $p \in [1, 2)$, $\|f_j \|_{W^{1, p}(\Sph^2)}$ is uniformly bounded for all $j \in \N$.
  \end{proof}

We use the uniform $W^{1,p}(\Sph^2)$ bound to prove convergence in the following lemma.
\begin{lem}\label{lem-W1p-limit}
Let $\{ f_j \}_{j=1}^\infty$ be a sequence of positive functions on $\Sph^2$ satisfying
\be
\Delta f_j \leq f_j, \quad \int_{\Sph^2}f_j d {\rm vol}_{\Sph^2} \leq \frac{V}{2\pi}, \quad \forall j \in \N.
\ee
Then for each fixed $p \in [1, 2)$, there exists a subsequence $\{ f_{j^{(p)}_k} \}_{k=1}^\infty$ and   $f_{\infty, p} \in W^{1, p}(\mathbb{S}^2) $  such that
\be
f_{j^{(p)}_k} \rightarrow f_{\infty, p}, \quad \text{in} \ \ L^{q}(\Sph^2),
\ee
for each $1 \leq q < \frac{2p}{2-p}$.

Moreover, for any $\varphi \in C^{\infty}(\Sph^2)$,
\begin{equation*}
\int_{\Sph^2} \left( f_{j^{(p)}_k} \varphi + \langle \nabla f_{j^{(p)}_k}, \nabla \varphi \rangle \right) d\vol_{g_{\Sph^2}} \rightarrow \int_{\Sph^2} \left( f_{\infty, p} \varphi + \langle \nabla f_{\infty, p}, \nabla \varphi \right) d\vol_{g_{\Sph^2}},
\end{equation*}
as $j^{(p)}_{k} \rightarrow \infty$, where $\nabla f_{\infty, p}$ is the weak gradient of $f_{\infty, p}$.

\end{lem}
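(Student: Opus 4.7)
The plan is to combine the uniform $W^{1,p}$ bound from Lemma \ref{lem-Sobolev-norm-bound} with two standard facts about Sobolev spaces on the compact two-dimensional manifold $\Sph^2$: the reflexivity of $W^{1,p}$ for $p \in (1,2)$, and the Rellich--Kondrachov compactness of the embedding $W^{1,p}(\Sph^2) \hookrightarrow L^q(\Sph^2)$ for $1 \leq q < \tfrac{2p}{2-p}$. No new analytic input is needed beyond what Lemma \ref{lem-Sobolev-norm-bound} already provides.

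First, fix $p \in (1,2)$. Lemma \ref{lem-Sobolev-norm-bound} supplies a uniform bound $\|f_j\|_{W^{1,p}(\Sph^2)} \leq C(p)$. Since $W^{1,p}(\Sph^2)$ is reflexive for $p>1$ (it embeds as a closed subspace of the reflexive product $L^p \times L^p \times L^p$), Banach--Alaoglu extracts a subsequence $\{f_{j^{(p)}_k}\}$ converging weakly in $W^{1,p}(\Sph^2)$ to some $f_{\infty,p} \in W^{1,p}(\Sph^2)$; equivalently, $f_{j^{(p)}_k} \rightharpoonup f_{\infty,p}$ and $\nabla f_{j^{(p)}_k} \rightharpoonup \nabla f_{\infty,p}$ weakly in $L^p(\Sph^2)$. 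Then the Rellich--Kondrachov theorem on the two-dimensional compact manifold $\Sph^2$ yields strong $L^q$ convergence for every $1 \leq q < \tfrac{2p}{2-p}$; a diagonal extraction over an increasing sequence $q_n \uparrow \tfrac{2p}{2-p}$ produces a single subsequence that works for all admissible $q$ simultaneously.

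For the endpoint $p=1$, the space $W^{1,1}(\Sph^2)$ is not reflexive, so I would sidestep this by picking an auxiliary exponent $p' \in (1,2)$ and applying the preceding argument with $p'$ in place of $p$. Because $\tfrac{2p'}{2-p'} > 2 = \tfrac{2\cdot 1}{2-1}$ whenever $p'>1$, the resulting $L^q$ convergence covers the required range $q<2$, and the limit $f_{\infty,1} := f_{\infty,p'}$ lies in $W^{1,p'}(\Sph^2) \subset W^{1,1}(\Sph^2)$.

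Finally, the test-function identity is essentially automatic. Given $\varphi \in C^\infty(\Sph^2)$, the strong $L^1$ convergence $f_{j^{(p)}_k} \to f_{\infty,p}$ paired with the bounded function $\varphi$ yields $\int_{\Sph^2} f_{j^{(p)}_k}\varphi \to \int_{\Sph^2} f_{\infty,p}\varphi$, while the weak $L^p$ convergence of $\nabla f_{j^{(p)}_k}$ tested against $\nabla \varphi \in L^{p/(p-1)}(\Sph^2)$ gives $\int_{\Sph^2}\langle \nabla f_{j^{(p)}_k}, \nabla\varphi\rangle \to \int_{\Sph^2}\langle \nabla f_{\infty,p}, \nabla\varphi\rangle$. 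The only mild obstacle is the non-reflexivity at $p=1$, which the $p'$-trick above cleanly avoids; otherwise this is a direct application of standard Sobolev compactness machinery to the estimates already established.
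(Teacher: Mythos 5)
Your proof is correct and uses the same core ingredients as the paper --- the uniform $W^{1,p}$ bound from Lemma \ref{lem-Sobolev-norm-bound}, the compact Rellich--Kondrachov embedding $W^{1,p}(\Sph^2)\hookrightarrow L^q(\Sph^2)$, and weak compactness to identify $f_{\infty,p}\in W^{1,p}$ --- but it arranges them in a slightly different order. You first extract a weak $W^{1,p}$ limit via reflexivity and then read off strong $L^q$ convergence from the compact embedding; the paper first extracts a strongly $L^q$-convergent subsequence, then a weakly $L^p$-convergent subsequence of the gradients, and identifies the weak $L^p$ limit with $\nabla f_{\infty,p}$ by integrating by parts against test vector fields. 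Your explicit treatment of $p=1$ via an auxiliary $p'\in(1,2)$ is a genuine improvement: the paper invokes weak compactness in $L^p$ uniformly for $p\in[1,2)$, but bounded sequences in $L^1$ need not have weakly convergent subsequences (the correct endpoint statement is weak-$*$ convergence to a Radon measure), so the paper's argument at $p=1$ implicitly requires exactly the bump to $p'>1$ that you make explicit; this never bites in practice, since Proposition \ref{prop-W1p-limit} only applies the lemma with $p=2-\tfrac{1}{n+1}>1$. One small simplification: once $f_{j_k^{(p)}}\rightharpoonup f_{\infty,p}$ weakly in $W^{1,p}$ with $p>1$, each compact embedding $W^{1,p}\hookrightarrow L^q$ already converts this to strong $L^q$ convergence of the \emph{same} subsequence for every admissible $q$ (a compact operator maps weakly convergent sequences to strongly convergent ones), so the diagonal extraction over $q_n\uparrow\tfrac{2p}{2-p}$ is unnecessary.
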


\begin{proof}
For each fixed $p \in [1, 2)$, by using Rellich-Kondrachov compactness theorem, the uniform estimate of Sobolev norms in Lemma \ref{lem-Sobolev-norm-bound} implies that there exists a subsequence of $\{ f_j \}$, which is still denoted by $\{ f_j \}$, converging to $f_{\infty, p}$ in $L^{q}(\Sph^2)$ for $1 \leq q < \frac{2p}{2-p}$. Then by the weak compactness in $L^p$ space (see, e.g. Theorem 1.42 in \cite{EG-text}), we can obtain that $f_{\infty, p} \in W^{1, p}(\Sph^2)$. Indeed, $\|f_j\|_{W^{1, p}(\Sph^2)} \leq C$ for all $j \in \N$ implies that $\| f_j \|_{L^{p}(\Sph^2)}$ and $\|\nabla f_j\|_{L^{p}(\Sph^2)}$ are both uniformly bounded. Then the weak compactness in $L^p$ space implies that there exist a further subsequence, denoted by $f_{j^{(p)}_k}$, and $X \in L^{p}(\Sph^2, {\rm T}\Sph^2)$ such that
\be
\nabla f_{j^{(p)}_k} \rightharpoonup X \quad \text{in} \ \ L^{p}(\Sph^2, {\rm T}\Sph^2),
\ee
i.e.
\be\label{eqn-gradient-weak-convergence}
\int_{\Sph^2} \langle \nabla f_{j^{(p)}_k} , Y \rangle d\vol_{g_{\Sph^2}} \rightarrow \int_{\Sph^2} \langle X, Y \rangle d\vol_{g_{\Sph^2}}, \quad \forall Y \in C^{\infty}(\Sph^2, {\rm T}\Sph^2).
\ee
On the other hand,
\be
\int_{\Sph^2} \langle \nabla f_{j^{(p)}_k} , Y \rangle d\vol_{g_{\Sph^2}} = \int_{\Sph^2} f_j {\rm div}Y d\vol_{g_{\Sph^2}} \rightarrow \int_{\Sph^2} f_{\infty, p} {\rm div} Y d\vol_{g_{\Sph^2}},
\ee
since $f_{j^{(p)}_k} \rightarrow f_{\infty, p}$ in $L^{p}$. Thus,
\be
\int_{\Sph^2} f_{\infty, p} {\rm div} Y d\vol_{g_{\Sph^2}} = \int_{\Sph^2} \langle X, Y \rangle d\vol_{g_{\Sph^2}}, \quad \forall Y \in C^{\infty}(\Sph^2, {\rm T}\Sph^2).
\ee
Therefore, $X = \nabla f_{\infty, p}$ is the gradient of $f_{\infty, p}$ in the sense of distribution, and so $f_{\infty, p} \in W^{1, p}(\Sph^2, g_{\Sph^2})$.
For any $\varphi \in C^{\infty}(\Sph^2)$, by taking $Y = \nabla \varphi$ in (\ref{eqn-gradient-weak-convergence}), we obtain
\be
\int_{\Sph^2} \left( f_{j^{(p)}_k} \varphi + \langle \nabla f_{j^{(p)}_k}, \nabla \varphi \rangle \right) d\vol_{g_{\Sph^2}} \rightarrow \int_{\Sph^2} \left( f_{\infty, p} \varphi + \langle \nabla f_{\infty, p}, \nabla \varphi \right) d\vol_{g_{\Sph^2}}.
\ee
\end{proof}

Now we use Lemma \ref{lem-W1p-limit} and diagonal argument to find a subsequence converging in $L^q$ for all $q\geq 1$ and prove the following proposition:

\begin{prop}\label{prop-W1p-limit}
Let $\{ f_j \}_{j=1}^\infty$ be a sequence of positive functions on $\Sph^2$ satisfying \be
\Delta f_j \leq f_j, \quad \int_{\Sph^2}f_j d {\rm vol}_{\Sph^2} \leq \frac{V}{2\pi}, \quad \forall j \in \N.
\ee
Then there exists a subsequence $\{ f_{j_k} \}_{k=1}^\infty$ and   $f_{\infty} \in W^{1, p}(\mathbb{S}^2)$ for all $p \in [1, 2)$,  such that
\be
f_{j_k} \rightarrow f_{\infty}, \quad \text{in} \ \ L^{q}(\Sph^2), \ \  \forall q \in [1, \infty).
\ee

Moreover, for any $\varphi \in C^{\infty}(\Sph^2)$,
\be\label{eqn-weak-convergence}
\int_{\Sph^2} \left( f_{j_k} \varphi + \langle \nabla f_{j_k}, \nabla \varphi \rangle \right) d\vol_{g_{\Sph^2}} \rightarrow \int_{\Sph^2} \left( f_{\infty} \varphi + \langle \nabla f_{\infty}, \nabla \varphi\rangle \right) d\vol_{g_{\Sph^2}},
\ee
as $j_{k} \rightarrow \infty$, where $\nabla f_{\infty}$ is the weak gradient of $f_{\infty}$.
\end{prop}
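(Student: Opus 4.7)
The plan is to extract the desired subsequence by applying Lemma~\ref{lem-W1p-limit} to a sequence of exponents $p_n \nearrow 2$ and then performing Cantor's diagonal argument. The key observation is that the exponent $\tfrac{2p}{2-p}$ appearing in the $L^q$-convergence conclusion of Lemma~\ref{lem-W1p-limit} tends to $+\infty$ as $p \to 2^-$, so by letting $p$ approach $2$ we can eventually reach every finite $q$.

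First I would fix $p_n = 2 - \tfrac{1}{n}$ for $n \geq 1$. Apply Lemma~\ref{lem-W1p-limit} with $p = p_1$ to the original sequence to obtain a subsequence $\{f_{j^{(1)}_k}\}_{k=1}^\infty$ and a limit function $f_{\infty, p_1} \in W^{1, p_1}(\Sph^2)$ with the stated $L^q$-convergence. Recursively, having produced $\{f_{j^{(n)}_k}\}$, apply Lemma~\ref{lem-W1p-limit} with $p = p_{n+1}$ to that subsequence to extract $\{f_{j^{(n+1)}_k}\}$ converging in $L^q$ for $q < \tfrac{2p_{n+1}}{2-p_{n+1}}$ to some $f_{\infty, p_{n+1}} \in W^{1, p_{n+1}}(\Sph^2)$. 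Since each new subsequence is a subsequence of the preceding one, and since $L^q$-convergence for any $q \geq 1$ forces $L^1$-convergence, uniqueness of $L^1$-limits gives $f_{\infty, p_{n+1}} = f_{\infty, p_n}$ almost everywhere; denote this common function by $f_\infty$. In particular $f_\infty \in W^{1, p_n}(\Sph^2)$ for every $n$, and since the exponents $p_n$ exhaust $[1,2)$ and $W^{1, p}$ embeds continuously into $W^{1, p'}$ for $p \geq p'$ on a compact manifold, one has $f_\infty \in W^{1, p}(\Sph^2)$ for all $p \in [1, 2)$.

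Next, form the diagonal subsequence $f_{j_k} := f_{j^{(k)}_k}$. For every fixed $n$, the tail $\{f_{j_k}\}_{k \geq n}$ is a subsequence of $\{f_{j^{(n)}_k}\}_{k=1}^\infty$, hence converges to $f_\infty$ in $L^q(\Sph^2)$ for every $q < \tfrac{2p_n}{2-p_n}$. Since $\tfrac{2p_n}{2-p_n} \to +\infty$ as $n \to \infty$, this yields convergence in $L^q(\Sph^2)$ for every $q \in [1, \infty)$, as desired.

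For the final weak-convergence statement~(\ref{eqn-weak-convergence}), fix any $\varphi \in C^\infty(\Sph^2)$ and any $p \in [1,2)$. Choose $n$ large enough that $p_n \geq p$. The diagonal subsequence $\{f_{j_k}\}_{k \geq n}$ is a subsequence of $\{f_{j^{(n)}_k}\}$, for which the analogous weak convergence in Lemma~\ref{lem-W1p-limit} holds with limit $f_{\infty, p_n} = f_\infty$; the weak gradient obtained in that lemma coincides with $\nabla f_\infty$ by uniqueness of the distributional gradient. Passing to this subsequence yields
\[
\int_{\Sph^2} \bigl( f_{j_k}\varphi + \langle \nabla f_{j_k}, \nabla \varphi \rangle \bigr)\, d\vol_{g_{\Sph^2}} \longrightarrow \int_{\Sph^2} \bigl( f_\infty \varphi + \langle \nabla f_\infty, \nabla \varphi \rangle \bigr)\, d\vol_{g_{\Sph^2}}.
\]
The only genuine step beyond bookkeeping is verifying that the different limits $f_{\infty, p_n}$ agree, which is where I expect the only mild subtlety to lie; it is handled by uniqueness of $L^1$-limits along nested subsequences.
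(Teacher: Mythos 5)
Your proposal is correct and follows essentially the same route as the paper: apply Lemma~\ref{lem-W1p-limit} to a sequence of exponents increasing to $2$, take nested subsequences, pass to the diagonal, and use uniqueness of $L^1$ (the paper uses $L^2$) limits along nested subsequences to identify the various $f_{\infty, p_n}$ with a single $f_\infty$. The minor differences — your choice of $p_n = 2 - \tfrac{1}{n}$ versus the paper's $2 - \tfrac{1}{n+1}$, and invoking the Sobolev embedding $W^{1,p_n}\hookrightarrow W^{1,p}$ rather than directly bounding $\|\nabla f_\infty\|_{L^p}$ — are cosmetic.
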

\begin{proof}
The proof is a diagonal argument. We apply Lemma \ref{lem-W1p-limit} for $p = 2- \frac{1}{n+1}, n=1, 2, 3, \dots$.

For $n=1$, by applying Lemma \ref{lem-W1p-limit} to $\{f_j\}_{j=1}^\infty$ and $p =2 - \frac{1}{2}$, we obtain a subsequence, denoted by $f_{j^{(1)}_k, 1}$, and $f_{\infty, 1} \in W^{1, 2-\frac{1}{2}}$ such that
\be
f_{j^{(1)}_k, 1} \rightarrow f_{\infty, 1} \ \ \text{in} \ \ L^{q}(\Sph^2), \ \ \forall 1 \leq q < 6, \ \ \text{as} \ \ k \rightarrow \infty.
\ee

For $n=2$, by applying Lemma \ref{lem-W1p-limit} to the subsequence $ \left\{ f_{j^{(1)}_k, 1} \right\}_{k=1}^\infty$ and $p = 2 - \frac{1}{3}$, we obtain a subsequence, $\left\{ f_{j^{(2)}_k, 2} \right\}_{k=1}^\infty \subset \left\{ f_{j^{(1)}_k, 1} \right\}_{k=1}^\infty$, and $f_{\infty, 2} \in W^{1, 2 - \frac{1}{3}}$ such that
\be
f_{j^{(2)}_k, 2} \rightarrow f_{\infty, 2} \ \ \text{in} \ \ L^{q}(\Sph^2), \ \ \forall 1 \leq q < 10, \ \ \text{as} \ \ k \rightarrow \infty.
\ee

Then by repeating this process for $n=3, 4, 5, \dots$, we can obtain a family of decreasing subsequence $\left\{ f_{j^{(n)}_k, n} \right\}_{k=1}^\infty \subset \left\{ f_{j^{(n-1)}_k, n-1} \right\}_{k=1}^\infty$ and $f_{\infty, n} \in W^{1, 2 - \frac{1}{n+1}}$ for all $n \in \N$, such that for each fixed $n \in \N$
\be
f_{j^{(n)}_k, n} \rightarrow f_{\infty, n} \ \ \text{in} \ \ L^{q}(\Sph^2), \ \ \forall 1 \leq q < 4n+2, \ \ \text{as} \ \ k \rightarrow \infty.
\ee

Now we take the diagonal subsequence $\left\{ f_{j_k} := f_{f_{j^{(k)}_k, k}}  \mid k \in \N \right\}$. By the construction of $f_{j_k}$ and $4k+2 \rightarrow +\infty$ as $k\rightarrow +\infty$, we have that $\{f_{j_k}\}$ is a Cauchy sequence in $L^{q}(\Sph^2)$ for all $q \in [1, \infty)$. Thus there exists $f_\infty \in L^{q}(\Sph^2)$ such that
\be
f_{j_k} \rightarrow f_{\infty} \ \ \text{in} \ \ \in L^{q}(\Sph^2), \ \ \text{as} \ \ k \rightarrow \infty, \ \ \forall q \in [1, \infty).
\ee
Then by the uniqueness of $L^2$ limit, $f_\infty = f_{\infty, n}$ in $L^{2}(\Sph^2)$ for all $n \in \N$. Furthermore, because $f_{\infty, n} \in W^{1, 2 - \frac{1}{n+1}} (\Sph^2)$ and $ 2 - \frac{1}{n+1} \rightarrow 2^-$ as $n \rightarrow \infty$, we see that the $L^{p}$ norm of the weak derivative of $f_\infty$ is bounded for any $p \in [1, 2)$. Thus $f_\infty \in W^{1, p} (\Sph^2)$ for all $p \in [1, 2)$.

Finally, the last claim in (\ref{eqn-weak-convergence}) follows from that $\left\{ f_{j_k} \right\}_{k=1}^\infty \subset \left\{ f_{j^{(1)}_k, 1} \right\}_{k=1}^\infty$ and the corresponding convergence in Lemma \ref{lem-W1p-limit} for $p = 2- \frac{1}{2}$, in particular for the subsequence $\left\{ f_{j^{(1)}_k, 1} \right\}_{k=1}^\infty$.
\end{proof}

\begin{rmrk}\label{rmrk-best-regularity}
{\rm
The extreme example constructed by Christina Sormani and authors in \cite{STW-ex} shows that $W^{1, p}$ regularity for $p<2$ is the best regularity we can expect for $f_\infty$ in general (see Lemma 3.4 in \cite{STW-ex}).
}
\end{rmrk}

\subsection{Lower semi-continuous representative of the limit function}\label{subsect-lower-semi-continuous-representative}

For the limit function $f_\infty$ obtained in Proposition \ref{prop-W1p-limit}, Lebesgue-Besicovitch differential theorem implies that
\begin{equation}\label{eqn-LBDT}
\lim_{r\rightarrow 0} \fint_{B_{r}(x)} f_{\infty} d\vol_{g_{\Sph^2}} = f_\infty (x)
\end{equation}
holds for a.e. $x \in \Sph^2$ with respect to the volume measure $d\vol_{g_{\Sph^2}}$. In Proposition \ref{prop: average limit exists}, by applying the ball average monotonicity property in Proposition \ref{prop: ball average non-increasing}, we will show that the limit of ball average in (\ref{eqn-LBDT}) actually exists for all $x \in \Sph^2$, and that the limit produces a lower semi-continuous function.

\begin{prop}\label{prop: average limit exists}
Let $\{f_{j}\}_{j=1}^\infty$ be a sequence of smooth positive functions on $\Sph^2$ satisfying
\be
\Delta f_j \leq f_j, \quad \int_{\Sph^2}f_j d {\rm vol}_{\Sph^2} \leq \frac{V}{2\pi}, \quad \forall j \in \N.
\ee
Then the limit function, $f_\infty$,  obtained in Proposition \ref{prop-W1p-limit},  has the following properties.
\begin{enumerate}[(i)]
 \item For each fixed $x \in \Sph^2$,
            the ball average
           \be
           \fint_{B_{r}(x)} \left( f_\infty (y) - C d(y, x) \right) d{\rm vol}(y)
            \ee
            is non-increasing in $r \in \left(0, \frac{\pi}{2} \right)$,
             where $C$ is a positive real number such that $\sup_{j\in \N}\| f_ j \|_{L^{2}(\Sph^2)} \leq C \sqrt{2\pi}$.
             Note that the existence of such $C$ is guaranteed by Lemma \ref{Lem-Moser-Trudinger}.
\item Consequently, the limit
\be
\overline{f_\infty}(x) := \lim_{r \rightarrow 0} \fint_{B_{r}(x)} f_{\infty} = \lim_{r \rightarrow 0 } \fint_{B_{r}(x)} \left( f_\infty (y) - C d(y, x) \right) d{\rm vol}(y)
\ee
exists, allowing $+\infty$ as a limit, for every $x \in \Sph^2$. Moreover, $\overline{f_{\infty}}$ is a lower semi-continuous function on $\Sph^2$.
\end{enumerate}
\end{prop}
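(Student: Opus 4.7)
The plan is to transfer the ball average monotonicity of Proposition~\ref{prop: ball average non-increasing} from each smooth $f_j$ to the limit $f_\infty$, and then read off both (i) and (ii) as formal consequences of that monotonicity.

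First I would fix a constant $C > 0$ with $\sup_j \|f_j\|_{L^2(\Sph^2)} \leq C\sqrt{2\pi}$; such a $C$ exists by Lemma~\ref{Lem-Moser-Trudinger} applied with $p=2$, so every $f_j$ satisfies the hypothesis of Proposition~\ref{prop: ball average non-increasing} with the same $C$. For the subsequence $f_{j_k} \to f_\infty$ from Proposition~\ref{prop-W1p-limit} and every $x\in\Sph^2$, $0<r<R\leq \pi/2$, that proposition gives
\[
\fint_{B_R(x)}\bigl(f_{j_k}(y) - C\,d(y,x)\bigr)\,d\vol(y) \;\leq\; \fint_{B_r(x)}\bigl(f_{j_k}(y) - C\,d(y,x)\bigr)\,d\vol(y).
\]
Since $\Sph^2$ is homogeneous, $\vol(B_\rho(x))$ depends only on $\rho$, and since $f_{j_k}\to f_\infty$ in $L^1(\Sph^2)$, both sides converge as $k\to\infty$ to the corresponding averages of $f_\infty$. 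This yields (i).

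For (ii), fix $x\in\Sph^2$ and write $F(r) := \fint_{B_r(x)}(f_\infty(y) - C\,d(y,x))\,d\vol(y)$. By (i), $F$ is non-increasing on $(0,\pi/2]$, so $\lim_{r\to 0^+} F(r)$ exists in $(-\infty,+\infty]$. Because $0 \leq \fint_{B_r(x)} C\,d(y,x)\,d\vol(y) \leq Cr \to 0$, the limit $\overline{f_\infty}(x) := \lim_{r\to 0^+}\fint_{B_r(x)} f_\infty$ also exists and coincides with $\lim_{r\to 0^+} F(r)$; since $f_\infty\geq 0$ a.e.\ this value lies in $[0,+\infty]$. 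For lower semi-continuity, I would observe that for each fixed $r\in(0,\pi/2]$ the map $x \mapsto F_r(x) := \fint_{B_r(x)}(f_\infty - C d(\cdot,x))\,d\vol$ is continuous on $\Sph^2$: as $x_n\to x$ the indicator $\chi_{B_r(x_n)}$ converges a.e.\ to $\chi_{B_r(x)}$ (the boundary circle has measure zero) while $d(\cdot,x_n)\to d(\cdot,x)$ uniformly, so dominated convergence with dominant $f_\infty + C\pi \in L^1(\Sph^2)$ applies. The monotonicity in (i) then rewrites $\overline{f_\infty}(x) = \sup_{r\in(0,\pi/2]} F_r(x)$, a pointwise supremum of continuous functions, hence lower semi-continuous.

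I do not expect a serious obstacle here: all the analytic heavy lifting (uniform $L^2$ control via Moser--Trudinger, strong $L^q$ convergence, Bishop--Gromov-style monotonicity) is already in place. The only points that need care are the direction of the monotonicity and the routine dominated-convergence verification that each $F_r$ is continuous in $x$; together with the elementary fact that a supremum of continuous functions is lower semi-continuous, these immediately produce the stated representative $\overline{f_\infty}$.
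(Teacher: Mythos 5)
Your proof is correct and follows essentially the same route as the paper: pass the ball-average monotonicity of Proposition~\ref{prop: ball average non-increasing} from $f_{j_k}$ to $f_\infty$ via $L^1$ convergence, note that the $Cd(\cdot,x)$ correction vanishes in the $r\to 0$ average, and obtain lower semi-continuity by writing $\overline{f_\infty}$ as a supremum of the continuous functions $F_r$. The only cosmetic differences are that the paper computes $\fint_{B_r(x)} d(y,x)\,d\vol(y) = \frac{\sin r - r\cos r}{1-\cos r}$ explicitly while you bound it by $Cr$, and your dominated-convergence justification of the continuity of $x\mapsto F_r(x)$ is spelled out a bit more than the paper's.
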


\begin{proof}
By Lemma \ref{Lem-Moser-Trudinger}, there exists $C \in \R$ such that
\be
\|f_j\|_{L^{2}(\Sph^2)} \leq C \sqrt{2\pi}, \ \ \forall j \in \N.
\ee
Then by applying Proposition \ref{prop: ball average non-increasing} to functions $f_j$, we obtain that for any fixed $x\in \Sph^2$
\begin{equation}
\fint_{B_{R}(x)} (f_j (y) - K d(y, x)) d{\rm vol}(y) \leq \fint_{B_{r}(x)} (f_j (y) - C d(y, x)) d{\rm vol}(y)
\end{equation}
holds for any $0 < r < R < \frac{\pi}{2}$ and all $j \in \N$.

By Proposition \ref{prop-W1p-limit} $f_j \rightarrow f_\infty$ in $L^{1}(\Sph^2)$.
Then for any fixed $x\in \Sph^2$, and any fixed $0 < r < R  < \frac{\pi}{2}$, by taking the limit as $j \rightarrow +\infty$, we obtain
\begin{equation}\label{eqn: ball average monotonicity}
\fint_{B_{R}(x)} \left( f_\infty (y) - C d(y, x) \right) d{\rm vol}(y) \leq \fint_{B_{r}(x)} \left( f_\infty (y) - C d(y, x) \right) d{\rm vol}(y),
\end{equation}
 So for each fixed $x \in \Sph^2$, the ball average
\be
\fint_{B_{r}(x)} \left( f_\infty (y) - C d(y, x) \right) d{\rm vol}(y)
\ee
is non-increasing for $r \in \left(0, \frac{\pi}{2} \right)$. Therefore, for any $x \in \Sph^2$ the limit
\begin{equation}
\lim_{r\rightarrow 0}  \fint_{B_{r}(x)} \left( f_\infty (y) - C d(y, x) \right) d{\rm vol}(y)
\end{equation}
exists as a finite number or $+\infty$.

On the other hand, by direct calculation
\begin{equation}
\fint_{B_{r}(x)} d(y, x) d{\rm vol}(y) = \frac{\int^{r}_{0} 2\pi s \sin s ds}{\int^{r}_{0} 2\pi \sin (s) ds} = \frac{\sin r - r \cos r}{1 - \cos r} \rightarrow 0,
\end{equation}
as $r \rightarrow 0$. Thus the limit
\be
\overline{f_{\infty}}(x) := \lim_{r \rightarrow 0} \fint_{B_{r}(x)} f_{\infty}
                                  = \lim_{r\rightarrow 0}  \fint_{B_{r}(x)} \left( f_\infty (y) - C d(y, x) \right) d{\rm vol}(y)
\ee
exists for all $x \in \Sph^2$.

For each fixed $0< r < \frac{\pi}{2}$, we have that $\fint_{B_{r}(x)} (f_\infty (y) - C d(y, x)) d{\rm vol}(y)$ is a continuous function of $x \in \Sph^2$, since $f_\infty \in L^2(\Sph^2)$, $C d(y, x)\leq C\pi$, and ${\rm Area}(B_{r}(x)) = 2\pi \sin r$ for all $x \in \Sph^2$. Then by the monotonicity in (\ref{eqn: ball average monotonicity}), we have
\be
\overline{f_\infty}(x) = \sup_{r>0}  \fint_{B_{r}(x)} \left( f_\infty (y) - C d(y, x) \right) d{\rm vol}(y).
\ee
In other words, $\overline{f_{\infty}}$ is the supremum of a sequence of continuous function. Thus $\overline{f_\infty}$ is lower semi-continuous.
\end{proof}

\begin{rmrk}\label{rmrk-lower-semi-continuous}
{\rm
Recall that by (\ref{eqn-LBDT}), $\lim\limits_{r\rightarrow 0} \fint_{B_{r}(x)} f_{\infty} d\vol_{g_{\Sph^2}} = f_\infty (x)$ hold for a.e. $x \in \Sph^2$, thus $\overline{f_\infty} (x) = f_\infty (x)$ holds for a.e. $x \in \Sph^2$. So as a $W^{1, p}$ function, $f_\infty$ has a lower semi-continuous representative $\overline{f_\infty}$.
}
\end{rmrk}


\section{Positivity of the limit warping functions}\label{sect-positivity-limit}

In this section, we prove that the limit warping function $f_\infty$ has a positive essential infimum, provided that the Riemannian manifold $\Sph^2 \times_{f_j} \Sph^1$ satisfies both requirements in (\ref{eqn-warping-function-condition}) and the $\mina$ condition [Theorem \ref{thm: f_infty positive}]. The main tools we use in the proof of Theorem \ref{thm: f_infty positive} include the maximum principle, the Min-Max minimal surface theory of Marques and Neves, and the spherical mean inequality we obtained in Proposition \ref{prop: spherical mean inequality}.

The maximum principle for weak solutions (Theorem 8.19 in \cite{GT-PDE-book}) requires $W^{1,2}$ regularity, but in general we only have $f_\infty \in W^{1, p}(\Sph^2)$ for $p<2$ [Remark \ref{rmrk-best-regularity}]. To overcome this difficulty, in Subsection \ref{subsect-cutoff}, we consider the truncation of warping functions $\bar{f}^K_j$ as defined in Definition \ref{defn-cut-off}, and obtain a $W^{1, 2}(\Sph^2)$ limit function $\bar{f}^K_\infty$ for the sequence of truncated function $\bar{f}^K_j$ [Lemma \ref{Cutoff-Lemma-Limit}]. This enables us to apply maximum principle for weak solutions  (Theorem 8.19 in \cite{GT-PDE-book}) to $\bar{f}^K_\infty$, and prove that either $\inf \bar{f}^K_\infty >0$ or $\bar{f}^K_\infty \equiv 0$ on $\Sph^2$ [Proposition \ref{prop-cutoff-infimum}].

In Subsection \ref{subsect-bound-mina}, we use Min-Max minimal surface theory of Marques and Neves and the spherical mean inequality in Proposition \ref{prop: spherical mean inequality} to obtain an upper bound for $\mina(\Sph \times_f \Sph^1)$ in terms of $L^1$ norm of the warping function $f$, provided that the $L^2$ norm of $f$ is sufficiently small [Proposition \ref{prop-L1-bound-mina-from-upper}].

In Subsection \ref{subsect-positivity-limit}, we use Proposition \ref{prop-cutoff-infimum} and Proposition \ref{prop-L1-bound-mina-from-upper} to prove Theorem \ref{thm: f_infty positive}. Moreover, as an application of Theorem \ref{thm: f_infty positive}, we obtain a positive uniform lower bound for warping functions $f_j$, if the warped product manifolds $\Sph^2 \times_{f_j} \Sph^1$ satisfy requirements in (\ref{eqn-warping-function-condition}) and the $\mina$ condition [Proposition \ref{prop: f_j lower bound}].


\subsection{$ W^{1, 2} $ regularity of limit of truncated warping functions}\label{subsect-cutoff}

We define the truncation of a function firstly:
\bd\label{defn-cut-off}
Let $f: \Sph^2 \to \R$ be a positive smooth function. Let $K>0$ be a real number,  for each $x \in \Sph^2$, we define
\be
\bar{f}^K(x)=
\begin{cases}
f(x), & \text{ if }  \ \ f(x)<K,\\
K, & \text{ if } \ \ f(x)\geq K.
\end{cases}
\ee
Then $\bar{f}^K$ is a positive continuous function on $\Sph^2$ with the maximal value not greater than $K$.
\ed
From the definition we can prove the following lemma:
\bl\label{Lemma-Cutoff-Single-Function}
Let $f: \Sph^2 \to \R$ be a positive smooth function, and let $K>0$ be a regular value of the function $f$. If
\be
\Delta f\leq f
\ee
then for all $u\in W^{1,2}(\Sph^2)$ such that $u\geq 0$ we have
\be
-\int_{\Sph^2} \langle \nabla u, \nabla \bar{f}^K \rangle \leq \int_{\Sph^2} u \bar{f}^K.
\ee
\el

\begin{proof}
By Theorem 4.4 from \cite{EG-text}, we have for all $K>0$
\be
\nabla \bar{f}^K=\begin{cases}
\nabla f,  & \text{a.e. on } \{f(x)<K\},\\
0, & \text{a.e. on } \{f (x)\geq K\}.
\end{cases}
\ee
As a result we have
\be
\begin{split}
-\int_{\Sph^2} \langle \nabla u, \nabla \bar{f}^K \rangle &=-\int_{\{ f<K\}} \langle \nabla u, \nabla f \rangle\\
&=\int_{\{f <K\}} u \Delta f-\int_{\partial\{f<K\}} u \partial_{\nu} f.
\end{split}
\ee
Here, since $K$ is a regular value of $f$, from the Regular Level Set Theorem we know that the level set $ \{f=K\}=\partial \{f<K\}$ is am embedded submanifold of dimension $1$ in $\Sph^2$. Hence we can apply Stokes' theorem to get the last step. Moreover, since $\nu $ is the outer unit normal vector on the boundary of the set $ \{f<K\}$, we have
\be
 \partial_{\nu} f \geq 0.
\ee
Hence we can drop the boundary term to get the inequality
\be
-\int_{\Sph^2} \langle \nabla u, \nabla \bar{f}^K \rangle \leq \int_{\{f <K\}} u \Delta f.
\ee
Since
\be
 \Delta f \leq f,
\ee
we have
\be
-\int_{\Sph^2} \langle \nabla u, \nabla \bar{f}^K \rangle\leq  \int_{\{f <K\}} u \Delta f \leq  \int_{\{f <K\}} u f \leq   \int_{\Sph^2} u \bar{f }^K.
\ee
This finishes the proof.
\end{proof}

We can prove similar results for a sequence of functions:
\bl\label{Cutoff-Lemma-Sard}
Let $\{f_j\}_{j=1}^\infty$ be a sequence of smooth positive function defined on $\Sph^2$. If
\be
\Delta f_j \leq f_j, \ \ \forall j \in \N,
\ee
then there exists $K>0$ such that for all $u\in W^{1,2}(\Sph^2)$ with $u\geq 0$ we have
\be\label{Cutoff-Inequality}
-\int_{\Sph^2} \langle \nabla u,\nabla \bar{f}_j^K \rangle \leq \int_{\Sph^2} u\bar{f}_j^K \quad \forall j \in \N .
\ee
Moreover, we can choose $K$ as large as we want.
\el

\begin{proof}
Note that if $0<K \leq \inf\limits_{x\in \Sph^2} f_j(x)$ for some $i$ then we have $\bar{f}_j^K(x)=K$. On the other hand, if $\sup\limits_{x\in\Sph^2}f(x) \leq K$ for some $i$ then $\bar{f}_j^K(x)=f_j(x)$. Either way the inequality (\ref{Cutoff-Inequality}) holds.

In general, by Sard's theorem, for each function $f_j$, the critical values of $f_j$ has measure zero, and the union of all the critical sets for each of the function also has measure zero. As a result, there exists $K>0$ such that for each $f_j$ either $K$ is a regular value or $f_j^{-1}(\{K\})=\emptyset$. By Lemma \ref{Lemma-Cutoff-Single-Function} we get inequality (\ref{Cutoff-Inequality}). Moreover, we can choose $K$ as large as we want. This finishes the proof.
\end{proof}

Next we prove similar results for the limit function, but before that we need to consider the regularity of the limit function:
\bl\label{Cutoff-Lemma-Limit}
Let $K>0$ be a real number. Let $\{f_{j}\}_{j=1}^\infty$ be a sequence of positive smooth functions on $\Sph^2$ satisfying
\be
\Delta f_{j} \leq f_j, \quad \forall j \in \N.
\ee
Then the sequence $\{\bar{f}^K_j\}_{j=1}^\infty$ is uniformly bounded in $W^{1,2}(\Sph^2)$:
\be
\|\bar{f}^K_j \|_{W^{1,2}(\Sph^2)}\leq 2K \vol(\Sph^2).
\ee
As a result, there exists $\bar{f}_{\infty}^K\in W^{1,2}(\Sph^2)$ such that $\bar{f}^K_j $ converges to $\bar{f}^K_\infty $ in $L^2(\Sph^2)$, and that $\bar{f}^K_j $ converges to $\bar{f}^K_\infty $ weakly in $W^{1,2}(\Sph^2)$.
\el

\begin{proof}
By definition of the cutoff in Definition \ref{defn-cut-off}, we get
\be\label{Cutoff-L2}
\|\bar{f}_j^K\|_{L^{2}(\Sph^2)}\leq K \sqrt{\vol(\Sph^2)}.
\ee
By Theorem 4.4 from \cite{EG-text}, we have for all $K>0$ and for each $i$
\be
\nabla \bar{f}^K_j=\begin{cases}
\nabla f_j, & \text{a.e. on } \{f_j (x)<K\},\\
0,  & \text{a.e. on } \{f_j (x)\geq K\}.
\end{cases}
\ee
Hence
\be\label{Cutoff-Gradient}
\begin{split}
\|\nabla \bar{f}_j\|^2_{L^2(\Sph^2)} &=\int_{\{f_j <K\}} |\nabla f_j|^2 \\
&=\int_{\{f_i <K\}} |f_j |^2|\nabla \ln f_j |^2\\
&\leq K^2\int_{\{f_j <K\}}  |\nabla \ln f_j |^2\\
&\leq K^2\|\nabla \ln f_j\|^2\\
&\leq K^2 \vol(\Sph^2),
\end{split}
\ee
where the last step follows from Lemma \ref{lem-l2-gradient-ln}. Combine inequalities (\ref{Cutoff-L2}) and (\ref{Cutoff-Gradient}) then we get the desired results.
\end{proof}

Now we prove the following proposition concerning the limit function:
\begin{lem}\label{lem-differential inequality for limit function}
Let $\{f_{j}\}_{j=1}^\infty$ be a sequence of positive smooth functions on $\Sph^2$ satisfying
\be
\Delta f_{j} \leq f_j, \quad \forall j \in \N.
\ee
Let $K>0$ be a real number that satisfies the requirement in Lemma \ref{Cutoff-Lemma-Sard}.
Let $\bar{f}^K_{\infty} \in W^{1, 2}(\Sph^2)$ be the limit function as in Lemma \ref{Cutoff-Lemma-Limit}. Then $\bar{f}^K_{\infty}$ satisfies the inequality
\be
-\int_{\Sph^2} \langle \nabla u,\nabla \bar{f}_{\infty}^K \rangle \leq \int_{\Sph^2} u\bar{f}_{\infty}^K,
\ee
for all $u\in W^{1,2}(\Sph^2)$ such that $u\geq 0$.
\end{lem}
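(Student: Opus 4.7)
The proof will be a straightforward limiting argument that combines the two preceding lemmas. Concretely, the plan is as follows.

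First, I would fix $K>0$ satisfying the conclusion of Lemma \ref{Cutoff-Lemma-Sard}, and fix an arbitrary nonnegative test function $u \in W^{1,2}(\Sph^2)$. By Lemma \ref{Cutoff-Lemma-Sard}, for every $j \in \N$ the truncated warping function $\bar{f}^K_j$ satisfies
\be
-\int_{\Sph^2} \langle \nabla u, \nabla \bar{f}^K_j \rangle \, d\vol_{g_{\Sph^2}} \ \leq \ \int_{\Sph^2} u\, \bar{f}^K_j \, d\vol_{g_{\Sph^2}},
\ee
so it suffices to pass to the limit in $j$ on both sides of this inequality.

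Next, I would invoke Lemma \ref{Cutoff-Lemma-Limit}, which gives $\bar{f}^K_j \to \bar{f}^K_\infty$ strongly in $L^2(\Sph^2)$ and $\bar{f}^K_j \rightharpoonup \bar{f}^K_\infty$ weakly in $W^{1,2}(\Sph^2)$. The strong $L^2$ convergence, together with $u \in L^2(\Sph^2)$, yields
\be
\int_{\Sph^2} u\, \bar{f}^K_j \, d\vol_{g_{\Sph^2}} \ \longrightarrow \ \int_{\Sph^2} u\, \bar{f}^K_\infty \, d\vol_{g_{\Sph^2}}
\ee
by Cauchy--Schwarz. For the left-hand side, the weak $W^{1,2}$ convergence means in particular that $\nabla \bar{f}^K_j \rightharpoonup \nabla \bar{f}^K_\infty$ weakly in $L^2(\Sph^2, T\Sph^2)$. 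Since $\nabla u \in L^2(\Sph^2, T\Sph^2)$ is a fixed test vector field, we obtain
\be
\int_{\Sph^2} \langle \nabla u, \nabla \bar{f}^K_j \rangle \, d\vol_{g_{\Sph^2}} \ \longrightarrow \ \int_{\Sph^2} \langle \nabla u, \nabla \bar{f}^K_\infty \rangle \, d\vol_{g_{\Sph^2}}.
\ee

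Passing to the limit in the inequality and using $u \geq 0$ preserves the sign, so the desired inequality
\be
-\int_{\Sph^2} \langle \nabla u, \nabla \bar{f}^K_\infty \rangle \ \leq \ \int_{\Sph^2} u\, \bar{f}^K_\infty
\ee
holds for every nonnegative $u \in W^{1,2}(\Sph^2)$. There is essentially no obstacle here: the two lemmas have been arranged precisely so that strong $L^2$ convergence controls the zeroth-order term and weak $W^{1,2}$ convergence controls the gradient term. The only point deserving a line of care is that weak convergence of $\nabla \bar{f}^K_j$ against the \emph{fixed} test vector field $\nabla u$ is exactly the statement needed, so one does not require any upgraded strong convergence of gradients (which we do not have).
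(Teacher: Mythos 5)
Your proposal is correct and takes essentially the same approach as the paper: start from the inequality for the truncated functions $\bar{f}^K_j$ given by Lemma \ref{Cutoff-Lemma-Sard}, then pass to the limit using the strong $L^2(\Sph^2)$ convergence (for the right-hand side) and the weak $W^{1,2}(\Sph^2)$ convergence, equivalently weak $L^2$ convergence of the gradients against the fixed test field $\nabla u$ (for the left-hand side), both supplied by Lemma \ref{Cutoff-Lemma-Limit}. One tiny phrasing quibble: the hypothesis $u\geq 0$ is not needed to ``preserve the sign'' when passing to the limit (limits of inequalities are inequalities unconditionally); it is needed earlier, to make Lemma \ref{Cutoff-Lemma-Sard} applicable in the first place.
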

\begin{proof}
By Lemma \ref{Cutoff-Lemma-Limit} we know that $\bar{f}^K_j $ converges to $\bar{f}^K_\infty $ in $L^2(\Sph^2)$, and that $\bar{f}^K_j $ converges to $\bar{f}^K_\infty $ weakly in $W^{1,2}(\Sph^2)$. As a result, for any $u\in W^{1,2}(\Sph^2)$ we have that
\be
\int_{\Sph^2} u\bar{f}_{j}^K \to \int_{\Sph^2} u\bar{f}_{\infty}^K,  \ \ \text{ as } \ \ j\to \infty,
\ee
and that
\be
\int_{\Sph^2} \langle \nabla u,\nabla \bar{f}_{j}^K \rangle \to \int_{\Sph^2} \langle \nabla u,\nabla \bar{f}_{\infty}^K \rangle, \ \  \text{ as } j\to\infty.
\ee
As a result, by (\ref{Cutoff-Inequality}) we have for all $u\in W^{1,2}(\Sph^2)$ such that $u\geq 0$
\be
- \int_{\Sph^2} \langle \nabla u,\nabla \bar{f}_{\infty}^K\rangle \leq \int_{\Sph^2} u\bar{f}_{\infty}^K .
\ee
Hence by Theorem 8.19 in \cite{GT-PDE-book}, we have that either the essential infimum of $\bar{f}^K_\infty$ is bounded away from zero or $\bar{f}^K_\infty$ is the zero function. This finishes the proof.
\end{proof}

We need the definition of essential infimum of a function:
\begin{defn}\label{Defn-Ess-Inf}
Consider the standard $\Sph^2$ and use $m$ to denote the standard volume measure in $\Sph^2$. Let $U$ be an open subset of $\Sph^2$ . Let $f: U \to \R$ be measurable. Define the set
\be
U_f^{ess}=\{a\in \R: m(f^{-1}(-\infty,a))=0\}.
\ee
We use $\inf_{U} f$ to denote the essential infimum of $f$ in $U$ and define
\be
\inf_{U} f=\sup U^{ess}_f
\ee
\end{defn}

Finally, we apply the maximum principle for weak solution to prove the following property for the essential infimum of  $f_\infty$.
\begin{prop}\label{prop-cutoff-infimum}
Let $\{f_{j}\}_{j=1}^\infty$ be a sequence of positive smooth functions on $\Sph^2$ satisfying
\be
\Delta f_{j} \leq f_j, \quad \forall j \in \N.
\ee
If we further assume that $f_j\to f_\infty$ in $L^2(\Sph^2)$ for some $f_\infty$, then either the essential infimum of $f_\infty$ is bounded away from zero or $f_\infty = 0$ a.e. on $\Sph^2$.
\end{prop}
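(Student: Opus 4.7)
The plan is to reduce the dichotomy for $f_\infty$ to the one already available for its truncations. Fix a regular value $K>0$ as in Lemma \ref{Cutoff-Lemma-Sard} (which may be chosen arbitrarily large). By Lemma \ref{Cutoff-Lemma-Limit} a subsequence of $\{\bar{f}_j^K\}$ converges in $L^2(\Sph^2)$ and weakly in $W^{1,2}(\Sph^2)$ to some $\bar{f}_\infty^K \in W^{1,2}(\Sph^2)$, and by Lemma \ref{lem-differential inequality for limit function} this limit satisfies the weak differential inequality. The weak maximum principle (Theorem~8.19 in \cite{GT-PDE-book}) then yields the dichotomy: either $\inf_{\Sph^2} \bar{f}_\infty^K > 0$ or $\bar{f}_\infty^K \equiv 0$ a.e. on $\Sph^2$.

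The heart of the argument is to identify $\bar{f}_\infty^K$ with $\min(f_\infty, K)$ a.e. Since $f_j \to f_\infty$ in $L^2(\Sph^2)$, after passing to a further subsequence we may assume $f_j \to f_\infty$ a.e. on $\Sph^2$. The truncation $t \mapsto \min(t, K)$ is $1$-Lipschitz, so $\bar{f}_j^K \to \min(f_\infty, K)$ a.e.; combined with the uniform bound $0 \leq \bar{f}_j^K \leq K$, dominated convergence promotes this to convergence in $L^2(\Sph^2)$. By uniqueness of $L^2$ limits we then conclude $\bar{f}_\infty^K = \min(f_\infty, K)$ a.e.

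With the identification in hand, the dichotomy transfers directly to $f_\infty$. In the first case $\min(f_\infty, K) = 0$ a.e., and since $f_j > 0$ together with a.e.\ subsequential convergence forces $f_\infty \geq 0$ a.e., we conclude $f_\infty = 0$ a.e. In the second case there exists $c \in (0, K]$ with $\bar{f}_\infty^K \geq c$ a.e., and since $f_\infty \geq \min(f_\infty, K) = \bar{f}_\infty^K \geq c$ a.e., we obtain $\inf_{\Sph^2} f_\infty \geq c > 0$. Note that we are free to take $K$ as large as we wish, so the second alternative really places no artificial ceiling on $c$.

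The only delicate point is the subsequence bookkeeping needed to match the $L^2$-limit $\bar{f}_\infty^K$ produced by $W^{1,2}$ weak compactness with the pointwise a.e.\ limit $\min(f_\infty, K)$ coming from the hypothesis on $\{f_j\}$; since any $L^2$-convergent sequence admits an a.e.-convergent subsequence and $L^2$ limits are unique, the two identifications are forced to agree, closing the argument.
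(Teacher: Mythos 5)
Your proposal is correct and follows essentially the same strategy as the paper: truncate, obtain the weak differential inequality for the truncated limit, apply the strong maximum principle to get a dichotomy for $\bar{f}^K_\infty$, and transfer it to $f_\infty$. The explicit identification $\bar{f}^K_\infty = \min(f_\infty, K)$ a.e.\ (via the $1$-Lipschitz truncation, a.e.\ subsequential convergence, dominated convergence, and uniqueness of $L^2$ limits) is a genuine clarification that the paper leaves implicit; the paper instead argues by assuming $\inf_{\Sph^2} f_\infty = 0$, deducing $\inf_{\Sph^2} \bar{f}^K_\infty = 0$ from $\bar{f}^K_j \leq f_j$, and then transferring the vanishing pointwise, which is logically equivalent but less transparent. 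Two minor cautions: Theorem~8.19 in Gilbarg--Trudinger is a \emph{strong} maximum principle (for weak supersolutions), not a weak maximum principle; and deriving the global dichotomy on the closed manifold $\Sph^2$ from that interior result is not immediate --- the paper's proof carries out a hemispheres argument, applying the strong maximum principle on geodesic balls of radius slightly greater than $\pi/2$ so that the infimum is attained away from the boundary. Your phrase ``then yields the dichotomy'' glosses over this step, which should be either spelled out or explicitly delegated to the paper's proof of that fact.
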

\begin{proof}
Since $\|f_j-f_\infty\|_{L^2(\Sph^2)}\to 0$ as $j\to\infty$, choose a subsequence if needed, then we have $f_j\to f_\infty$ poiintwise almost everywhere in $\Sph^2$. Let $K>0$ be a real number that satisfies the requirement in Lemma \ref{Cutoff-Lemma-Sard}. Construct a truncated sequence $\{\bar{f}^K_j\}_{j=1}^\infty$ as in Definition \ref{defn-cut-off}.
By Lemma \ref{Cutoff-Lemma-Limit}, choose a subsequence if needed, there exists $\bar{f}^K_{\infty}\in W^{1,2}(\Sph^2)$ such that $\bar{f}^K_j$ converges to $\bar{f}^K_\infty$ in $L^2(\Sph^2)$ norm. As a result, choose a subsequence if needed we have $\bar{f}_j^K\to \bar{f}_\infty^K$ pointwise almost everywhere in $\Sph^2$.

It suffices to show that if the essential infimum $\inf\limits_{\Sph^2} f_\infty = 0$ then $\bar{f}^K_\infty  = f_\infty = 0$ in $\Sph^2$. We assume that $\inf\limits_{\Sph^2} f_\infty = 0$. Since for each $j$ we have $0<\bar{f}_j^K\leq f_j$, we have $0\leq \inf\limits_{\Sph^2} \bar{f}_j^K \leq \inf\limits_{\Sph^2} f_\infty =0$.
This implies that for any $\delta, \delta^{\prime}>0$, we have
\be
m\left(\left(\bar{f}^K_\infty\right)^{-1}(-\infty, \delta)\right) >0,
\ee
and
\be
m\left(\left(\bar{f}^K_\infty \right)^{-1}(-\infty, -\delta^{\prime})\right)=0.
\ee
Let $N$ be the north pole of $\mathbb{S}^{2}$, and $S$ be the south pole.  $B_{\frac{\pi}{2}}(N)$ and $B_{\frac{\pi}{2}}(S)$ are upper and lower hemispheres respectively. Then either
\be
\inf\limits_{B_{\frac{\pi}{2}}(N)} \bar{f}^K_\infty=0,
\ee
or
\be
\inf\limits_{B_{\frac{\pi}{2}}(S)} \bar{f}^K_\infty=0.
\ee
Without loss of generality we assume that $\inf\limits_{B_{\frac{\pi}{2}}(N)}\bar{f}^K_\infty=0$. Since $\bar{f}^K_\infty\geq 0$ in $\Sph^2$, for any $r>\frac{\pi}{2}$, and $\epsilon>0$ such that $r+\epsilon <\pi$ we have
\be\label{eqn: inf-equal}
\inf_{B_{r}(N)} \bar{f}^K_\infty = \inf_{B_{r+\epsilon}(N)}\bar{f}^K_\infty=0.
\ee

Now by Lemma \ref{lem-differential inequality for limit function}, $\bar{f}^K_\infty$ satisfies
\be
(\Delta -1)\bar{f}^K_\infty \leq 0,
\ee
on $B_{r+\epsilon}(N)$ in the weak sense.
Hence by the strong maximum principle for weak solutions (see Theorem 8.19 in \cite{GT-PDE-book}), the equality in (\ref{eqn: inf-equal}) implies that $\bar{f}^K_{\infty}$ is constant on $B_{r}(N)$. This is true for any $r>\frac{\pi}{2}$, thus $\bar{f}^K_\infty \equiv0$ on $\mathbb{S}^{2}$. Moreover, since $K>0$, for almost every $x\in \Sph^2$ we have,
\be
\lim_{j\to\infty} \bar{f}_j^K=\lim_{j\to\infty} f_j=0,
\ee
and hence $f_\infty = 0$ a.e. on $\Sph^2$. This finishes the proof.
\end{proof}


\subsection{A $1$-sweepout of the warped product manifold $\Sph^2 \times_f \Sph^1$}\label{subsect-sweepout}
Because we will apply the Min-Max minimal surface theory to get an upper bound for $\mina$ in \S $\ref{subsect-bound-mina}$, in this subsection we briefly recall some basic notions in geometric measure theory following Marques and Neves \cite{MN-LNM-2018}, and construct a $1$-sweepout for $\Sph^2 \times_f \Sph^1$, which will be used in the proof in Lemma $\ref{lem-mina-upper-bound}$.  For an excellent survey and more details about these materials we refer to  \cite{MN-LNM-2018} and references therein.

A {\em $k$-current} $T$ on $\R^J$ is a continuous linear functional on the space of compactly supported smooth $k$-forms: $\mathcal{D}^k(\R^J)$. Its boundary $\partial T$ is a $(k-1)$-current that is defined as $\partial T (\phi) := T(d\phi)$ for $\phi \in \mathcal{D}^{k-1}(\R^J)$. A $k$-current $T$ is said to be an {\em integer multiplicity $k$-current} if it can be written as
\be\label{eqn-integer-current}
T(\phi) = \int_{S} \langle \phi(x), \tau(x) \rangle \theta(x) d\mathcal{H}^k, \quad \phi \in \mathcal{D}^k(\R^J),
\ee
where $S$ is a $\mathcal{H}^k$-measurable countable $k$-rectifiable set, that is $S \subset S_0 \cup_{j\in \N} S_j$ with $\mathcal{H}^k(S_0) = 0$ and $S_j$ is an embedded $k$-dimensional $C^1$-submanifold for all $j\in\N$, $\theta$ is a $\mathcal{H}^k$-integrable $\N$-valued function, and $\tau$ is a $k$-form such that $\tau(x)$ is a volume form for $T_xS$ at $x$ where a $k$-dimensional tangent space $T_xS$ is well-defined. Note that this tangent space $T_x S$ is well-defined for $\mathcal{H}^k$-a.e. $x \in S$, provided $\mathcal{H}^{k}(S\cap K)< +\infty$ for every compact set $K \subset \R^J$. Also note that the form $\tau$ give an orientation for $T_xS$. The {\em mass} of an integer multiplicity $k$-current $T$ is defined as
\be
{\bf M}(T) := \sup \{ T(\phi) \mid \phi \in \mathcal{D}^k(\R^J), \ \ |\phi| \leq 1 \},
\ee
where $|\phi|$ is the pointwise maximal norm of a form $\phi$.

In particular, a $k$-dimensional embedded smooth submanifold of $\R^J$ can be viewed as an integer multiplicity $k$-current by integrating a $k$-form over it. Its current boundary is given by its usual boundary, and its mass is the $k$-dimensional volume of the submanifold.

Let $M$ be a manifold embedded in $\R^J$. The space of {\em integral $k$-currents} on $M$, denoted by $\I_{k}(M)$, is defined to be the space of $k$-current such that both $T$ and $\partial T$ are integer multiplicity currents with finite mass and support contained in $M$. The space of {\em $k$-cycles}, denoted by $\mathcal{Z}_k(M)$, is defined to be the space of those $T \in \I_k(M)$ so that $T = \partial Q$ for some $Q \in \I_{k+1}(M)$.

A {\em rectifiable $k$-varifold} V is defined to be a certain Radon measure on $\R^J \times G_k(\R^J)$, where $G_k(\R^J)$ is the Grassmannian of $k$-planes in $\R^J$. An integral $k$-current $T \in \I_k(M)$ given as in $(\ref{eqn-integer-current})$ naturally associates a rectifiable $k$-varifold, denoted by $|T|$, as
\be\label{eqn-varifold}
|T|(A) = \int_{S \cap \pi(TS \cap A)} \theta(x) d\mathcal{H}^k.
\ee
Here $\pi$ is the natural projection map from $\R^J \times G_{k}(\R^J)$ to $\R^J$, and $TS$ is rank-$k$ tangent bundle of $S$ consisting of $T_x S$ at $x\in S$ where its $k$-dimensional tangent plane can be well defined. Note that: in the varifold expression $(\ref{eqn-varifold})$ of $|T|$, we forget the orientation of $S$ determined by the $k$-form $\tau$ in the current expression $(\ref{eqn-integer-current})$ of $T$.

The space $\I_k(M)$ can be endowed with various metrics and have different induced topologies. Given $T, S \in \I_k(M)$, the {\em flat metric} is defined by
\begin{equation*}
\mathcal{F}(T, S) := \inf \left\{ {\bf M}(Q) + {\bf M}(R) \mid T - S = Q + \partial R, \ \ Q \in \I_k(M), \ \ R \in \I_{k+1}(M) \right\}
\end{equation*}
and induces the {\em flat topology} on $\I_k(M)$. We also denote $\mathcal{F}(T) := \mathcal{F}(T, 0)$ and have
\be
\mathcal{F}(T) \leq {\bf M}(T), \quad \forall T \in \I_k(M).
\ee

For $T, S \in \I_k(M)$, the  {\bf F}-{\em metric} is defined by Pitts in \cite{Pitts-book} as:
\be\label{eqn-F-metric}
{\bf F}(S, T) := \mathcal{F}(S - T) + {\bf F}(|S|, |T|),
\ee
where ${\bf F}(|S|, |T|)$ is the {\bf F}-metric on the associated varifolds defined on page 66 in \cite{Pitts-book} as:
\begin{equation*}
{\bf F}(|S|, |T|) := \sup \left\{ |S|(f) - |T|(f) \mid  f \in C_{c}(G_k(\R^J)), \ \ |f|\leq 1, \ \ {\rm Lip}(f) \leq 1 \right\}.
\end{equation*}
Recall that (see page 66 in \cite{Pitts-book})
\be
{\bf F}(|S|, |T|) \leq {\bf M}(S - T),
\ee
and hence
\be\label{eqn-F-metric-less-than-mass}
{\bf F}(S, T) \leq 2 {\bf M}(S - T), \quad \forall S, T \in \I_k(M).
\ee

For the Min-Max theory for minimal surfaces, the space of mod $2$ integral $k$-currents and mod $2$ $k$-cycles are also needed. They are denoted by $\I_k(M; \Z_2)$ and $\mathcal{Z}_k(M; \Z_2)$, respectively, and defined by an equivalence relation: $T \equiv S$ if $T - S = 2 Q$ for $T, S, Q \in \I_k(M)$. The notions of boundary, mass and metrics defined above for $\I_k(M)$ can be extended to $\I_{k}(M; \Z_2)$. For a $n$-dimensional manifold $M$, the Constancy Theorem (Theorem 26.27 in \cite{Simon-book}) says that if $T \in \I_n(M; \Z_2)$ has $\partial T = 0$, then either $T = M$ or $T = 0$.

Then we recall some basic facts about the topology of $\mathcal{Z}_k(M; \mathcal{F}; \Z_2)$, that is  $\mathcal{Z}_k(M; \Z_2)$ endowed with flat metric. Their proofs can be found in \cite{MN-LNM-2018}, also see \cite{Almgren-62}.  Let $n$ be the dimension of the manifold $M$. Then $\I_{n}(M; \mathcal{F}; \Z_2)$ is contractible and the continuous map
\be
\partial : \I_{n}(M; \mathcal{F}; \Z_2) \rightarrow \mathcal{Z}_{n-1}(M; \mathcal{F}; \Z_2)
\ee
is a $2$-fold covering map. The homotopy groups are:
\be
\pi_{k}\left( \mathcal{Z}_{n-1}(M; \mathcal{F}; \Z_2), 0 \right) =
\begin{cases}
0, & \text{when} \ \ k \geq 2, \\
\Z_2, & \text{when} \ \ k=1.
\end{cases}
\ee
For the calculation of the fundamental group, one notes that the map
\begin{eqnarray}
P : \pi_{1}\left( \mathcal{Z}_{n-1}(M; \mathcal{F}; \Z_2), 0 \right) & \rightarrow &  \{ 0, M \} \\
            \left[ \gamma \right] & \mapsto & \tilde{\gamma}(1)
\end{eqnarray}
is an isomorphism. Here $\gamma$ is a loop in $\mathcal{Z}_{n-1}(M; \mathcal{F}; \Z_2)$ with $\gamma(0) = \gamma(1) = 0$, and $\tilde{\gamma}$ is the unique lift to $\I_{n}(M; \mathcal{F}; \Z_2)$ with $\tilde{\gamma}(0) = 0$. Then by applying Hurewicz Theorem, one can obtain:
\be
H^1\left( \mathcal{Z}_{n-1}(M; \mathcal{F}; \Z_2); \Z_2 \right) = \Z_2 = \{ 0, \bar{\lambda} \}.
\ee
The the action of the fundamental cohomology class $\bar{\lambda}$ on a homology class induced by a loop is nonzero if and only if the loop is homotopically non-trivial.

We take the following definition of $1$-sweepout from \cite{MN-LNM-2018}.
\begin{defn}\label{defn-sweepout}
A continuous map $\Phi: \Sph^1 \rightarrow \mathcal{Z}_{n-1}(M; {\bf F}; \Z_2)$ is called a $1$-sweepout if $\Phi^*(\bar{\lambda}) \neq 0 \in H^1(\Sph^1, \Z_2)$.
\end{defn}
Here $\mathcal{Z}_{n-1}(M; {\bf F}; \Z_2)$ is the space $\mathcal{Z}_{n-1}(M; \Z_2)$ endowed with the {\bf F}-metric given in $(\ref{eqn-F-metric})$.

Now we return back our warped product manifold $\Sph^2 \times_f \Sph^1$, that is $\Sph^2 \times \Sph^1$ with Riemannian metric
\be
g = g_{\Sph^2} + f^2 g_{\Sph^1}.
\ee
For each fixed $x \in \Sph^2$, we construct a $1$-sweepout of $\Sph^2 \times_f \Sph^1$ consisting of tori
$\{\Sigma_{x, r} := \partial B_{r}(x) \times \Sph^{1} \mid 0\leq r \leq \pi\}$ , where $B_r(x)$ denotes the geodesic ball on $\Sph^2$ centered at $x$ with radius $r$. In other words, we consider the map
\begin{equation}\label{eqn-tori-sweepout}
\begin{aligned}
\Phi: [0, \pi] & \rightarrow { \mathcal{Z}_{2}(\mathbb{S}^{2} \times_{f} \mathbb{S}^{1}; {\bf F}; \Z_2}), \\
              r & \mapsto   \partial \left( B_r(x) \times \Sph^1 \right) = \partial B_r(x) \times \Sph^1.
\end{aligned}
\end{equation}
\begin{lem}\label{lem-sweepout}
The map $\Phi$ given in $(\ref{eqn-tori-sweepout})$ provides a $1$-sweepout of $\Sph^2 \times_f \Sph^1$ as in Definition $\ref{defn-sweepout}$.
\end{lem}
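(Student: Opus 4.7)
The plan is to verify the two properties required of a $1$-sweepout---continuity in the $\mathbf F$-metric and non-triviality of $\Phi^*(\bar\lambda)$---by exhibiting a natural continuous lift through the $2$-fold covering $\partial\colon \I_3(\Sph^2\times_f \Sph^1;\mathcal F;\Z_2)\to \mathcal Z_2(\Sph^2\times_f \Sph^1;\mathcal F;\Z_2)$. Define
\[
\tilde\Phi\colon [0,\pi]\to \I_3(\Sph^2\times_f \Sph^1;\Z_2),\qquad \tilde\Phi(r)=B_r(x)\times \Sph^1,
\]
regarded as an integer multiplicity current of multiplicity one. Then $\partial\tilde\Phi(r)=\Phi(r)$, $\tilde\Phi(0)=0$ (since $B_0(x)=\{x\}$ is lower-dimensional), and $\tilde\Phi(\pi)=\Sph^2\times \Sph^1=M$ as mod-$2$ currents since removing the single antipodal point $-x$ contributes zero volume.

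Next I would observe that $\Phi(0)=\Phi(\pi)=0$ as $2$-currents---at these endpoints $\partial B_r(x)$ collapses to a single point, so $\partial B_r(x)\times \Sph^1$ has $\mathcal H^2$-measure zero---so that once continuity is established, $\Phi$ descends to a map $\bar\Phi\colon \Sph^1\to \mathcal Z_2(\Sph^2\times_f \Sph^1;\mathbf F;\Z_2)$. For continuity in the $\mathbf F$-metric I would split into two regimes. Near an interior point $r_0\in(0,\pi)$ the family $\{\Sigma_{x,r}\}$ is a smooth family of embedded tori depending $C^1$-continuously on $r$, and hence the flat part $\mathcal F(\Sigma_{x,r}-\Sigma_{x,r_0})$---controlled by the mass of the annular $3$-current $\tilde\Phi(r)-\tilde\Phi(r_0)$, i.e.\ ${\rm vol}_g\bigl((B_r(x)\triangle B_{r_0}(x))\times \Sph^1\bigr)$---together with the varifold part $\mathbf F(|\Sigma_{x,r}|,|\Sigma_{x,r_0}|)$ both tend to zero. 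At the endpoints $r_0\in\{0,\pi\}$ I would instead use $\mathbf F(S,T)\leq 2\mathbf M(S-T)$ together with the explicit mass formula $\mathbf M(\Sigma_{x,r})=2\pi\sin r\int_0^{2\pi} f(r,\theta)\,d\theta\to 0$.

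To verify $\bar\Phi^*(\bar\lambda)\neq 0$, I would invoke the isomorphism $P\colon \pi_1(\mathcal Z_2(M;\mathcal F;\Z_2),0)\to \{0,M\}$ recalled in the excerpt. The lift $\tilde\Phi$ itself is continuous in the flat metric because
\[
\mathcal F\bigl(\tilde\Phi(r_1)-\tilde\Phi(r_2)\bigr)\leq \mathbf M\bigl(\tilde\Phi(r_1)-\tilde\Phi(r_2)\bigr)={\rm vol}_g\bigl((B_{r_1}(x)\triangle B_{r_2}(x))\times \Sph^1\bigr),
\]
which vanishes as $|r_1-r_2|\to 0$, and $\tilde\Phi(0)=0$. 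By uniqueness of lifts in the $2$-fold cover $\partial$, $\tilde\Phi$ is \emph{the} lift of $\bar\Phi$ starting at $0\in \I_3$, so $P([\bar\Phi])=\tilde\Phi(\pi)=M\neq 0$. Hence $[\bar\Phi]$ is the generator of $\pi_1\cong \Z_2$, which by the description of $\bar\lambda$ in the excerpt is equivalent to $\bar\Phi^*(\bar\lambda)\neq 0 \in H^1(\Sph^1;\Z_2)$.

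The technical heart of the argument is the continuity of $\Phi$ in the $\mathbf F$-metric at the two degenerating endpoints; once this is in hand, all the topological content is captured by the trivial-looking observation that the lift $\tilde\Phi(r)=B_r(x)\times \Sph^1$ starts empty and ends covering all of $M$, which is precisely what makes the loop $\bar\Phi$ nontrivial in $\pi_1(\mathcal Z_2)$.
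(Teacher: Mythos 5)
Your proof follows essentially the same approach as the paper: verify continuity of $\Phi$ in the $\mathbf F$-metric (interior points via smooth dependence, endpoints via $\mathbf F\le 2\mathbf M$ together with the mass formula $\mathbf M(\Sigma_{x,r})\to 0$ as $r\to 0,\pi$), then identify the lift $\tilde\Phi(r)=B_r(x)\times\Sph^1$ of $\Phi$ starting at $0$, observe $\tilde\Phi(\pi)=\Sph^2\times\Sph^1$, and conclude $\Phi^*(\bar\lambda)\neq 0$ via the isomorphism $P$. The only minor addition is that you explicitly verify that $\tilde\Phi$ itself is continuous in the flat metric and write the area of $\Sigma_{x,r}$ as $\sin r\int_0^{2\pi}f(r,\theta)\,d\theta$ rather than the paper's looser $f\cdot 4\pi^2\sin r$, but these are small stylistic differences, not a different argument.
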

\begin{proof}
Clearly, $\Phi(0) = \Phi(\pi) = 0$, and hence $\Phi$ can be viewed as a map from $\Sph^1$ to $\mathcal{Z}_2(\Sph^2 \times_f \Sph^1; {\bf F}; \Z_2)$ by identifying the end points of the interval $[0, \pi]$.

Now we show the continuity of the map $\Phi$ on $[0, \pi]$. This is clear for $r \in (0, \pi)$, since $\partial B_r(x)$ varies smoothly for $r \in (0, \pi)$. Then the continuity at $t =0$ follows from the inequality in $(\ref{eqn-F-metric-less-than-mass})$ and the estimate:
\be
\mass(\Phi(r) - \Phi(0)) = \mass(\Phi(r)) = \mass \left(\partial B_r(x) \times \Sph^1 \right) = f\cdot 4 \pi^2 \sin r \rightarrow 0,
\ee
as $r \rightarrow 0$, since the warping function $f$ is smooth on $\Sph^2$. The continuity at $t = \pi$ follows similarly, since $\sin r \rightarrow 0$ as $r\rightarrow \pi$.

Because by the definition flat metric is less than or equal to {\bf F}-metric, $\Phi$ is also continuous if we endow the flat metric on $\mathcal{Z}_2(M; \Z_2)$. So $\Phi$ is a loop in $\mathcal{Z}_2(\Sph^2 \times_f \Sph^1; \mathcal{F} ; \Z_2)$, and represents a non-trivial element:
\be
\left[ \Phi \right] \neq 0 \in \pi_1\left( \mathcal{Z}_2(\Sph^2 \times_f \Sph^1; \mathcal{F}; \Z_2) \right).
\ee
This is because by the definition of the map $\Phi$ we have that the unique lift $\tilde{\Phi}$ of $\Phi$ with $\tilde{\Phi}(0) = 0$ is given by
\begin{equation}
\begin{aligned}
\tilde{\Phi}: [0, \pi] & \rightarrow { \mathcal{Z}_{3}(\mathbb{S}^{2} \times_{f} \mathbb{S}^{1}; \mathcal{F}; \Z_2}), \\
              r & \mapsto    B_r(x) \times \Sph^1,
\end{aligned}
\end{equation}
and has $\tilde{\Phi}(\pi) = \Sph^2 \times \Sph^1$. Consequently, $\Phi^*(\bar{\lambda}) \neq 0$, and so $\Phi$ is a $1$-sweepout.
\end{proof}

\subsection{Bound $\mina$ from above by $L^{1}$-norm of warping function}\label{subsect-bound-mina}

In this subsection, we derive an upper bound for $\mina(\Sph^2 \times_f \Sph^1)$ in terms of $\|f\|_{L^{1}(\Sph^2)}$, provided that $\|f\|_{L^{2}(\Sph^2)}$ is small relative to $\mina(\Sph^2 \times_f \Sph^1)$.
\begin{prop}\label{prop-L1-bound-mina-from-upper}
Let $ \Sph^2 \times_{f} \Sph^1 $ be a warped product Riemannian manifolds with metric tensor as in (\ref{eqn-circle-over-sphere}) that has nonnegative scalar curvature and $\mina (\Sph^2 \times_{f} \Sph^1) \geq  A > 0$. If $\|f\|_{L^{2}(\Sph^2)} < \frac{A}{2^{\frac{3}{2}} \pi^{\frac{5}{2}}}$, then we have $\|f\|_{L^{1}(\Sph^1)} \geq \frac{A}{100 \pi}$.
\end{prop}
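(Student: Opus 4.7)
The plan is to combine the Marques--Neves Min-Max theorem applied to the $1$-sweepout of Lemma~\ref{lem-sweepout} with the spherical mean inequality of Proposition~\ref{prop: spherical mean inequality} to produce a pointwise lower bound for $f$ on a set of large measure, and then integrate.

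First, for each fixed $x \in \Sph^2$, I will apply Min-Max (see~\cite{MN-LNM-2018}) to the $1$-sweepout $\Phi_x(r) = \partial B_r(x) \times \Sph^1$ of $\Sph^2 \times_f \Sph^1$. The width of $\Phi_x$ is realized by a closed embedded minimal $2$-surface (possibly with integer multiplicity), so it is bounded below by $\mina \ge A$; on the other hand the width is at most $\sup_{r} \mass(\Phi_x(r))$. Since $\Phi_x$ is continuous on $[0,\pi]$ with vanishing endpoint values, there is some $r_x \in (0,\pi)$ with
\[
\mass(\Phi_x(r_x)) \ =\ 2\pi \int_{\partial B_{r_x}(x)} f \, ds \ \ge\ A.
\]
Because $\partial B_r(x) = \partial B_{\pi - r}(-x)$ in $\Sph^2$, the same torus arises in the sweepout at $-x$ with radius $\pi - r_x$. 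After replacing $x$ by its antipode $-x$ if necessary, I may therefore assume $r_x \le \pi/2$, with the understanding that the pointwise conclusion below is recorded at whichever point is retained.

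Next, applying Proposition~\ref{prop: spherical mean inequality} at the chosen point with radius $r_x \le \pi/2$ gives
\[
\frac{1}{2\pi \sin r_x} \int_{\partial B_{r_x}(x)} f \, ds \ \le\ f(x) + \frac{\|f\|_{L^2(\Sph^2)}}{\sqrt{2\pi}} \, r_x.
\]
Combining with the Min-Max lower bound and using $\sin r_x \le 1$, $r_x \le \pi/2$ yields
\[
f(x) \ \ge\ \frac{A}{4\pi^2} - \frac{\sqrt{\pi}}{2\sqrt{2}} \|f\|_{L^2(\Sph^2)}.
\]
The numerical hypothesis $\|f\|_{L^2(\Sph^2)} < A/(2^{3/2} \pi^{5/2})$ is calibrated exactly so that the subtracted term is strictly less than $A/(8\pi^2)$, yielding $f(y) > A/(8\pi^2)$ for at least one of $y = x$ or $y = -x$.

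Finally, set $E = \{x \in \Sph^2 : f(x) > A/(8\pi^2)\}$. By the previous step every antipodal pair meets $E$, so $E \cup (-E) = \Sph^2$, and antipodal invariance of the round measure gives $|E| \ge 2\pi$. Integrating,
\[
\|f\|_{L^1(\Sph^2)} \ \ge\ \frac{A}{8\pi^2} \cdot |E|\ \ge\ \frac{A}{4\pi}\ \ge\ \frac{A}{100\pi}.
\]
The main delicate point will be the Min-Max input: one must verify that the width of $\Phi_x$ is genuinely bounded below by $\mina$ in spite of the possible integer multiplicity of the realizing minimal surface. This is handled by Lemma~\ref{lem-sweepout}, which supplies the required nontrivial class in $\pi_1(\mathcal{Z}_2(\Sph^2 \times_f \Sph^1; \mathcal{F}; \Z_2))$, together with the fact that a single connected component of the realizing minimal surface is itself a closed minimal surface whose area is bounded below by $\mina$.
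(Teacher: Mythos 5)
Your proof is correct, and it takes a genuinely different and cleaner route than the paper's. The paper proves a spherical-mean lower bound $\fint_{\partial B_r(x)} f \geq A/(8\pi^2)$ uniformly for all $r\in[0,r_x]$ at each retained point $x$ (Lemma~\ref{lem-spherical-mean-bound-mina}), integrates this over the small ball $B_{r_x/10}(x)$, and then invokes the Vitali covering theorem to sum contributions from disjoint small balls covering at least half of $\Sph^2$; this yields the constant $A/(100\pi)$. You instead use the $r_0\to 0$ limit form of Proposition~\ref{prop: spherical mean inequality} to extract a \emph{pointwise} lower bound $f(x) > A/(8\pi^2)$ at the retained point, observe that the set $E = \{f > A/(8\pi^2)\}$ must meet every antipodal pair and therefore has measure at least $2\pi$, and integrate. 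This avoids the Vitali argument entirely and in fact delivers the sharper constant $A/(4\pi)$. Both routes rely on the same antipode trick to force the critical radius below $\pi/2$, and on the same sweepout/Min--Max input from Lemmas~\ref{lem-sweepout} and \ref{lem-mina-upper-bound}; your remark about integer multiplicity is resolved exactly as in Lemma~\ref{lem-mina-upper-bound}, since the realizing object is a smooth embedded minimal surface whose (unweighted) area already dominates $\mina$, and the weighted area (the width) only exceeds it. Your arithmetic checks out: with $\|f\|_{L^2} < A/(2^{3/2}\pi^{5/2})$ the correction term $\frac{\sqrt{\pi}}{2\sqrt{2}}\|f\|_{L^2}$ is strictly below $A/(8\pi^2)$, so $f > A/(4\pi^2) - A/(8\pi^2) = A/(8\pi^2)$ at the retained point. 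The one advantage of the paper's heavier route is that it works entirely with spherical averages rather than pointwise values; this buys nothing here since $f$ is smooth, but it is the form the authors reuse when passing to weak limits elsewhere in the paper.
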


Recall that $\mina(\Sph^2 \times_f \Sph^1)$ is the infimum of areas of closed embedded minimal surfaces in $\Sph^2 \times_f \Sph^1$. Proposition \ref{prop-L1-bound-mina-from-upper} is crucial in the proof of Theorem \ref{thm: f_infty positive} below. In order to prove Proposition \ref{prop-L1-bound-mina-from-upper}, we first prove the following two lemmas.

First of all, we use the Min-Max minimal surface theory of Marques and Neves to bound $\mina(\Sph^2 \times_f \Sph^1)$ from above by areas of some tori in $\Sph^2 \times_f \Sph^1$.

\begin{lem}\label{lem-mina-upper-bound}
Let $\Sph^2 \times_{f} \Sph^1$ be a warped product Riemannian manifold with metric tensor as in (\ref{eqn-circle-over-sphere}). For each $x \in \Sph^2$, there exists a torus $\Sigma_{x, r_{x}}=\partial B_{r_{x}}(x) \times \mathbb{S}^{1} \subset \mathbb{S}^{2} \times_{f} \mathbb{S}^{1}$, $0 < r_{x} < \pi$, whose area is not less than $\mina (\Sph^2 \times_{f} \Sph^1)$, i.e.
\be
{\rm Area}(\Sigma_{x, r_{x}}) \geq \mina(\Sph^2 \times_{f} \Sph^1),
\ee
where $B_{r_{x}}(x)$ is the geodesic ball in the standard $\Sph^2$ centered at $x$ with radius $r_{x}$.
\end{lem}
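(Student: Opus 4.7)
The plan is to apply the Almgren--Pitts--Marques--Neves Min-Max minimal surface theory to the $1$-sweepout $\Phi$ of $\Sph^2\times_f\Sph^1$ constructed in Lemma \ref{lem-sweepout}. Define the continuous function $A : [0,\pi] \to [0,\infty)$ by
\[
A(r) := \area(\Sigma_{x,r}) = \mass(\Phi(r)) = 2\pi \int_{\partial B_r(x)} f \, ds,
\]
where $ds$ denotes the arclength element on $\partial B_r(x)\subset\Sph^2$. Since $f$ is smooth and $\partial B_r(x)$ degenerates to a point as $r\to 0^+$ or $r\to\pi^-$, we have $A(0)=A(\pi)=0$, so $A$ attains its maximum at some interior point $r_x\in(0,\pi)$, which is my candidate.

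Next I would bound the Almgren--Pitts width
\[
L(\Phi) := \inf_{\Phi'\sim\Phi}\ \sup_{r\in[0,\pi]} \mass(\Phi'(r))
\]
of the homotopy class of $\Phi$ in $\mathcal{Z}_2(\Sph^2\times_f\Sph^1;\mathcal{F};\Z_2)$ from above by taking $\Phi'=\Phi$, giving $L(\Phi)\le\sup_r A(r)=A(r_x)$. Because $\Phi$ is a $1$-sweepout (Lemma \ref{lem-sweepout}, i.e.\ $\Phi^*(\bar\lambda)\neq 0$), the Marques--Neves Min-Max existence and regularity theorem \cite{MN-LNM-2018} produces a nontrivial stationary integral $2$-varifold $V$ on the smooth closed $3$-manifold $\Sph^2\times_f\Sph^1$ with $\mass(V)=L(\Phi)>0$, whose support $\Sigma_0$ is a smooth, closed, embedded minimal surface carrying positive integer multiplicities on its connected components.

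Since $\Sigma_0$ is a closed embedded minimal surface in $\Sph^2\times_f\Sph^1$, the definition (\ref{defn-MinA}) of $\mina$ gives $\area(\Sigma_0)\ge\mina(\Sph^2\times_f\Sph^1)$, and since each multiplicity is at least $1$ we have $\mass(V)\ge\area(\Sigma_0)$. Chaining the inequalities together yields
\[
A(r_x)\ \ge\ L(\Phi)\ =\ \mass(V)\ \ge\ \area(\Sigma_0)\ \ge\ \mina(\Sph^2\times_f\Sph^1),
\]
which is the desired conclusion. The main obstacle is the careful invocation of the Min-Max regularity theorem: one must verify that $L(\Phi)>0$, which follows from the topological nontriviality of the $1$-sweepout via $\Phi^*(\bar\lambda)\neq 0$, and that the width is actually realized by a smooth closed embedded minimal surface, so that the infimum in the definition of $\mina$ truly bounds $\mass(V)$ from below.
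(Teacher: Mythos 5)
Your proof is correct and follows essentially the same route as the paper: take the sweepout from Lemma \ref{lem-sweepout}, pick $r_x$ maximizing $\mass(\Phi(r))$, bound the width ${\bf L}(\Pi)$ above by that maximum, invoke the Marques--Neves Min-Max existence theorem (the paper cites Lemma 2.2.6 and Theorem 2.2.7 of \cite{MN-LNM-2018}) to produce a smooth closed embedded minimal surface realizing the width, and chain the inequalities with the definition of $\mina$. The only cosmetic difference is that you keep explicit track of the varifold and its multiplicities, whereas the paper directly asserts $\area(\Sigma)={\bf L}(\Pi)$ for the resulting minimal surface.
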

\begin{proof}
We will use Min-Max minimal surface theory of Marques and Neves to prove the lemma.

For each fixed point $x \in \Sph^2$, by Lemma $\ref{lem-sweepout}$, the map $\Phi$ in $(\ref{eqn-tori-sweepout})$ gives a $1$-sweepout of $\Sph^2 \times_f  \Sph^1$ as in Definition \ref{defn-sweepout}. For $r \in [0, \pi]$, the image $\Phi(r) = \partial B_r(x) \times \Sph^1 =: \Sigma_{x, r}$ are tori in $\Sph^2 \times_f \Sph^1$ with mass:
\be
\mass(\Phi(r)) = {\rm Area}(\Sigma_{x, r}) = 2\pi  \int_{\partial B_{r}(x)} f ds.
\ee
Clearly, $\mass(\Phi(r))$ is a continuous function of $r$ on $[0, \pi]$ with $\mass(\Phi(0)) = \mass(\Phi(\pi)) = 0$. Thus there exist $r_{x} \in (0, \pi)$ such that
\be
\mass(\Phi(r_x)) = \max \{ \mass(\Phi(r)) \mid 0 \leq r \leq \pi\}.
\ee

Let $\Pi$ be the homotopy class of the $1$-sweepout $\Phi$, which consists of all continuous maps $\Phi^\prime : [0, \pi] \rightarrow \mathcal{Z}_2(\Sph^2 \times_f \Sph^1; {\bf F}; \Z_2)$ with $\Phi^\prime(0) = \Phi^\prime(\pi)$ such that $\Phi$ and $\Phi^\prime$ are homotopic to each other in the flat topology. By Lemma 2.2.6 in \cite{MN-LNM-2018}, the width
\be\label{eqn-defn-width}
{\bf L} (\Pi) = \inf_{\Phi^{\prime}\in \Pi} \sup_{r\in [0, \pi]} \{{\bf M} (\Phi^{\prime}(r))\} >0,
\ee
since $\Phi$ is a $1$-sweepout and so $\Pi$ is a non-trivial homotopy class.
Then Min-Max Theorem of Marques-Neves (see Theorem 2.2.7 in \cite{MN-LNM-2018}) implies that there exists a smooth embedded minimal surface $\Sigma$ in $\mathbb{S}^{2} \times_{f} \mathbb{S}^{1}$ achieving the width, i.e. ${\rm Area}(\Sigma) = {\bf L}(\Pi) >0$.

Finally, by the definitions of the width in (\ref{eqn-defn-width}) and $\mina$, and by the choice of $\Sigma_{x, r_{x}}$, we have
\be
{\rm Area}(\Sigma_{x, r_{x}}) \geq {\bf L}(\Pi) = {\rm Area}(\Sigma) \geq \mina(\mathbb{S}^{2} \times \mathbb{S}^{1}).
\ee
Because $x$ is an arbitrary point on $\Sph^2$, this completes the proof.
\end{proof}

Next, we apply Lemma \ref{lem-mina-upper-bound} and the spherical mean inequality from Proposition \ref{prop: spherical mean inequality}  to prove the following lemma.
\begin{lem}\label{lem-spherical-mean-bound-mina}
Let $\Sph^2 \times_{f} \Sph^1$ be a warped product Riemannian manifold with metric tensors as in (\ref{eqn-circle-over-sphere}) that have non-negative scalar curvatures and $\mina (\Sph^2 \times_{f} \Sph^1) \geq A >0$. If $\|f\|_{L^{2}(\Sph^2)} < \frac{A}{2^{\frac{3}{2}} \pi^{\frac{5}{2}}}$, then there exists a set $\mathcal{H} \subset \Sph^2$ satisfying that for each $x \in \mathcal{H}$ there exists $0< r_x \leq \frac{\pi}{2}$ such that
\begin{enumerate}[(i)]
\item ${\rm Area}\left( \underset{x \in \mathcal{H}}{\cup} B_{\frac{r_{x}}{10}}(x) \right) \geq \frac{1}{2} {\rm Area}(\Sph^2)$,
\item and
              \be
               \fint_{\partial B_{r}(x)} f ds \geq \frac{A}{2(2\pi)^{2}}
              \ee
              holds for all $r \in [0, r_{x}]$.
\end{enumerate}
\end{lem}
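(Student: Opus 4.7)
The plan is to invoke \lemref{lem-mina-upper-bound} pointwise on $\Sph^{2}$, exploit the reflection symmetry $\sin r = \sin(\pi - r)$ to reduce to radii at most $\pi/2$, and then propagate the resulting area lower bound inward to every smaller radius via \propref{prop: spherical mean inequality}. The $L^{2}$--threshold in the hypothesis is calibrated so that at most a factor of $\tfrac{1}{2}$ is lost in this propagation.

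For (ii), I would fix $y \in \Sph^{2}$ and let $r_{y}^{*} \in (0,\pi)$ be the radius provided by \lemref{lem-mina-upper-bound}. Unpacking
\begin{equation*}
\area(\Sigma_{y,r_{y}^{*}}) = (2\pi)^{2}\sin(r_{y}^{*})\cdot\fint_{\partial B_{r_{y}^{*}}(y)} f\, ds \geq A
\end{equation*}
yields $\fint_{\partial B_{r_{y}^{*}}(y)} f\, ds \geq A / \bigl((2\pi)^{2}\sin r_{y}^{*}\bigr)$. Because $\partial B_{r}(y) = \partial B_{\pi-r}(-y)$ and $\sin r = \sin(\pi-r)$, after replacing $y$ by $-y$ if necessary I obtain a point $x \in \Sph^{2}$ and radius $r_{x} \leq \pi/2$ with $\fint_{\partial B_{r_{x}}(x)} f \geq A / \bigl((2\pi)^{2}\sin r_{x}\bigr) \geq A/(2\pi)^{2}$. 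Applying \propref{prop: spherical mean inequality} with $r_{1} = r_{x}$ and $r_{0} = r$ then gives, for every $r \in (0, r_{x}]$,
\begin{equation*}
\fint_{\partial B_{r}(x)} f \;\geq\; \frac{A}{(2\pi)^{2}} - \frac{\|f\|_{L^{2}(\Sph^{2})}}{\sqrt{2\pi}}\, r_{x} \;\geq\; \frac{A}{(2\pi)^{2}} - \frac{\|f\|_{L^{2}(\Sph^{2})}}{\sqrt{2\pi}}\cdot\frac{\pi}{2}.
\end{equation*}
A direct computation shows that the hypothesis $\|f\|_{L^{2}(\Sph^{2})} < A/(2^{3/2}\pi^{5/2})$ forces the second term to be strictly less than $A/(2(2\pi)^{2}) = A/(8\pi^{2})$, which delivers the required bound; the endpoint $r = 0$ follows from continuity of the spherical mean for smooth $f$.

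For (i) I will construct $\mathcal{H}$ by an antipodal dichotomy. After a measurable selection $y \mapsto r_{y}^{*}$ (e.g.\ the smallest $r \in (0,\pi)$ achieving the area bound), set $E = \{y \in \Sph^{2} : r_{y}^{*} \leq \pi/2\}$. If $\area(E) \geq \tfrac{1}{2}\area(\Sph^{2})$ I take $\mathcal{H} = E$ with $r_{x} = r_{x}^{*}$; otherwise $\area(E^{c}) > \tfrac{1}{2}\area(\Sph^{2})$, and I take $\mathcal{H} = \{-y : y \in E^{c}\}$ with $r_{-y} = \pi - r_{y}^{*} < \pi/2$. In both cases $\area(\mathcal{H}) \geq \tfrac{1}{2}\area(\Sph^{2})$, each $x \in \mathcal{H}$ carries a radius $r_{x} \in (0,\pi/2]$ satisfying the spherical-mean bound from (ii), and the trivial inclusion $\mathcal{H} \subset \bigcup_{x\in\mathcal{H}} B_{r_{x}/10}(x)$ yields (i).

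The main obstacle is the sharpness of the constants in (ii): the hypothesis $\|f\|_{L^{2}} < A/(2^{3/2}\pi^{5/2})$ is exactly what is needed for $(\|f\|_{L^{2}}/\sqrt{2\pi})\cdot(\pi/2)$ to fit inside $\tfrac{1}{2}\cdot A/(2\pi)^{2}$, leaving no slack. A secondary point is guaranteeing measurability of the selection $y \mapsto r_{y}^{*}$ so that $E$ and $\mathcal{H}$ are genuinely measurable subsets of $\Sph^{2}$; this is a standard selection issue that does not affect the geometric content of the argument.
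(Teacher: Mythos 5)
Your argument matches the paper's essentially step for step: Lemma~\ref{lem-mina-upper-bound} to produce a torus of area $\geq A$ at every basepoint, the antipodal identity $\partial B_r(y)=\partial B_{\pi-r}(-y)$ to force $r_x\leq\pi/2$ while preserving the spherical mean, and Proposition~\ref{prop: spherical mean inequality} with exactly the paper's constant arithmetic to propagate the lower bound down to every $r\in[0,r_x]$. The only divergence is in the bookkeeping for part (i): the paper builds $\mathcal{H}$ to contain one point of each antipodal pair and observes that $\bigcup_{x\in\mathcal{H}}B_{r_x/10}(x)$ and $\bigcup_{x\in\mathcal{H}}B_{r_x/10}(\bar{x})$ are open sets of equal area covering $\Sph^2$, so each has area at least $\tfrac{1}{2}\area(\Sph^2)$, which never requires $\mathcal{H}$ itself to be measurable; you instead compare $\area(E)$ with $\area(E^c)$, which forces you to address measurable selection (you flag this and wave at it). That concern can in fact be bypassed, since outer-measure subadditivity already gives $\area^*(E)+\area^*(E^c)\geq\area(\Sph^2)$ and your trivial inclusion $\mathcal{H}\subset\bigcup_{x\in\mathcal{H}}B_{r_x/10}(x)$ only needs outer measure of $\mathcal{H}$, but the paper's symmetric-covering argument is cleaner on this point.
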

\begin{proof}
For any point $x \in \Sph^2$, we denote its antipodal point by $\bar{x}$. By Lemma \ref{lem-mina-upper-bound}, for any $x \in \Sph^2$, there exists $0 < r_{x} < \pi$ such that the torus $\Sigma_{x, r_{x}} = \partial B_{r_{x}}(x) \times \Sph^1$ in $\Sph^2 \times_{f} \Sph^1$ has area
\be
{\rm Area} (\Sigma_{x, r_{x}}) \geq \mina (\Sph^2 \times_{f} \Sph^1) \geq A.
\ee
{
Since ${\rm Area} (\Sigma_{x, r_{x}}) = 2 \pi  \int_{\partial B_{r_{x}}(x)} f ds $, we have
\be
2 \pi  \int_{\partial B_{r_{x}}(x)} f ds  \geq A.
\ee
 Thus, we have
\be
\int_{\partial B_{r_{x}}(x)} f ds \geq \frac{A}{2\pi}.
\ee
}

Now if $0 < r_{x} \leq \frac{\pi}{2}$, then we include the point $x$ in the set $\mathcal{H} $, and if $r_{x} > \frac{\pi}{2}$, then we include its antipodal point $\bar{x}$ in the set $\mathcal{H} $, and we set $r_{\bar{x}} = \pi - r_{x} < \frac{\pi}{2}$. Then we still have
\be
\int_{\partial B_{r_{\bar{x}}}(\bar{x})} f ds = \int_{\partial B_{r_{x}}(x)} fds \geq \frac{A}{2\pi},
\ee
since $\partial B_{r_{\bar{x}}}(\bar{x}) = \partial B_{r_{x}}(x)$.

By the construction of the set $\mathcal{H} \subset \Sph^2$, $\mathcal{H}$ contains at least one of any pair of antipodal points on $\Sph^2$, and for any $x \in \mathcal{H}$, there exists $0 < r_{x} \leq \frac{\pi}{2}$ such that
\be
\int_{\partial B_{r_{x}}(x)} f ds \geq \frac{A}{2\pi}.
\ee

Then we have that the area of the open set $\underset{x \in \mathcal{H}}{\cup} B_{\frac{r_{x}}{10}}(x)$ is at least half of the area of the whole sphere $\Sph^2$, i.e.
\be
\area \left( \underset{x \in \mathcal{H}}{\cup} B_{\frac{r_{x}}{10}}(x) \right) \geq \frac{1}{2} \area(\Sph^2).
\ee
Indeed, otherwise, we have
\be
\area \left( \underset{x \in \mathcal{H}}{\cup} B_{\frac{r_{x}}{10}}(\bar{x}) \right)
= \area \left( \underset{x \in \mathcal{H}}{\cup} B_{\frac{r_{x}}{10}}(x) \right)
< \frac{1}{2} \area(\Sph^2).
\ee
On the other hand, because for each $x \in \Sph^2$ either $x$ or $\bar{x}$ is contained in $\mathcal{H}$, we have
\be
\Sph^2 = \left( \underset{x \in \mathcal{H}}{\cup} B_{\frac{r_{x}}{10}}(x) \right) \cup  \left( \underset{x \in \mathcal{H}}{\cup} B_{\frac{r_{x}}{10}}(\bar{x}) \right).
\ee
So
\begin{eqnarray}
\area(\Sph^2)
& = &  \area \left( \left( \underset{x \in \mathcal{H}}{\cup} B_{\frac{r_{x}}{10}}(x) \right) \cup  \left( \underset{x \in \mathcal{H}}{\cup} B_{\frac{r_{x}}{10}}(\bar{x}) \right)  \right)  \\
& \leq & \area \left( \underset{x \in \mathcal{H}}{\cup} B_{\frac{r_{x}}{10}}(x) \right) + \area \left( \underset{x \in \mathcal{H}}{\cup} B_{\frac{r_{x}}{10}}(\bar{x}) \right) \\
& < & \frac{1}{2} \area(\Sph^2) + \frac{1}{2} \area(\Sph^2) = \area(\Sph^2).
\end{eqnarray}
This gives a contradiction. So we have $\area \left( \underset{x \in \mathcal{H}}{\cup} B_{\frac{r_{x}}{10}}(x) \right) \geq \frac{1}{2} \area(\Sph^2)$.

Because $\Sph^2 \times_{f} \Sph^1$ has non-negative scalar curvature, by Lemma \ref{lem: nonnegative scalar curvature condition}, we have $\Delta f \leq f$. Then by the spherical mean inequality in Proposition \ref{prop: spherical mean inequality}, for any $x \in \mathcal{H} \subset \Sph^2$ and any $0 \leq r \leq r_{x} (\leq \frac{\pi}{2})$ we have that
\be
\fint_{\partial B_{r_{x}}(x)} f ds - \fint_{\partial B_{r}(x)} f ds \leq \frac{\|f\|_{L^{2}(\Sph^2)}}{\sqrt{2\pi}}(r_{x} - r) \leq \frac{A}{2(2\pi)^2},
\ee
since $\|f\|_{L^{2}(\Sph^2)} \leq \frac{A}{2^{\frac{3}{2}} \pi^{\frac{5}{2}}}$ and $r_{x} - r \leq \frac{\pi}{2}$. By rearrange the inequality, we obtain that for any $x \in \mathcal{H}$ and any $ 0 \leq r \leq r_{x}$,
\begin{eqnarray}
\fint_{\partial B_{r}(x)} f ds
& \geq &
 \fint_{\partial B_{r_{x}}(x)} f ds - \frac{A}{2(2\pi)^{2}} \\
 & = & \frac{1}{2\pi \sin r_{x}} \int_{\partial B_{r_{x}}(x)} f ds - \frac{A}{2(2\pi)^2} \\
 & \geq & \frac{1}{2\pi} \int_{\partial B_{r_{x}}(x)} f ds - \frac{A}{2(2\pi)^2}  \\
 & \geq & \frac{A}{(2\pi)^{2}} - \frac{A}{2(2\pi)^2} = \frac{A}{2(2\pi)^{2}}.
 \end{eqnarray}

\end{proof}

We now apply Lemma \ref{lem-spherical-mean-bound-mina} and Vitali covering theorem to prove Proposition \ref{prop-L1-bound-mina-from-upper}:
\begin{proof}[Proof of Proposition \ref{prop-L1-bound-mina-from-upper}]
By Lemma \ref{lem-spherical-mean-bound-mina},  there exists a set $\mathcal{H} \subset \Sph^2$ such that
\begin{equation}
{\rm Area}(\underset{x \in \mathcal{H}}{\cup} B_{\frac{r_{x}}{10}}(x)) \geq \frac{1}{2} {\rm Area}(\Sph^2),
\end{equation}
 and for any $x \in \mathcal{H}$, there exists $r_{x} \leq \frac{\pi}{2}$ such that
              \be
               \fint_{\partial B_{r}(x)} f \geq \frac{A}{2(2\pi)^{2}}
              \ee
holds for all $r \in [0, r_{x}]$.

By the Vitali covering theorem, there exists a countable sequence of points $\{x_{i} \mid i \in \N\} \subset \mathcal{H}$ such that the collection of balls $\{B_{\frac{r_{x_{i}}}{10}}(x_{i})\}$ are disjoint with each other, and that
\begin{equation}
\underset{x \in \mathcal{H}}{\cup} B_{\frac{r_{x}}{10}}(x) \subset \underset{i \in \N}{\cup} B_{\frac{r_{x_{i}}}{2}}(x_{i}).
\end{equation}

By Lemma \ref{lem-spherical-mean-bound-mina} we have
\be
\frac{A}{8\pi^2} \leq \fint_{\partial B_{r}(x_i)} f = \frac{1}{2\pi \sin r} \int_{\partial B_{r}(x_i)} f ds, \quad \forall r \in [0, r_{x_i}].
\ee
As a result, we have
\be
\frac{A}{4\pi} \sin r \leq \int_{\partial B_{r}(x_i)} f ds, \quad \forall r \in [0, r_{x_i}].
\ee
Integrating this inequality from $0$ to $\frac{r_{x_{i}}}{10}$ gives
\begin{eqnarray}
\frac{A}{8\pi^2} \area(B_{\frac{r_{x_i}}{10}})
& = & \frac{A}{8\pi^2} \int^{\frac{r_{x_i}}{10}}_{0} 2\pi \sin r dr \\
& \leq & \int^{\frac{r_{x_i}}{10}}_{0} \left( \int_{\partial B_{r}(x_{i})}  f ds \right) dr \\
& = & \int_{B_{\frac{r_{x_i}}{10}}(x_{i})} f {\rm vol}_{\Sph^2}.
\end{eqnarray}
Then by summing the above inequalities for $i \in \N$ together, we obtain
\be
\frac{A}{8\pi^2}  \sum^{+\infty}_{i=1} \area(B_{\frac{r_{x_i}}{10}})  \leq \sum^{+\infty}_{i=1} \int_{B_{\frac{r_{x_i}}{10}}(x_{i})} f {\rm vol}_{\Sph^2} \leq \|f\|_{L^{1}(\Sph^2)},
\ee
since $\{ B_{\frac{r_{x_i}}{10}}(x_{i}) \mid i\in \N \}$ are disjoint balls. In the standard $\Sph^2$ we have
\be
\area\left( B_{\frac{r_{x_i}}{10}}(x_{i}) \right) \geq \frac{1}{25} \area \left( B_{\frac{r_{x_i}}{2}}(x_i) \right).
\ee
As a result, we have
\begin{eqnarray}
\|f\|_{L^{1}(\Sph^2)}
& \geq & \frac{A}{8\pi^2}  \sum^{+\infty}_{i=1} \area \left( B_{\frac{r_{x_i}}{10}} \right) \\
& \geq & \frac{A}{200 \pi^2} \sum^{+\infty}_{i=1}  \area \left(B_{\frac{r_{x_i}}{2}}(x_i) \right) \\
& \geq &  \frac{A}{200 \pi^2} \area \left(\underset{i\in\N}{\cup} B_{\frac{r_{x_i}}{2}}(x_i)\right) \\
& \geq &  \frac{A}{200 \pi^2} \area \left( \underset{x \in \mathcal{H}}{\cup} B_{\frac{r_x}{10}}(x) \right) \\
& \geq & \frac{A}{200 \pi^2} \frac{1}{2} \area (\Sph^2) = \frac{A}{100 \pi}.
\end{eqnarray}
This completes the proof.
\end{proof}


\subsection{Positivity of the limit of warping functions}\label{subsect-positivity-limit}
In this subsection, we use Proposition \ref{prop-cutoff-infimum} and Proposition \ref{prop-L1-bound-mina-from-upper} to prove Theorem \ref{Intro-thm: f_infty positive}, we restate it here for the convenience of the reader
\begin{thm}\label{thm: f_infty positive}
Let $\{\Sph^2 \times_{f_j} \Sph^1\}_{j=1}^\infty$ be a sequence of warped product manifolds such that each $\Sph^2 \times_{f_j} \Sph^1$ has non-negative scalar curvature. If we assume that
\be
{\rm Vol} (\Sph^2 \times_{f_j} \Sph^1) \leq V \text{ and }\mina(\Sph^2 \times_{f_{j}} \Sph^1) \geq A >0, \forall j \in \N,
\ee
then we have the following:
\begin{enumerate}[$ (i)$]
   \item After passing to a subsequence if needed, the sequence of warping functions $\{f_j\}_{j=1}^\infty$ converges to some limit function $f_\infty$ in $L^{q}(\Sph^2)$ for all $q \in [1, \infty)$.
   \item The limit function $f_\infty$ is in $W^{1, p}(\Sph^2)$, for all $p$ such that $1 \leq p <2$.

   \item The essential infimum of $f_{\infty}$ is strictly positive, i.e. $\inf\limits_{\Sph^2} f_{\infty} >0$.
   \item If we allow $+\infty$ as a limit, then the limit
             \be
            \overline{f_\infty}(x) := \lim_{r \rightarrow 0} \fint_{B_{r}(x)} f_{\infty}
            \ee
         exists for every $x \in \Sph^2$. Moreover, $\overline{f_{\infty}}$ is lower semi-continuous and strictly positive everywhere on $\Sph^2$, and $\overline{f_\infty} = f_\infty $ a.e. on $\Sph^2$.
         \end{enumerate}
\end{thm}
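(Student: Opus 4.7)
The plan is to assemble the four parts from results already set up in the paper. By Lemma~\ref{lem: nonnegative scalar curvature condition} the hypothesis of nonnegative scalar curvature is equivalent to the differential inequality $\Delta f_j\le f_j$ on $\Sph^2$, and by Lemma~\ref{lem: volume upper bound condition} the volume bound becomes $\int_{\Sph^2} f_j\,d\vol_{\Sph^2}\le V/(2\pi)$. Hence the sequence $\{f_j\}$ satisfies the hypotheses of Proposition~\ref{prop-W1p-limit}, which immediately delivers parts (i) and (ii): one extracts a subsequence $\{f_{j_k}\}$ converging in $L^q(\Sph^2)$ for every $q\in[1,\infty)$ to a limit $f_\infty\in W^{1,p}(\Sph^2)$ for every $p\in[1,2)$.

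The main work is part (iii), which is where the $\mina$ hypothesis enters. I would argue by contradiction using two ingredients established earlier: Proposition~\ref{prop-cutoff-infimum} says that either $\inf_{\Sph^2}f_\infty>0$ or $f_\infty=0$ almost everywhere; Proposition~\ref{prop-L1-bound-mina-from-upper} says that, whenever $\|f\|_{L^2(\Sph^2)}<A/(2^{3/2}\pi^{5/2})$, one has $\|f\|_{L^1(\Sph^2)}\ge A/(100\pi)$. Suppose for contradiction that $f_\infty=0$ a.e. The $L^q$ convergence from (i) then forces both $\|f_{j_k}\|_{L^2(\Sph^2)}\to 0$ and $\|f_{j_k}\|_{L^1(\Sph^2)}\to 0$. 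For $k$ sufficiently large the $L^2$-smallness hypothesis of Proposition~\ref{prop-L1-bound-mina-from-upper} is satisfied, so $\|f_{j_k}\|_{L^1(\Sph^2)}\ge A/(100\pi)$, contradicting $\|f_{j_k}\|_{L^1(\Sph^2)}\to 0$. Hence the dichotomy must resolve on the positive side, and $e_\infty:=\inf_{\Sph^2}f_\infty>0$.

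For part (iv), Proposition~\ref{prop: average limit exists} already establishes, for any sequence satisfying $\Delta f_j\le f_j$ and the uniform $L^1$ bound, that the ball average $\overline{f_\infty}(x):=\lim_{r\to 0}\fint_{B_r(x)}f_\infty$ exists (allowing $+\infty$) for every $x\in\Sph^2$, that $\overline{f_\infty}$ is lower semi-continuous on $\Sph^2$, and that $\overline{f_\infty}=f_\infty$ almost everywhere (the last equality using the Lebesgue--Besicovitch differentiation theorem as in Remark~\ref{rmrk-lower-semi-continuous}). What remains is the pointwise strict positivity of $\overline{f_\infty}$, and this is an immediate consequence of (iii): since $f_\infty\ge e_\infty>0$ almost everywhere, every ball average satisfies $\fint_{B_r(x)}f_\infty\ge e_\infty$, so $\overline{f_\infty}(x)\ge e_\infty>0$ for every $x\in\Sph^2$.

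The principal conceptual obstacle has already been resolved in the preceding subsections: the delicate step is Proposition~\ref{prop-L1-bound-mina-from-upper}, whose proof marries the Min--Max construction of a torus sweepout with the spherical mean inequality and a Vitali covering argument, and it is precisely this inequality that converts the $\mina$ noncollapsing hypothesis into the $L^1$ positivity needed to exclude the degenerate branch of the Proposition~\ref{prop-cutoff-infimum} dichotomy. Given these tools, the proof of Theorem~\ref{thm: f_infty positive} itself reduces to the short assembly described above.
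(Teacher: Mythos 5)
Your proof is correct and follows essentially the same route as the paper: parts (i) and (ii) assemble Lemmas~\ref{lem: nonnegative scalar curvature condition} and~\ref{lem: volume upper bound condition} with Proposition~\ref{prop-W1p-limit}; part (iii) runs the same contradiction combining the dichotomy of Proposition~\ref{prop-cutoff-infimum} with the $L^1$ lower bound from Proposition~\ref{prop-L1-bound-mina-from-upper}; part (iv) cites Proposition~\ref{prop: average limit exists} for existence, lower semi-continuity, and a.e.\ identification. The one place you diverge slightly is the positivity in (iv): the paper picks $r_0$ small so that $f_\infty(y) - C\,d(y,x) > e_\infty/2$ on $B_{r_0}(x)$, and then invokes the monotonicity to write $\overline{f_\infty}(x)$ as a supremum of the shifted ball averages, obtaining $\overline{f_\infty}(x) > e_\infty/2$. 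You instead observe directly that $f_\infty \ge e_\infty$ a.e.\ gives $\fint_{B_r(x)} f_\infty \ge e_\infty$ for every $r>0$, so the already-known limit must satisfy $\overline{f_\infty}(x) \ge e_\infty$. This is a small simplification (and yields a sharper constant, $e_\infty$ rather than $e_\infty/2$) that avoids the shifted-average supremum step; once Proposition~\ref{prop: average limit exists} guarantees the limit exists, the elementary bound suffices.
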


\begin{proof}
$(i)$
By Lemma \ref{lem: nonnegative scalar curvature condition} and Lemma \ref{lem: volume upper bound condition}, the nonnegative scalar curvature condition and ${\rm Vol}(\Sph^2 \times_{f_j} \Sph^2) \leq V$ imply that the sequence of warping functions $\{f_j\}_{j=1}^\infty$ satisfies the hypothesis in Proposition \ref{prop-W1p-limit}. By applying Proposition \ref{prop-W1p-limit}, we get the desired convergence.

$(ii)$
By applying Proposition \ref{prop-W1p-limit} we get that $f_\infty\in W^{1, p}(\Sph^2)$, for all $p\in [1,2)$.

$(iii)$
We prove $\inf\limits_{\Sph^2} f_\infty >0$ by contradiction. Recall that $\inf\limits_{\Sph^2} f_\infty$ is the essential infimum of $ f_\infty$ as defined in Definition \ref{Defn-Ess-Inf}. First note that $f_{\infty}\geq 0$, since $f_{j}>0, \forall j \in \N$.
Assume that $\inf\limits_{\mathbb{S}^{2}}f_{\infty}=0$,
then by Proposition \ref{prop-cutoff-infimum} we have $f_\infty=0$ almost everywhere in $\Sph^2$ and hence
\be\label{eqn-f_j-converge-zero}
f_j \rightarrow 0 \ \  \text{in} \ \  L^{2}(\Sph^2), \ \ \text{as} \ \  j \rightarrow +\infty.
\ee

Therefore, for all sufficiently large $j$, we have $\|f_j\|_{L^{2}(\Sph^2)} < \frac{A}{2^{\frac{3}{2}} \pi^{\frac{5}{2}}}$. Then by Proposition \ref{prop-L1-bound-mina-from-upper}, we have $\|f_j\|_{L^{1}(\Sph^2)} \geq \frac{A}{100 \pi} >0$ for all sufficiently large $j \in \N$. This contradicts with that $f_j \rightarrow 0 $ in $L^{ 2}(\Sph^2)$ as $j\rightarrow +\infty$  in (\ref{eqn-f_j-converge-zero}). This finishes the proof of part $(ii)$.

$(iv)$
Because warping functions $f_i$ satisfy the requirements in Proposition \ref{prop: average limit exists}, the existence of the limit
  \be
  \overline{f_\infty}(x) := \lim_{r \rightarrow 0} \fint_{B_{r}(x)} f_{\infty},
  \ee
  the lower semi-continuity of $\overline{f_\infty}$ and $\overline{f_\infty} = f_\infty$ a.e. on $\Sph^2$ directly follow from Proposition \ref{prop: average limit exists}.

  Thus we only need to prove that $\overline{f_\infty}(x) > 0$ for all $x \in \Sph^2$. Let
  \be
  e_\infty : = \inf\limits_{\Sph^2} f_\infty >0.
  \ee
By the continuity of the distance funciton $d(y,x)$, there exists $ 0 < r_0 < \frac{\pi}{2}$ such that for all $x\in \Sph^2$ we have
  \be
  f_\infty (y) - C d(y, x) > \frac{e_\infty}{2}, \ \ \text{ for a.e.} \ \ y \in B_{r_0}(x).
  \ee
As a result, we have
  \be
  \fint_{B_{r_0}(x)} \left( f_\infty(y) - C d(y, x) \right) d\vol(y) > \frac{e_\infty}{2}, \ \ \forall x \in \Sph^2.
  \ee
  Then because in Proposition \ref{prop: average limit exists} we proved that for each fixed $x \in \Sph^2$ the ball average $\fint_{B_{r_0}(x)} \left( f_\infty(y) - C d(y, x) \right) d\vol(y)$ is non-increasing in $r \in \left( 0, \frac{\pi}{2} \right)$, and
  \be
  \lim_{r \rightarrow 0} \fint_{B_{r}(x)} f_\infty = \lim_{r \rightarrow 0}  \fint_{B_{r}(x)} \left( f_\infty(y) - C d(y, x) \right) d\vol(y),
  \ee
  we have that for each fixed $x \in \Sph^2$,
  \begin{eqnarray}
  \overline{f_\infty}(x)
  & := &  \lim_{r \rightarrow 0} \fint_{B_{r}(x)} f_{\infty} \\
  & = & \sup_{0< r < \frac{\pi}{2}}   \fint_{B_{r}(x)} \left( f_\infty(y) - C d(y, x) \right) d\vol(y) \\
  & \geq &    \fint_{B_{r_0}(x)} \left( f_\infty(y) - C d(y, x) \right) d\vol(y) \\
  & > & \frac{e_\infty}{2}>0.
  \end{eqnarray}
This completes the proof of theorem.
\end{proof}

\begin{rmrk}
{\rm
Theorem \ref{thm: f_infty positive} implies that the limit function $f_\infty$ has a everywhere positive lower semi-continuous representative $\overline{f_\infty}$ as a function in $W^{1, p}(\Sph^2)$ for $1 \leq p <2$. For the rest of paper, $f_\infty \in W^{1, p}(\Sph^2)$ will always denote this everywhere positive lower semi-continuous representative.
}
\end{rmrk}

We end this section with Proposition \ref{prop: f_j lower bound} below. The proof of Proposition \ref{prop: f_j lower bound} uses Theorem \ref{thm: f_infty positive} and the spherical mean inequality from Proposition \ref{prop: spherical mean inequality}. The positive uniform lower bound for warping functions $f_j$ obtained in Proposition \ref{prop: f_j lower bound} is important in proving geometric convergences of the sequence of warped product manifolds $\{\Sph^2 \times_{f_j} \Sph^1\}_{j=1}^\infty$ in our next paper.

\begin{prop}\label{prop: f_j lower bound}
Let $\{\Sph^2 \times_{f_j} \Sph^1\}_{j=1}^\infty$ be a sequence of warped product manifolds with metric tensors as in (\ref{eqn-circle-over-sphere}) that have non-negative scalar curvature and satisfy
\be {\rm Vol}(\Sph^2 \times_{f_j} \Sph^1) \leq V \text{ and }\mina(\Sph^2 \times_{f_j} \Sph^1) \geq A >0, \forall j \in \N.
\ee
Let $e_\infty := \inf\limits_{\Sph^2} f_\infty > 0$. Then there exists $j_{0} \in \N$ such that $f_j (x) \geq \frac{e_{\infty}}{4}> 0 $, for all $j \geq j_0$ and all $x \in \Sph^2$.
\end{prop}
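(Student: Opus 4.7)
The plan is to argue by contradiction. Suppose the conclusion fails; then after passing to a subsequence (which I will still denote $\{f_j\}$) there exist points $x_j \in \Sph^2$ with $f_j(x_j) < e_\infty/4$ for every $j$. Since $\Sph^2$ is compact, I can further extract a subsequence so that $x_j \to x_\infty \in \Sph^2$. The goal is to derive a contradiction with Theorem~\ref{thm: f_infty positive}(iii) by sandwiching the ball integral $\int_{B_{r_0}(x_j)} f_j$ between an upper bound coming from the spherical mean inequality at $x_j$ and a lower bound coming from the $L^1$-convergence $f_j \to f_\infty$.

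For the upper bound, by Lemma~\ref{Lem-Moser-Trudinger} applied with $p=2$, the sequence $\{\|f_j\|_{L^2(\Sph^2)}\}$ is uniformly bounded; set $C := \sup_j \|f_j\|_{L^2(\Sph^2)}/\sqrt{2\pi}$. The spherical mean inequality (Proposition~\ref{prop: spherical mean inequality}) then gives, for every $0 < r \leq \pi/2$,
\begin{equation*}
\fint_{\partial B_r(x_j)} f_j\, ds \,\leq\, f_j(x_j) + C r \,<\, \frac{e_\infty}{4} + C r.
\end{equation*}
Fix $r_0 := \min\{\pi/2,\, e_\infty/(4C)\} > 0$, so that $\fint_{\partial B_r(x_j)} f_j \leq e_\infty/2$ for all $r \in (0, r_0]$. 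Integrating in the geodesic polar coordinate around $x_j$,
\begin{equation*}
\int_{B_{r_0}(x_j)} f_j\, d\vol_{g_{\Sph^2}}
 = \int_0^{r_0} 2\pi \sin r \cdot \fint_{\partial B_r(x_j)} f_j\, dr
 \leq \frac{e_\infty}{2}\,\area(B_{r_0}).
\end{equation*}

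For the lower bound I would use the $L^1$ convergence $f_j \to f_\infty$ from Theorem~\ref{thm: f_infty positive}(i) together with the uniform $L^2$ bound on $\{f_j\}$. Since the symmetric difference $B_{r_0}(x_j)\,\triangle\, B_{r_0}(x_\infty)$ has area tending to zero as $x_j \to x_\infty$, Cauchy--Schwarz gives
\begin{equation*}
\left| \int_{B_{r_0}(x_j)} f_j - \int_{B_{r_0}(x_\infty)} f_j \right|
\leq \bigl\|\chi_{B_{r_0}(x_j)} - \chi_{B_{r_0}(x_\infty)}\bigr\|_{L^2} \|f_j\|_{L^2} \to 0,
\end{equation*}
and $\int_{B_{r_0}(x_\infty)} f_j \to \int_{B_{r_0}(x_\infty)} f_\infty$ by $L^1$ convergence. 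Hence $\int_{B_{r_0}(x_j)} f_j \to \int_{B_{r_0}(x_\infty)} f_\infty$. Using the everywhere positive lower semi-continuous representative in Theorem~\ref{thm: f_infty positive}(iv), $f_\infty \geq e_\infty$ almost everywhere, so the limit is at least $e_\infty \,\area(B_{r_0})$. Combining with the upper bound yields
\begin{equation*}
e_\infty \,\area(B_{r_0}) \leq \lim_{j\to\infty}\int_{B_{r_0}(x_j)} f_j \leq \frac{e_\infty}{2}\,\area(B_{r_0}),
\end{equation*}
a contradiction since $\area(B_{r_0}) > 0$ and $e_\infty > 0$.

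There is no serious obstacle, but the step requiring the most care is the convergence $\int_{B_{r_0}(x_j)} f_j \to \int_{B_{r_0}(x_\infty)} f_\infty$, because the domain of integration varies with $j$ and $f_j$ is not uniformly bounded in $L^\infty$; the argument has to combine the $L^2$ Moser--Trudinger bound with the convergence of characteristic functions in $L^2$. Everything else is essentially an application of the spherical mean inequality followed by integration in polar coordinates.
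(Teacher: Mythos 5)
Your argument is correct, but it takes a somewhat different route from the paper's. Both proofs rest on the same three ingredients: the spherical mean inequality, the $L^1$ convergence $f_j\to f_\infty$, and the fact that $f_\infty\geq e_\infty$ almost everywhere; and both integrate the spherical mean inequality in polar coordinates to pass from a pointwise value of $f_j$ to a ball integral. The difference is structural. The paper argues directly: for an \emph{arbitrary} $x\in\Sph^2$ it writes the chain
$V(r)f_j(x)\geq\int_{B_r(x)}f_j - O(r^3)\geq \int_{B_r(x)}f_\infty - \|f_j-f_\infty\|_{L^1(\Sph^2)} - O(r^3)\geq e_\infty V(r) - \|f_j-f_\infty\|_{L^1(\Sph^2)} - O(r^3)$,
divides by $V(r)$, and chooses $r$ then $j_0$; the crucial point is that $\bigl|\int_{B_r(x)}(f_j-f_\infty)\bigr|\leq \|f_j-f_\infty\|_{L^1(\Sph^2)}$ is \emph{uniform in $x$}, so no compactness of $\Sph^2$ is needed. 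You instead argue by contradiction, extract a sequence of bad points $x_j\to x_\infty$, and then prove $\int_{B_{r_0}(x_j)}f_j\to\int_{B_{r_0}(x_\infty)}f_\infty$ by combining the $L^2$ bound with convergence of the characteristic functions of the varying balls. That step is the one extra piece of machinery your proof carries, and it is correct (the symmetric-difference area does tend to zero and Cauchy--Schwarz closes the estimate). In fact you could have avoided it entirely: once you have the upper bound $\int_{B_{r_0}(x_j)}f_j\leq\frac{e_\infty}{2}\area(B_{r_0})$, the same uniform $L^1$ estimate that the paper uses gives directly $\int_{B_{r_0}(x_j)}f_j\geq e_\infty\area(B_{r_0}) - \|f_j-f_\infty\|_{L^1(\Sph^2)}$, which already contradicts the upper bound for large $j$ without introducing $x_\infty$ at all. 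So the paper's version is essentially your argument with the compactness detour stripped out, and it also has the small advantage of being effective --- it produces the threshold $j_0$ from explicit quantities rather than a proof by contradiction.
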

\begin{proof}
By Lemma \ref{lem: nonnegative scalar curvature condition}, the non-negativity of scalar curvature of $\Sph^2 \times_{f_{i}} \Sph^1$ implies that
\be
\Delta f_j \leq f_j , \quad \forall j \in \N.
\ee
Therefore, by the spherical mean inequality in Proposition \ref{prop: spherical mean inequality}, we have
\be
f_j (x) \geq \fint_{\partial B_{s}(x)} f_j ds - \frac{\|f_j\|_{L^2(\Sph^2)}}{\sqrt{2\pi}}s, \quad \forall  s \in \left( 0, \frac{\pi}{2} \right), x \in \Sph^2, j\in \N.
\ee
Then multiplying the inequality by $\area(\partial B_{s}(x)) = 2\pi \sin(s)$ gives us
\be\label{eqn: f_j lower bound 1}
2\pi \sin(s) f_j (x) \geq \int_{\partial B_{s}(x)} f_j ds - \frac{\|f_j\|_{L^{2}(\Sph^2)}}{\sqrt{2\pi}} 2\pi \sin(s) s,
\ee
for all $ s \in \left( 0, \frac{\pi}{2} \right), x \in \Sph^2$ and $ j \in \N$. Let
\be
V(r):= \vol(B_{r}(x)) = \int^r_0 2\pi \sin s ds = 2\pi(1-\cos r),
\ee
and let $e_\infty := \inf_{\Sph^2} f_\infty$ denote the essential infimum of the limit function $f_\infty$ which is strictly positive by Theorem \ref{thm: f_infty positive}.

 Now integrating the inequality (\ref{eqn: f_j lower bound 1}) with respect to $s$ from $0$ to $r<\frac{\pi}{2}$ gives us
\begin{eqnarray}
 V(r) f_j (x)
& \geq & \int_{B_{r}(x)} f_j d{\rm vol}_{\Sph^2}  - \frac{\|f_j\|_{L^2 (\Sph^2)}}{\sqrt{2\pi}} \int^r_0 2\pi s \sin s ds  \\
& \geq & \int_{B_{r}(x)} f_{\infty} d{\rm vol}_{\Sph^2} - \|f_\infty - f_j \|_{L^1(\Sph^2)} \\
&         & - \sqrt{2\pi}\|f_j \|_{L^2(\Sph^2)} (\sin r - r\cos r)  \\
& \geq & e_\infty V(r)  - \|f_\infty - f_j \|_{L^1(\Sph^2)} \\
&         &  - \sqrt{2\pi}\|f_j \|_{L^2(\Sph^2)} (\sin r - r\cos r).
\end{eqnarray}
Then by dividing the inequality by $V(r)$ we obtain
\be\label{eqn: f_j lower bound 2}
f_j (x) \geq e_{\infty} - \frac{\|f_\infty - f_j \|_{L^1(\Sph^2)}}{V(r)}  - \frac{\|f_j \|_{L^{2}(\Sph^2)}}{\sqrt{2\pi}} \frac{\sin r - r \cos r}{1 - \cos r},
\ee
for all $0< r < \frac{\pi}{2}, x \in \Sph^2$ and $j \in \N$. By Lemma \ref{Lem-Moser-Trudinger} we have $\sup\limits_{j}\|f_j \|_{L^2(\Sph^2)}<\infty$, and by direct calculation we have that
\be
\lim_{r \rightarrow 0} \frac{\sin r - r \cos r}{1 - \cos r} =0,
\ee
we can choose $0< r_{1} < \frac{\pi}{2}$ such that
\be\label{eqn: f_j lower bound 3}
\left| \frac{\|f_j\|_{L^{2}(\Sph^2)}}{\sqrt{2\pi}} \frac{\sin r_1 - r_1 \cos r_1}{1 - \cos r_1} \right| < \frac{e_\infty}{2}, \quad \forall j \in \N.
\ee
Moreover, because $f_j \rightarrow f_\infty$ in $L^1(\Sph^2)$, we can choose $j_0 \in \N$ such that
\be\label{eqn: f_j lower bound 4}
\frac{\|f_\infty - f_j \|_{L^1(\Sph^2)}}{V(r_1)} \leq \frac{e_\infty}{4}, \quad \forall j \geq j_0.
\ee

Finally by combining (\ref{eqn: f_j lower bound 2}), (\ref{eqn: f_j lower bound 3}) and (\ref{eqn: f_j lower bound 4}) together, we conclude that $f_j (x) \geq \frac{e_\infty}{4} > 0$ for all $j \geq j_0$ and $x \in \Sph^2$.
\end{proof}

\subsection{Uniform systole positive lower bound} In this subsection, as an application of non-collapsing of warping functions $f_j$ obtained in Proposition \ref{prop: f_j lower bound}, we derive a uniform positive lower bound for the systole of the sequence of warped product manifolds $\Sph^2 \times_{f_i} \Sph^1$ satisfying assumptions in Proposition \ref{prop: f_j lower bound}.

\begin{defn}[Systole]\label{defn: systole}
The systole of a Riemannian manifold $(M, g)$, which is denoted by $sys(M, g)$ is defined to be the length of the shortest closed geodesic in $M$.
\end{defn}

\begin{rmrk}
{\rm
People may usually consider so-called $\pi_1$-systole that is  the length of a shortest {\em non-contractible} closed geodesic. But in the study of compactness problem of manifolds with nonnegative scalar curvature, we also need to take into account contractible closed geodesic, for example, in a dumbell, which is diffeomorphic to $\Sph^3$, we may have a short contractible closed geodesic.
}
\end{rmrk}

First of all we derive an interesting dichotomy property for closed geodesics in warped product manifolds: $N \times_f \Sph^1$, that is, the product manifold $N \times \Sph^1$ endowed with the metric $g = g_{N} + f^2 g_{\Sph^1}$, where $(N, g_N)$ is a $n$-dimensional (either compact or completep non-compact) Riemannian manifold without boundary, and $f$ is a positive smooth function on $N$.

\begin{lem}\label{lem: geodesic dichotomy}
There is a dichotomy for closed geodesics in $N \times_f \Sph^1$, that is, a closed geodesic in $N \times_f \Sph^1$ either wraps around the fiber $\Sph^1$, or is a geodesic in the base $N$.
\end{lem}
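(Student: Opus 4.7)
The plan is to exploit the fact that $N \times_f \Sph^1$ carries a natural Killing field coming from rotations of the $\Sph^1$ fiber, and then split into cases based on whether the corresponding conserved quantity vanishes on the geodesic.

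First I would parametrize a closed geodesic as $\gamma(t) = (\alpha(t), \beta(t)) \in N \times \Sph^1$ for $t \in [0, L]$ with $\gamma(0) = \gamma(L)$, and denote the coordinate vector field along $\Sph^1$ by $\partial_\varphi$. Because $f$ depends only on the base point, the flow $(p, \varphi) \mapsto (p, \varphi + s)$ is a one-parameter group of isometries of $(N \times \Sph^1, g_N + f^2 g_{\Sph^1})$, so $\partial_\varphi$ is a Killing field. A standard consequence is that along any geodesic $\gamma$ the quantity
\begin{equation}
c := \langle \gamma'(t), \partial_\varphi \rangle_g = f(\alpha(t))^2\, \beta'(t)
\end{equation}
is constant in $t$. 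This is the single ingredient that drives the dichotomy.

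If $c = 0$, then since $f > 0$ we get $\beta'(t) \equiv 0$, so $\beta$ is constant and $\gamma$ lies inside the slice $N \times \{\beta(0)\}$. A standard computation of the Christoffel symbols of a warped product shows that each slice $N \times \{\varphi_0\}$ is totally geodesic in $N \times_f \Sph^1$ (the second fundamental form has the fiber direction as its only nontrivial direction). Hence $\gamma$ is a geodesic of $(N, g_N)$ lying in that slice, which is the second alternative in the dichotomy.

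If $c \neq 0$, then $\beta'(t) = c/f(\alpha(t))^2$ has constant sign and never vanishes. Since $\gamma$ is closed with $\gamma(0) = \gamma(L)$ we have $\beta(L) \equiv \beta(0) \pmod{2\pi}$, while monotonicity of $\beta$ forces the total increment $\beta(L) - \beta(0)$ to be a nonzero multiple of $2\pi$. Thus $\gamma$ wraps a nonzero integer number of times around the fiber $\Sph^1$, which is the first alternative. I do not anticipate a serious obstacle here: the only nonroutine point is the totally geodesic property of the slices $N \times \{\varphi_0\}$, which I would either verify by a direct Christoffel-symbol computation in the warped product or cite from the standard warped product geometry (e.g.\ O'Neill or Besse, Chapter 9).
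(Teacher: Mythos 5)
Your proof is correct and takes a genuinely different route from the paper's. The paper does not invoke the Killing field $\partial_\varphi$ or the associated Clairaut integral $c = f^2\varphi'$. Instead, in the non-wrapping case it argues by contradiction: if $\varphi$ is not constant, it attains a maximum $\varphi_0 \in (0,2\pi)$ at some interior time $t_0$, so $\varphi'(t_0)=0$ and $\gamma'(t_0)$ is tangent to the slice $N\times\{\varphi_0\}$; since the slice is totally geodesic (the paper cites Proposition 9.104 in Besse), uniqueness of geodesics forces $\gamma$ to stay in that slice, contradicting $\varphi(0)=0$. Both proofs therefore rely on the totally geodesic property of the slices $N\times\{\varphi_0\}$, but the mechanisms differ: you reach the dichotomy directly by splitting on whether the conserved quantity $c=f^2\varphi'$ vanishes, whereas the paper uses a maximum point plus geodesic uniqueness. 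Your approach buys a slightly sharper conclusion in the wrapping case --- since $\varphi'=c/f^2$ has constant sign and never vanishes, $\varphi$ is strictly monotone and the total increment $\varphi(L)-\varphi(0)$ is a nonzero integer multiple of $2\pi$, i.e.\ a genuine nonzero winding number --- whereas the paper's Case~1 (that $\varphi$ attains every value in $[0,2\pi]$) only records wrapping at the level of the image set and leaves the monotonicity implicit. The one point you flagged as ``nonroutine'' (totally geodesic slices) is indeed exactly what the paper cites to Besse, so citing it as you propose is adequate.
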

\begin{proof}
Let $\varphi \in [0, 2\pi]$ is a coordinate on the fiber $\Sph^1$. The warped product metric $g$ then can be written as
\begin{equation}
g = g_N + f^2 d\varphi^2.
\end{equation}

Let
\begin{equation}
\gamma(t) = (\gamma_N(t), \varphi(t)) \ \ t \in [0, 1]
\end{equation}
be a closed geodesic in $\Sph^2 \times_f \Sph^1$, and without loss of generality, we assume $\varphi(0) = 0$. We have two possible cases as following:

{\bf Case 1}: $\varphi([0, 1]) = [0, 2\pi]$. In this case, clearly, the geodesic wraps around the fiber $\Sph^1$.

{\bf Case 2}: $\varphi([0, 1]) \neq [0, 2\pi]$.  In this case, we show that $\varphi([0, 1]) = \{0\}$ by a proof by contradiction, and then clearly, $\gamma$ is a closed geodesic on base $N \cong  N \times \{\varphi = 0\}$. Otherwise, we have
\begin{equation}
0 < \varphi_0 := \max\{\varphi(t) \mid t \in [0, 1]\} < 2\pi.
\end{equation}
Moreover, there exists $0< t_0 < 1$ such that $\varphi(t_0) = \varphi_0$, since $\varphi(1) = \varphi(0) =0$ due to the closeness of the geodesic $\gamma$. Consequently, $t_0$ is a critical point of the function $\varphi(t)$, i.e. $\varphi^\prime(t_0) = 0$. As a result, the tangent vector of the geodesic at $t_0$, $\gamma^\prime(t_0) = (\gamma^\prime_N(t_0), 0)$, is tangent to $N \times \{\varphi = \varphi_0 \}$. On the other hand, there is a geodesic contained in $N \times \{ \varphi=\varphi_0\}$ that passes through the point $(\gamma_N(t_0), \varphi_0)$ and is tangent to $(\gamma^\prime_N(t_0), 0)$ at this point. Then by the uniqueness of the geodesic with given tangent vector at a point, and the fact that base $N$ is totally geodesic in the warped product manifold $N \times_f \Sph^1$, which can be seen easily by Koszul's formula, or see Proposition 9.104 in \cite{Besse}, we can obtain $\varphi([0, 1]) = \{\varphi_0\}$, and this contradicts with $\varphi(0) =0$.
\end{proof}

By the dichotomy of closed geodesics in Lemma \ref{lem: geodesic dichotomy}, we can obtain a lower bound estimate for the systole of $N \times_f \Sph^1$.

\begin{lem}\label{lem: systole lower bound}
The systole of the warped product Riemannian manifold $N \times_f \Sph^1$ is greater than or equal to $\min \left\{sys(N, g_N), 2\pi \min\limits_{\Sph^2}f \right\}$.
\end{lem}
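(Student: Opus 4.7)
The plan is to apply the dichotomy in Lemma \ref{lem: geodesic dichotomy} to any closed geodesic $\gamma$ in $N\times_f \Sph^1$ and bound its length in each of the two cases. Writing $\gamma(t)=(\gamma_N(t),\varphi(t))$ for $t\in[0,1]$ with $\gamma(0)=\gamma(1)$, the velocity has squared norm
\begin{equation*}
|\gamma'(t)|^2_g \;=\; |\gamma_N'(t)|^2_{g_N} + f(\gamma_N(t))^2\,(\varphi'(t))^2,
\end{equation*}
so the length splits naturally into a base contribution and a fiber contribution, and this is what I would exploit.

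In the first case of the dichotomy, $\gamma$ is a closed geodesic lying entirely in the base $N\cong N\times\{\varphi=0\}$. Then by definition of systole, its length is at least $sys(N,g_N)$. In the second case, $\varphi([0,1])=[0,2\pi]$; by continuity this forces $\int_0^1 |\varphi'(t)|\,dt \geq 2\pi$ (the curve $\varphi$ surjects onto $\Sph^1$, so its total variation is at least the length $2\pi$ of $\Sph^1$). Then I would estimate
\begin{equation*}
L_g(\gamma) \;\geq\; \int_0^1 f(\gamma_N(t))\,|\varphi'(t)|\,dt \;\geq\; \bigl(\min_N f\bigr)\cdot\int_0^1 |\varphi'(t)|\,dt \;\geq\; 2\pi\min_N f.
\end{equation*}
Combining the two cases, every closed geodesic has length at least $\min\{sys(N,g_N),\,2\pi\min_N f\}$, proving the claim. (Note that the statement writes $\min_{\Sph^2} f$, but this appears to be a typo for $\min_N f$; the argument only uses a lower bound of $f$ along the projected curve $\gamma_N\subset N$.)

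There is essentially no obstacle here: the dichotomy lemma has already done the hard geometric work, reducing the problem to a one-line estimate in each case. The only small point worth being careful about is justifying $\int_0^1|\varphi'|\,dt\geq 2\pi$ in the wrapping case, which follows from the fact that a continuous closed curve on $\Sph^1$ that is surjective has total variation at least the length of $\Sph^1$ — a standard fact one can verify by lifting $\varphi$ to a continuous real-valued function and applying the intermediate value theorem. The argument works verbatim whether $N$ is compact or complete noncompact, as long as $\min_N f$ is attained (which is implicit in the statement).
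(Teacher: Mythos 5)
Your proof is correct and follows essentially the same route as the paper: invoke the dichotomy from Lemma~\ref{lem: geodesic dichotomy}, then bound the length of the closed geodesic separately in the base case and the fiber-wrapping case using the warped product form of the metric. Your observation that $\min_{\Sph^2} f$ should read $\min_N f$ in the general statement is correct, and your explicit justification of $\int_0^1|\varphi'|\,dt \geq 2\pi$ fills a small step the paper leaves implicit.
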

\begin{proof}
Let $\gamma(t) = (r(t), \theta(t), \varphi(t)), t \in [0, 1],$ is a closed geodesic in $\Sph^2 \times_f \Sph^1$. By Lemma \ref{lem: geodesic dichotomy}, $\gamma$ either wraps around the fiber $\Sph^1$, or $\gamma$ is a closed geodesic in the base manifold $(N, g_N)$.

If $\gamma$ wraps around the fiber $\Sph^1$, then $\varphi([0, 1]) = [0, 2\pi]$, and so the length of $\gamma$:
\begin{eqnarray}
L(\gamma) = \int^{1}_{0}|\gamma^\prime(t)|_{g} dt
& \geq & \int^1_0 f(\gamma(t)) |\varphi^\prime(t)|dt \\
& \geq & \min\limits_{\Sph^2} f \int^1_0 |\varphi^\prime(t)| dt  \\
& \geq  & 2\pi \min\limits_{\Sph^2} f.
\end{eqnarray}

If $\gamma$ is a closed geodesic in the base $(N, g_N)$, then by the definition of systole, the length of $\gamma$ is greater than or equal to $sys(N, g_N)$.

These estimates of length of closed geodesics imply the lower bound of systole in the conclusion.
\end{proof}

By combining the lower bound estimate of systole in Lemma \ref{lem: systole lower bound} and Proposition \ref{prop: f_j lower bound}, we immediately have the following uniform lower bound for systoles.
\begin{prop}\label{prop: systole uniform lower bound}
Let $\{\Sph^2 \times_{f_j} \Sph^1\}_{j=1}^\infty$ be a sequence of warped product manifolds with metric tensors as in (\ref{eqn-circle-over-sphere}) that have non-negative scalar curvature and satisfy
\be
{\rm Vol}(\Sph^2 \times_{f_j} \Sph^1) \leq V \text{ and }\mina(\Sph^2 \times_{f_j} \Sph^1) \geq A >0, \forall j \in \N.
\ee
Let $e_\infty := \inf\limits_{\Sph^2} f_\infty > 0$. Then the systoles of $\Sph^2 \times_{f_j} \Sph^1$, for all $j\in \N$, have a uniform positive lower bound given by $\min\left\{2\pi, \frac{e_\infty}{2}\pi\right\}$.
\end{prop}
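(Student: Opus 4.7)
The plan is to combine the two immediate ingredients already assembled just before this proposition: the abstract systole lower bound for warped products over a base (Lemma \ref{lem: systole lower bound}) and the uniform positive lower bound on the warping functions (Proposition \ref{prop: f_j lower bound}). Neither is hard, and there is no additional analytic input needed beyond identifying the systole of the base $(\Sph^2, g_{\Sph^2})$.

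First, I would apply Lemma \ref{lem: systole lower bound} with $(N, g_N) = (\Sph^2, g_{\Sph^2})$ and $f = f_j$, which yields
\[
\mathrm{sys}(\Sph^2 \times_{f_j} \Sph^1) \;\geq\; \min\!\left\{ \mathrm{sys}(\Sph^2, g_{\Sph^2}),\ 2\pi \min_{\Sph^2} f_j \right\}.
\]
It is a classical fact that on the standard round $(\Sph^2, g_{\Sph^2})$ every closed geodesic is a great circle of length $2\pi$, so $\mathrm{sys}(\Sph^2, g_{\Sph^2}) = 2\pi$. I would briefly record this, so that the first slot in the $\min$ becomes the constant $2\pi$ appearing in the statement.

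Next, I would invoke Proposition \ref{prop: f_j lower bound} to obtain an index $j_0 \in \N$ such that
\[
\min_{\Sph^2} f_j \;\geq\; \frac{e_\infty}{4}, \qquad \forall j \geq j_0,
\]
where $e_\infty = \inf_{\Sph^2} f_\infty > 0$ by Theorem \ref{thm: f_infty positive}(iii). Substituting this into the bound above gives $2\pi \min_{\Sph^2} f_j \geq \frac{e_\infty \pi}{2}$, and hence
\[
\mathrm{sys}(\Sph^2 \times_{f_j} \Sph^1) \;\geq\; \min\!\left\{ 2\pi,\ \tfrac{e_\infty}{2}\pi \right\}, \qquad \forall j \geq j_0.
\]
Since passage to a subsequence has already been taken in Theorem \ref{Intro-thm: f_infty positive} and is harmless for a pre-compactness statement, I would re-index so that this holds for all $j \in \N$, which is exactly the claim.

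There is no real obstacle here; the only subtlety is the cleanness of the systole bound on the base $\Sph^2$. One should be mindful that we defined systole as the length of the shortest closed geodesic without a homotopy constraint, so the $2\pi$ value genuinely refers to the length of any great circle (every point on $\Sph^2$ lies on one, and no shorter closed geodesic exists). The rest is a direct substitution of the two cited results.
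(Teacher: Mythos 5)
Your proposal is correct and follows essentially the same route as the paper's own proof: identify $\mathrm{sys}(\Sph^2, g_{\Sph^2}) = 2\pi$, apply Lemma \ref{lem: systole lower bound} to the warped product, and invoke Proposition \ref{prop: f_j lower bound} for the uniform lower bound $\min_{\Sph^2} f_j \geq e_\infty/4$. Your remark about the index $j_0$ and re-indexing is if anything a bit more careful than the paper's phrasing, which quietly asserts the bound for all $j \in \N$.
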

\begin{proof}
First note that the base manifold of the sequence of the warped product manifolds is the standard $2$-sphere, and its systole is equal to $2\pi$, since the image of a closed geodesic in $(\Sph^2, g_{\Sph^2})$ is always a great circle.

Then note that $e_\infty >0$ follows from the item $(iii)$ in Theorem \ref{thm: f_infty positive}. For each $j \in \N$, by Lemma \ref{lem: systole lower bound}, the systole of $\Sph^2 \times_{f_j} \Sph^1$ has a lower bound given by $\min \left\{ 2\pi, 2\pi \min\limits_{\Sph^2}f_j \right\}$. Then by Proposition \ref{prop: f_j lower bound}, $\min\limits_{\Sph^2} f_j \geq \frac{e_\infty}{4}$ holds for all $j \in \N$. Hence the conclusion follows and we complete the proof.
\end{proof}


\section{Nonnegative distributional scalar curvature of limit metric}\label{sect-distr-scal}
Now we use the positive limit function $f_\infty$ obtained in Theorem \ref{thm: f_infty positive} to define a weak warped product metrics:
\begin{defn}\label{defn-limit-metric}
Let $f_{\infty}$ be a function defined on $\Sph^2$ such that it is almost everywhere positive and finite on $\Sph^2$. We further assume that $f_{\infty} \in W^{1, p}(\Sph^2)$  for $1 \leq p<2$. Define
\be
g_{\infty} := g_{\Sph^2} + f_{\infty}^2 g_{\Sph^1},
\ee
to be a (weak) warped product Riemannian metric on $\Sph^2 \times \Sph^1$ in the sense of defining an inner product on the tangent space at (almost) every point of $\Sph^2 \times \Sph^1$.
\end{defn}
\begin{rmrk}
{\rm
In general, $g_\infty$ is only defined almost everywhere in $\Sph^2 \times \Sph^1$ with respect to the standard product volume measure $d\vol_{g_{\Sph^2}} d\vol_{g_{\Sph^1}}$, since $f_\infty$ may have value as $+\infty$ on a measure zero set in $\Sph^2$. Note that we allow $+\infty$ as ball average limit in Proposition \ref{prop: average limit exists}. For example, in the extreme example constructed by Christina Sormani and authors in \cite{STW-ex}, the limit warping function equal to $+\infty$ at two poles of $\Sph^2$.
}
\end{rmrk}

In Subsection \ref{subsect-W1p-limit-metric}, we show $W^{1, p}$ regularity of the weak metric tensor $g_\infty$ defined in Definition \ref{defn-limit-metric} for $1 \leq p <2$ [Proposition \ref{prop-regularity-limit-metric}], and prove that the warped product metrics $g_j = g_{\Sph^2} + f^2_j g_{\Sph^1}$ converge to $g_\infty$ in the $L^q$ sense for any $1 \leq q <+\infty$ [Theorem \ref{thm-Lq-convergence}].

In Subsection \ref{subsect-nonnegative-distr-scalar}, we show that the limit weak metric $g_\infty$ has nonnegative distributional scalar curvature in the sense of Lee-LeFloch [Theorem \ref{thm-distr-scalar}].

\subsection{$W^{1, p}$ limit Riemannian metric $g_\infty$}\label{subsect-W1p-limit-metric}
we prove the regularity of the metric tensor. Before that we need the following definition:

\begin{defn} \label{defn-W1p}
We define $L^{p}(\Sph^2\times \Sph^1, g_0)$ as the set of all tensors defined almost everywhere on $\Sph^2\times\Sph^1$ such that its $L^p$ norm measured in terms of $g_0$ is finite where $g_0$ is the
isometric product metric
\be
g_0 = g_{\Sph^2}+ g_{\Sph^1} \textrm{ on } \Sph^2 \times \Sph^1.
\ee
We define $W^{1,p}(\Sph^2\times \Sph^1, g_0)$ as the set of all tensors, $h$,  defined almost everywhere on $\Sph^2\times\Sph^1$ such that both the $L^p$ norm of $h$ and the $L^p$ norm of $\overline{\nabla} h$ measured in terms of $g_0$ are finite
where $\overline{\nabla}$ is the connection corresponding to the metric $g_0$.
\end{defn}

Now we prove the regularity of the metric tensor $g_\infty$ defined in Definition \ref{defn-limit-metric}:

\begin{prop} [Regularity of the metric tensor]\label{prop-regularity-limit-metric}
\label{metric-W1p}
The Riemannian metric tensor $g_\infty$ as in Definition \ref{defn-limit-metric} satisfies
\be
g_\infty\in W^{1,p}(\Sph^2\times \Sph^1, g_0)
\ee
for all $p\in [1,2)$ in the sense of Definition \ref{defn-W1p}.
\end{prop}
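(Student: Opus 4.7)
The plan is to reduce the claim to two scalar estimates for $f_\infty$ on the standard sphere $(\Sph^2, g_{\Sph^2})$. Writing $g_\infty = g_{\Sph^2} + f_\infty^2 g_{\Sph^1}$, the first summand is smooth and contributes trivially, so the content of the proposition is that $f_\infty^2 g_{\Sph^1} \in W^{1,p}(\Sph^2\times \Sph^1, g_0)$ for all $p \in [1,2)$. Since $g_{\Sph^1}$ is parallel under the Levi-Civita connection of the isometric product metric $g_0 = g_{\Sph^2}+g_{\Sph^1}$ and $f_\infty$ depends only on the $\Sph^2$ factor, the covariant derivative factors as $\overline{\nabla}(f_\infty^2 g_{\Sph^1}) = (\nabla f_\infty^2) \otimes g_{\Sph^1}$, where $\nabla$ is the Levi-Civita connection of $g_{\Sph^2}$. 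Fubini's theorem then reduces the $L^p$-norms on the product to the scalar $L^p$-norms $\|f_\infty^2\|_{L^p(\Sph^2)}$ and $\|\nabla f_\infty^2\|_{L^p(\Sph^2)}$, up to the constant factor $(2\pi)^{1/p}$ from integrating over $\Sph^1$.

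The first estimate, $f_\infty^2 \in L^p(\Sph^2)$ for every $p \in [1,\infty)$, follows from the fact that $f_\infty \in L^q(\Sph^2)$ for every finite $q$. This integrability transfers from the sequence $\{f_j\}$ to the limit via the Moser-Trudinger bound in Lemma \ref{Lem-Moser-Trudinger} together with the $L^q$-convergence $f_{j_k}\to f_\infty$ provided by Proposition \ref{prop-W1p-limit} (or Theorem \ref{thm: f_infty positive}(i)).

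The second estimate requires $\nabla f_\infty^2 = 2 f_\infty \nabla f_\infty \in L^p(\Sph^2)$ for all $p \in [1,2)$, and this is the main analytic step. I will obtain it by H\"older's inequality: given $p \in [1,2)$, pick $a \in (1, 2/p)$ with conjugate exponent $a'$, so that $pa < 2$ and $pa' < \infty$. Then
\begin{equation*}
\left(\int_{\Sph^2} |f_\infty \nabla f_\infty|^p \, d\vol_{g_{\Sph^2}}\right)^{1/p} \leq \|f_\infty\|_{L^{pa'}(\Sph^2)} \, \|\nabla f_\infty\|_{L^{pa}(\Sph^2)},
\end{equation*}
where the first factor is finite by the previous step and the second is finite by Theorem \ref{thm: f_infty positive}(ii), which provides $f_\infty \in W^{1,pa}(\Sph^2)$.

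The one delicate point is justifying the distributional identity $\nabla f_\infty^2 = 2 f_\infty \nabla f_\infty$ when $f_\infty$ is not known to be essentially bounded, so that the standard Sobolev chain rule for $u \mapsto u^2$ does not apply verbatim. The clean route is truncation: the functions $\bar f_\infty^K$ from Definition \ref{defn-cut-off} are bounded $W^{1,p}$ functions for which the chain rule gives $\nabla (\bar f_\infty^K)^2 = 2 \bar f_\infty^K \nabla \bar f_\infty^K$ classically, and the $L^q$-integrability of $f_\infty$ supplies a dominating function that lets one pass $K\to\infty$ against any smooth test form by dominated convergence. This upgrades the H\"older estimate above to the desired Sobolev-norm bound and completes the proof.
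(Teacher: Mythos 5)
Your proof takes essentially the same approach as the paper: both reduce the claim to showing $\|f_\infty^2\|_{L^p(\Sph^2)} < \infty$ and $\|f_\infty\nabla f_\infty\|_{L^p(\Sph^2)} < \infty$ for $p<2$, using $\overline\nabla g_\infty = 2f_\infty\nabla f_\infty \otimes g_{\Sph^1}$ and a H\"older splitting that pairs $\nabla f_\infty \in L^{pa}$ (with $pa<2$) against $f_\infty \in L^{pa'}$, which is finite since $f_\infty \in L^q(\Sph^2)$ for every finite $q$. The one thing you add is the truncation argument justifying the distributional chain rule $\nabla f_\infty^2 = 2f_\infty\nabla f_\infty$ for the unbounded $f_\infty$; the paper writes this identity without comment, so your version is slightly more careful on that point.
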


\begin{proof}
Using the background metric, $g_0$, we have

\begin{eqnarray}
\|g_\infty\|_{L^p(\Sph^2\times \Sph^1, g_0)}
& = & (2\pi)^{\frac{1}{p}} \| (2 + f^4_\infty)^{\frac{1}{2}} \|_{L^{p}(\Sph^2)} \\
& \leq  & (2\pi)^{\frac{1}{p}} \| \sqrt{2} + f^2_\infty \|_{L^{p}(\Sph^2)} \\
& \leq & (2\pi)^{\frac{1}{p}} \left( \sqrt{2} (4\pi)^{\frac{1}{p}} + \| f_\infty \|^2_{L^{2p}(\Sph^2)} \right)
\end{eqnarray}
is finite, since by the assumption, $f_\infty \in W^{1, p}(\Sph^2)$ for any $ p \in [1, 2)$, and Sobolev embedding theorem, we have $f_\infty \in L^{2p}(\Sph^2)$ for any $p \in [1, \infty)$.

Now for the gradient estimate, we fix an arbitrary $p \in [1, 2)$. We use $\overline{\nabla}$ to denote the connection of the background metric $g_0$. Clearly, we have
\be
\overline{\nabla} g_\infty =\overline{\nabla} g_{\Sph^2} +\overline{\nabla} f^2_\infty \otimes g_{\Sph^1} +f^2_\infty \overline{\nabla} g_{\Sph^1}.
\ee
and
\be
\overline{\nabla} g_{\Sph^2}=0,\text{ and }\overline{\nabla} g_{\Sph^1}=0.
\ee
Moreover, since $\overline{\nabla} f^2_\infty= 2f_\infty \nabla f_{\infty}$ we have
\be
\overline{\nabla} g_\infty=2f_\infty \nabla f_{\infty}\otimes g_{\Sph^1},
\ee
where $\nabla f_\infty$ is the gradient of $f_\infty$ on $(\Sph^2, g_{\Sph^2})$.
As  a result, we have
\begin{eqnarray}
\|\overline{\nabla} g_\infty\|^p_{L^p(\Sph^2\times \Sph^1, g_0)}
& = &  2\pi \int_{\Sph^2} 2^p f^p_\infty |\nabla f_\infty|^p d\vol_{g_{\Sph^2}} \\
& = & 2^{p+1} \pi \|f_\infty\|_{L^{p q^*}(\Sph^2, g_{\Sph^2})} \cdot \|\nabla f_\infty\|_{L^{pq}(\Sph^2)},
\end{eqnarray}
where $q>1$ is chosen so that $pq<2$, and $q^* = \frac{q}{q-1}$. Then again by Sobolev embedding theorem we have $f_\infty \in L^{q}$ for any $p \in [1, \infty)$, thus we obtain that $\|\on g_\infty\|_{L^{p}(\Sph^2 \times \Sph^1, g_0)}$ is finite for any $p \in [1, 2)$. This completes the proof.
\end{proof}

Then we apply Proposition \ref{prop-W1p-limit} to prove Theorem \ref{Intro-thm-Lq-convergence} which concerns the $L^q$ pre-compactness of warped product circles over sphere with non-negative scalar curvature. We restate Theorem \ref{Intro-thm-Lq-convergence}  as follows:

\begin{thm} \label{thm-Lq-convergence}
Let $\{ g_j = g_{\Sph^2}  + f^2_j g_{\Sph^1} \mid j \in \N \}$ be a sequence of warped Riemannian metrics on $\Sph^2 \times \Sph^1$ satisfying requirements in (\ref{eqn-Intro-thm-main-condition}).
Then there exists a subsequence $g_{j_k}$ and a (weak) warped Riemannian metric $g_\infty \in W^{1, p}(\Sph^2 \times \Sph^1, g_0)$ for $p \in [1, 2)$ as in Definition \ref{defn-limit-metric} such that
\be
 g_{j_k} \rightarrow g_\infty \ \ \text{ in} \ \  L^{q}(\Sph^2\times \Sph^1, g_0), \ \ \forall q \in [1, \infty).
\ee
\end{thm}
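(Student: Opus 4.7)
The plan is to reduce the metric convergence to the function-level convergence of warping functions already established by Theorem~\ref{thm: f_infty positive}, and to exploit the simple tensorial form of the difference $g_{j_k}-g_\infty$.

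First, I would apply Theorem~\ref{thm: f_infty positive}: under the hypotheses of the theorem, parts (i)--(iii) give a subsequence $\{f_{j_k}\}$ converging in $L^q(\Sph^2)$ for every $q\in[1,\infty)$ to some $f_\infty\in W^{1,p}(\Sph^2)$ (for all $p\in[1,2)$) with $\inf_{\Sph^2} f_\infty>0$. With this $f_\infty$, define the weak warped product metric $g_\infty := g_{\Sph^2} + f_\infty^2\, g_{\Sph^1}$ as in Definition~\ref{defn-limit-metric}. The required regularity $g_\infty\in W^{1,p}(\Sph^2\times\Sph^1,g_0)$ for $p\in[1,2)$ is then immediate from Proposition~\ref{prop-regularity-limit-metric}, since $f_\infty$ satisfies its hypotheses.

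Next I would establish the $L^q$ convergence $g_{j_k}\to g_\infty$. Since $g_{j_k}-g_\infty = (f_{j_k}^2-f_\infty^2)\,g_{\Sph^1}$ and $g_{\Sph^1}$ has constant pointwise $g_0$-norm, the $L^q$ norm reduces to
\be
\|g_{j_k}-g_\infty\|_{L^q(\Sph^2\times\Sph^1,g_0)}^q = 2\pi \int_{\Sph^2} |f_{j_k}^2-f_\infty^2|^q\, d\vol_{g_{\Sph^2}}.
\ee
Factoring $f_{j_k}^2-f_\infty^2 = (f_{j_k}-f_\infty)(f_{j_k}+f_\infty)$ and applying the Cauchy--Schwarz (or Hölder) inequality, the right side is bounded by
\be
2\pi\,\|f_{j_k}-f_\infty\|_{L^{2q}(\Sph^2)}^q\,\|f_{j_k}+f_\infty\|_{L^{2q}(\Sph^2)}^q.
\ee
Part (i) of Theorem~\ref{thm: f_infty positive} ensures $\|f_{j_k}-f_\infty\|_{L^{2q}(\Sph^2)}\to 0$, while Lemma~\ref{Lem-Moser-Trudinger} (applied to $\{f_{j_k}\}$) together with Fatou's lemma (or the already-established $L^{2q}$ convergence) gives a uniform bound on $\|f_{j_k}+f_\infty\|_{L^{2q}(\Sph^2)}$. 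Hence the product tends to $0$, which yields the desired $L^q$ convergence for every $q\in[1,\infty)$.

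There is no real obstacle here; the statement is essentially a packaging result. The nontrivial analytic work has already been done upstream: the Moser--Trudinger-based $L^q$ control of $f_j$, the $W^{1,p}$ compactness for $p<2$, and crucially the use of the $\mina$ condition to ensure $\inf f_\infty>0$ (so that $g_\infty$ is genuinely a weak warped product metric rather than degenerating on a positive measure set). The only mild care needed is to verify that $\|f_{j_k}+f_\infty\|_{L^{2q}}$ is uniformly bounded; this follows either directly from Lemma~\ref{Lem-Moser-Trudinger} applied to the sequence together with the $L^{2q}$ convergence giving control of $f_\infty$, or from the triangle inequality in $L^{2q}$.
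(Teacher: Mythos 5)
Your proposal matches the paper's proof essentially line by line: translate the geometric hypotheses into the differential/integral inequalities on $f_j$ via Lemmas~\ref{lem: nonnegative scalar curvature condition} and~\ref{lem: volume upper bound condition}, invoke the $L^q$ subconvergence and $W^{1,p}$ regularity of the warping functions (the paper cites Proposition~\ref{prop-W1p-limit} directly, you cite Theorem~\ref{thm: f_infty positive} which packages the same content together with positivity of $f_\infty$), apply Proposition~\ref{prop-regularity-limit-metric} for the regularity of $g_\infty$, and finish by factoring $f_{j_k}^2 - f_\infty^2 = (f_{j_k}-f_\infty)(f_{j_k}+f_\infty)$ and applying H\"older in $L^{2q}$. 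Your added remark that the $\mina$-induced positivity of $f_\infty$ is what makes $g_\infty$ a genuine (non-degenerate) weak metric is correct and in fact echoes the paper's own introductory discussion, even though the paper's proof body does not spell it out.
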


\begin{proof}
By Lemma \ref{lem: nonnegative scalar curvature condition} and Lemma \ref{lem: volume upper bound condition}, the assumptions in (\ref{eqn-Intro-thm-main-condition}) for $g_j$ implies that the warping functions $f_j$ satisfy the assumptions in Proposition \ref{prop-W1p-limit}. Thus, by applying Proposition \ref{prop-W1p-limit}, we have that there exists a subsequence $f_{j_k}$ of warping functions and $f_\infty \in W^{1, p}(\Sph^2)$ for all $1 \leq p <2$, such that
\be
f_{j_k} \rightarrow f_\infty, \ \ \text{in} \ \ L^{q}(\Sph^2), \ \ \forall q \in [1, \infty).
\ee
Let $g_\infty := g_{\Sph^2} + f^2_\infty g_{\Sph^1}$. Then by Proposition \ref{prop-regularity-limit-metric}, we have
\be
g_\infty \in W^{1, p}(\Sph^2 \times \Sph^1, g_0) \ \ \forall  1 \leq p <2.
\ee
Moreover, because
\be
g_j-g_\infty= (f_j^2-f_\infty^2)g_{\Sph^1},
\ee
 we have that for any $q \in [1, \infty)$,
\begin{eqnarray}
& & \|g_{j_k}-g_\infty\|_{L^q (\Sph^2\times \Sph^1, g_0)} \\
& = & ( 2\pi )^{\frac{1}{q}} \| f^2_{j_k} - f^2_\infty \|_{L^q (\Sph^2)} \\
& =  & (2\pi)^{\frac{1}{q}} \| (f_{j_k} - f_\infty) \cdot (f_{j_k} + f_\infty) \|_{L^q (\Sph^2)}\\
& \leq &  (2\pi)^{\frac{1}{q}} \| f_{j_k} - f_\infty \|_{L^{2q}(\Sph^2)} \cdot \|f_{j_k} + f_\infty\|_{L^{2q}(\Sph^2)} \\
& \rightarrow & 0, \ \ \text{as} \ \ j_k \rightarrow \infty,
\end{eqnarray}
since $f_{j_k} \rightarrow f_\infty$ in $L^{2q}(\Sph^2)$ for any $q \in [1, \infty)$.
\end{proof}

\begin{rmrk}
{\rm
As showed by the example constructed by Christina Sormani and authors in \cite{STW-ex}, $g_\infty \in W^{1, p}(\Sph^2 \times \Sph^1, g_0)$ for $1 \leq p <2$ is the best regularity we can expect in general for the limit weak Riemannian metric $g_\infty$, see Proposition 3.6 and Remark 3.8 in \cite{STW-ex}.
}
\end{rmrk}

\subsection{Nonnegative distributional scalar curvature of $g_\infty$}\label{subsect-nonnegative-distr-scalar}
Building upon work of Mardare-LeFloch \cite{LM07}, Dan Lee and Philippe LeFloch defined a notion of distributional scalar curvature for smooth manifolds that
have a metric tensor which is only $L^{\infty}_{loc} \cap W^{1, 2}_{loc}$.   See Definition 2.1 of \cite{Lee-LeFloch}
which we review below in Definition~\ref{defn-Lee-LeFloch}.

In Theorem \ref{thm-Lq-convergence}  we proved that if a sequence of smooth warped product circles over the sphere $\{\Sph^2 \times_{f_j} \Sph^1\}$ with non-negative scalar curvature have uniform bounded volumes, then a subsequence of the smooth warped product metric $g_j = g_{\Sph^2} + f^2_j g_{\Sph^1}$ converges to a weak warped product metric $g_\infty = g_{\Sph^2} + f^2_\infty g_{\Sph^1} \in W^{1, p}(\Sph^2\times \Sph^1, g_0) (1 \leq p <2)$ in the sense of $L^{q}(\Sph^2 \times \Sph^1, g_0)$ for any $q \geq 1$. For the rest of this section, we use $g_\infty$ to denote such limit metric. We use $g_0 = g_{\Sph^2} + g_{\Sph^1}$ as a background metric .

In Theorem~\ref{thm-distr-scalar}, we prove that this limit (weak) metric $g_\infty$
has nonnegative distributional scalar curvature in the sense of Lee-LeFloch .
In Remarks~\ref{rmrk-Lee-LeFloch-original}-\ref{rmrk-Lee-LeFloch}, we discuss how the metric tensors studied by Lee and LeFloch have stronger regularity than
the regularity of $g_\infty$ but their definition of distributional scalar curvature is
still valid in our case.

First we recall Definition 2.1 in the work of Lee-LeFloch \cite{Lee-LeFloch}.   In their paper, they assume that

\begin{defn}[Lee-LeFloch]\label{defn-Lee-LeFloch}
{\rm
Let $M$ be a smooth manifold endowed with a smooth background metric, $g_0$.  Let $g$ be a metric tensor defined on $M$ with $L^{\infty}_{loc} \cap W^{1, 2}_{loc}$ regularity and locally bounded inverse $g^{-1} \in L^{\infty}_{loc}$.

The {\em scalar curvature distribution} $\Scal_{g}$ is defined as a distributions in $M$ such that
for every test function $u \in C^{\infty}_{0}(M)$
\be\label{eqn-Lee-LeFloch}
\langle \Scal_g, u \rangle := \int_{M} \left( - V \cdot \overline{\nabla} \left(u \frac{d\mu_g}{d \mu_{g_0}}\right)  + F u \frac{d\mu_g}{\,d\mu_0}\right) \,d\mu_0,
\ee
where the dot product is taken using the metric $g_0$, $\overline{\nabla}$ is the Levi-Civita connection of $g_0$, $d\mu_g$ and $d\mu_{g_0}$ are volume measure with respect to $g$ and $g_0$ respectively, $V$ is a vector field given by
\be\label{defn-V}
 V^k:= g^{ij} \Gamma^k_{ij}-g^{ik}\Gamma^j_{ji},
\ee
where
\be\label{eqn-Lee-LeFloch-Christoffel-Sym}
\Gamma^{k}_{ij} := \frac{1}{2} g^{kl} \left( \overline{\nabla}_{i}g_{jl} + \overline{\nabla}_{j}g_{il} - \overline{\nabla}_{l}g_{ij} \right),
\ee
\be\label{defn-F}
F:=  \overline{R} - \overline{\nabla}_k g^{ij}\Gamma^{k}_{ij} + \overline{\nabla}_{k} g^{ik}\Gamma^{j}_{ji} + g^{ij}\left( \Gamma^{k}_{kl} \Gamma^{l}_{ij} - \Gamma^{k}_{jl}\Gamma^{l}_{ik} \right),
\ee
and
\be\label{eqn-overline-R}
\overline{R} : = g^{ij} \left( \partial_k \og^k_{ij} - \partial_i \og^k_{kj} + \og^l_{ij} \og^k_{kl} - \og^l_{kj} \og^k_{il} \right).
\ee
The Riemannian metric $g$ has {\em nonnegative distributional scalar curvature}, if $\langle \Scal_g, u \rangle \geq 0$ for every nonnegative test function $u$ in the integral in (\ref{eqn-Lee-LeFloch}).
}
\end{defn}

\begin{defn}[Distributional total scalar curvature]
For a weak metric $g$ having the regularity as in Definition \ref{defn-Lee-LeFloch}, we define the distributional total scalar curvature of $g$ to be $\langle \Scal_{g}, 1 \rangle$, which is obtained by setting the test function $u \equiv 1$ in the integration in (\ref{eqn-Lee-LeFloch}).
\end{defn}
Note that for a $C^2$-metric, the distributional total scalar curvature is exactly the usual total scalar curvature.

\begin{rmrk}\label{rmrk-Lee-LeFloch-original}
{\rm
By the regularity assumption for the Riemannian metric $g$ in the work of Lee-LeFloch \cite{Lee-LeFloch}, one has the regularity
$\Gamma^{k}_{ij} \in L^{2}_{loc}$, $V \in L^{2}_{loc}, F\in L^{1}_{loc}$,
and the density of volume measure $d\mu_g$ with respect to $\,d\mu_0$ is
\be
\tfrac{d\mu_{g}}{\,d\mu_0} \in L^{\infty}_{loc} \cap W^{1, 2}_{loc}.
\ee
Thus
\be\label{LL-term1}
FirstInt_g=\int_{M} \left( - V \cdot \overline{\nabla} \left(u \frac{d\mu_g}{d \mu_{g_0}}\right) \right) \,d\mu_0
\ee
and
\be\label{LL-term2}
SecondInt_g=\int_{M} \left( F u \frac{d\mu_g}{\,d\mu_0}\right) \,d\mu_0.
\ee
are both finite.
}
\end{rmrk}

\begin{rmrk}\label{rmrk-Lee-LeFloch}
{\rm
Our limit metric is less regular than the metrics studied by Lee-LeFloch in \cite{Lee-LeFloch}. Recall that in Proposition \ref{metric-W1p} we showed $g_\infty \in W^{1, p}(\Sph^2 \times \Sph^1, g_0)$ for $1\leq p<2$, and as shown by the extreme example constructed in \cite{STW-ex}, in general $g_\infty \notin W^{1, 2}_{loc}(\Sph^2 \times \Sph^1, g_0)$, see Proposition 3.6 in \cite{STW-ex}.

In Remark \ref{rmrk-LL-divergence} below we show that in genenral both integrals in (\ref{LL-term1}) and (\ref{LL-term2}) may be divergent.
However, in Theorem~\ref{thm-distr-scalar} below, we show that in our case the sum of (\ref{LL-term1}) and (\ref{LL-term2}) is still well-defined since the singularity cancels out when we add them up.
}
\end{rmrk}

We are ready to prove Theorem \ref{Intro-thm-distr-scalar}. We restate it as follows:

\begin{thm}\label{thm-distr-scalar}
The limit metric $g_\infty$ obtained in Theorem \ref{thm-Lq-convergence} has nonnegative distributional scalar curvature on $\Sph^2\times \Sph^1$  in the sense of Lee-LeFloch as in Definition \ref{defn-Lee-LeFloch}.
In particular, (\ref{eqn-Lee-LeFloch}) is finite and nonnegative for any nonnegative
test function,  $u\in C^{\infty}(\Sph^2 \times \Sph^1)$. Moreover, the total scalar curvatures of $g_j$ converge to the distributional total scalar curvature of $g_\infty$.
\end{thm}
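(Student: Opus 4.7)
\medskip
\noindent\textbf{Proof plan.} The strategy is to reduce the Lee--LeFloch scalar curvature distribution of a warped product $g = g_{\Sph^2} + f^{2}\,g_{\Sph^1}$ to a simple bilinear expression in $f$ and $\nabla f$ whose only regularity requirement is $f \in W^{1,1}(\Sph^2)$. This reduced formula will be well-defined for $f_\infty$ despite the fact that $g_\infty$ generally lies below the $W^{1,2}_{loc}$ regularity threshold assumed in \cite{Lee-LeFloch}. Once this reduction is in hand, both nonnegativity and continuity of total scalar curvature will follow from the $L^q$- and weak-gradient-convergences of Proposition~\ref{prop-W1p-limit}.

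\medskip
The first step is a direct computation for smooth warped products. Using coordinates with indices $a,b \in \{1,2\}$ on $\Sph^2$ and $3$ on $\Sph^1$, and the background $g_0 = g_{\Sph^2} + g_{\Sph^1}$, the Christoffel symbols (\ref{eqn-Lee-LeFloch-Christoffel-Sym}) of $g$ relative to $g_0$ vanish except for
\[
\Gamma^{a}_{33} = -f\,\nabla^{a} f, \qquad \Gamma^{3}_{a3} = \Gamma^{3}_{3a} = \frac{\partial_{a} f}{f}.
\]
Substituting into (\ref{defn-V}) gives $V^{3}=0$ and $V^{a} = -2\nabla^{a}f/f$. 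For (\ref{defn-F}) we verify that $\overline{R} = g^{ij}\Ric(g_0)_{ij} = g^{ab}(g_{\Sph^2})_{ab} = 2$ (since the standard $\Sph^2$ is Einstein with $\Ric = g_{\Sph^2}$ and $\Sph^1$ has zero Ricci), that $\overline{\nabla}_{k}g^{ik} = 0$, and that the quadratic combination $g^{ij}\bigl(\Gamma^{k}_{kl}\Gamma^{l}_{ij} - \Gamma^{k}_{jl}\Gamma^{l}_{ik}\bigr)$ expands to $-|\nabla f|^{2}/f^{2} - (-|\nabla f|^{2}/f^{2}) = 0$; the only surviving divergent piece is $\overline{\nabla}_{k}g^{ij}\Gamma^{k}_{ij} = 2|\nabla f|^{2}/f^{2}$, so $F = 2 - 2|\nabla f|^{2}/f^{2}$. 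Since $d\mu_{g}/d\mu_{g_0} = f$, substituting into (\ref{eqn-Lee-LeFloch}) produces two integrands, each containing a contribution $\pm 2u|\nabla f|^{2}/f$; these divergent pieces cancel pointwise, leaving
\begin{equation}\label{eqn-plan-target}
\langle \Scal_{g},u\rangle \;=\; 2\int_{\Sph^2\times\Sph^1}\!\bigl(uf + \langle \nabla u,\nabla f\rangle_{g_0}\bigr)\,d\mu_{g_0}.
\end{equation}
This is exactly what one obtains formally by expanding $\int u\,\Scal_{g}\,d\mu_{g}$ using $\Scal_{g} = 2 - 2\Delta f/f$ from Lemma~\ref{lem: nonnegative scalar curvature condition} and a single integration by parts on $\Sph^2$. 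We adopt (\ref{eqn-plan-target}) with $f = f_\infty$ as the meaning of $\langle \Scal_{g_\infty}, u\rangle$.

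\medskip
The remainder is a convergence argument. For the smooth $g_j$, the identity (\ref{eqn-plan-target}) together with $\Scal_{g_j}\geq 0$ and $d\vol_{g_j} = f_j\,d\mu_{g_0}$ gives $\langle \Scal_{g_j}, u\rangle \geq 0$ for every $u \geq 0$. We then pass to the limit along the subsequence of Theorem~\ref{thm-Lq-convergence} term by term: the $L^1$-convergence $f_{j_k}\to f_\infty$ from Theorem~\ref{thm: f_infty positive}(i) handles the piece $\int uf_{j_k}\,d\mu_{g_0}$, while the weak-convergence identity (\ref{eqn-weak-convergence}) in Proposition~\ref{prop-W1p-limit}, applied on each $\varphi$-slice and combined with Fubini, handles $\int \langle \nabla u,\nabla f_{j_k}\rangle_{g_0}\,d\mu_{g_0}$. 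Taking $u \geq 0$ yields nonnegativity of $\langle \Scal_{g_\infty}, u\rangle$, and setting $u \equiv 1$ (so the gradient term vanishes) gives continuity of the total scalar curvature from pure $L^1$ convergence. The main obstacle is the cancellation verification in the first step: one must confirm algebraically that the apparently divergent $|\nabla f|^{2}/f$ terms in the $V$-piece and the $F$-piece appear with opposite signs, since it is precisely this cancellation---not any integrability of $|\nabla f_\infty|^{2}/f_\infty$, which fails for the extreme example of \cite{STW-ex}---that allows the Lee--LeFloch formula to be meaningfully applied to $g_\infty$.
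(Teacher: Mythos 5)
Your proposal is correct and follows essentially the same route as the paper: compute the Lee--LeFloch Christoffel symbols, $V$, and $F$ for the warped product, observe that the only divergent pieces ($\pm 2u|\nabla f|^2/f$) in the two integrands of (\ref{eqn-Lee-LeFloch}) cancel algebraically, reduce $\langle\Scal_{g_\infty},u\rangle$ to $2\int_{\Sph^2}\bigl(f_\infty\bar u + \langle\nabla f_\infty,\nabla\bar u\rangle\bigr)\,d\vol_{g_{\Sph^2}}$, and pass to the limit along the subsequence using the $L^1$ convergence of $f_{j_k}$ and the weak convergence identity (\ref{eqn-weak-convergence}) applied to the test function $\bar u(r,\theta)=\int_0^{2\pi}u\,d\varphi$. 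This matches the paper's Lemmas \ref{lem-Lee-LeFloch-Christoffel-sym}--\ref{lem-calculation-Lee-LeFloch} and the concluding limiting argument, including the observation in Remark \ref{rmrk-LL-divergence} that it is the cancellation rather than $W^{1,2}$ regularity that makes the formula meaningful.
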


The proof of Theorem~\ref{thm-distr-scalar} consists of straightforward but technical calculations. For the convenience of readers, we provide some details of the calculations in the following lemmas.

We use $g_0 = g_{\Sph^2} + g_{\Sph^1}$ as background metric, and use coordinate $\{r, \theta, \varphi\}$ on $\Sph^2 \times \Sph^1$, where $(r, \theta)$ is a polar coordinate on $\Sph^2$ and $\varphi$ is a coordinate on $\Sph^1$. The corresponding local frame of the tangent bundle is $\{\partial_r, \partial_\theta, \partial_\varphi\}$. In this coordinate system, both $g_0$ and $g_\infty$ are diagonal and given as
\be \label{eqn-diagonal-g_infty}
g_0 =\begin{pmatrix}
1 & 0 &0\\
0& \sin^2 r & 0\\
0&0& 1\\
\end{pmatrix}
\textrm{ and }
g_\infty=\begin{pmatrix}
1 & 0 &0\\
0& \sin^2 r & 0\\
0&0& f_{\infty}^2(r, \theta)\\
\end{pmatrix}.
\ee

First of all, by the formula of Christoffel symbols:
\be
\og^{i}_{jk} = \frac{1}{2} (g_0)^{il}\left( \frac{\partial (g_0)_{il}}{\partial x^k} + \frac{\partial (g_0)_{lk}}{\partial x^j} - \frac{\partial (g_0)_{jk}}{\partial x^l}\right),
\ee
one can easily obtain the following lemma:

\begin{lem}\label{lem-background-metric-Christoffel-sym}
The Christoffel symbols of the Levi-Civita connection $\on$ of the background metric $g_0 = g_{\Sph^2} + g_{\Sph^1}$, in the coordinate $\{r, \theta, \varphi\}$, all vanish except
\be
\og^{r}_{\theta \theta} = - \sin r \cos r,
\ee
and
\be
\og^{\theta}_{r \theta} = \og^{\theta}_{\theta r} = \frac{\cos r}{\sin r}.
\ee
\end{lem}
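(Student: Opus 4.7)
The plan is to carry out a direct computation from the Koszul/coordinate formula for Christoffel symbols, exploiting the fact that $g_0$ is a product metric whose coefficients, written in the frame $\{\partial_r,\partial_\theta,\partial_\varphi\}$, depend only on $r$.

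First I would record the non-vanishing components of $g_0$ and $g_0^{-1}$ read off from \eqref{eqn-diagonal-g_infty}: namely $(g_0)_{rr}=1$, $(g_0)_{\theta\theta}=\sin^2 r$, $(g_0)_{\varphi\varphi}=1$, with inverses $(g_0)^{rr}=1$, $(g_0)^{\theta\theta}=\sin^{-2}r$, $(g_0)^{\varphi\varphi}=1$. Since the matrix is diagonal, only diagonal entries of $g_0^{-1}$ enter into
\[
\og^{i}_{jk}=\tfrac{1}{2}(g_0)^{ii}\bigl(\partial_k (g_0)_{ij}+\partial_j (g_0)_{ik}-\partial_i (g_0)_{jk}\bigr)
\]
(no sum over $i$). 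Next I would note that the only coordinate derivative of any metric coefficient that is nonzero is $\partial_r (g_0)_{\theta\theta}=2\sin r\cos r$; in particular every term involving $\varphi$ vanishes automatically because $(g_0)_{\varphi\varphi}\equiv 1$ and the block decomposes.

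Then I would run through the short list of index combinations. Setting $i=r$ and $j=k=\theta$ gives $\og^{r}_{\theta\theta}=\tfrac{1}{2}(-\partial_r (g_0)_{\theta\theta})=-\sin r\cos r$. Setting $i=\theta$ and $\{j,k\}=\{r,\theta\}$ gives $\og^{\theta}_{r\theta}=\og^{\theta}_{\theta r}=\tfrac{1}{2}(g_0)^{\theta\theta}\partial_r (g_0)_{\theta\theta}=\cos r/\sin r$. Every other combination involves either a $\varphi$ index or a pair of $\theta$'s acted on by $\partial_\theta$, and therefore vanishes.

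There is no real obstacle here — the lemma is a one-line consequence of the product structure together with the standard Christoffel symbols of the round $\Sph^2$ in polar coordinates — so the only care needed is bookkeeping to confirm that no cross terms between the $\Sph^2$ and $\Sph^1$ factors can appear, which is immediate from $(g_0)_{\varphi\varphi}\equiv 1$ and $(g_0)_{r\varphi}=(g_0)_{\theta\varphi}=0$.
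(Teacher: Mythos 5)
Your proposal is correct and matches the paper's approach: the paper simply cites the coordinate formula for Christoffel symbols immediately before the lemma and declares the result "easily obtained," and your computation is exactly that derivation spelled out, using the diagonal product structure and the fact that the only nonvanishing derivative of a metric coefficient is $\partial_r(g_0)_{\theta\theta}$. One minor point: your closing catch-all ("either a $\varphi$ index or a pair of $\theta$'s acted on by $\partial_\theta$") doesn't literally cover cases like $\og^r_{rr}$ or $\og^\theta_{rr}$; it is cleaner to say that since $\partial_r(g_0)_{\theta\theta}$ is the only nonzero derivative, the only possibly nonzero symbols are those whose index triple is a permutation of $\{r,\theta,\theta\}$, which gives precisely the two listed.
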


Then by Lemma \ref{lem-background-metric-Christoffel-sym}, the formula
\begin{equation}
\on_i (g_\infty)_{jl}  =  \partial_i \left((g_\infty)_{jl} \right) -  \og^{p}_{ij} (g_\infty)_{pl} - \og^{q}_{il} (g_\infty)_{jq},
\end{equation}
and the diagonal expression of $g_\infty$ in (\ref{eqn-diagonal-g_infty}), one can obtain
the following lemma:

\begin{lem}\label{lem-Lee-LeFloch-Christoffel-sym}
For the limit metric, $g_\infty$, with the background metric, $g_0$, the Christoffel symbols defined by Lee-LeFloch as in (\ref{eqn-Lee-LeFloch-Christoffel-Sym}), in the coordinate $\{r, \theta, \varphi\}$, all vanish except
\be
\Gamma^r_{\varphi\varphi}=-f_{\infty}\partial_r f_{\infty},  \quad \Gamma^{\theta}_{\varphi \varphi} = - \frac{1}{\sin^2 r} f_{\infty} \partial_{\theta} f_{\infty},
\ee
and
\be
\Gamma^{\varphi}_{r \varphi}=\Gamma^{\varphi}_{\varphi r}=\frac{\partial_r f_\infty}{f_{\infty}}, \quad \Gamma^{\varphi}_{\theta \varphi} = \Gamma^{\varphi}_{\varphi \theta} = \frac{\partial_{\theta} f_{\infty}}{f_{\infty}}.
\ee
\end{lem}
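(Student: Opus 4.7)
The plan is to exploit the fact that $g_\infty - g_0$ is supported only in the $\varphi\varphi$ slot, namely $g_\infty = g_0 + (f_\infty^2 - 1)\, d\varphi \otimes d\varphi$. Since $g_0 = g_{\Sph^2} + g_{\Sph^1}$ is a Riemannian product, both $g_{\Sph^2}$ and $g_{\Sph^1}$ are $\on$-parallel; this is immediate from Lemma \ref{lem-background-metric-Christoffel-sym}, whose only nonzero Christoffel symbols do not involve any $\varphi$-index and are $\varphi$-independent. Consequently one obtains the clean identity
\[
\on_i (g_\infty)_{jl} \;=\; 2 f_\infty (\partial_i f_\infty)\, \delta_{j\varphi}\delta_{l\varphi},
\]
and, since $f_\infty$ is a function of $(r, \theta)$ only, the only nonvanishing components are $\on_r (g_\infty)_{\varphi\varphi} = 2 f_\infty \partial_r f_\infty$ and $\on_\theta (g_\infty)_{\varphi\varphi} = 2 f_\infty \partial_\theta f_\infty$.

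Next, because $g_\infty$ is diagonal in the $(r,\theta,\varphi)$ frame with inverse entries $g_\infty^{rr} = 1$, $g_\infty^{\theta\theta} = \sin^{-2} r$, and $g_\infty^{\varphi\varphi} = f_\infty^{-2}$, the Lee-LeFloch formula collapses to
\[
\Gamma^k_{ij} \;=\; \tfrac{1}{2}\, g_\infty^{kk} \bigl( \on_i (g_\infty)_{jk} + \on_j (g_\infty)_{ik} - \on_k (g_\infty)_{ij} \bigr) \quad (\text{no sum on } k).
\]
Combined with the identity for $\on g_\infty$ above, each covariant-derivative term vanishes unless its pair of lower indices equals $(\varphi,\varphi)$ and its differentiation index lies in $\{r, \theta\}$. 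A short case check then recovers all the nonzero symbols: for $k \in \{r, \theta\}$ only the third term survives and forces $i = j = \varphi$, giving the stated $\Gamma^r_{\varphi\varphi}$ and $\Gamma^\theta_{\varphi\varphi}$; for $k = \varphi$ only the first two terms can survive and force exactly one of $i, j$ to equal $\varphi$ with the other in $\{r, \theta\}$, giving the stated $\Gamma^\varphi_{r\varphi}$ and $\Gamma^\varphi_{\theta\varphi}$.

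No step here is a genuine obstacle; the lemma is an unwinding of definitions once the $\on$-parallelism of $g_{\Sph^1}$ is recorded. The only mild care required is to keep the indices straight and to remember that $\partial_\varphi f_\infty = 0$, which is precisely what forces $\on_\varphi (g_\infty)_{\varphi\varphi} = 0$ and hence eliminates any $\Gamma^\bullet_{\varphi\varphi}$ or $\Gamma^\varphi_{\varphi\varphi}$ contributions beyond those listed.
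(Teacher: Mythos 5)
Your proof is correct and follows essentially the same route the paper sketches before stating the lemma: compute $\on g_\infty$ via the Christoffel symbols of $g_0$ from Lemma \ref{lem-background-metric-Christoffel-sym}, then plug into (\ref{eqn-Lee-LeFloch-Christoffel-Sym}) using the diagonal form of $g_\infty$. Your observation that $\on g_{\Sph^1}=0$ by the product structure, so $\on_i(g_\infty)_{jl}=2f_\infty(\partial_i f_\infty)(g_{\Sph^1})_{jl}$ without any $\og$-correction terms (they would involve $\og^\varphi_{\cdot\cdot}$, which all vanish), is a clean way to organize the computation that the paper leaves implicit.
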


Note also that

\begin{lem}\label{lem-vol-ratio}
Note that the volume forms are:
\be
d\mu_0=\,dr\wedge \sin(r)\,d\theta \wedge \, d\varphi
\ee
and
\be
d\mu_\infty = dr\wedge \sin(r)\,d\theta \wedge f_\infty(r, \theta)\,d\varphi
\ee
which are both defined almost everywhere.
In particular,
\be
\frac{\,d\mu_\infty}{\,d\mu_0}= f_\infty(r, \theta)
\ee
is in $W^{1,p}(\Sph^2 \times \Sph^1, g_0)$ for $p<2$.
\end{lem}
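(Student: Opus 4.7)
The plan is to split the lemma into its two assertions and handle them separately: first read off the explicit formulas for $d\mu_0$ and $d\mu_\infty$ directly from the diagonal expression (\ref{eqn-diagonal-g_infty}), and second deduce the $W^{1,p}$ regularity of the density $f_\infty$ on the product $\Sph^2 \times \Sph^1$ from the $W^{1,p}$ regularity of $f_\infty$ on $\Sph^2$ already established in Theorem \ref{Intro-thm: f_infty positive}(ii). Both steps are essentially bookkeeping on top of results stated earlier in the paper.

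For the volume forms, I would simply take determinants of the diagonal matrices in (\ref{eqn-diagonal-g_infty}). This gives $\det g_0 = \sin^2 r$ and $\det g_\infty = f_\infty^2 \sin^2 r$, hence $\sqrt{\det g_0} = \sin r$ and $\sqrt{\det g_\infty} = f_\infty \sin r$, where the positive square root in the second case is justified by $f_\infty > 0$ (which holds almost everywhere on $\Sph^2$ by Theorem \ref{Intro-thm: f_infty positive}(iii), and everywhere for the lower semi-continuous representative $\overline{f_\infty}$). Multiplying by $dr \wedge d\theta \wedge d\varphi$ yields the two displayed volume form expressions. Both are defined on the full-measure subset of $\Sph^2 \times \Sph^1$ where the polar coordinates $(r,\theta)$ are regular and $f_\infty$ is finite, which accounts for the ``almost everywhere'' qualifier. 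Dividing the two gives $d\mu_\infty/d\mu_0 = f_\infty(r,\theta)$ at once.

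For the $W^{1,p}$ regularity I would lift $f_\infty$ from $\Sph^2$ to $\Sph^2 \times \Sph^1$ by pullback under the projection $\pi \colon \Sph^2 \times \Sph^1 \to \Sph^2$. Because $g_0 = g_{\Sph^2} + g_{\Sph^1}$ is an isometric product, the Levi-Civita connection $\on$ of $g_0$ splits as the direct sum of the connections on the two factors, and $\on(\pi^\ast f_\infty)$ is the horizontal lift of $\nabla f_\infty$, with vanishing $\partial_\varphi$-component. Fubini then gives
\[
\|f_\infty\|_{L^p(\Sph^2 \times \Sph^1, g_0)}^p = 2\pi \, \|f_\infty\|_{L^p(\Sph^2)}^p
\]
and
\[
\|\on f_\infty\|_{L^p(\Sph^2 \times \Sph^1, g_0)}^p = 2\pi \, \|\nabla f_\infty\|_{L^p(\Sph^2)}^p ,
\]
and both right-hand sides are finite for $p \in [1,2)$ by Theorem \ref{Intro-thm: f_infty positive}(ii). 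By Definition \ref{defn-W1p} this puts $f_\infty$ in $W^{1,p}(\Sph^2 \times \Sph^1, g_0)$ for every such $p$.

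No step here is a genuine obstacle; the lemma is a packaging statement that combines the $W^{1,p}$ regularity of $f_\infty$ from Theorem \ref{Intro-thm: f_infty positive}(ii), its almost-everywhere positivity from Theorem \ref{Intro-thm: f_infty positive}(iii), and the product-coordinate form of $g_0$ and $g_\infty$. The only point to flag carefully is the ``almost everywhere'' qualifier: $f_\infty$ may equal $+\infty$ on a set of measure zero in $\Sph^2$ (as explicitly allowed by Theorem \ref{Intro-thm: f_infty positive}(iv) and realized in the example of \cite{STW-ex}), and the polar coordinates degenerate at $r = 0, \pi$, but these two exceptional sets together form a null set in $\Sph^2 \times \Sph^1$ and so do not affect any of the $L^p$ or $W^{1,p}$ norms above.
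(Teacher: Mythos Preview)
Your proposal is correct and follows the same approach as the paper's proof, which is quite terse: it simply appeals to the definition of the volume form (away from the coordinate poles $r=0,\pi$) and to Proposition~\ref{prop-W1p-limit} for the $W^{1,p}$ regularity. Your version supplies the details the paper omits, in particular the explicit determinant computation and the Fubini argument lifting $W^{1,p}$ regularity from $\Sph^2$ to the product; these are exactly the steps implicit in the paper's one-line justification.
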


\begin{proof}
The first claim holds away from $r=0$ and $r=\pi$ by the definition of volume form, and the second claim holds almost everywhere on $(\Sph^2 \times \Sph^1, g_0)$.
So $d\mu_\infty = f_\infty d\mu_0$ almost everywhere which gives
us the third claim.  The rest follows from Proposition~\ref{prop-W1p-limit}.
\end{proof}

Now we are ready to compute the vector field $V$ and the function $F$ defined by Lee-LeFloch as in (\ref{defn-V}) and $(\ref{defn-F})$.

 \begin{lem}\label{lem-V}
For the limit metric $g_\infty$ with the background metric $g_0$, the vector field $V$ defined in (\ref{defn-V}), in the local frame $\{\partial_r, \partial_\theta, \partial_\varphi\}$, is given by
\be
V=\left( -2\frac{\partial_r f_\infty}{ f_\infty}, -  \frac{2}{\sin^2 r} \frac{\partial_\theta f_\infty}{f_\infty}, 0\right).
\ee
Furthermore
\be
 - V \cdot \overline\nabla \left(u \frac{\,d\mu_\infty}{\,d\mu_0}\right)
 =  2 \frac{\partial_r f_\infty}{f_\infty}  \partial_r(u f_\infty) +  \frac{2}{\sin^2 r}  \frac{\partial_\theta f_\infty}{f_\infty} \partial_\theta (u f_\infty).
\ee
\end{lem}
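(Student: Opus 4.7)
\textbf{Proof proposal for Lemma \ref{lem-V}.} The strategy is a direct computation exploiting the fact that both $g_\infty$ and $g_0$ are diagonal in the coordinate frame $\{\partial_r,\partial_\theta,\partial_\varphi\}$, so the inverse metric $g_\infty^{-1}$ is diagonal with entries $g_\infty^{rr}=1$, $g_\infty^{\theta\theta}=1/\sin^2 r$, $g_\infty^{\varphi\varphi}=1/f_\infty^2$, and the sum $V^k=g_\infty^{ij}\Gamma^k_{ij}-g_\infty^{ik}\Gamma^j_{ji}$ collapses to very few surviving terms once the list of nonvanishing Christoffel symbols from Lemma \ref{lem-Lee-LeFloch-Christoffel-sym} is used.

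The plan is to compute the three components of $V$ in turn. For $V^r$, the first sum $g_\infty^{ij}\Gamma^r_{ij}$ picks up only $g_\infty^{\varphi\varphi}\Gamma^r_{\varphi\varphi}=-\partial_r f_\infty/f_\infty$, while the second sum $g_\infty^{ir}\Gamma^j_{ji}=g_\infty^{rr}\Gamma^j_{jr}$ reduces, since only $\Gamma^\varphi_{\varphi r}$ among the $\Gamma^j_{jr}$ is nonzero, to $\partial_r f_\infty/f_\infty$. Adding gives $V^r=-2\partial_r f_\infty/f_\infty$. The computation for $V^\theta$ is structurally identical: $g_\infty^{ij}\Gamma^\theta_{ij}$ contributes only $g_\infty^{\varphi\varphi}\Gamma^\theta_{\varphi\varphi}=-\sin^{-2}r\cdot\partial_\theta f_\infty/f_\infty$, and $g_\infty^{\theta\theta}\Gamma^j_{j\theta}=\sin^{-2}r\cdot\partial_\theta f_\infty/f_\infty$ by the same mechanism, yielding $V^\theta=-2\sin^{-2}r\cdot\partial_\theta f_\infty/f_\infty$. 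For $V^\varphi$, every $\Gamma^\varphi_{ij}$ in Lemma \ref{lem-Lee-LeFloch-Christoffel-sym} has unequal indices, so $g_\infty^{ij}\Gamma^\varphi_{ij}=0$ by diagonality, and $g_\infty^{\varphi\varphi}\Gamma^j_{j\varphi}=0$ because no $\Gamma^j_{j\varphi}$ is nonzero; hence $V^\varphi=0$.

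For the second formula, the dot product with $\overline{\nabla}(uf_\infty)$ is taken in $g_0$. Since $g_0$ is diagonal with the components stated, the gradient vector field has components $(\overline{\nabla}(uf_\infty))^r=\partial_r(uf_\infty)$, $(\overline{\nabla}(uf_\infty))^\theta=\sin^{-2}r\,\partial_\theta(uf_\infty)$, and $(\overline{\nabla}(uf_\infty))^\varphi=\partial_\varphi(uf_\infty)$. Using that $V\cdot W=(g_0)_{ij}V^iW^j=V^i\partial_i(uf_\infty)$ (the metric factors from raising and lowering indices cancel), and that $V^\varphi=0$ so the $\varphi$-derivative term drops out, one obtains
\[
-V\cdot\overline{\nabla}(uf_\infty)=2\,\frac{\partial_r f_\infty}{f_\infty}\partial_r(uf_\infty)+\frac{2}{\sin^2 r}\,\frac{\partial_\theta f_\infty}{f_\infty}\partial_\theta(uf_\infty),
\]
as claimed. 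There is no substantive obstacle: the whole lemma is bookkeeping, with the only care needed being to keep track of which $\Gamma^k_{ij}$ survive in each contracted sum and to remember that $d\mu_\infty/d\mu_0=f_\infty$ by Lemma \ref{lem-vol-ratio}, so that $u\,d\mu_\infty/d\mu_0=uf_\infty$.
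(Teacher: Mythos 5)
Your proposal is correct and takes essentially the same route as the paper: compute the three components of $V$ directly by contracting the inverse metric against the nonvanishing Christoffel symbols from Lemma~\ref{lem-Lee-LeFloch-Christoffel-sym}, then form $-V\cdot\overline\nabla(uf_\infty)$ using the diagonality of $g_0$. The only cosmetic difference is that you observe $(g_0)_{ij}V^i(\overline\nabla\psi)^j = V^i\partial_i\psi$ to skip explicitly writing the gradient components, whereas the paper writes them out; the computation is otherwise identical.
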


\begin{proof}
By plugging the non-vanishing Christoffel symbols in Lemma \ref{lem-Lee-LeFloch-Christoffel-sym} into
\be
V^k:= g^{ij}_\infty \Gamma^k_{ij}-g^{ik}_\infty\Gamma^j_{ji},
\ee
we get
\begin{eqnarray}
V^r & = &  g^{\varphi \varphi}_\infty \Gamma^r_{\varphi\varphi}- g^{rr}_\infty \Gamma^{\varphi}_{\varphi r} \\
     & = & \frac{1}{(f_\infty)^2} (- f_\infty \partial_r f_\infty) - \frac{\partial_r f_\infty}{f_\infty} = -2 \frac{\partial_r f_\infty}{f_\infty}.
\end{eqnarray}
Also
\begin{eqnarray}
V^{\theta}
& = & g^{\varphi \varphi}_\infty \Gamma^{\theta}_{\varphi \varphi} - g^{\theta \theta}_\infty \Gamma^{\varphi}_{\varphi \theta} \\
& = & \frac{1}{f^2_\infty} \left( - \frac{1}{\sin^2 r} f_\infty \partial_\theta f_\infty \right) - \frac{1}{\sin^2 r} \frac{\partial_\theta f_\infty}{f_\infty}=  - \frac{2}{\sin^2 r} \frac{\partial_\theta f_\infty}{f_\infty}.
\end{eqnarray}
\begin{equation}
V^{\varphi} = g^{ij}_\infty \Gamma^{\varphi}_{ij} - g^{\varphi \varphi}_\infty \Gamma^{j}_{j \varphi} = 0.
\end{equation}
By Lemma~\ref{lem-vol-ratio}, we now see that,
\begin{eqnarray}
 \overline\nabla \left(u \frac{\,d\mu_\infty}{\,d\mu_0}\right)
&=&
\overline\nabla
\left(u f_\infty\right)\\
&=& \partial_r (u f_\infty) \frac{\partial}{\partial r} + \frac{1}{\sin^2 r} \partial_\theta (u f_\infty) \frac{\partial}{\partial \theta} + \partial_\varphi (u f_\infty) \frac{\partial}{\partial \varphi}
\end{eqnarray}
Thus
\be
 - V \cdot \overline\nabla \left(u \frac{\,d\mu_\infty}{\,d\mu_0}\right)
 =  2 \frac{\partial_r f_\infty}{f_\infty}  \partial_r(u f_\infty ) +  \frac{2}{\sin^2 r}  \frac{\partial_\theta f_\infty}{f_\infty} \partial_\theta (u f_\infty)
 \ee
\end{proof}

\begin{lem}\label{lem-F}
For the limit metric $g_\infty$ with the background metric $g_0$, the function $F$ defined in (\ref{defn-F}) is given by
\begin{equation}
F= 2- 2\left(\frac{\partial_r f_\infty}{f_\infty}\right)^2 -  \frac{2}{\sin^2 r}  \left( \frac{\partial_\theta f_\infty}{f_\infty} \right)^2 = 2 - 2 \frac{1}{(f_\infty)^2} |\nabla f_\infty|^2.
\end{equation}
Furthermore,
\be
\left( F u \frac{\,d\mu_\infty}{d \mu_0} \right)=  2 u f_\infty - 2\frac{u}{f_\infty} |\nabla f_\infty|^2.
\ee
Here $|\nabla f_\infty|$ is the norm of weak gradient of $f_\infty$ with respect to the standard metric $g_{\Sph^2}$.
\end{lem}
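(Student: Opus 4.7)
The proof is a direct computation of each of the four summands in the definition
\[
F = \overline{R} - \overline{\nabla}_k g_\infty^{ij}\Gamma^k_{ij} + \overline{\nabla}_k g_\infty^{ik}\Gamma^j_{ji} + g_\infty^{ij}\bigl(\Gamma^k_{kl}\Gamma^l_{ij} - \Gamma^k_{jl}\Gamma^l_{ik}\bigr),
\]
using the explicit Christoffel symbols $\overline{\Gamma}^{\cdot}_{\cdot\cdot}$ of $g_0$ from Lemma~\ref{lem-background-metric-Christoffel-sym}, the Lee--LeFloch symbols $\Gamma^{\cdot}_{\cdot\cdot}$ of $g_\infty$ from Lemma~\ref{lem-Lee-LeFloch-Christoffel-sym}, and the diagonal form $g_\infty^{rr}=1,\ g_\infty^{\theta\theta}=1/\sin^2r,\ g_\infty^{\varphi\varphi}=1/f_\infty^{2}$ in the coordinates $(r,\theta,\varphi)$.

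First, I would compute $\overline{R}=g_\infty^{ij}\overline{\mathrm{Ric}}_{ij}$. Since $g_0 = g_{\Sph^2}+g_{\Sph^1}$ is a Riemannian product with $\Sph^2$ of constant Gaussian curvature one, the Ricci tensor of $g_0$ has only the nonvanishing components $\overline{\mathrm{Ric}}_{rr}=1$, $\overline{\mathrm{Ric}}_{\theta\theta}=\sin^2 r$, so $\overline{R}=g_\infty^{rr}\cdot 1 + g_\infty^{\theta\theta}\cdot\sin^2r = 2$. Next, I would verify that $\overline{\nabla}_k g_\infty^{ij}=0$ except when $ij=\varphi\varphi$, in which case $\overline{\nabla}_k g_\infty^{\varphi\varphi}=\partial_k(1/f_\infty^{\,2})=-2\partial_k f_\infty/f_\infty^{\,3}$ for $k\in\{r,\theta\}$; the vanishing in the $rr$ and $\theta\theta$ cases is a direct consequence of the fact that $\overline{\nabla}$ is the Levi-Civita connection of $g_0$ (or one checks it by hand, since the $\og$ terms cancel the derivatives of $\sin^2 r$). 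Pairing this with $\Gamma^r_{\varphi\varphi}=-f_\infty\partial_r f_\infty$ and $\Gamma^\theta_{\varphi\varphi}=-f_\infty\partial_\theta f_\infty/\sin^2 r$ gives
\[
\overline{\nabla}_k g_\infty^{ij}\Gamma^k_{ij} = \frac{2(\partial_r f_\infty)^2}{f_\infty^{\,2}} + \frac{2(\partial_\theta f_\infty)^2}{f_\infty^{\,2}\sin^2 r} = \frac{2|\nabla f_\infty|^2}{f_\infty^{\,2}}.
\]
The third summand $\overline{\nabla}_k g_\infty^{ik}\Gamma^j_{ji}$ vanishes: the only candidate for a nonzero $\overline{\nabla}_k g_\infty^{ik}$ with $i=k$ is $\overline{\nabla}_\varphi g_\infty^{\varphi\varphi}=0$ (since $f_\infty$ does not depend on $\varphi$), and the remaining off-diagonal covariant derivatives $\overline{\nabla}_k g_\infty^{ik}$ with $i\ne k$ are zero by the same cancellation as above.

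The last summand requires the most bookkeeping and is the step I expect to be the main obstacle, purely because of the number of index combinations. I would first compute the contractions $\Gamma^k_{kl}$, observing that only $\Gamma^k_{kr}=\Gamma^\varphi_{\varphi r}=\partial_r f_\infty/f_\infty$ and $\Gamma^k_{k\theta}=\Gamma^\varphi_{\varphi\theta}=\partial_\theta f_\infty/f_\infty$ are nonzero. Pairing with the nonzero $\Gamma^l_{ij}$ (only for $ij=\varphi\varphi$ or $\Gamma^\varphi_{\cdot\varphi}$-type) and contracting with $g_\infty^{ij}$ gives
\[
g_\infty^{ij}\,\Gamma^k_{kl}\Gamma^l_{ij} = g_\infty^{\varphi\varphi}\bigl(\Gamma^\varphi_{\varphi r}\Gamma^r_{\varphi\varphi} + \Gamma^\varphi_{\varphi\theta}\Gamma^\theta_{\varphi\varphi}\bigr) = -\frac{|\nabla f_\infty|^2}{f_\infty^{\,2}}.
\]
For $g_\infty^{ij}\Gamma^k_{jl}\Gamma^l_{ik}$ I would enumerate the six nonvanishing $\Gamma$'s and list the allowed $(i,j,k,l)$ quadruples; only the diagonal choices $(i,j)\in\{(r,r),(\theta,\theta),(\varphi,\varphi)\}$ survive the contraction with the diagonal $g_\infty^{ij}$. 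A short calculation yields $(\partial_r f_\infty)^2/f_\infty^{\,2}$ from $(i,j)=(r,r)$, $(\partial_\theta f_\infty)^2/(f_\infty^{\,2}\sin^2 r)$ from $(\theta,\theta)$, and $-2|\nabla f_\infty|^2/f_\infty^{\,2}$ from $(\varphi,\varphi)$, summing to $-|\nabla f_\infty|^2/f_\infty^{\,2}$. Hence the two quadratic terms cancel and
\[
F = 2 - \frac{2|\nabla f_\infty|^2}{f_\infty^{\,2}} + 0 + 0 = 2 - \frac{2|\nabla f_\infty|^2}{f_\infty^{\,2}}.
\]
Finally, by Lemma~\ref{lem-vol-ratio}, $d\mu_\infty/d\mu_0 = f_\infty$, so
\[
Fu\,\frac{d\mu_\infty}{d\mu_0} = 2uf_\infty - \frac{2u}{f_\infty}|\nabla f_\infty|^2,
\]
which is the asserted formula. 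All manipulations above are pointwise algebraic identities valid wherever $f_\infty$ is differentiable and positive; since $f_\infty\in W^{1,p}(\Sph^2)$ with $\overline{f_\infty}>0$ everywhere (Theorem~\ref{thm: f_infty positive}), they hold almost everywhere on $\Sph^2\times\Sph^1$.
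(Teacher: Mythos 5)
Your proof is correct and takes essentially the same approach as the paper: a direct coordinate computation of $F$ by substituting the Lee--LeFloch and background Christoffel symbols from Lemmas \ref{lem-Lee-LeFloch-Christoffel-sym} and \ref{lem-background-metric-Christoffel-sym} and the diagonal form of $g_\infty^{-1}$, then applying Lemma \ref{lem-vol-ratio}. The paper carries out one bulk expansion while you track the four summands of (\ref{defn-F}) separately, but the intermediate values ($\overline{R}=2$, the $\overline{\nabla} g_\infty^{\varphi\varphi}$ term giving $-2|\nabla f_\infty|^2/f_\infty^2$, the divergence term vanishing, and the quadratic $\Gamma\Gamma$ terms cancelling) agree with the paper's arithmetic.
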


\begin{proof}
First note that from the expression of $\overline{R}$ in (\ref{eqn-overline-R}) and the Christofell symbols calculated in Lemma \ref{lem-background-metric-Christoffel-sym}, one can easily see that
\be
\overline{R} = R_{g_{\Sph^2}} = 2.
\ee
Also recall that
\be
\on_i g^{jl}_\infty = \partial_i (g^{jl}_\infty) + \og^{j}_{ip} g^{pl}_\infty + \og^{l}_{iq} g^{jq}_\infty.
\ee
Then by Lemmas \ref{lem-background-metric-Christoffel-sym} and \ref{lem-Lee-LeFloch-Christoffel-sym}, one has
\begin{eqnarray}
F & := & \overline{R} - (\overline{\nabla}_k g^{ij}) \Gamma^k_{ij}+(\overline{\nabla}_k  g^{ik})\Gamma^j_{ji}+g^{ij}(\Gamma^k_{kl}\Gamma^l_{ij}-\Gamma^k_{jl}\Gamma^l_{ik}) \\
& = & 2 - \on_{r}g^{\varphi \varphi} \Gamma^{r}_{\varphi \varphi} - \on_{\theta} g^{\varphi \varphi} \Gamma^{\theta}_{\varphi \varphi} - 2 \on_{\varphi}g^{r\varphi} \Gamma^{\varphi}_{r \varphi}  - 2 \on_{\varphi}g^{\theta \varphi} \Gamma^{\varphi}_{\theta \varphi}\\
& & + \on_k g^{rk}\Gamma^{\varphi}_{\varphi r} + \on_k g^{\theta k}\Gamma^{\varphi}_{\varphi \theta} \\
&  & + \cancel{ g^{\varphi \varphi}\Gamma^{\varphi}_{\varphi r} \Gamma^{r}_{\varphi \varphi} } + \bcancel{ g^{\varphi \varphi} \Gamma^{\varphi}_{\varphi \theta} \Gamma^{\theta}_{\varphi \varphi} } \\
& &  - g^{\varphi \varphi} \Gamma^{r}_{\varphi \varphi} \Gamma^{\varphi}_{r \varphi} - \bcancel{ g^{\varphi \varphi} \Gamma^{\theta}_{\varphi \varphi} \Gamma^{\varphi}_{\varphi \theta} } - g^{rr} \Gamma^{\varphi}_{r \varphi } \Gamma^{\varphi}_{r \varphi} - \cancel{ g^{\varphi \varphi} \Gamma^{\varphi}_{\varphi r} \Gamma^{r}_{\varphi \varphi} } \\
& &  - g^{\theta \theta} \Gamma^{\varphi}_{\theta \varphi} \Gamma^{\varphi}_{\theta \varphi} - g^{\varphi \varphi} \Gamma^{\varphi}_{\varphi \theta} \Gamma^{\theta}_{\varphi \varphi} \\
& = & 2 - \left( \partial_r (g^{\varphi \varphi}) + 2 \og^{\varphi}_{r \varphi} g^{\varphi \varphi} \right) \Gamma^{r}_{\varphi \varphi}   - \left( \partial_\theta (g^{\varphi \varphi}) + 2 \og^{\varphi}_{\theta \varphi} g^{\varphi \varphi} \right) \Gamma^{\theta}_{\varphi \varphi} \\
&  &  - 2 \left( \partial_{\varphi}(g^{r \varphi}) + \og^{r}_{\varphi \varphi} g^{\varphi \varphi} + \og^{\varphi}_{\varphi r} g^{rr} \right) \Gamma^{\varphi}_{r \varphi}\\
&  &  - 2 \left( \partial_{\varphi}(g^{\theta \varphi}) + \og^{\theta}_{\varphi \varphi} g^{\varphi \varphi} + \og^{\varphi}_{\varphi \theta} g^{\theta \theta} \right) \Gamma^{\varphi}_{\theta \varphi}\\
& & + \left( \partial_r(g^{rr}) + \og^{r}_{rr}g^{rr} + \og^{r}_{rr} g^{rr} \right) \Gamma^{\varphi}_{\varphi r}  \\
& & + \left( \partial_\theta (g^{r \theta}) + \og^{r}_{\theta \theta}g^{\theta \theta} + \og^{\theta}_{\theta r} g^{rr} \right) \Gamma^{\varphi}_{\varphi r} \\
&  & + \left( \partial_\varphi (g^{r \varphi}) + \og^{r}_{\varphi \varphi} g^{\varphi \varphi} + \og^{\varphi}_{\varphi r} g^{rr} \right) \Gamma^{\varphi}_{\varphi r} \\
& & + \left( \partial_r(g^{\theta r}) + \og^{\theta}_{rr}g^{rr} + \og^{r}_{r\theta} g^{\theta \theta} \right) \Gamma^{\varphi}_{\varphi \theta}  \\
& & + \left( \partial_\theta (g^{\theta \theta}) + \og^{\theta}_{\theta \theta}g^{\theta \theta} + \og^{\theta}_{\theta \theta} g^{\theta \theta} \right) \Gamma^{\varphi}_{\varphi \theta} \\
&  & + \left( \partial_\varphi (g^{\theta \varphi}) + \og^{\theta}_{\varphi \varphi} g^{\varphi \varphi} + \og^{\varphi}_{\varphi \theta} g^{\theta \theta} \right) \Gamma^{\varphi}_{\varphi \theta} \\
& &  - g^{\varphi \varphi} \Gamma^{r}_{\varphi \varphi} \Gamma^{\varphi}_{r \varphi}  - g^{\varphi \varphi} \Gamma^{\varphi}_{\varphi \theta} \Gamma^{\theta}_{\varphi \varphi} - g^{rr} \Gamma^{\varphi}_{ r \varphi } \Gamma^{\varphi}_{r \varphi} - g^{\theta \theta} \Gamma^{\varphi}_{\varphi \theta} \Gamma^{\varphi}_{\varphi \theta} \\
& = & 2 - (-2) \frac{\partial_r f_\infty}{(f_\infty)^3} \left( - f_\infty \partial_r f_\infty \right) - (-2) \frac{\partial_\theta f_\infty}{(f_\infty)^3} \left( - \frac{1}{\sin^2 r} f_\infty \partial_\theta f_\infty \right) \\
& & + \left( -\frac{\cos r}{\sin r} + \frac{\cos r}{\sin r} \right) \Gamma^{\varphi}_{\varphi r} - \frac{1}{(f_\infty)^2}(-f_\infty \partial_r f_\infty) \left( \frac{\partial_r f_\infty}{f_\infty} \right) \\
& & - \frac{1}{(f_\infty)^2}\left( -\frac{1}{\sin^2 r} f_\infty \partial_\theta f_\infty \right) \left( \frac{\partial_\theta f_\infty}{f_\infty} \right)\\
& &  - \left( \frac{\partial_r f_\infty}{f_\infty} \right)^2 - \frac{1}{\sin^2 r} \left( \frac{\partial_\theta f_\infty}{f_\infty} \right)^2 \\
& = & 2 - 2 \left( \frac{\partial_r f_\infty}{f_\infty} \right)^2 -  \frac{2}{\sin^2 r}  \left( \frac{\partial_\theta f_\infty}{f_\infty} \right)^2 \\
& = & 2 - 2 \frac{1}{(f_\infty)^2} |\nabla f_\infty|^2.
\end{eqnarray}
We immediately obtain our second claim by applying Lemma~\ref{lem-vol-ratio}.
\end{proof}

 \begin{lem}\label{lem-extreme-Lee-LeFloch-divergence}
For $g$ being our limit metric tensor $g_\infty$ and a smooth nonnegative
test function $u$, the integrals in (\ref{LL-term1}) and (\ref{LL-term2}) are given by
\begin{eqnarray}
\quad FirstInt_{g_\infty}
&=& \int_{\Sph^2 \times \Sph^1}  \left( - V \cdot \overline\nabla \left(u \frac{\,d\mu_\infty}{\,d\mu_0}\right) \right) \,d\mu_0 \\
& = & \int_{\Sph^2} \left( 2 \langle \nabla f_\infty, \nabla \bar{u} \rangle + 2\frac{\bar{u}}{f_\infty} |\nabla f_\infty|^2 \right) d\vol_{g_{\Sph^2}}, \label{eqn-extreme-metric-FirstInt}
\end{eqnarray}
and
\begin{eqnarray}
\quad SecondInt_{g_\infty}&=& \int_{\Sph^2 \times \Sph^1} \left( F u \frac{\,d\mu_\infty}{d \mu_0} \right) \,d\mu_0\\
& = & \int_{\Sph^2} \left( 2 \bar{u} f_\infty - 2\frac{\bar{u}}{f_\infty} |\nabla f_\infty|^2 \right) d\vol_{g_{\Sph^2}},\label{eqn-extrem-metric-SecondInt}
\end{eqnarray}
where
\be\label{eq-bar-u}
\bar{u}(r, \theta) =  \int^{2 \pi}_{0} u(r, \theta, \varphi) d\varphi,
\ee
$\nabla f_\infty$ and $\nabla \bar{u}$ are (weak) gradients of functions $f_\infty$ and $\bar{u}$ on standard sphere $(\Sph^2, g_{\Sph^2})$ respectively,  and $\langle \cdot , \cdot \rangle$ is the Riemannian metric on $(\Sph^2, g_{\Sph^2})$.
\end{lem}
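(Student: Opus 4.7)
The plan is to start from the pointwise identities in Lemma \ref{lem-V} and Lemma \ref{lem-F}, and then perform the $\varphi$-integration first, taking advantage of the fact that the warping function $f_\infty(r,\theta)$ does not depend on $\varphi$. Since the background volume form factors as $d\mu_0 = \sin(r)\,dr\wedge d\theta \wedge d\varphi$ and since $V$, $F$ and $\tfrac{d\mu_\infty}{d\mu_0} = f_\infty$ are all independent of $\varphi$, any $\varphi$-integration hits only the test function $u$ and produces $\bar u(r,\theta) = \int_0^{2\pi} u(r,\theta,\varphi)\,d\varphi$.

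For $SecondInt_{g_\infty}$, Lemma \ref{lem-F} already gives the integrand as $2 u f_\infty - 2\tfrac{u}{f_\infty}|\nabla f_\infty|^2$, where the gradient is with respect to $g_{\Sph^2}$. Applying Fubini-Tonelli (the integrand is a finite linear combination of nonnegative and $\varphi$-independent-times-$u$ functions, so we may separately integrate the positive and negative parts), we integrate $u$ against $d\varphi$, obtain $\bar u$, and the remaining integration over $(r,\theta)$ against $\sin(r)\,dr\,d\theta$ is exactly $d\vol_{g_{\Sph^2}}$. This yields \eqref{eqn-extrem-metric-SecondInt}.

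For $FirstInt_{g_\infty}$, I would expand the integrand from Lemma \ref{lem-V} by the product rule:
\begin{align*}
2 \frac{\partial_r f_\infty}{f_\infty} \partial_r(u f_\infty) &= 2 \partial_r f_\infty\, \partial_r u + 2\frac{u}{f_\infty}(\partial_r f_\infty)^2, \\
\frac{2}{\sin^2 r} \frac{\partial_\theta f_\infty}{f_\infty}\partial_\theta(u f_\infty) &= \frac{2}{\sin^2 r}\partial_\theta f_\infty\, \partial_\theta u + \frac{2 u}{\sin^2 r\, f_\infty}(\partial_\theta f_\infty)^2.
\end{align*}
Grouping and using that in geodesic polar coordinates on $(\Sph^2,g_{\Sph^2})$ one has
\[
\langle \nabla f_\infty, \nabla u\rangle = \partial_r f_\infty\, \partial_r u + \tfrac{1}{\sin^2 r}\partial_\theta f_\infty\, \partial_\theta u, \qquad |\nabla f_\infty|^2 = (\partial_r f_\infty)^2 + \tfrac{1}{\sin^2 r}(\partial_\theta f_\infty)^2,
\]
the integrand becomes $2\langle \nabla f_\infty,\nabla u\rangle + 2\tfrac{u}{f_\infty}|\nabla f_\infty|^2$. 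Integrating over $\varphi$ first (again via Fubini-Tonelli, splitting the indefinite-sign piece $2\langle \nabla f_\infty,\nabla u\rangle$ into positive and negative parts) commutes with $\partial_r$ and $\partial_\theta$ acting on $u$ since these derivatives are in variables independent of $\varphi$, so $\int_0^{2\pi}\partial_r u\, d\varphi = \partial_r \bar u$ and similarly for $\partial_\theta$. The resulting integral over $(r,\theta)$ against $\sin(r)\,dr\,d\theta = d\vol_{g_{\Sph^2}}$ gives \eqref{eqn-extreme-metric-FirstInt}.

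The main technical point to keep track of is the integrability, since $f_\infty$ is only in $W^{1,p}(\Sph^2)$ for $p<2$, so $|\nabla f_\infty|^2$ need not be integrable and each individual formula may take the value $+\infty$. However, the formulas \eqref{eqn-extreme-metric-FirstInt}--\eqref{eqn-extrem-metric-SecondInt} are asserted as equalities in $[-\infty,\infty]$ obtained by the Fubini-Tonelli interchange, which is legitimate because the indefinite-sign term $2\langle \nabla f_\infty, \nabla \bar u\rangle$ lies in $L^1$ (as $\nabla f_\infty\in L^p$, $p\in[1,2)$, and $\nabla \bar u$ is bounded) while the remaining terms have a definite sign, so the interchange reduces to Tonelli on each signed piece. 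This integrability bookkeeping, rather than the algebraic manipulation, is the place where care must be taken, and it is precisely this observation that sets up the cancellation argument made in the proof of Theorem \ref{thm-distr-scalar} and in Remark \ref{rmrk-LL-divergence}.
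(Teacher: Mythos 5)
Your proof is correct and takes the same computational route as the paper: expand the pointwise formulas from Lemmas \ref{lem-V} and \ref{lem-F}, integrate out $\varphi$ to produce $\bar u$, and identify the remaining integral over $(r,\theta)$ with an integral against $d\vol_{g_{\Sph^2}}$. Your explicit care with the Fubini--Tonelli interchange, separating the $L^1$ piece $2\langle \nabla f_\infty, \nabla u\rangle$ from the nonnegative terms, is a welcome elaboration of the integrability bookkeeping that the paper's one-line proof leaves implicit and only touches on in Remark \ref{rmrk-LL-divergence}.
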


\begin{proof}
By integrating the formulas in Lemma~\ref{lem-V} and Lemma~\ref{lem-F}, one can easily obtain the integrals in (\ref{eqn-extreme-metric-FirstInt}) and (\ref{eqn-extrem-metric-SecondInt}).
\end{proof}

\begin{rmrk}\label{rmrk-LL-divergence}
{\rm
As explained in Remark \ref{rmrk-best-regularity},  $f_\infty \in W^{1, p}$ for any $1 \leq p < 2$, which is obtained in in Proposition \ref{prop-W1p-limit}, is the best regularity for $f_\infty$ in general, and we cannot expect $f_\infty$ is in $W^{1, 2}_{loc}(\Sph^2)$. So the integral $\int_{\Sph^2} \frac{\bar{u}}{f_\infty} |\nabla f_\infty|^2 d\vol_{g_{\Sph^2}}$ appearing in both (\ref{eqn-extreme-metric-FirstInt}) and (\ref{eqn-extrem-metric-SecondInt}) may be divergent (c.f. Lemma 4.16 in \cite{STW-ex}). But if we sum the integrants in (\ref{eqn-extreme-metric-FirstInt}) and (\ref{eqn-extrem-metric-SecondInt}) firstly and then integrate,  then this possible divergent integrant terms cancel out and we obtain a finite integral as in the following lemma.
}
\end{rmrk}

\begin{lem}\label{lem-calculation-Lee-LeFloch}
For the limit metric $g_\infty = g_{\Sph^2} + f^2_\infty g_{\Sph^1}$, the scalar curvature distribution $\Scal_{g_\infty}$ defined in Definition \ref{defn-Lee-LeFloch} can be expressed, for every test function $u \in C^{\infty}(\Sph^2 \times \Sph^1)$, as the integral
\be\label{eqn-calculation-Lee-LeFloch}
\langle \Scal_{g_\infty}, u \rangle
 =  \int_{\Sph^2} \left( 2 \langle \nabla f_\infty, \nabla \bar{u} \rangle + 2 f_\infty \bar{u} \right) d\vol_{g_{\Sph^2}},
\ee
and this is finite for any test function $u \in C^{\infty}(\Sph^2 \times \Sph^1)$. Here $\bar{u}$ is defined as in (\ref{eq-bar-u}),
$\nabla f_\infty$ and $\nabla \bar{u}$ are (weak) gradients of functions $f_\infty$ and $\bar{u}$ on standard sphere $(\Sph^2, g_{\Sph^2})$ respectively,  and $\langle \cdot , \cdot \rangle$ is the Riemannian metric on $(\Sph^2, g_{\Sph^2})$.
\end{lem}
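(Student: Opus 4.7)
The plan is to combine the expressions from Lemma~\ref{lem-V} and Lemma~\ref{lem-F} \emph{pointwise} on $\Sph^2\times\Sph^1$ before integrating, so that the potentially non-integrable term $\tfrac{u}{f_\infty}|\nabla f_\infty|^2$ cancels, leaving an integrand that is manifestly in $L^1$.

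First I would expand $-V\cdot\on(u f_\infty)$ by applying the product rule to $\partial_r(u f_\infty)$ and $\partial_\theta(u f_\infty)$ in the formula from Lemma~\ref{lem-V}. Each application produces one cross term of the form $(\partial_i f_\infty)(\partial_i u)$ and one diagonal term of the form $u(\partial_i f_\infty)^2/f_\infty$. Collecting the $r$ and $\theta$ contributions and using
\[
(\partial_r f_\infty)(\partial_r u)+\tfrac{1}{\sin^2 r}(\partial_\theta f_\infty)(\partial_\theta u) = \langle \nabla f_\infty, \nabla u\rangle_{g_{\Sph^2}},
\]
together with the analogous identity for the diagonal pieces, gives
\[
-V\cdot\on(u f_\infty) = 2\langle \nabla f_\infty, \nabla u\rangle + 2\tfrac{u}{f_\infty}|\nabla f_\infty|^2.
\]
Lemma~\ref{lem-F} gives $F u f_\infty = 2 u f_\infty - 2\tfrac{u}{f_\infty}|\nabla f_\infty|^2$. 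Adding these two identities, the diagonal terms cancel and one is left with the pointwise identity
\[
-V\cdot\on(u f_\infty) + F u f_\infty = 2\langle \nabla f_\infty, \nabla u\rangle + 2 u f_\infty
\]
valid a.e.\ on $\Sph^2\times\Sph^1$.

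Finally I would integrate this identity against $d\mu_0 = dr\wedge\sin r\,d\theta\wedge d\varphi$ and apply Fubini in the $\varphi$ direction. Because neither $f_\infty$ nor $\nabla f_\infty$ depends on $\varphi$, integrating $u$ in $\varphi$ replaces $u$ by $\bar u$, yielding exactly the formula (\ref{eqn-calculation-Lee-LeFloch}). Finiteness of the right-hand side then follows by H\"older's inequality from $\bar u\in C^\infty(\Sph^2)$ together with $\nabla f_\infty \in L^p(\Sph^2)$ for every $p<2$ (Proposition~\ref{prop-W1p-limit}) and $f_\infty \in L^q(\Sph^2)$ for every $q<\infty$ (Lemma~\ref{Lem-Moser-Trudinger}).

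The main subtlety, rather than any deep obstacle, is precisely the one flagged in Remark~\ref{rmrk-LL-divergence}: under the regularity $f_\infty\in W^{1,p}$ with $p<2$ the quantity $\tfrac{1}{f_\infty}|\nabla f_\infty|^2$ need not be integrable, so each of $FirstInt_{g_\infty}$ and $SecondInt_{g_\infty}$ from Lemma~\ref{lem-extreme-Lee-LeFloch-divergence} may individually diverge even when their sum is finite. The cancellation must therefore be performed at the level of integrands, not of integrals; the everywhere-positive lower semi-continuous representative of $f_\infty$ provided by Theorem~\ref{thm: f_infty positive} is what legitimizes dividing by $f_\infty$ a.e.\ in these manipulations.
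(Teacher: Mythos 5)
Your proposal is correct and takes essentially the same route as the paper: the paper's proof adds the expressions from Lemma~\ref{lem-extreme-Lee-LeFloch-divergence} at the level of integrands, and you simply carry out the intermediate product-rule expansion and the $\varphi$-integration explicitly rather than citing them. The cancellation of the $\tfrac{u}{f_\infty}|\nabla f_\infty|^2$ terms before integrating, and the finiteness argument via $f_\infty\in W^{1,p}$ for $p<2$, match the paper's reasoning exactly.
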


\begin{proof}
The expression in (\ref{eqn-calculation-Lee-LeFloch}) immediately follows from the expressions in (\ref{eqn-extreme-metric-FirstInt}) and (\ref{eqn-extrem-metric-SecondInt})  and Definition \ref{defn-Lee-LeFloch}. The finiteness of the integral in (\ref{eqn-calculation-Lee-LeFloch}) follows from that $f_\infty \in W^{1, p}(\Sph^2)$ for $1 \leq p <2$ as proved in Proposition \ref{prop-W1p-limit}.
\end{proof}

We now apply these lemmas to prove Theorem~\ref{thm-distr-scalar}:

\begin{proof}
By the expression (\ref{eqn-scalar}) of the scalar curvature of $\Sph^2 \times_{f_i} \Sph^1$,
we have that for any test function $u \in C^{\infty} (\Sph^2 \times \Sph^1)$,
\begin{eqnarray}
 \int_{\Sph^2 \times \Sph^1} \Scal_{g_{j}} u d\vol_{g_j}
& = & \int_{\Sph^2} \left( \int^{2\pi}_{0} \left( 2 f_j u - 2 \Delta f_j u \right) d\varphi  \right) d\vol_{g_{\Sph^2}} \\
& = & \int_{\Sph^2} \left( 2 f_j \bar{u} - 2 \Delta f_j \bar{u} \right) d\vol_{g_{\Sph^2}} \\
& = & \int_{\Sph^2} \left( 2 f_j \bar{u} + 2 \langle \nabla f_j, \nabla \bar{u} \rangle \right) d\vol_{g_{\Sph^2}},
\end{eqnarray}
where $\bar{u}(r, \theta) = \int^{2\pi}_{0} u(r, \theta, \varphi) d\varphi$.
Then, by using the nonnegative scalar curvature condition $\Scal_{g_{j}} \geq 0$, Proposition \ref{prop-W1p-limit} and Lemma \ref{lem-calculation-Lee-LeFloch}, possibly after passing to a subsequence, we obtain for any nonnegative test function $0 \leq u \in C^{\infty}(\Sph^2 \times \Sph^1)$,
\begin{eqnarray}
0 & \leq & \int_{\Sph^2 \times \Sph^1} \Scal_{g_{j}} u d\vol_{g_j} \label{eqn-total scalar curvature-1}  \\
& = & \int_{\Sph^2} \left( 2 f_j \bar{u} + 2 \langle \nabla f_j, \nabla \bar{u} \rangle \right) d\vol_{g_{\Sph^2}} \\
& \rightarrow & \int_{\Sph^2} \left( 2 f_\infty \bar{u} + 2 \langle \nabla f_\infty, \nabla \bar{u} \rangle \right) d\vol_{g_{\Sph^2}} \\
& = & \langle \Scal_{g_\infty}, u \rangle. \label{eqn-total scalar curvature-2}
\end{eqnarray}
Thus, $\langle \Scal_{g_\infty}, u \rangle \geq 0$ for all nonnegative test function $u \in C^{\infty}(\Sph^2 \times \Sph^1)$. By setting $u\equiv 1$ in equations (\ref{eqn-total scalar curvature-1})-(\ref{eqn-total scalar curvature-2}), we obtain the convergence of distributional total scalar curvature.
\end{proof}




\begin{appendix}

\section{ $W^{1 ,2}$ convergence in $\Sph^{1} \times_{h} \Sph^{2}$ case}\label{appendix}

In this appendix, we will derive $W^{1, 2}$ convergence in the case of warped product spheres over circle with nonnegative scalar curvature, and show that the limit metric has nonnegative distributional scalar curvature in the sense of Lee-LeFloch.  Specifically, we will prove the following two theorems.
\begin{thm}\label{thm-sphere-over-circle-convergence}
Let $\{ \Sph^1 \times_{h_j} \Sph^2 \}^\infty_{j=1}$ be a family of warped Riemannian manifolds with metric tensors as in (\ref{eqn-sphere-over-circle}) satisfying
\be  \Scal_j \geq 0, \quad \diam (\Sph^1 \times_{h_j} \Sph^2) \leq D, \ee
and
\be \mina(\Sph^1 \times_{h_j} \Sph^2) \geq A>0 \ee
for all $j \in \N$, where $\Scal_j$ is the scalar curvature of $\Sph^1 \times_{h_j} \Sph^2$. Then there is
a subsequence of warping functions $h_j$ that converges in $W^{1, 2}(\Sph^1)$ to a Lipschitz function $h_\infty \in W^{1, 2}(\Sph^1)$, which has Lipschitz constant 1 and satisfies
\be
\sqrt{\frac{A}{4\pi}}\le h_{\infty} \le \frac{D}{\pi} + 2\pi, \quad \text {on} \ \ \Sph^1.
\ee

Moreover, let $g_\infty := g_{\Sph^1} + h^2_\infty g_{\Sph^2}$, then $g_\infty$ is a Lipschitz continuous Riemannian metric tensor on $\Sph^1 \times \Sph^2$, and a subsequence of $\{ g_j = g_{\Sph^1} + h^2_j g_{\Sph^2} \}^\infty_{j=1}$ converges in $W^{1, 2}(\Sph^1 \times \Sph^2, g_0)$ to $g_\infty$.
\end{thm}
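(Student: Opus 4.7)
The plan is to first translate the scalar curvature and $\mina$ hypotheses into concrete control on $h_j$, then apply a compactness argument, and finally upgrade the resulting convergence to $W^{1,2}$. A direct computation of the warped product curvature yields
\be
\Scal_{g_j} = \tfrac{2}{h_j^2} - \tfrac{4 h_j''}{h_j} - \tfrac{2(h_j')^2}{h_j^2},
\ee
so the condition $\Scal_j \ge 0$ is equivalent to the pointwise inequality $2 h_j h_j'' + (h_j')^2 \le 1$ on $\Sph^1$. Evaluating this at a maximum of $(h_j')^2$, where $h_j'' = 0$ whenever $h_j' \ne 0$, immediately gives the uniform Lipschitz bound $|h_j'| \le 1$. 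The $\mina$ hypothesis then yields the lower bound: at any minimizer $\theta_0^{(j)}$ of $h_j$ one has $h_j'(\theta_0^{(j)}) = 0$, so by the standard warped product formula the fiber $\{\theta_0^{(j)}\}\times\Sph^2$ has vanishing mean curvature and is therefore a closed minimal surface of area $4\pi(\min h_j)^2 \ge A$. For the upper bound, any curve $\gamma(s) = (\theta(s),\phi(s))$ joining antipodal points of this minimizing slice has length at least $\int h_j(\theta(s))|\dot\phi|_{\Sph^2}\,ds \ge \pi \min h_j$, so $D \ge \pi \min h_j$; combined with the 1-Lipschitz estimate this gives $\max h_j \le \min h_j + \pi \le D/\pi + 2\pi$.

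With $\{h_j\}$ equi-bounded and equi-Lipschitz on $\Sph^1$, Arzel\`a--Ascoli produces a subsequence (still denoted $h_j$) converging uniformly to a 1-Lipschitz limit $h_\infty$ satisfying the same two-sided bounds. Since $\{h_j'\}$ is uniformly bounded in $L^\infty \subset L^2$, after passing to a further subsequence $h_j' \rightharpoonup h_\infty'$ weakly in $L^2(\Sph^1)$, giving $h_\infty \in W^{1,2}(\Sph^1)$.

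\textbf{The main obstacle} is upgrading this weak $W^{1,2}$-convergence to strong. The plan is to exploit the one-sided bound
\be
h_j'' \;\le\; \tfrac{1 - (h_j')^2}{2 h_j} \;\le\; \tfrac{1}{2 \min h_j} \;\le\; \sqrt{\pi/A} \,=:\, C
\ee
coming from $\Scal_j \ge 0$ together with the uniform lower bound on $h_j$. On each local chart of $\Sph^1$, the smooth functions $\phi_j(\theta) := h_j(\theta) - \tfrac{C}{2}\theta^2$ then satisfy $\phi_j'' \le 0$, so they are concave and converge uniformly to $\phi_\infty := h_\infty - \tfrac{C}{2}\theta^2$. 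A standard fact about uniformly convergent sequences of concave functions on an interval is that their derivatives converge at every differentiability point of the limit, and since concave functions are differentiable off a null set this yields $\phi_j' \to \phi_\infty'$, hence $h_j' \to h_\infty'$, almost everywhere on $\Sph^1$. The uniform pointwise bound $|h_j'| \le 1$ now lets dominated convergence upgrade this to $h_j' \to h_\infty'$ in $L^p(\Sph^1)$ for every $p<\infty$, and in particular $h_j \to h_\infty$ strongly in $W^{1,2}(\Sph^1)$.

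Finally, since $h_\infty > 0$ is Lipschitz, the tensor $g_\infty := g_{\Sph^1} + h_\infty^2 g_{\Sph^2}$ is a Lipschitz continuous Riemannian metric on $\Sph^1 \times \Sph^2$. For the transfer to the metric level, I would write $g_j - g_\infty = (h_j^2 - h_\infty^2)\, g_{\Sph^2}$ and use that $\overline{\nabla} g_{\Sph^2} = 0$ with respect to the product background $g_0 = g_{\Sph^1} + g_{\Sph^2}$. The uniform $L^\infty$ bounds on $h_j$ combined with the strong $W^{1,2}(\Sph^1)$-convergence of $h_j$ imply $h_j^2 \to h_\infty^2$ strongly in $W^{1,2}(\Sph^1)$ via the factorization $(h_j^2 - h_\infty^2)' = 2(h_j - h_\infty)h_j' + 2 h_\infty (h_j' - h_\infty')$, and this converts directly into $g_j \to g_\infty$ in $W^{1,2}(\Sph^1 \times \Sph^2, g_0)$ in the sense of Definition~\ref{defn-W1p}.
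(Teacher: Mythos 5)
Your proposal is correct and follows the paper's overall three-step outline (uniform bounds plus Arzel\`a--Ascoli, then strong $W^{1,2}$ convergence of the derivatives, then transfer to the metric tensor), but two sub-steps use genuinely different arguments and are worth noting. For the gradient bound $|h_j'|\le 1$: the paper (Lemma~\ref{lem-scal}) argues by contradiction, following an integral curve of $\nabla h_j/|\nabla h_j|$ and invoking the mean value theorem; your direct argument---evaluate $2h_j h_j'' + (h_j')^2\le 1$ at a maximum of $(h_j')^2$, where $h_j'h_j''=0$ forces $h_j''=0$ if $h_j'\ne 0$---is simpler and equally rigorous. For the decisive step (strong $W^{1,2}$ convergence of $h_j'$): the paper uses the one-sided bound $\Delta h_j \le \sqrt{\pi/A}$ together with $\int_{\Sph^1}\Delta h_j=0$ to get a uniform $BV(\Sph^1)$ bound on $\nabla h_j$, then applies $BV$ compactness for $L^1$ convergence and interpolates against the $L^\infty$ bound via H\"older to reach $L^2$; you instead use the very same one-sided bound to make $\phi_j := h_j - \tfrac{C}{2}\theta^2$ concave on local charts, invoke the classical fact (e.g.\ Rockafellar, Theorem~25.7) that derivatives of locally uniformly convergent concave functions converge at every differentiability point of the limit, and then conclude by dominated convergence using $|h_j'|\le 1$. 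Both routes hinge on exactly the same estimate $h_j''\le \sqrt{\pi/A}$; yours avoids the $BV$ machinery entirely and is arguably more elementary, while the paper's additionally exhibits $h_\infty'$ as a $BV$ function. Your diameter argument is also slightly more careful than Lemma~\ref{lem-diam}: you bound the length of an arbitrary curve joining antipodal points of the minimizing slice rather than restricting attention to curves within the slice. Everything else (minimal slice at a minimum of $h_j$, the two-sided bounds on $h_\infty$, and Step~3's factorization $(h_j^2-h_\infty^2)' = 2(h_j-h_\infty)h_j' + 2h_\infty(h_j'-h_\infty')$) matches the paper's proof.
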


Here, as before, we still use $g_0 = g_{\Sph^1} + g_{\Sph^2}$ as a background metric. Then we can compute the scalar curvature distribution of Lee-LeFloch and have the following property.
\begin{thm}\label{thm-scalar-appendix}
The limit metric $g_\infty$ obtained in Theorem \ref{thm-sphere-over-circle-convergence} has nonnegative distributional scalar curvature in the sense of Lee-LeFloch as recalled in Definition \ref{defn-Lee-LeFloch}.
\end{thm}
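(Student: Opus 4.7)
The plan is to verify the three ingredients for the Lee--LeFloch framework: that $g_\infty$ meets the regularity hypothesis of Definition \ref{defn-Lee-LeFloch}, that $\langle \Scal_{g_\infty}, u\rangle$ admits an explicit finite expression in $h_\infty$ alone, and that this expression is the limit of $\int \Scal_{g_j} u\, d\vol_{g_j}$, which is nonnegative by hypothesis. The situation is considerably simpler than in Section \ref{sect-distr-scal}, because Theorem \ref{thm-sphere-over-circle-convergence} gives $h_\infty$ Lipschitz with $0<\sqrt{A/(4\pi)} \le h_\infty \le D/\pi + 2\pi$; consequently $g_\infty \in W^{1,\infty}(\Sph^1\times \Sph^2, g_0) \subset L^\infty \cap W^{1,2}_{loc}$, and $g_\infty^{-1} \in L^\infty_{loc}$. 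Thus Definition \ref{defn-Lee-LeFloch} applies directly, and no delicate cancellation of divergent terms (as in Remark \ref{rmrk-LL-divergence}) will be needed.

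For the Lee--LeFloch computation I will use coordinates $\varphi$ on $\Sph^1$ and polar $(r, \theta)$ on $\Sph^2$, in which $g_0 = \mathrm{diag}(1, 1, \sin^2 r)$ and $g_\infty = \mathrm{diag}(1, h_\infty^2, h_\infty^2 \sin^2 r)$. The nonzero background Christoffel symbols are $\og^r_{\theta\theta} = -\sin r \cos r$ and $\og^\theta_{r\theta} = \cot r$, and following the pattern of Lemma \ref{lem-Lee-LeFloch-Christoffel-sym} one checks that the only nonvanishing Lee--LeFloch Christoffel symbols of $g_\infty$ are
\begin{equation*}
\Gamma^\varphi_{rr} = -h_\infty h_\infty', \quad \Gamma^\varphi_{\theta\theta} = -h_\infty h_\infty'\sin^2 r, \quad \Gamma^r_{r\varphi} = \Gamma^\theta_{\theta\varphi} = \frac{h_\infty'}{h_\infty}.
\end{equation*}
Substituting into (\ref{defn-V}) and (\ref{defn-F}) and using $d\mu_\infty/d\mu_0 = h_\infty^2$, a direct calculation gives $V = -4(h_\infty'/h_\infty)\partial_\varphi$ and $F = 2/h_\infty^2 - 6(h_\infty')^2/h_\infty^2$, and inserting these into (\ref{eqn-Lee-LeFloch}) and integrating over the $\Sph^2$ fiber first yields
\begin{equation*}
\langle \Scal_{g_\infty}, u\rangle = \int_{\Sph^1} \bigl[\, 2\bar u + 2(h_\infty')^2\,\bar u + 4 h_\infty h_\infty'\,\bar u'\,\bigr]\, d\varphi,
\end{equation*}
where $\bar u(\varphi) := \int_{\Sph^2} u(\varphi, r, \theta)\, d\vol_{g_{\Sph^2}}$. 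Since $h_\infty, h_\infty' \in L^\infty(\Sph^1)$, this integral is absolutely convergent.

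For the passage to the limit, the warped product formula (Besse, Proposition 9.106) gives $\Scal_{g_j} = 2/h_j^2 - 4 h_j''/h_j - 2 (h_j')^2/h_j^2$; multiplying by a nonnegative test function $u$ and by the $g_j$-volume form $h_j^2 \sin r\, d\varphi\, dr\, d\theta$, then integrating over $\Sph^2$ and integrating by parts in $\varphi$ to eliminate $h_j''$, produces
\begin{equation*}
0 \le \int_{\Sph^1\times\Sph^2} \Scal_{g_j} u\, d\vol_{g_j} = \int_{\Sph^1} \bigl[\, 2\bar u + 2(h_j')^2\,\bar u + 4 h_j h_j'\,\bar u'\,\bigr]\, d\varphi,
\end{equation*}
which is exactly $\langle \Scal_{g_\infty}, u\rangle$ with $h_\infty$ replaced by $h_j$. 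The uniform 1-Lipschitz bound from Theorem \ref{thm-sphere-over-circle-convergence} furnishes equicontinuity and $L^\infty$ bounds, so along the chosen subsequence $h_j \to h_\infty$ uniformly, and the $W^{1,2}$ convergence then gives $(h_j')^2 \to (h_\infty')^2$ in $L^1(\Sph^1)$ and $h_j h_j' \to h_\infty h_\infty'$ in $L^2(\Sph^1)$. All three integrands converge, and passing to the limit yields $\langle \Scal_{g_\infty}, u\rangle \ge 0$. The main technical work will be the Lee--LeFloch computation itself: the $\Gamma^k_{ij}$, $V$, $F$, and volume ratio must be tracked carefully so that the resulting formula matches the integration-by-parts form of $\int \Scal_{g_j} u\, d\vol_{g_j}$; once the algebraic identity is in place, the limiting inequality is immediate from strong $W^{1,2}$ convergence.
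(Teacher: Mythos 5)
Your proposal follows essentially the same route as the paper's own proof: compute the Lee--LeFloch Christoffel symbols, $V$, $F$, and volume ratio in the coordinates $(\varphi, r, \theta)$, integrate over the $\Sph^2$ fiber to get the explicit expression $\int_{\Sph^1}[2\bar u + 2(h_\infty')^2\bar u + 4h_\infty h_\infty' \bar u']\,d\varphi$, match this (after an integration by parts in $\varphi$) with $\int \Scal_{g_j} u \, d\vol_{g_j}$ with $h_j$ in place of $h_\infty$, and pass to the limit using the uniform bounds and $W^{1,2}$ convergence from Theorem \ref{thm-sphere-over-circle-convergence}. Your remark that $g_\infty \in W^{1,\infty}\subset L^\infty\cap W^{1,2}_{loc}$ so that Definition \ref{defn-Lee-LeFloch} applies directly (with no cancellation of divergences needed, in contrast to the $\Sph^2\times_f\Sph^1$ case) matches the paper's stated contrast, and your formula correctly carries the $\bar u'$ factor on the $4 h_\infty h_\infty'$ term that is garbled to $\bar u$ in the paper's Lemma \ref{lem-calculation-Lee-LeFloch-appendix}.
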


The study of this case is similar as the case of rotationally symmetric metrics on sphere, which was studied by authors with Jiewon Park in \cite{Park-Tian-Wang-18}. But there are some difference between these two cases. For example, in the rotationally symmetric metrics on sphere, in general $\mina$ condition may not be able to prevent collapsing happening near two poles [Lemma 4.3 in \cite{Park-Tian-Wang-18}], however, in the case of $\Sph^1 \times_{h_j} \Sph^2$, $\mina$ condition can provide a positive uniform lower bound for $h_j$ [Lemma \ref{lem-lower-bound}] and hence prevent collapsing happening.

The key ingredient is a uniform gradient estimate obtained by using nonnegative scalar curvature condition [Lemma \ref{lem-scal}]. Moreover, for the minimal value of warping function $h_j$, we obtain a uniform upper bound from uniform upper bounded diameter condition [Lemma \ref{lem-diam}] and a uniform lower bound from $\mina$ condition [Lemma \ref{lem-lower-bound}]. Then we combine these estimates to prove Theorem \ref{thm-sphere-over-circle-convergence} at the end of Subsection \ref{subsect-convergence-appendix}. Finally, in Subsection \ref{subsect-scalar-appendix}, we will prove Theorem \ref{thm-scalar-appendix}.


\subsection{Convergence of a subsequence}\label{subsect-convergence-appendix}

\begin{lem}\label{lem-diam}
Let $\{ \Sph^1 \times_{h_j} \Sph^2 \}^\infty_{j=1} $ be a family of warped product Riemannian manifolds with metric tensors as in (\ref{eqn-sphere-over-circle}), having uniformly upper bounded diameters, i.e. $\diam(\Sph^1 \times_{h_j} \Sph^2) \leq D$, then we have $\min\limits_{\Sph^1}\{h_{j}\} \le \frac{D}{\pi}.$
\end{lem}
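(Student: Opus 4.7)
The plan is to extract a pair of points whose distance in $(\Sph^1\times\Sph^2, g_j)$ is bounded below by $\pi\min_{\Sph^1} h_j$, and then invoke the diameter bound.

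First, since $\Sph^1$ is compact and $h_j$ is smooth, pick $\theta_0\in\Sph^1$ where $h_j$ attains its minimum, and set $h_{\min}:=h_j(\theta_0)=\min_{\Sph^1} h_j$. Choose two antipodal points $p,q$ on the fiber $\{\theta_0\}\times\Sph^2$, so that $d_{g_{\Sph^2}}(p,q)=\pi$. I claim that
\[
d_{g_j}(p,q)\;\ge\;\pi\, h_{\min}.
\]

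To prove the claim, let $\gamma(t)=(\theta(t),\sigma(t))$, $t\in[0,1]$, be any piecewise smooth curve in $\Sph^1\times\Sph^2$ joining $p$ to $q$. Since $\sigma(0)$ and $\sigma(1)$ are antipodal on the standard $\Sph^2$, the $\Sph^2$-length of the projected curve $\sigma$ satisfies $\int_0^1 |\dot\sigma(t)|_{g_{\Sph^2}}\,dt\ge\pi$. Using $h_j(\theta(t))\ge h_{\min}$, I estimate the length in $g_j$ as
\[
L_{g_j}(\gamma)=\int_0^1\sqrt{|\dot\theta|_{g_{\Sph^1}}^2 + h_j(\theta(t))^2|\dot\sigma|_{g_{\Sph^2}}^2}\,dt
\;\ge\;\int_0^1 h_j(\theta(t))\,|\dot\sigma|_{g_{\Sph^2}}\,dt
\;\ge\; h_{\min}\cdot\pi.
\]
Taking the infimum over all such curves gives the claim.

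Combining the claim with the hypothesis $\diam(\Sph^1\times_{h_j}\Sph^2)\le D$ yields $\pi h_{\min}\le d_{g_j}(p,q)\le D$, so $\min_{\Sph^1} h_j = h_{\min}\le D/\pi$, as desired. The proof is essentially immediate once one notices that the warped metric only magnifies $\Sph^2$-lengths by the factor $h_j\ge h_{\min}$, so there is no real obstacle; the only subtlety is resisting the temptation to restrict to curves lying in the fiber $\{\theta_0\}\times\Sph^2$, since a priori a shorter path might wander into nearby fibers, but the pointwise inequality $h_j\ge h_{\min}$ handles this uniformly.
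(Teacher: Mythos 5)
Your proof is correct and follows essentially the same route as the paper's: pick a fiber over the minimum point of $h_j$ and compare the distance between two antipodal points on that fiber to the diameter bound. The paper asserts without elaboration that this distance equals $\pi\min_{\Sph^1} h_j$, while you supply the (brief but worthwhile) justification that no curve, including one that wanders into other fibers, can do better than $\pi h_{\min}$, since $h_j \ge h_{\min}$ everywhere; this is a useful clarification but not a different argument.
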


\begin{proof}
Let $s_{0}\in\mathbb{S}^{1}$ be the minimum point of the function $h_j$. Then clearly the distance between antipodal points on the sphere $\{s_{0}\}\times \mathbb{S}^{2} \subset M_{j}$ is $\pi\cdot\min\limits_{\Sph^1}\{h_{j}\}$. So we have $\pi\cdot\min\limits_{\Sph^1}\{h_{j}\} \le \diam(M_{j}) \le D$, and the claim follows.
\end{proof}

\begin{lem} \label{lem-scal}
Let $\{ \Sph^1 \times_{h_j} \Sph^2 \}^\infty_{j=1}$ be a family of warped product Riemannian manifolds with metric tensors as in (\ref{eqn-sphere-over-circle}). The scalar curvature of the warped product metric $g_j = g_{\Sph^1} + h^2_j g_{\Sph^2}$ is given by
\be\label{scalar-curvature-formula-appendix}
\Scal_{j}=-4\frac{\Delta h_{j}}{h_{j}} + 2\frac{1-|\nabla h_{j}|^{2}}{h^{2}_{j}}.
\ee
Here the Laplace is the trace of the Hessian.

Moreover, if $\Scal_j \geq 0$, then we have $|\nabla h_j|\leq 1$ on $\mathbb{S}^{1}$.	
\end{lem}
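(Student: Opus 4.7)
For the scalar curvature formula, the plan is to invoke the general warped product curvature identity (Proposition 9.106 of \cite{Besse}) for a metric of the form $g_B + h^2 g_F$: one has
\[
\Scal_g \;=\; \Scal_B + \frac{\Scal_F}{h^2} - 2(\dim F)\,\frac{\Delta_B h}{h} - (\dim F)(\dim F - 1)\,\frac{|\nabla_B h|^2}{h^2}.
\]
I would then substitute $B = \Sph^1$ (so $\Scal_B = 0$), $F = \Sph^2$ (so $\Scal_F = 2$), and $\dim F = 2$, which reduces the formula immediately to
\[
\Scal_j \;=\; \frac{2}{h_j^2} - 4\,\frac{\Delta h_j}{h_j} - \frac{2|\nabla h_j|^2}{h_j^2} \;=\; -4\,\frac{\Delta h_j}{h_j} + 2\,\frac{1 - |\nabla h_j|^2}{h_j^2},
\]
which is exactly \eqref{scalar-curvature-formula-appendix}. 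The identification $\Delta h_j = h_j''$ with respect to arc length on $\Sph^1$ is automatic since the base is one-dimensional.

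For the gradient bound, the plan is a direct maximum principle argument exploiting the fact that $h_j$ is a smooth positive function on the compact one-dimensional circle. Let $s_0 \in \Sph^1$ be a point where $|\nabla h_j|^2 = (h_j')^2$ attains its maximum. The first-order condition $\bigl((h_j')^2\bigr)'(s_0) = 2\, h_j'(s_0)\, h_j''(s_0) = 0$ yields two cases. If $h_j'(s_0) = 0$, then the maximum of $|\nabla h_j|^2$ is zero and the bound $|\nabla h_j| \leq 1$ holds trivially. Otherwise $h_j''(s_0) = \Delta h_j(s_0) = 0$, and plugging $s_0$ into \eqref{scalar-curvature-formula-appendix} together with $\Scal_j(s_0) \geq 0$ and $h_j(s_0) > 0$ gives $1 - |\nabla h_j(s_0)|^2 \geq 0$; since $s_0$ is the maximum, this propagates to all of $\Sph^1$.

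I do not anticipate any real obstacle here: both steps are essentially algebraic once one invokes the warped product formula, and the maximum principle argument is elementary because the base is one-dimensional, which is precisely what lets one dispose of the $\Delta h_j$ term at the critical point without needing the stronger analytic tools (Moser–Trudinger, min-max, spherical means) used in the main $\Sph^2 \times_f \Sph^1$ setting. The contrast with the two-dimensional-base case—where $|\nabla h|$ cannot be controlled pointwise by such a simple argument—is in fact the underlying reason the regularity of the limit function is better in this appendix than in the body of the paper.
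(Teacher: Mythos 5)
Your derivation of the scalar curvature formula matches the paper's: both invoke Proposition 9.106 of Besse, substitute $\Scal_{\Sph^1}=0$, $\Scal_{\Sph^2}=2$, $\dim F=2$, and read off the expression. For the gradient bound $|\nabla h_j|\leq 1$, however, you take a genuinely cleaner route than the paper. The paper argues by contradiction: it supposes $|\nabla h_j|(p)>1$, moves backward from $p$ to the first point $q$ where $|\nabla h_j|=1$, parameterizes the arc from $q$ to $p$, and applies the Mean Value Theorem to find an interior point $t_2$ where $h_j''>0$; it then reads off from the scalar curvature formula (together with $|\nabla h_j|>1$ at $t_2$, which holds by the choice of $q$) that $h_j''<0$ there, a contradiction. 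Your version replaces this detour by the direct first-order test at the global maximum of $(h_j')^2$: at such a point $s_0$ either $h_j'(s_0)=0$ (and then $h_j'\equiv 0$) or $h_j''(s_0)=0$, and in the latter case the formula with $\Scal_j\geq 0$ and $h_j>0$ immediately forces $1-(h_j'(s_0))^2\geq 0$. Both arguments exploit exactly the same leverage — that the $\Delta h_j$ term can be disposed of at a critical point of $|\nabla h_j|$ because the base is one-dimensional — but yours avoids the contradiction framing and the MVT entirely, and is the shorter and more transparent of the two. Your closing remark correctly identifies why this pointwise control has no analogue in the $\Sph^2\times_f\Sph^1$ case, where the base has dimension two and the Laplacian cannot be killed at a critical point of $|\nabla f|$.
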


\begin{proof}
First, by using the formula of Ricci curvature for warped product metrics as in 9.106 in \cite{Besse}, one can easily obtain that the scalar curvature $\Scal_{j}$ of $\Sph^1 \times_{h_j} \Sph^2$ is given as in (\ref{scalar-curvature-formula-appendix}).

Now we prove the second claim by contradiction. Assume for some $j$, $|\nabla h_{j}|>1$ at some point, let's say $p\in\mathbb{S}^{1}$. Take a unit vector field $X$ on $\mathbb{S}^{1}$ such that $X$ is in the same direction as $\nabla h_{j}$ at the point $p$. Let $q$ be the first point such that $|\nabla h|(q)=1$ while moving from the point $p$ on $\mathbb{S}^{1}$ in the opposite direction of the unit vector field $X$. Then let $\gamma$ be the integral curve of the vector field $X$ with the initial point $\gamma(0)=q$. Let $t_{1}>0$ such that $\gamma(t_{1})=p$. Set $\tilde{h}_{j}(t)=h_{j}\circ\gamma(t)$. Then (at least) for $t\in[0, t_{1}]$,
\be
\tilde{h}^{\prime}_{j}(t)=\langle \nabla h_{j}, \gamma^{\prime}(t) \rangle = \langle \nabla h_{j}, X \rangle \circ\gamma(t) = |\nabla h_{j}|\circ\gamma(t),
\ee
and
\be
\tilde{h}^{\prime \prime}_{j}(t) =(\Delta h_{j})\circ\gamma(t).
\ee

By the Mean Value Theorem, there exists $t_{2}\in (0, t_{1})$ such that
\be
\tilde{h}^{\prime\prime}_{j}(t_{2})=\frac{\tilde{h}^{\prime}_{j}(t_{1})-\tilde{h}^{\prime}_{j}(0)}{t_{1}} >0,
\ee
since $\tilde{h}^{\prime}_{j}(t_{1}) = |\nabla h_{j}|(p) >1$ and $\tilde{h}^{\prime}_{j}(0) = |\nabla h_{j}|(q) = 1$.

On the other hand, because $\Scal_{j}\ge0$, by using the scalar curvature $(\ref{scalar-curvature-formula-appendix})$, one has
\be
-4\frac{\tilde{h}^{\prime\prime}_{j}(t_{2})}{\tilde{h}_{j}(t_{2})}+2\frac{1-(\tilde{h}_{j}(t_{2}))^{2}}{(\tilde{h}_{j}(t_{2}))^{2}}\ge0
\ee
So
\be
\tilde{h}^{\prime\prime}_{j}(t_{2}) \le \frac{1-(\tilde{h}^{\prime}(t_{2}))^{2}}{2\tilde{h}(t_{2})}<0,
\ee
since $\tilde{h}^{\prime}_{j}(t_{2})>1$ by the choice of $q=\gamma(0)$. This produces a contradiction, and so $|\nabla h_j|\leq 1$ on $\mathbb{S}^{1}$.

\end{proof}

\begin{lem} \label{lem-min-surf}
Let $\{ \Sph^1 \times_{h_j} \Sph^2 \}^\infty_{j=1}$ be a family of warped product Riemannian manifolds with metric tensors as in (\ref{eqn-sphere-over-circle}). If $\nabla h_j(x_0)=0$ for some $x_0\in\mathbb{S}^1$ then there is a minimal surface $\{x_0\}\times \mathbb{S}^{2}$ in $\Sph^1 \times_{h_j} \Sph^2$.
\end{lem}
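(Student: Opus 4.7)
The plan is to recognize that for a warped product metric $g_j = g_{\Sph^1} + h_j^2 g_{\Sph^2}$, the fiber $\{x_0\} \times \Sph^2$ is a codimension-one submanifold whose extrinsic geometry is entirely determined by the warping function $h_j$ and its gradient at $x_0$. Specifically, I would invoke the standard warped product identity (e.g., Proposition 9.104 in \cite{Besse}), which states that the second fundamental form of a fiber $\{x_0\} \times F$ in a warped product $B \times_h F$ is given by
\be
II(X, Y) \;=\; -\,g_j(X, Y)\,\frac{\nabla h_j(x_0)}{h_j(x_0)},
\ee
where $X, Y$ are tangent to the fiber and $\nabla h_j$ denotes the gradient of $h_j$ on the base $\Sph^1$, viewed as a horizontal vector normal to the fiber. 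This formula is the same one already used in Lemma \ref{lem: geodesic dichotomy} to conclude that the base is totally geodesic.

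Tracing this expression over an orthonormal frame of $\{x_0\} \times \Sph^2$, the mean curvature vector of the fiber reduces to
\be
H \;=\; -\,\frac{2}{h_j(x_0)}\,\nabla h_j(x_0).
\ee
By hypothesis $\nabla h_j(x_0) = 0$, so $H \equiv 0$ on $\{x_0\} \times \Sph^2$. Hence the fiber is a minimal surface (in fact totally geodesic) in $\Sph^1 \times_{h_j} \Sph^2$.

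This argument is a direct consequence of the warped product structure, and I do not foresee any serious obstacle. The only thing that requires care is the sign and constant in the mean curvature formula, which can in any case be verified directly by a short Koszul-formula computation using the diagonal form of $g_j$ in coordinates $(s, y)$ with $s \in \Sph^1$ and $y \in \Sph^2$: the only nonvanishing Christoffel symbols mixing base and fiber directions are of the form $\Gamma^{s}_{ab} = -h_j (\partial_s h_j)(g_{\Sph^2})_{ab}$, and these vanish identically at $s = x_0$ once $\partial_s h_j(x_0) = 0$.
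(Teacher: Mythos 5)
Your proof is correct and takes essentially the same route as the paper: the paper simply states (without derivation) that $\Sigma_x = \{x\}\times\Sph^2$ has mean curvature $H_j = \frac{2|\nabla h_j|(x)}{h_j(x)}$, from which the conclusion is immediate, while you supply the standard warped-product second-fundamental-form computation that justifies this formula. The sign, factor of $2$, and vanishing-at-a-critical-point conclusion all check out.
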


\begin{proof}
Define $\Sigma_x:=\{x\}\times\mathbb{S}^{2}$. Then for all $x\in\mathbb{S}^{1}$, $\Sigma_x$ is an embedded submanifold with mean curvature
\be
H_j=\frac{2|\nabla h_{j}|(x)}{h_j(x)}.
\ee
\end{proof}

\begin{lem}\label{lem-lower-bound}
Let $\{ \Sph^1 \times_{h_j} \Sph^2 \}^\infty_{j=1}$ be a family of warped product Riemannian manifolds with metric tensors as in (\ref{eqn-sphere-over-circle}) satisfying $\mina(\Sph^1 \times_{h_j} \Sph^2) \ge A>0$. Then we have $\min\limits_{\Sph^1}\{h_{j}\}\ge\sqrt{\frac{A}{4\pi}}>0$.
\end{lem}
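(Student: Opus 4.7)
The plan is straightforward: exhibit an explicit closed minimal surface in $\Sph^1 \times_{h_j} \Sph^2$ at the minimum point of $h_j$, compute its area, and then apply the $\mina$ hypothesis.

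First I would let $x_0 \in \Sph^1$ be a point where $h_j$ attains its minimum. Since $\Sph^1$ is a smooth closed one-manifold and $h_j$ is smooth, we have $\nabla h_j(x_0) = 0$. By Lemma \ref{lem-min-surf}, the fiber slice $\Sigma_{x_0} := \{x_0\} \times \Sph^2$ is then an embedded submanifold whose mean curvature vanishes, i.e.\ $\Sigma_{x_0}$ is a closed minimal surface in $\Sph^1 \times_{h_j} \Sph^2$.

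Next I would compute the area of $\Sigma_{x_0}$. The induced metric on $\Sigma_{x_0}$ from $g_j = g_{\Sph^1} + h_j^2 g_{\Sph^2}$ is $h_j(x_0)^2 g_{\Sph^2}$, so
\be
\area(\Sigma_{x_0}) = h_j(x_0)^2 \cdot \area(\Sph^2) = 4\pi\, h_j(x_0)^2.
\ee

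Finally I would invoke the definition of $\mina$: since $\Sigma_{x_0}$ is a closed minimal surface,
\be
4\pi\, h_j(x_0)^2 = \area(\Sigma_{x_0}) \geq \mina(\Sph^1 \times_{h_j} \Sph^2) \geq A,
\ee
which gives $h_j(x_0) \geq \sqrt{A/(4\pi)}$, and since $x_0$ is a minimum point, $\min_{\Sph^1} h_j = h_j(x_0) \geq \sqrt{A/(4\pi)}$, completing the proof. There is no real obstacle here; the only mild subtlety is ensuring that the minimum slice is genuinely a closed minimal surface of the ambient warped product, which is exactly the content of the preceding Lemma \ref{lem-min-surf}.
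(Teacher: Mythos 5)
Your proof is correct and follows the same route as the paper: take the minimum point $x_0$ of $h_j$, apply Lemma~\ref{lem-min-surf} to conclude the slice $\{x_0\}\times\Sph^2$ is a closed minimal surface of area $4\pi h_j(x_0)^2$, and invoke the $\mina$ hypothesis.
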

\begin{proof}
By applying Lemma \ref{lem-min-surf}, we have that there exists a minimal surface $\Sigma_{x_{0}} ={x_0} \times \Sph^2 $ on $\Sph^1 \times_{h_j} \Sph^2$  at the minimal value point $x_{0}$ of $h_{j}$.  The area of $\Sigma_{x_0}$ is given by
\be
{\rm Area}(\Sigma_0) = 4\pi h^2_j(x_0).
\ee
Thus by the $\mina$ condition, $4 \pi h^2_j(x_0) \geq A$, and the conclusion follows.
\end{proof}

Now we will use above lemmas to prove Theorem \ref{thm-sphere-over-circle-convergence}:
\begin{proof}
We complete the proof in the following three steps.

{\bf Step 1. Uniform convergence of warping functions.}
By applying Lemma $\ref{lem-diam}$ and Lemma $\ref{lem-scal}$ we immediately obtain the uniform upper bound
\be
\max_{\Sph^1}\{h_{j}\}\le\frac{D}{\pi}+2\pi, \quad \forall i \in N.
\ee
By combining this uniform upper bound with the uniform lower bound obtained in Lemma $\ref{lem-lower-bound}$, we have that the warping functions $h_j$ are uniformly bounded, i.e.
\be
\sqrt{\frac{A}{4\pi}}\le h_{j} \le \frac{D}{\pi} + 2\pi \quad \text {on} \ \ \Sph^1 , \quad \forall j \in \N.
\ee
Moreover, Lemma \ref{lem-scal} implies function $h_j$ are equicontinuous. Thus by applying Arzel\`{a}-Ascoli theorem we obtain that $h_j$ are uniformly convergent a continuous function $f_\infty$ satisfying
\be
\sqrt{\frac{A}{4\pi}}\le h_{\infty} \le \frac{D}{\pi} + 2\pi, \quad \text {on} \ \ \Sph^1.
\ee
Meanwhile, the uniform gradient estimate obtained in Lemma \ref{lem-scal} also implies that the limit function $h_\infty$ is Lipschitz with Lipschitz constant $1$. Because a Lipschitz function is $W^{1, \infty}$, we actually have $h_\infty \in W^{1, \infty}(\Sph^1)$.

{\bf Step 2. $W^{1, 2}$ convergence of warping functions.}  We will estimate the bounded variation norm $\| \nabla h_j \|_{BV(\Sph^1)}$ of warping functions. First note that
\be
0 = \int_{\Sph^1} \Delta h_j = \int_{\{\Delta h_j \geq 0\}} \Delta h_j + \int_{\{\Delta h_j < 0\}} \Delta h_j.
\ee
Thus,
\be
-\int_{\{\Delta h_j < 0\}} \Delta h_j = \int_{\{\Delta h_j \geq 0\}} \Delta h_j,
\ee
furthermore,
\begin{eqnarray}
\| \nabla h_j \|_{BV(\Sph^1)}
& = & \int_{\Sph^1} |\nabla h_j| + \int_{\Sph^1} |\Delta h_j| \\
& = & \int_{\Sph^1} |\nabla h_j| + \int_{\{\Delta h_j \geq 0\}} \Delta h_j - \int_{\{\Delta h_j < 0\}} \Delta h_j \\
& = & \int_{\Sph^1} |\nabla h_j| + 2\int_{\{\Delta h_j \geq 0\}} \Delta h_j.
\end{eqnarray}

Then by the expression of the scalar curvature in Lemma \ref{lem-scal}, the nonnegative scalar curvature condition implies
\be
\Delta h_j \leq \frac{1 - |\nabla h_j|^2}{2 h_j} \leq \frac{1}{2 h_j} \leq \sqrt{\frac{\pi}{A}}, \quad \forall j \in \N.
\ee
The last inequality here follows from Lemma \ref{lem-lower-bound}. Lemma \ref{lem-scal} also tells us that $|\nabla h_j| \leq 1$ on $\Sph^1$ for all $j \in \N$. Consequently, we have
\begin{eqnarray}
\| \nabla h_j \|_{BV(\Sph^1)}
& = & \int_{\Sph^1} |\nabla h_j| + 2\int_{\{\Delta h_j \geq 0\}} \Delta h_j \\
& \leq & 2\pi + 2 \int_{\Delta h_j \geq 0} \sqrt{\frac{\pi}{A}} \\
& \leq & 2\pi \left( 1 + 2 \sqrt{\frac{\pi}{A}}\right), \quad \forall j \in \N.
\end{eqnarray}

As a result, by Theorem 5.5 in \cite{EG-text} we have that a subsequence, which is still denoted by $\nabla h_j$, converges to some $\phi \in BV(\Sph^1)$ in $L^1(\Sph^1)$ norm, and it is
easy to see that $\phi = \nabla h_\infty$ in the weak sense. Moreover, since $h_\infty \in W^{1, \infty}(\Sph^1)$ and $\sup\limits_{j} \|\nabla h_j \|_{L^\infty (\Sph^1)} < \infty$, we have $\nabla h_j \rightarrow \nabla h_\infty$ in $L^{2}(\Sph^1)$ norm. Indeed, note that by the H\"older inequality,
\be
\int_{\Sph^1} |\nabla h_j - \nabla h_\infty|^2 \leq \|\nabla h_j - \nabla h_\infty\|_{L^1(\Sph^1)} \|\nabla h_j - \nabla h_\infty\|_{L^\infty(\Sph^1)}.
\ee
As a result, $h_j \rightarrow h_\infty$ in $W^{1, 2}(\Sph^1)$.

{\bf Step 3. $W^{1, 2}$ convergence of metrics.} Note that
\be
 g_j - g_\infty  =  (h^2_j - h^2_\infty) g_{\Sph^2} ,
\ee
and
\be
\on(g_j - g_\infty) = 2 (h_j \on h_j - h_\infty \on h_\infty) \otimes g_{\Sph^2}.
\ee
Therefore, by applying the uniform bound $\sup\limits_j \|\nabla h_j\|_{L^\infty (\Sph^1)} < \infty$, and $W^{1, 2}$ convergence of $h_j$ to $h_\infty$, we can obtain that $g_j = g_{\Sph^1} + h^2_j g_{\Sph^2}$ converges to $g_\infty$ in $ W^{1, 2}(\Sph^1 \times \Sph^2, g_0) $.
\end{proof}


\subsection{Nonnegative distributional scalar curvature of the limit metric}\label{subsect-scalar-appendix}
In this subsection, we compute the distributional scalar curvature of the limit metric tensor $g_\infty$ obtained in Theorem \ref{thm-sphere-over-circle-convergence} with the background metric $g_0$ in the sense of Lee-LeFloch, and prove Theorem \ref{thm-scalar-appendix}. Throughout this subsection, $g_\infty$ always denotes the limit metric obtained in Theorem \ref{thm-sphere-over-circle-convergence}.

By the definition of $\Gamma^k_{ij}$ in Definition \ref{defn-Lee-LeFloch} and the Christofell symbols in Lemma \ref{lem-background-metric-Christoffel-sym},  one can obtain the following lemma:

\begin{lem}\label{lem-Lee-LeFloch-Christoffel-sym-appendix}
For the limit metric, $g_\infty$, with the background metric, $g_0$, the Christoffel symbols defined by Lee-LeFloch as in (\ref{eqn-Lee-LeFloch-Christoffel-Sym}), in the coordinate $\{\varphi, r, \theta\}$, all vanish except
\be
\Gamma^{\varphi}_{r r}= -h_\infty h_\infty^{\prime}, \quad \Gamma^{\varphi}_{\theta \theta}=- h_\infty h^{\prime}_\infty \sin^2 r,
\ee

\be
\Gamma^r_{\varphi r} = \Gamma^{r}_{r \varphi} = \frac{h^{\prime}_\infty}{ h_\infty },
\ee
and
\be
\Gamma^{\theta}_{\varphi \theta} = \Gamma^{\theta}_{\theta \varphi} = \frac{h^{\prime}_\infty}{h_\infty}.
\ee
\end{lem}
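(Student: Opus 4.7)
The plan is a direct index-by-index computation of the formula
\[
\Gamma^k_{ij} \;=\; \tfrac{1}{2}\, g_\infty^{\,kl}\!\left(\overline{\nabla}_i (g_\infty)_{jl} + \overline{\nabla}_j (g_\infty)_{il} - \overline{\nabla}_l (g_\infty)_{ij}\right),
\]
recorded in equation (\ref{eqn-Lee-LeFloch-Christoffel-Sym}). First I would fix the coordinate $(\varphi,r,\theta)$ on $\Sph^1\times\Sph^2$, write the diagonal expressions $(g_\infty)_{\varphi\varphi}=1$, $(g_\infty)_{rr}=h_\infty^2$, $(g_\infty)_{\theta\theta}=h_\infty^2\sin^2 r$, and note that the inverse $g_\infty^{\,kl}$ is also diagonal so the sum over $l$ collapses to $l=k$. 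The background Christoffel symbols $\overline{\Gamma}^p_{ij}$ are identical to those in Lemma~\ref{lem-background-metric-Christoffel-sym} (since the $\Sph^1$ factor contributes nothing): the only nonvanishing ones are $\overline{\Gamma}^{r}_{\theta\theta}=-\sin r\cos r$ and $\overline{\Gamma}^{\theta}_{r\theta}=\overline{\Gamma}^{\theta}_{\theta r}=\cot r$.

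Next I would compute the covariant derivatives $\overline{\nabla}_i(g_\infty)_{jl}=\partial_i(g_\infty)_{jl}-\overline{\Gamma}^p_{ij}(g_\infty)_{pl}-\overline{\Gamma}^p_{il}(g_\infty)_{jp}$. Because $h_\infty$ depends only on $\varphi$, the nonzero partials are precisely $\partial_\varphi(g_\infty)_{rr}=2h_\infty h_\infty'$, $\partial_\varphi(g_\infty)_{\theta\theta}=2h_\infty h_\infty'\sin^2 r$, and $\partial_r(g_\infty)_{\theta\theta}=2h_\infty^2\sin r\cos r$. A key simplification is that on the pure $(r,\theta)$ block, the Koszul combination of $\overline{\nabla}(g_\infty)$ agrees with $h_\infty^2$ times the Koszul combination of $\overline{\nabla}(g_{\Sph^2})$ (since $h_\infty$ is constant in $r,\theta$), and the latter vanishes by metric compatibility of $\overline{\nabla}$ with the $\Sph^2$-piece. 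Hence all $\Gamma^k_{ij}$ with indices in $\{r,\theta\}$ only are zero.

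The remaining cases involve at least one $\varphi$-index. Substituting the partials above into the Koszul combination gives, for instance,
\[
\Gamma^{\varphi}_{rr}=\tfrac{1}{2}g_\infty^{\,\varphi\varphi}\bigl(-\partial_\varphi(g_\infty)_{rr}\bigr)=-h_\infty h_\infty',
\qquad
\Gamma^{r}_{r\varphi}=\tfrac{1}{2}g_\infty^{\,rr}\partial_\varphi(g_\infty)_{rr}=\frac{h_\infty'}{h_\infty},
\]
and analogously $\Gamma^{\varphi}_{\theta\theta}=-h_\infty h_\infty'\sin^2 r$ and $\Gamma^{\theta}_{\theta\varphi}=h_\infty'/h_\infty$. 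Every other triple either mixes $\varphi$ with a $\overline{\Gamma}$ that vanishes or produces terms that cancel pairwise (e.g.\ $\Gamma^{\varphi}_{r\theta}$ picks up only $\overline{\Gamma}$-corrections involving the off-diagonal $(g_\infty)_{r\theta}=0$). The only real obstacle is bookkeeping; no analytic input beyond the definition is needed, and the computation is a clean analogue of the one carried out in Lemmas~\ref{lem-Lee-LeFloch-Christoffel-sym}, with the roles of base and fiber swapped.
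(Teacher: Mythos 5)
Your proposal is correct and takes exactly the approach the paper intends: the paper does not write out a proof of this lemma but simply asserts that it follows from the definition of $\Gamma^k_{ij}$ in Definition~\ref{defn-Lee-LeFloch} together with the background Christoffel symbols in Lemma~\ref{lem-background-metric-Christoffel-sym}, and your write-up is the detailed computation that sentence is gesturing at. Two small points worth tightening: the observation that $\overline{\nabla}g_\infty = d(h_\infty^2)\otimes g_{\Sph^2}$ (so the only nonvanishing covariant derivatives are $\overline{\nabla}_\varphi(g_\infty)_{rr}$ and $\overline{\nabla}_\varphi(g_\infty)_{\theta\theta}$) is cleaner than listing ordinary partials $\partial_r(g_\infty)_{\theta\theta}$ and then invoking cancellation, since it makes the vanishing of the pure $(r,\theta)$-block immediate rather than a separate argument; and your displayed computation of $\Gamma^r_{r\varphi}$ should keep the Koszul combination $\tfrac{1}{2}g_\infty^{rr}\bigl(\overline{\nabla}_\varphi(g_\infty)_{rr}+\overline{\nabla}_r(g_\infty)_{\varphi r}-\overline{\nabla}_r(g_\infty)_{\varphi r}\bigr)$ explicit so the reader sees the last two terms cancel, rather than writing only the surviving first term.
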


Note also that

\begin{lem}\label{lem-vol-ratio}
Note that the volume forms are:
\be
d\mu_0=\, d\varphi \, \wedge dr\wedge \sin(r)\,d\theta,
\ee
and
\be
d\mu_\infty = d\varphi \wedge h^2_\infty dr\wedge \sin(r)\,d\theta,
\ee
which are both defined everywhere away from $r=0$ and $r=\pi$.
In particular,
\be
\frac{\,d\mu_\infty}{\,d\mu_0}= h^2_\infty (\varphi)
\ee
is in $W^{1,p}(\Sph^2 \times \Sph^1, g_0)$ for all $ p \geq 1$.
\end{lem}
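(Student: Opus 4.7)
The plan is straightforward: compute the volume forms directly from the diagonal matrix expressions of $g_0$ and $g_\infty$, extract the ratio, and then exploit the Lipschitz bounds on $h_\infty$ established in Theorem~\ref{thm-sphere-over-circle-convergence} to control the Sobolev norm.

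First I would write out $g_0$ and $g_\infty$ in the local coordinate frame $\{\partial_\varphi,\partial_r,\partial_\theta\}$ where $\varphi$ is the coordinate on $\Sph^1$ and $(r,\theta)$ are polar coordinates on $\Sph^2$. Both are diagonal: $g_0$ has matrix $\mathrm{diag}(1,1,\sin^2 r)$ while $g_\infty$ has matrix $\mathrm{diag}(1,h_\infty^2(\varphi),h_\infty^2(\varphi)\sin^2 r)$. Taking square roots of the determinants yields $\sqrt{\det g_0}=\sin r$ and $\sqrt{\det g_\infty}=h_\infty^2(\varphi)\sin r$, which produce the two displayed formulas for $d\mu_0$ and $d\mu_\infty$ on the open set $\{r\neq 0,\pi\}$ where the polar coordinates are nonsingular. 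The complement is a measure-zero set in $\Sph^2\times\Sph^1$, so the pointwise identifications extend to identifications of the measures. Dividing the two forms yields $\tfrac{d\mu_\infty}{d\mu_0}=h_\infty^2(\varphi)$ at once.

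For the Sobolev regularity claim, Theorem~\ref{thm-sphere-over-circle-convergence} gives that $h_\infty\colon \Sph^1\to\R$ is Lipschitz with Lipschitz constant $1$ and satisfies $\sqrt{A/(4\pi)}\le h_\infty \le D/\pi+2\pi$. Hence $h_\infty^2$ is bounded and Lipschitz on $\Sph^1$ with Lipschitz constant at most $2(D/\pi+2\pi)$, i.e. $h_\infty^2\in W^{1,\infty}(\Sph^1)\subset W^{1,p}(\Sph^1)$ for every $p\ge 1$. Viewing $h_\infty^2$ as a function on $\Sph^1\times\Sph^2$ which is constant in the $\Sph^2$-factor, its weak gradient on $(\Sph^1\times\Sph^2,g_0)$ is just $(h_\infty^2)'\,\partial_\varphi$. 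By Fubini, the $W^{1,p}$-norm measured with respect to $g_0$ equals $\area(\Sph^2)^{1/p}\cdot\|h_\infty^2\|_{W^{1,p}(\Sph^1)}$, which is finite for every $p\ge 1$.

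There is essentially no obstacle here; the lemma is a direct bookkeeping consequence of the uniform Lipschitz and positivity bounds on $h_\infty$ already proved. The only mild point worth noting is that the coordinate singularities of polar coordinates at $r=0,\pi$ are harmless because they form a set of $g_0$-measure zero, so the formulas for the volume forms and their ratio hold almost everywhere, which is all that is needed for the $W^{1,p}$ statement.
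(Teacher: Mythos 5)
Your proof is correct and takes essentially the same approach as the paper's: compute the volume-form ratio from the diagonal metric matrices (treating the polar-coordinate singularity at $r=0,\pi$ as a measure-zero set), then invoke the uniform Lipschitz and two-sided bounds on $h_\infty$ from Theorem~\ref{thm-sphere-over-circle-convergence}. You fill in more detail than the paper's very terse proof, which in fact appears to contain a copy-paste slip—citing $f_\infty$ and Proposition~\ref{prop-W1p-limit} from the $\Sph^2\times_f\Sph^1$ case rather than $h_\infty$ and Theorem~\ref{thm-sphere-over-circle-convergence}—so your version supplies the reference that should have been made.
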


\begin{proof}
The first claim holds away from $r=0$ and $r=\pi$ by the definition of volume form, and the second claim holds almost everywhere on $(\Sph^2 \times \Sph^1, g_0)$.
So $d\mu_\infty = f_\infty d\mu_0$ almost everywhere which gives
us the third claim.  The rest follows from Proposition~\ref{prop-W1p-limit}.
\end{proof}

Now we are ready to compute the vector field $V$ and the function $F$ defined by Lee-LeFloch as in (\ref{defn-V}) and $(\ref{defn-F})$.

 \begin{lem}\label{lem-V-appendix}
For the limit metric $g_\infty$ with the background metric $g_0$, the vector field $V$ defined in (\ref{defn-V}), in the local frame $\{\partial_\varphi, \partial_r, \partial_\theta \}$, is given by
\be
V=\left( -4\frac{h^\prime_\infty}{h_\infty},0,0\right) = -4 \frac{h^\prime_\infty}{h_\infty} \frac{\partial}{\partial \varphi}.
\ee
Furthermore
\be
 - V \cdot \overline\nabla \left(u \frac{\,d\mu_\infty}{\,d\mu_0}\right)
 =  4 \frac{h^\prime_\infty}{h_\infty} \partial_\varphi (u h^2_\infty ).
\ee
\end{lem}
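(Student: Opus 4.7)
The statement to prove is a direct computation, so the plan is organized as a clean unpacking of the definitions. First I would record the diagonal form of the inverse metric in the coordinates $(\varphi,r,\theta)$: from $g_\infty=d\varphi^2+h_\infty^2(dr^2+\sin^2 r\,d\theta^2)$ we have
$$g_\infty^{\varphi\varphi}=1,\qquad g_\infty^{rr}=\frac{1}{h_\infty^2},\qquad g_\infty^{\theta\theta}=\frac{1}{h_\infty^2\sin^2 r},$$
with all off-diagonal entries zero. Then I would substitute these and the nonvanishing Christoffel symbols from Lemma \ref{lem-Lee-LeFloch-Christoffel-sym-appendix} into the defining formula $V^k=g_\infty^{ij}\Gamma^k_{ij}-g_\infty^{ik}\Gamma^j_{ji}$ componentwise.

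For $V^\varphi$, the first piece reduces to $g_\infty^{rr}\Gamma^\varphi_{rr}+g_\infty^{\theta\theta}\Gamma^\varphi_{\theta\theta}=-\tfrac{h'_\infty}{h_\infty}-\tfrac{h'_\infty}{h_\infty}=-2\tfrac{h'_\infty}{h_\infty}$, while the second piece becomes $-g_\infty^{\varphi\varphi}(\Gamma^\varphi_{\varphi\varphi}+\Gamma^r_{r\varphi}+\Gamma^\theta_{\theta\varphi})=-2\tfrac{h'_\infty}{h_\infty}$, giving the desired $V^\varphi=-4h'_\infty/h_\infty$. For $V^r$ the first piece vanishes because the only nonzero $\Gamma^r_{ij}$ are off-diagonal (the pair $\{\varphi,r\}$) and $g_\infty^{\varphi r}=0$, while the second piece vanishes because $\Gamma^j_{jr}=\Gamma^\varphi_{\varphi r}+\Gamma^r_{rr}+\Gamma^\theta_{\theta r}=0$ (none of these appears in the list of nonzero symbols). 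The argument for $V^\theta=0$ is identical, using that $\Gamma^\theta_{ij}$ is only nonzero on the off-diagonal pair $\{\varphi,\theta\}$ and that $\Gamma^j_{j\theta}=0$. Collecting these gives $V=-4(h'_\infty/h_\infty)\partial_\varphi$.

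For the second identity, I would invoke Lemma \ref{lem-vol-ratio} to replace $d\mu_\infty/d\mu_0$ by $h_\infty^2$, then use the standard formula for the gradient on $(\Sph^1\times\Sph^2,g_0)$,
$$\overline\nabla(uh_\infty^2)=\partial_\varphi(uh_\infty^2)\,\partial_\varphi+\partial_r(uh_\infty^2)\,\partial_r+\frac{1}{\sin^2 r}\partial_\theta(uh_\infty^2)\,\partial_\theta.$$
Since $V$ has only a $\partial_\varphi$ component, the $g_0$-pairing collapses to $V\cdot\overline\nabla(uh_\infty^2)=V^\varphi\,\partial_\varphi(uh_\infty^2)=-4(h'_\infty/h_\infty)\,\partial_\varphi(uh_\infty^2)$, from which the claimed formula for $-V\cdot\overline\nabla(u\,d\mu_\infty/d\mu_0)$ follows immediately.

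There is no real obstacle here; the whole lemma is bookkeeping. The only step requiring minor care is making sure the summation in $g_\infty^{ik}\Gamma^j_{ji}$ uses the diagonal of $g_\infty^{-1}$ correctly and that no off-diagonal Christoffel sum is overlooked—so I would present the three component calculations in parallel to make the vanishings of $V^r$ and $V^\theta$ transparent.
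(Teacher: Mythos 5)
Your computation is correct, and it follows essentially the same route the paper takes: diagonal inverse metric, component-by-component substitution of the nonvanishing Lee--LeFloch Christoffel symbols from Lemma~\ref{lem-Lee-LeFloch-Christoffel-sym-appendix}, then the $g_0$-gradient formula to pair with $V$. (The paper omits a written proof for this particular appendix lemma, but it proves the parallel Lemma~\ref{lem-V} in Section~\ref{sect-distr-scal} by precisely the same bookkeeping; your argument matches that template.)
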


\begin{lem}\label{lem-F-appendix}
For the limit metric $g_\infty$ with the background metric $g_0$, the function $F$ defined in (\ref{defn-F}) is given by
\be
F= \frac{2}{h^2_\infty}- 6\left(\frac{h^\prime_\infty}{h_\infty}\right)^2.
\ee
Furthermore,
\be
\left( F u \frac{\,d\mu_\infty}{d \mu_0} \right)=  2 u  - 6 u (h^\prime_\infty)^2.
\ee
\end{lem}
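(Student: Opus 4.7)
The plan is to compute each summand in the Lee--LeFloch expression (\ref{defn-F}) in the local coordinates $\{\varphi,r,\theta\}$ on $\Sph^1\times\Sph^2$, using the explicit table of nonvanishing $\Gamma^k_{ij}$ from Lemma~\ref{lem-Lee-LeFloch-Christoffel-sym-appendix} together with the background Christoffel symbols of $g_0$ recorded in Lemma~\ref{lem-background-metric-Christoffel-sym} (which here act only on the $(r,\theta)$-block). The strategy mirrors the body-case Lemma~\ref{lem-F}, except that the warping direction is now $\varphi$ rather than the base variables, which shuffles which entries are nonzero; consequently the non-vanishing $\Gamma$'s will each carry exactly one factor of $h_\infty'/h_\infty$.

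First I would observe that the tensor inside $g^{ij}(\cdots)$ in (\ref{eqn-overline-R}) is exactly the Ricci tensor of the isometric product $g_0=g_{\Sph^1}+g_{\Sph^2}$, so $\overline R=g_\infty^{ij}\operatorname{Ric}(g_0)_{ij}$. Since $\Sph^1$ is flat and $\Sph^2$ has constant curvature $1$, the only nonzero entries are $\operatorname{Ric}(g_0)_{rr}=1$ and $\operatorname{Ric}(g_0)_{\theta\theta}=\sin^2 r$. Contracting with $g_\infty^{-1}=\operatorname{diag}(1,h_\infty^{-2},h_\infty^{-2}\sin^{-2}r)$ immediately gives $\overline R=2/h_\infty^2$, which supplies the first term of the claimed formula.

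For the remaining three pieces, I would expand $\overline\nabla_k g_\infty^{ij}=\partial_k g_\infty^{ij}+\bar\Gamma^i_{kl}g_\infty^{lj}+\bar\Gamma^j_{kl}g_\infty^{il}$, noting that $g_\infty^{\varphi\varphi}=1$ is constant while $g_\infty^{rr}$ depends only on $\varphi$ and $g_\infty^{\theta\theta}$ on $\varphi$ and $r$. Because only four $\Gamma^k_{ij}$ are nonzero, each of $-\overline\nabla_k g_\infty^{ij}\Gamma^k_{ij}$, $\overline\nabla_k g_\infty^{ik}\Gamma^j_{ji}$, and $g_\infty^{ij}(\Gamma^k_{kl}\Gamma^l_{ij}-\Gamma^k_{jl}\Gamma^l_{ik})$ collapses to a scalar multiple of $(h_\infty'/h_\infty)^2$. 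Summing these coefficients should yield $-6(h_\infty'/h_\infty)^2$, which combined with $\overline R$ produces the stated formula for $F$. The second identity then follows at once from Lemma~\ref{lem-vol-ratio}: multiplying $F$ by $u\cdot d\mu_\infty/d\mu_0=uh_\infty^2$ clears the $h_\infty^{-2}$ denominators and gives $2u-6u(h_\infty')^2$.

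The principal hazard is sign and index bookkeeping, especially for the $\bar\Gamma^r_{\theta\theta}=-\sin r\cos r$ and $\bar\Gamma^\theta_{r\theta}=\cot r$ contributions that couple to the $\theta\theta$-component of $g_\infty^{-1}$. These $r$-dependent cross-terms are independent of $h_\infty$ and must cancel identically among themselves, since every such dependence was already absorbed into $\overline R$; verifying this cancellation (and tracking the correct coefficient $-6$ rather than, say, $-2$ or $-4$) is the one step that warrants careful explicit computation rather than being glossed over.
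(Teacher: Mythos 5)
Your computation is correct and is essentially the argument the paper intends: the appendix states this lemma without proof, and the analogous body-case Lemma~\ref{lem-F} is proved by exactly this direct substitution of the Lee--LeFloch Christoffel symbols into (\ref{defn-F}); carrying it out here gives $-\on_k g^{ij}\Gamma^k_{ij}=-4\left(h'_\infty/h_\infty\right)^2$, $\on_k g^{ik}\Gamma^j_{ji}=0$, and $g^{ij}\left(\Gamma^k_{kl}\Gamma^l_{ij}-\Gamma^k_{jl}\Gamma^l_{ik}\right)=-2\left(h'_\infty/h_\infty\right)^2$, confirming the coefficient $-6$, while the background symbols $\og^{r}_{\theta\theta}$ and $\og^{\theta}_{r\theta}$ drop out of these three terms because they only pair with vanishing off-diagonal components of $g_\infty^{-1}$. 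Your identification of $\overline R$ in (\ref{eqn-overline-R}) as the $g_\infty^{-1}$-trace of $\operatorname{Ric}(g_0)$, yielding $2/h_\infty^2$, is a slightly tidier version of the paper's treatment of $\overline R$ and is correct, as is the final multiplication by $u\,d\mu_\infty/d\mu_0=u\,h_\infty^2$.
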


 \begin{lem}\label{lem-extreme-Lee-LeFloch-divergence-appendix}
For $g$ being our limit metric tensor $g_\infty$ and a smooth nonnegative
test function $u$, the integrals in (\ref{LL-term1}) and (\ref{LL-term2}) are given by
\begin{eqnarray}
\quad FirstInt_{g_\infty}
&=& \int_{\Sph^2 \times \Sph^1}  \left( - V \cdot \overline\nabla \left(u \frac{\,d\mu_\infty}{\,d\mu_0}\right) \right) \,d\mu_0 \\
& = & \int_{\Sph^1} \left( 8 (h^\prime_\infty)^2 \bar{u} + 4 h^\prime_\infty h_\infty \bar{u} \right) d\varphi, \label{eqn-extreme-metric-FirstInt-appendix}
\end{eqnarray}
and
\begin{eqnarray}
\quad SecondInt_{g_\infty}&=& \int_{\Sph^2 \times \Sph^1} \left( F u \frac{\,d\mu_\infty}{d \mu_0} \right) \,d\mu_0\\
& = & \int_{\Sph^1} \left( 2 \bar{u} - 6  (h^\prime_\infty)^2 \bar{u} \right) d\varphi,\label{eqn-extrem-metric-SecondInt-appendix}
\end{eqnarray}
where
\be\label{eq-bar-u}
\bar{u}(\varphi) =  \int^{\pi}_{0} dr \int^{2\pi}_{0} u(r, \theta, \varphi) d\theta.
\ee
\end{lem}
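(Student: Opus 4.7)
The proof is a direct calculation: substitute the explicit formulas for $V$, $F$, and the volume-density ratio $d\mu_\infty/d\mu_0 = h_\infty^2$ from Lemmas~\ref{lem-V-appendix}, \ref{lem-F-appendix}, and \ref{lem-vol-ratio} into the two integrals, expand with the product rule, and then apply Fubini's theorem, exploiting that $h_\infty$ depends only on $\varphi$ to separate the $\Sph^1$-integration from the $\Sph^2$-integration.

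For $\text{FirstInt}_{g_\infty}$, I would begin with the pointwise identity
\[
-V\cdot\overline{\nabla}\!\left(u\,\frac{d\mu_\infty}{d\mu_0}\right) = 4\,\frac{h'_\infty}{h_\infty}\,\partial_\varphi(u\,h_\infty^2)
\]
from Lemma~\ref{lem-V-appendix}. A single application of the product rule expands this to $4\,h_\infty h'_\infty\,\partial_\varphi u + 8\,(h'_\infty)^2\,u$. Integrating against $d\mu_0 = d\varphi\wedge dr\wedge \sin r\,d\theta$, carrying the $\Sph^2$-integration inside by Fubini's theorem, and noting that $\partial_\varphi$ commutes with integration in $(r,\theta)$, one collects the right-hand side of (\ref{eqn-extreme-metric-FirstInt-appendix}) once the $\Sph^2$-averages of $u$ and $\partial_\varphi u$ are expressed in terms of $\bar{u}$ and $\partial_\varphi\bar{u}$.

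For $\text{SecondInt}_{g_\infty}$, Lemma~\ref{lem-F-appendix} already records the pointwise identity $F\,u\,(d\mu_\infty/d\mu_0) = 2u - 6u(h'_\infty)^2$, so integration against $d\mu_0$ followed by Fubini's theorem yields (\ref{eqn-extrem-metric-SecondInt-appendix}) immediately.

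I do not anticipate a serious obstacle. The heavy computational work has already been packaged into Lemmas~\ref{lem-Lee-LeFloch-Christoffel-sym-appendix}, \ref{lem-V-appendix}, and \ref{lem-F-appendix}, and the uniform bound $|\nabla h_\infty|\le 1$ from Theorem~\ref{thm-sphere-over-circle-convergence}, combined with smoothness of the test function $u$ and compactness of the domain, makes the integrability needed for Fubini's theorem transparent. The only care point is bookkeeping: confirming the precise convention for the average $\bar{u}$ relative to the area element $\sin r\,d\theta\,dr$ on $\Sph^2$, and verifying that the $(r,\theta)$-integration indeed collapses the three-variable integrand to a function of $\varphi$ alone before pairing against the $\Sph^1$-measure.
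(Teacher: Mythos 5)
Your approach is exactly the paper's: the proof is a one-line appeal to Lemmas~\ref{lem-V-appendix}, \ref{lem-F-appendix}, and \ref{lem-vol-ratio}, followed by Fubini and separation of the $\varphi$-variable, and your verification of both integrals along those lines is sound.

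However, your careful expansion actually reveals a typographical error in the statement being proved. For $FirstInt_{g_\infty}$, the product rule gives
\[
-V\cdot\overline{\nabla}\Bigl(u\,\tfrac{d\mu_\infty}{d\mu_0}\Bigr)
= 4\,\frac{h'_\infty}{h_\infty}\,\partial_\varphi\bigl(u\,h_\infty^2\bigr)
= 4\,h_\infty h'_\infty\,\partial_\varphi u + 8\,(h'_\infty)^2\,u,
\]
so after carrying the $(r,\theta)$-integration inside one obtains
\[
FirstInt_{g_\infty} = \int_{\Sph^1}\Bigl( 8\,(h'_\infty)^2\,\bar{u} + 4\,h'_\infty h_\infty\,\partial_\varphi\bar{u} \Bigr)\,d\varphi,
\]
with a $\partial_\varphi\bar{u}$ in the second term, not $\bar{u}$ as written in (\ref{eqn-extreme-metric-FirstInt-appendix}). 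Several pieces of evidence confirm that the prime is intended: the analogous display (\ref{eqn-extreme-metric-FirstInt}) in the body of the paper carries $\nabla\bar{u}$ in the corresponding position; and in the proof of Theorem~\ref{thm-scalar-appendix} the integration by parts of $\int_{\Sph^1}(-4 h_j h_j''\,\bar{u})\,d\varphi$ produces the term $4 h_j h'_j\,\partial_\varphi\bar{u}$, matching Lemma~\ref{lem-calculation-Lee-LeFloch-appendix} only after the same correction. Your ``care point'' about the area element is also warranted: for the separated formulas to be literally correct, the definition (\ref{eq-bar-u}) of $\bar{u}$ should include the factor $\sin r$, i.e.\ $\bar{u}(\varphi) = \int_0^\pi\!\int_0^{2\pi} u(r,\theta,\varphi)\,\sin r\,d\theta\,dr$, so that the $(r,\theta)$-integration against $d\mu_0$ collapses onto $\bar{u}$. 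Neither issue affects the validity of your method; both are minor slips in the source.
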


\begin{proof}
By integrating the formulas in Lemma~\ref{lem-V-appendix} and Lemma~\ref{lem-F-appendix}, one can easily obtain the integrals in (\ref{eqn-extreme-metric-FirstInt-appendix}) and (\ref{eqn-extrem-metric-SecondInt-appendix}).
\end{proof}

\begin{rmrk}\label{rmrk-LL-divergence-appendix}
{\rm
Here $W^{1, 2}$ regularity of $h_\infty$ implies that the integrals in (\ref{eqn-extreme-metric-FirstInt-appendix}) and (\ref{eqn-extreme-metric-FirstInt-appendix}) are both finite (c.f. Remarks \ref{rmrk-Lee-LeFloch} and \ref{rmrk-LL-divergence}).
}
\end{rmrk}

\begin{lem}\label{lem-calculation-Lee-LeFloch-appendix}
For the limit metric $g_\infty = g_{\Sph^1} + h^2_\infty g_{\Sph^2}$, the scalar curvature distribution $\Scal_{g_\infty}$ defined in Definition \ref{defn-Lee-LeFloch} can be expressed, for every test function $u \in C^{\infty}(\Sph^2 \times \Sph^1)$, as the integral
\be\label{eqn-calculation-Lee-LeFloch-appendix}
\langle \Scal_{g_\infty}, u \rangle
 =  \int_{\Sph^1} \left( 2 \bar{u} + 2 (h^\prime_\infty)^2 \bar{u} + 4 h^\prime_\infty h_\infty \bar{u} \right) d\varphi,
\ee
and this is finite for any test function $u \in C^{\infty}(\Sph^2 \times \Sph^1)$. Here $\bar{u}$ is defined as in (\ref{eq-bar-u}).
\end{lem}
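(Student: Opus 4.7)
The plan is essentially to add two results that have already been established. By Definition \ref{defn-Lee-LeFloch}, the distributional pairing $\langle \Scal_{g_\infty}, u \rangle$ is precisely $FirstInt_{g_\infty} + SecondInt_{g_\infty}$, and Lemma \ref{lem-extreme-Lee-LeFloch-divergence-appendix} already gives closed-form expressions for both of these one-dimensional integrals over $\Sph^1$. So my first step would be to write
\[
\langle \Scal_{g_\infty}, u \rangle \;=\; FirstInt_{g_\infty} + SecondInt_{g_\infty}
\]
and substitute (\ref{eqn-extreme-metric-FirstInt-appendix}) and (\ref{eqn-extrem-metric-SecondInt-appendix}).

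Next I would carry out the algebraic simplification: the two $(h^\prime_\infty)^2 \bar{u}$ contributions, with coefficients $+8$ and $-6$, combine to $+2(h^\prime_\infty)^2 \bar{u}$; the constant term $2\bar{u}$ from $SecondInt_{g_\infty}$ and the cross term $4 h^\prime_\infty h_\infty \bar{u}$ from $FirstInt_{g_\infty}$ are each left untouched. This produces exactly the integrand in the statement. No further manipulation is needed; all the real work — computing the Christoffel symbols, the vector field $V$, the function $F$, and the volume-form ratio — has been absorbed into Lemmas \ref{lem-Lee-LeFloch-Christoffel-sym-appendix}--\ref{lem-extreme-Lee-LeFloch-divergence-appendix}.

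For finiteness I would appeal to Theorem \ref{thm-sphere-over-circle-convergence}, which provides exactly the regularity needed: $h_\infty$ is Lipschitz on $\Sph^1$ with Lipschitz constant at most $1$ and satisfies $\sqrt{A/(4\pi)} \le h_\infty \le D/\pi + 2\pi$. Consequently $h_\infty$ and $h^\prime_\infty$ lie in $L^\infty(\Sph^1)$, and so do the coefficient functions $(h^\prime_\infty)^2$ and $h^\prime_\infty h_\infty$. Since $u \in C^\infty(\Sph^2 \times \Sph^1)$ is smooth on a compact manifold, the averaged function $\bar{u}$ is bounded on $\Sph^1$. Thus each of the three summands in the integrand belongs to $L^1(\Sph^1)$ and the integral converges.

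The main point worth flagging is not an obstacle but rather a contrast: unlike the situation of Theorem \ref{thm-distr-scalar}, where the limit warping function only lies in $W^{1,p}$ for $p<2$ and one must rely on the cancellation described in Remark \ref{rmrk-LL-divergence} to make sense of the Lee-LeFloch expression, here $h_\infty$ is Lipschitz, hence automatically in $W^{1,2}_{loc}$, which is the regularity hypothesis imposed in Definition \ref{defn-Lee-LeFloch}. Thus $FirstInt_{g_\infty}$ and $SecondInt_{g_\infty}$ are each individually finite, Definition \ref{defn-Lee-LeFloch} applies verbatim, and the proof reduces to the bookkeeping above.
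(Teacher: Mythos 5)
Your proposal is correct and matches the paper's proof essentially verbatim: the paper likewise obtains the formula by adding the two integrals (\ref{eqn-extreme-metric-FirstInt-appendix}) and (\ref{eqn-extrem-metric-SecondInt-appendix}) from Lemma~\ref{lem-extreme-Lee-LeFloch-divergence-appendix} and invokes the $W^{1,2}$ regularity of $h_\infty$ for finiteness. Your added observations — that $8-6=2$ is the only cancellation, and that Lipschitz regularity of $h_\infty$ here places us squarely inside the Lee--LeFloch hypothesis so no divergence cancellation is needed, unlike in Theorem~\ref{thm-distr-scalar} — are accurate and consistent with Remark~\ref{rmrk-LL-divergence-appendix}.
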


\begin{proof}
The expression in (\ref{eqn-calculation-Lee-LeFloch-appendix}) immediately follows from the expressions in (\ref{eqn-extreme-metric-FirstInt-appendix}) and (\ref{eqn-extrem-metric-SecondInt-appendix})  and Definition \ref{defn-Lee-LeFloch}. The finiteness of the integral in (\ref{eqn-calculation-Lee-LeFloch-appendix}) follows from that $h_\infty \in W^{1, 2}(\Sph^2)$.
\end{proof}

We now apply these lemmas to prove Theorem~\ref{thm-scalar-appendix}:

\begin{proof}
By the expression (\ref{scalar-curvature-formula-appendix}) of the scalar curvature of $\Sph^1 \times_{h_i} \Sph^2$,
we have that for any test function $u \in C^{\infty} (\Sph^2 \times \Sph^1)$,
\begin{eqnarray}
& &  \int_{\Sph^1 \times \Sph^2} \Scal_{g_{j}} u d\vol_{g_j} \\
& = & \int_{\Sph^1} \left( \int_{\Sph^2} \left( -4 (\Delta h_j)h_j u + 2 u - 2 |\nabla h_j|^2 u \right) d\vol_{g_{\Sph^2}}  \right) d\varphi \\
& = & \int_{\Sph^2} \left( 2 \bar{u} + 2 (h^\prime_j)^2 \bar{u} + 4 h^\prime_j h_j \bar{u} \right) d\varphi. \\
\end{eqnarray}
Then, by using the nonnegative scalar curvature condition $\Scal_{g_{j}} \geq 0$, and convergence property of $h_j$ in Theorem \ref{thm-sphere-over-circle-convergence}, possibly after passing to a subsequence, we obtain for any nonnegative test function $0 \leq u \in C^{\infty}(\Sph^2 \times \Sph^1)$,
\begin{eqnarray}
0 & \leq & \int_{\Sph^2 \times \Sph^1} \Scal_{g_{j}} u d\vol_{g_j}  \\
& = & \int_{\Sph^1} \left( 2 \bar{u} + 2 (h^\prime_j)^2 \bar{u} + 4 h^\prime_j h_j \bar{u} \right) d\varphi \\
& \rightarrow & \int_{\Sph^2} \left( 2 \bar{u} + 2 (h^\prime_\infty)^2 \bar{u} + 4 h^\prime_\infty h_\infty \bar{u}  \right) d\varphi \\
& = & \langle \Scal_{g_\infty}, u \rangle.
\end{eqnarray}
Thus, $\langle \Scal_{g_\infty}, u \rangle \geq 0$ for all nonnegative test function $u \in C^{\infty}(\Sph^2 \times \Sph^1)$.
\end{proof}

\end{appendix}


\bibliographystyle{plain}
\bibliography{2023.bib}

\end{document}